\documentclass[11pt,a4paper]{article}
\usepackage{graphicx}
\usepackage{ifpdf}
\usepackage{amsthm,amsfonts,amssymb,amsmath,euscript,comment,bbm}
\usepackage{mathabx}
\usepackage[utf8]{inputenc}
\usepackage[T1]{fontenc}
\usepackage{color}
\usepackage{textcomp}
\usepackage{bbm}
\usepackage{slashed}
\usepackage[colorlinks=true,linkcolor=red,citecolor=blue]{hyperref}
\usepackage{mathrsfs}
\usepackage{dsfont}
\usepackage{geometry}
\usepackage{epsfig}
\usepackage{tikz}
\numberwithin{equation}{section}
\renewcommand{\c}{\cdot}
\newcommand{\unl}{\underline{L}}
\newcommand{\pa}{\partial}
\newcommand{\pr}{\pa}
\newcommand{\unu}{{\underline{u}}}

\newcommand{\mo}{\mathcal{O}}
\newcommand{\unchi}{\underline{\chi}}
\newcommand{\omb}{\underline{\om}}
\newcommand{\etab}{\underline{\eta}}
\newcommand{\sld}{{\slashed{d}}}
\def\pf{\mathcal{P}}
\def\Wb{\underline{W}}
\def\omd{\dual\om}
\def\ombd{\dual\omb}
\def\muc{\widecheck{\mu}}
\def\cuvi{H_{u_\infty}^{(0,\ub)}}
\def\mg{\mathcal{G}}
\def\AAb{\underline{\mathcal{A}}}
\def\ug{\underline{\mg}}
\def\mw{\mathcal{W}}
\def\uw{\underline{\mw}}
\def\pfb{{\underline{\pf}}}
\def\Ssl{\slashed{\S}}
\def\Tr{\mathbf{Tr}}
\def\ges{\gtrsim}
\def\Sslh{\widehat{\Ssl}}
\def\Kc{\widetilde{\K}}
\def\PP{{\mathcal{P}}}

\def\Jh{{\widehat{J}}}
\def\ins{\slashed{\in}}

\def\mf{\mathcal{F}}

\def\trcht{\widetilde{\trch}}

\def\TT{\mathcal{T}}

\def\bt{\widetilde{\b}}
\def\bbt{\widetilde{\bb}}

\def\trchbt{\widetilde{\trchb}}

\def\Des{\slashed{\De}}
\def\bu{\bullet}
\def\eps{\slashed{\in}}
\def\slD{\slashed{D}}
\def\sdivs{\slashed{\sdiv}}
\def\curls{\slashed{\curl}}

\def\af{a^\frac{1}{2}}

\def\Ub{\underline{H}}
\def\aa{{\underline{\a}}}
\def\sk{\mathfrak{s}}
\def\th{\theta}

\def\slJ{\widehat{J}}

\def\dual{{}^*}
\def\bF{{}^{(F)}\hspace{-1.5pt}\b}
\def\bbF{{}^{(F)}\hspace{-1.5pt}\bb}

\def\rhoF{{}^{(F)}\hspace{-1.5pt}\rho}
\def\siF{{}^{(F)}\hspace{-1.5pt}\si}

\def\nabs{\slashed{\nab}}
\def\dag{\dagger}

\def\slg{{\slashed{g}}}
\def\afd{a^{-\frac{1}{2}}}
\def\sic{\widetilde{\si}}

\def\vkp{\varkappa}
\def\vkpb{{\underline{\vkp}}}

\def\Hb{{\underline{H}}}

\newcommand{\mr}{\mathcal{R}}

\newcommand{\M}{\mathcal{M}}
\newcommand{\D}{\mathbf{D}}
\newcommand{\uf}{\underline{\mf}}

\newcommand{\cuv}{{H_u^{(0,\ub)}}}
\newcommand{\ucuv}{{\Hb_\ub^{(u_\infty,u)}}}

\newcommand{\g}{\mathbf{g}}
\newcommand{\F}{\mathbf{F}}
\newcommand{\R}{{\mathbf{R}}}
\newcommand{\K}{\mathbf{K}}

\def\bo{b^\frac{1}{4}}
\def\hot{\widehat{\otimes}}

\def\Lb{\unl}
\DeclareMathOperator{\lot}{l.o.t.}
\def\II{\mathcal{I}}
\def\la{\lambda}
\def\fc{\widecheck{f}}
\def\MM{\mathcal{M}}
\def\ub{\unu}
\def\chib{\unchi}

\def\om{\omega}
\def\ze{\zeta}
\def\Om{\Omega}
\def\bb{{\underline{\b}}}
\def\si{\sigma}
\def\mub{{\underline{\mu}}}
\def\dk{\mathfrak{d}}
\def\dkb{\slashed{\dk}}
\def\ga{\gamma}
\def\Ga{\Gamma}
\def\xib{\underline{\xi}}
\def\a{\alpha}
\def\b{\beta}
\def\hch{\widehat{\chi}}
\def\hchb{\widehat{\chib}}
\def\trch{\tr\chi}
\def\trchb{\tr\chib}
\def\trchbc{\widecheck{\trchb}}
\def\de{\delta}
\def\De{\Delta}
\def\nab{\nabla}
\def\ov{\overline}

\def\bbb{\underline{b}}
\def\Gab{\Ga_b}
\def\Gag{\Ga_g}
\def\les{\lesssim}

\def\OO{\mo}

\def\Up{\Upsilon}

\def\Fb{\underline{F}}
\def\T{\mathbf{T}}
\DeclareMathOperator{\curl}{curl}
\DeclareMathOperator{\grad}{grad}
\DeclareMathOperator{\tr}{tr}
\DeclareMathOperator{\sdiv}{div}
\def\bdiv{\mathbf{Div}}
\def\Ric{\mathbf{Ric}}
\def\S{\mathbf{S}}
\def\ef{\mathfrak{e}}
\def\A{\mathbf{A}}
\def\slA{\Asl}
\def\Asl{\slashed{A}}
\def\Ub{\underline{U}}
\def\slep{\slashed{\in}}
\def\Psisl{\slashed{\Psi}}
\def\Psit{\widetilde{\Psi}}

\def\W{\mathbf{W}}
\newtheorem{thm}{Theorem}[section]
\newtheorem{prop}[thm]{Proposition}
\newtheorem{lem}[thm]{Lemma}
\newtheorem{cor}[thm]{Corollary}
\newtheorem{rk}[thm]{Remark}
\newtheorem{df}[thm]{Definition}
\allowdisplaybreaks[3]
\usepackage{tikz}
\usetikzlibrary{decorations.pathmorphing}
\usetikzlibrary{arrows.meta, decorations.markings}
\usepackage{float}
\title{Formation of trapped surfaces for the Einstein--Maxwell--charged scalar field system}
\author{Dawei Shen and Jingbo Wan}
\begin{document}
\maketitle
\begin{abstract}
In this paper, we prove a scale-critical trapped surface formation result for the Einstein--Maxwell--charged scalar field (EMCSF) system, without any symmetry assumptions. Specifically, we establish a scale-critical semi-global existence theorem from past null infinity and show that the focusing of gravitational waves, the concentration of electromagnetic fields, or the condensation of complex scalar fields, each individually, can lead to the formation of a trapped surface. In addition, we capture a nontrivial charging process along past null infinity, which introduces new difficulties due to the abnormal behavior of the matter fields. Nevertheless, the semi-global existence result and the formation of a trapped surface remain valid.

{\centering\subsubsection*{\small Keywords}}
\small\noindent double null foliation, Einstein--Maxwell--charged scalar field, signature $s_2$, $|u|^p$--weighted estimates, trapped surface formation, charging process.
\end{abstract}
\tableofcontents
\section{Introduction}
The study of trapped surface formation for the Einstein vacuum equations began with the seminal work of Christodoulou \cite{Chr} and continued in \cite{An12,An,AnLuk,Chen-K,KLR,kr}. Coupling Einstein’s equations with matter fields introduces new trapping mechanisms, as explored for the Einstein--scalar field, Einstein--Maxwell, and Einstein--Yang--Mills systems in \cite{AnAth,AMY,LL,Yu,Zhao}. In this paper, we establish a scale-critical result on trapped surface formation for the \emph{Einstein--Maxwell--charged scalar field (EMCSF) system}, where, in addition to gravitational wave focusing, both the electromagnetic field and the charged scalar field contribute nontrivially through a new charging process along the initial outgoing null hypersurface. The result is based on a unified treatment of the energy estimates for Bianchi pair equations of Weyl curvature and matter fields without any symmetry assumption.
\subsection{Einstein--Maxwell--charged scalar field system}
In this paper, we study the evolution of the Einstein--Maxwell--charged scalar field (EMCSF) system, with a fixed coupling constant $\ef$, involving a $1+3$--dimensional Lorentzian manifold $(\M,\g)$, an electromagnetic $2$-form $\F$, and a complex scalar field $\psi$:
\begin{align}
\begin{split}\label{EMCSF}
    \Ric_{\mu\nu}-\frac{1}{2}\g_{\mu\nu}\R &= \T_{\mu\nu}^{EM}+\T_{\mu\nu}^{CSF},\\
    \D^\mu \F_{\mu\nu} &= -2\ef\, \Im\left(\psi(D_\nu\psi)^\dag\right),\\
    \g^{\mu\nu} D_\mu D_\nu \psi &= 0,
\end{split}
\end{align}
where $D = \D + i\ef \A$ denotes the gauge covariant derivative associated to a gauge potential $1$-form $\A$, and $\F = d\A$ is the corresponding Faraday tensor.\footnote{More precisely, $\F$ is the \emph{Faraday tensor} associated with the Maxwell field.} The energy-momentum tensors are given by
\begin{align}
\begin{split}\label{Tdfeqintro}
    \T^{EM}_{\mu\nu} &= 2\left( \F_{\mu\a} {\F_{\nu}}^\a - \frac{1}{4} \g_{\mu\nu} \F_{\a\b} \F^{\a\b} \right),\\
    \T^{CSF}_{\mu\nu} &= 2\left( \Re\left( D_\mu \psi (D_\nu \psi)^\dag \right) - \frac{1}{2} \g_{\mu\nu} D_\a \psi (D^\a \psi)^\dag \right),
\end{split}
\end{align}
representing, respectively, the contributions of the Maxwell field $\F$ and the charged scalar field $\psi$. \\ \\
Throughout this paper, we fix the gauge by requiring the outgoing component $\A_4 := \A(e_4) = 0$ everywhere. We will introduce coordinates $u$ and $\ub$ on $(\M,\g)$ through a \emph{double null foliation},\footnote{See Section \ref{doublenullfoliation} for the detailed construction of the double null foliation.} and prescribe characteristic initial data along the incoming null hypersurface $\Hb_0$ (where $\ub=0$) and the outgoing null hypersurface $H_{u_\infty}$ (where $u=u_\infty$).
\subsection{Small data regime}
For characteristic initial data prescribed along the incoming null cone $\Hb_0$ and the outgoing null cone $H_\infty$, the study of \eqref{EMCSF} in the small data regime has been highly successful. For the Einstein vacuum equations (i.e. $\F = 0$ and $\psi = 0$ in \eqref{EMCSF}), Christodoulou--Klainerman's monumental work \cite{Ch-Kl} established that sufficiently small initial data lead to the completeness of all future-directed geodesics, implying no singularity formation in the interior region. More recent proofs of the stability of Minkowski space have shown that even weaker decay and regularity assumptions suffice to guarantee spacetime completeness; see \cite{Bieri,lr2,Shen23}.

There are also numerous Minkowski stability results for Einstein's equations coupled with matter fields. Zipser \cite{zipser} extended the result of \cite{Ch-Kl} to the Einstein--Maxwell system, while Speck \cite{speck} generalized it to a broader class of electromagnetic fields, including standard Maxwell fields. The global stability of the Einstein--Klein--Gordon system has been studied by Lefloch--Ma \cite{lefloch,leflochMa}, Wang \cite{wang}, and Ionescu--Pausader \cite{Io-Pau}. We also refer the reader to the introductions of \cite{Shen22,Shen23} for further results on the stability of Minkowski spacetime.
\subsection{Trapped surface formation}
The stability results discussed in the previous section imply the completeness of solutions to \eqref{EMCSF} when the initial data along $\Hb_0$ and $H_\infty$ are sufficiently small. However, for large initial data, a geometric object known as a \emph{trapped surface} may form dynamically within their domain of influence. In 1965, Penrose \cite{Penrose} proved the following celebrated incompleteness theorem.
\begin{thm}[Penrose \cite{Penrose}]\label{Penrosesingularity}
Let $(\M,\g,\F,\psi)$ be a spacetime satisfying \eqref{EMCSF}, containing a non-compact Cauchy hypersurface. If $\M$ contains a compact trapped surface, then it is future causally geodesically incomplete.
\end{thm}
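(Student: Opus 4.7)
This is the classical Penrose incompleteness theorem, and the plan is to adapt his original argument to the matter model \eqref{EMCSF}. The strategy has three ingredients: verifying the null energy condition from the structure of $\T^{EM}+\T^{CSF}$, invoking the Raychaudhuri equation to force the two null congruences from the trapped surface to develop focal points within uniformly bounded affine parameter, and deriving a contradiction from a global causal/topological argument using the non-compact Cauchy hypersurface.

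For the null energy condition, I would contract the Einstein equation in \eqref{EMCSF} with a null vector $k$. Because $\g_{\mu\nu}k^\mu k^\nu=0$, the trace terms drop and one obtains $\Ric_{\mu\nu}k^\mu k^\nu=(\T^{EM}+\T^{CSF})_{\mu\nu}k^\mu k^\nu$. Using \eqref{Tdfeqintro},
\begin{equation*}
\T^{EM}_{\mu\nu}k^\mu k^\nu=2\,\g^{\a\b}(\F_{\mu\a}k^\mu)(\F_{\nu\b}k^\nu),\qquad \T^{CSF}_{\mu\nu}k^\mu k^\nu=2\,|(D\psi)(k)|^2.
\end{equation*}
The second term is manifestly non-negative; for the first, note that the $1$-form $j_\a:=\F_{\mu\a}k^\mu$ satisfies $k^\a j_\a=0$ by antisymmetry of $\F$, so $j$ is orthogonal to the null direction $k$ and therefore either spacelike or proportional to $k$, giving $\g^{\a\b}j_\a j_\b\geq 0$.

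Next, let $S$ be the given compact trapped surface, with future-directed null normals of expansions $\theta_\pm<0$ on $S$. Along any null generator of the congruence emanating from $S$, the Raychaudhuri equation
\begin{equation*}
\frac{d\theta}{ds}=-\tfrac{1}{2}\theta^2-|\hch|^2-\Ric_{\mu\nu}k^\mu k^\nu
\end{equation*}
combined with the null energy condition gives $d\theta/ds\leq -\tfrac{1}{2}\theta^2$, forcing $\theta\to-\infty$ within affine parameter at most $2/|\theta(0)|$. Compactness of $S$ then supplies a uniform $s_0>0$ such that every generator of either congruence develops a focal point by parameter $s_0$.

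Finally, I would argue by contradiction: if $\M$ were future causally geodesically complete, then $\partial J^+(S)$ would be generated by null geodesic segments from $S$ of affine length at most $s_0$, hence the continuous image of a compact set, and therefore compact. On the other hand $\partial J^+(S)$ is an achronal closed topological $3$-manifold without boundary, and flowing along any continuous timelike vector field on $\M$ (which exists by time-orientability) gives a continuous injection $\pi\colon\partial J^+(S)\hookrightarrow\Sigma$ into the non-compact Cauchy hypersurface $\Sigma$. By invariance of domain the image $\pi(\partial J^+(S))$ is open in $\Sigma$, and being compact it is also closed. The connected component of $\Sigma$ meeting the image would then equal the image, contradicting the non-compactness of $\Sigma$. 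The main subtlety lies in this final topological step --- in particular, justifying that $\partial J^+(S)$ is a $3$-manifold without boundary and applying invariance of domain carefully --- while the analytic inputs (NEC and Raychaudhuri) are immediate from \eqref{EMCSF} and \eqref{Tdfeqintro}.
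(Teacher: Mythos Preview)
The paper does not prove this statement at all; it is quoted as Penrose's classical theorem with a citation to \cite{Penrose} and used only as background motivation. Your proposal correctly reconstructs the standard Penrose argument and, in particular, carries out the one model-dependent step---verifying the null energy condition for the EMCSF stress-energy---cleanly and correctly. So there is nothing to compare against here: you have supplied a proof where the paper deliberately omits one, and your sketch is sound up to the routine global-causal technicalities you already flagged (achronality and manifold structure of $\partial J^+(S)$, and implicitly the connectedness of $\Sigma$ needed for the open-and-closed argument to yield a contradiction).
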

Thus, in this setting, the proof of singularity formation in general relativity is reduced to deriving the formation of trapped surfaces. According to the stability results for the Minkowski space, e.g. \cite{Ch-Kl,Shen23,zipser}, the initial data chosen cannot be small to form a trapped surface. Solving \eqref{EMCSF} with large initial data is significantly more difficult: for the general large-data problem, only local existence results are known. However, forming a trapped surface at a later time requires a mathematical result beyond the local existence theory.
\subsubsection{Einstein vacuum equations}
In \cite{Chr}, Christodoulou discovered a remarkable mechanism for the dynamical formation of trapped surfaces. He showed that trapped surfaces can form, even in vacuum spacetimes, from regular, concentrated initial configurations. Christodoulou introduced an open set of initial data (the \emph{short pulse ansatz}) on an outgoing null hypersurface and proved that strongly focused gravitational waves arriving from past null infinity lead to the formation of a trapped surface. In \cite{kr}, Klainerman--Rodnianski extended Christodoulou's work by introducing a parabolic scaling and studying a broader class of initial data. Their new scaling captured hidden smallness in the nonlinear interactions of the Einstein equations, an idea further generalized by An in \cite{An12}. In \cite{LY}, Li--Yu showed the existence of spacelike initial data, initially free of trapped surfaces, that evolve into a trapped surface in its future development. Klainerman--Luk--Rodnianski \cite{KLR} significantly relaxed the lower bound condition in \cite{Chr,kr} by developing a fully anisotropic mechanism for the formation of trapped surfaces. The first scale-critical result was established by An--Luk in \cite{AnLuk}, leading to subsequent studies of the apparent horizon; see \cite{An17,AnHan}. In \cite{An}, An provided a simplified proof of the scale-critical result in the far-field regime by designing a scale-invariant norm based on the signature $s_2$ and the corresponding decay rates. All of the above results are based on an adapted double null foliation. More recently, Chen--Klainerman \cite{Chen-K} reproved trapped surface formation using a non-integrable Principal Temporal (PT) frame introduced in \cite{GKS,KS:main}.
\subsubsection{Einstein equations coupled with matter fields}
The formation of trapped surfaces for Einstein equations coupled with matter fields also plays an important role in both mathematics and physics. In a finite region, Yu \cite{Yucmp,Yu} extended the work of Klainerman--Rodnianski \cite{kr} and proved trapped surface formation for the Einstein--Maxwell system. Li--Liu \cite{LL} studied trapped surface formation for the Einstein--scalar field system. In \cite{AnAth}, An--Athanasiou extended Yu's work and established a scale-critical criterion for trapped surface formation from past null infinity by incorporating new ingredients from \cite{An}. Athanasiou--Mondal--Yau \cite{AMY} proved a scale-invariant trapped surface formation result for the coupled Einstein--Yang--Mills system. More recently, Zhao--Hilditch--Valiente Kroon \cite{Zhao} established trapped surface formation for the Einstein--scalar field system in a general setting using scale-critical analysis.
\subsubsection{Einstein--Maxwell--charged scalar field}
The formation of trapped surfaces for the Einstein–Maxwell–charged scalar field (EMCSF) system \eqref{EMCSF}, which describes charged gravitational collapse, has been studied extensively under spherical symmetry. In \cite{Kom}, Kommemi classified global structures of spherically symmetric EMCSF spacetimes and ruled out certain violations of cosmic censorship. In \cite{Anlim}, An–Lim established a sharp criterion for trapped surface formation using modern double null techniques. Later, An–Tan \cite{AnTan} gave the first full proof of weak cosmic censorship for spherically symmetric collapse. In \cite{KU}, Kehle–Unger constructed examples of collapse to extremal Reissner–Nordström black holes, demonstrating violations of the third law of black hole thermodynamics; see \cite{KU2} for further developments. The global dynamics of the spherically symmetric EMCSF system has also been explored, including the decay of \emph{weakly charged} solutions on sub-extremal Reissner–Nordstr\"om backgrounds by Van de Moortel~\cite{Van}.

Most existing results have focused on spherically symmetric spacetimes. Although these studies have deepened our understanding of charged collapse and cosmic censorship under symmetry, the behavior of the EMCSF system beyond spherical symmetry remains largely unexplored. To the best of our knowledge, this paper provides the first mathematically rigorous result on trapped surface formation for the EMCSF system without any symmetry assumptions. We prove a scale-critical semi-global existence theorem, a trapped surface formation result, and establish a charging process. Our work extends Christodoulou’s short pulse framework to a richer matter model and paves the way for future studies of charged radiative collapse in full generality.
\subsection{Rough versions of the main theorems}
In this section, we state simplified versions of our main results.
\begin{thm}[Main Theorem (first version)]\label{Maintheoremintro}
    There exist constants $a_0>0$ sufficiently large and $e_0>0$ sufficiently small such that the following hold. For $a>a_0$ and $0<\ef<e_0$, with initial data:\footnote{See Section \ref{doublenullfoliation} for the definition of $\hch$ and $\bF$.}
    \begin{itemize}
        \item $\displaystyle\sum_{i\leq 10,k\leq 4}\afd|u_\infty|\left\|\nabs_4^k(|u_\infty|\nabs)^i(\hch,\bF,\psi)\right\|_{L^\infty(S_{u_\infty,\ub})}\leq1$ along $u=u_\infty$,
        \item Minkowskian initial data along $\ub=0$.
    \end{itemize}
    Einstein--Maxwell--charged scalar field system \eqref{EMCSF} admits a unique smooth solution in the colored region of Figure \ref{maintheorempicture}. Moreover, we obtain detailed control of all the quantities associated with the double null foliations of spacetime; see Theorem \ref{maintheorem}.
\end{thm}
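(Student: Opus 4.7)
The plan is to prove semi-global existence by a large bootstrap argument in the slab bounded by $\Hb_0$, $H_{u_\infty}$, and the ``last slices'' $u=-1$ and $\ub=1$, closed by a standard continuity argument. First I would set up the double null foliation with null frame $(e_3,e_4,e_A)$ and the standard decompositions of the Weyl curvature $\W\sim(\a,\b,\rho,\si,\bb,\aa)$, of the Faraday tensor $\F\sim(\aF,\bF,\rhoF,\siF,\bbF,\aaF)$, and of the gauge covariant derivatives of the scalar field $\sim(D_4\psi,\slD\psi,D_3\psi)$. The gauge condition $\A_4=0$ is exploited to reconstruct the remaining components of $\A$ from $\F$ by ODE integration along $e_4$. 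Following An's scale-invariant framework based on the signature $s_2$ (cf.\ \cite{An,AnLuk,AnAth}), each geometric quantity is assigned weights in powers of $\af$ and $|u|$, and bootstrap assumptions are imposed in scale-invariant $L^\infty$ and $L^2$ norms on all Ricci coefficients, curvature components, Maxwell components, and covariant scalar field derivatives, up to $10$ tangential and $4$ transversal derivatives, matching the regularity of the data on $u=u_\infty$.

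With the bootstrap in place, I would then recover the Ricci coefficients by integrating their null structure equations from the appropriate initial hypersurface: the outgoing sector $(\trch,\hch,\om,\eta,\zeta)$ along $e_4$ from $H_{u_\infty}$, and the incoming sector $(\trchb,\hchb,\omb,\etab)$ along $e_3$ from the Minkowskian data on $\Hb_0$. The matter fields enter through the energy--momentum tensor, and the chosen signatures make the quadratic matter sources $\F\cdot\F$ and $D\psi\cdot(D\psi)^\dag$ at least as favorable as the vacuum nonlinearities $\Gag\cdot\Gag$, so the estimates close within the same $\afd$-hierarchy. Top-order derivatives of the anomalous coefficients $\hch,\omb,\trchb,\hchb$ are then recovered via elliptic Hodge estimates on the 2-spheres $S_{u,\ub}$, following \cite{AnLuk,An}.

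The core step is the simultaneous energy estimate for the coupled (Weyl, Maxwell, scalar) Bianchi system. Schematically each Bianchi pair has the form $\nabs_3\Psi_1+\sdivs\Psi_2=\mathcal{E}$, $\nabs_4\Psi_2+\sdivs^*\Psi_1=\mathcal{E}'$, and the corresponding weighted $L^2$ pairing, combined with the $|u|^p$ weights highlighted in the abstract, produces flux bounds on $H_u$ and $\Hb_\ub$. A unified treatment is essential because the three subsystems are genuinely coupled: $\Ric=\T$ couples $\W$ to $\F$ and $\psi$; the Faraday identity $\D^\mu\F_{\mu\nu}=-2\ef\,\Im(\psi(D_\nu\psi)^\dag)$ couples $\F$ to $\psi$; and $\g^{\mu\nu}D_\mu D_\nu\psi=0$ closes the loop back to $\A$, hence to $\F$. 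In the scale-invariant accounting, every nonlinear error integral gains either a factor of $\afd$ from the signature, a $|u|^{-1}$ from the weight, or a $\ub$ from $\ub$-integration, and therefore closes within the bootstrap.

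The principal obstacle, as signaled in the abstract, is the \emph{charging process} along $H_{u_\infty}$. The initial data prescribe zero charge on $S_{u_\infty,0}$, but the bilinear current $-2\ef\,\Im(\psi D_3\psi^\dag)$ drives the charge $Q(\ub)=\int_{S_{u_\infty,\ub}}\rhoF$ from $0$ up to a nontrivial value as $\ub$ increases, so its mean part $\ov{\rhoF}=Q(\ub)/|u|^2$ does not enjoy the smallness predicted by the naive signature hierarchy. The remedy is to decompose $\rhoF=\ov{\rhoF}+\rhoFc$, propagate $Q$ directly via $\pa_\ub Q=\int_{S_{u_\infty,\ub}}2\ef\,\Im(\psi D_3\psi^\dag)$, estimate the check part $\rhoFc$ within the standard scale-invariant scheme, and absorb interaction terms $\ov{\rhoF}\cdot(\text{anything})$ in the null structure and Bianchi equations by spending one power of $|u|^{-1}$ obtained from the $|u|^p$-weighted energy identity, which is precisely the analytic purpose of those weights. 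Once this abnormal sector has been isolated and propagated, all remaining errors close the bootstrap, and a last-slice/continuity argument produces the unique smooth solution throughout the colored region claimed in Theorem \ref{Maintheoremintro}.
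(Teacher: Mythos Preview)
Your overall architecture (bootstrap with scale-invariant norms based on the $s_2$ signature, $|u|^p$--weighted energy estimates for Bianchi pairs, transport and Hodge estimates for Ricci coefficients, continuity argument) matches the paper. However, several concrete steps are either wrong or missing. First, the region is $u_\infty\le u\le -\tfrac{a}{4}$, not $u=-1$; this matters because the trapped surface forms at $u=-\tfrac{a}{4}$ and all the $|u|$-weights in the argument are calibrated to $|u|\ge \tfrac{a}{4}$. Second, and more seriously, you misidentify the analytic content of the ``charging process''. The anomaly is \emph{not} localized in $\ov{\rhoF}$: already on the initial cone one has $|\rhoF|,|\siF|,|\Psisl|\lesssim(\af+\ef a)|u_\infty|^{-2}$, i.e.\ the full quantities, not just their means, are $\af$ worse than the naive signature predicts. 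The mechanism is that the coupling constant $\ef$ carries effective signature $s_2(\ef)=\tfrac12$, so every occurrence of $\ef$ in the source terms degrades decay by $\af$. A mean/oscillatory split of $\rhoF$ does not cure this. The paper's fix is structural: give $\rhoF,\siF,\bbF,\Psisl,\Psit_3,\Ub$ an extra weight $b$ in the norms (so that $\Gab$ absorbs the loss), and then crucially exploit the hypothesis $0<\ef<e_0$ through the smallness $b^3e_0\ll 1$, which makes every $\ef$-interaction term acceptable. Your proposal never invokes the smallness of $\ef$, which is precisely the point of the assumption $e_0\ll 1$.

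There are also indispensable renormalizations you omit, without which the Bianchi estimates cannot close. The pair $(\a,\b)$ must be replaced by $(\a,\bt)$ with $\bt=\b-\tfrac12\Ric_{4A}$, because $\nabs_4\Ric_{4A}$ contains $e_4e_4\psi$ which is not in your bootstrap. The middle pairs must be run with $(\Kc,\sic)=(\K-|u|^{-2},\,\si-\tfrac12\hch\wedge\hchb)$ instead of $(\rho,\si)$, which eliminates both the borderline $\hch\cdot\aa$ term and the need to ever estimate $\aa$ (following the An--Luk trick of dropping the $(\bb,\aa)$ pair entirely). Likewise $\Psi_3$ must be replaced by $\Psit_3=|u|^{-1}e_3(|u|\psi)+i\ef\Ub\psi$ to kill the radial mode and obtain the correct $|u|$-decay. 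Finally, the $(\b,(\rho,-\si))$ estimate requires first \emph{improving} the bootstrap for $\hch,\hchb$ (via the already-obtained $\a$ flux) before the $\hchb\cdot\a$ source is integrable; and top-order Ricci estimates need the renormalized quantities $\mu,\mub,\vkp,\vkpb$ because $\W$ sits one derivative below $\Ga,\F,\Psi$. (Incidentally, the current driving $Q$ along $e_4$ is $2\ef\,\Im(\psi\Psi_4^\dag)$, not the $D_3\psi$ expression you wrote.)
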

\begin{figure}[ht]
    \centering
    \begin{tikzpicture}[scale=0.9, decorate]
\fill[yellow] (1,1)--(2,2)--(4,0)--(3,-1)--cycle;

\draw[very thick] (1,1)--(2,2)--(4,0)--(3,-1)--cycle;

\node[scale=1, black] at (1.1,1.9) {$H_{-\frac{a}{4}}$};
\node[scale=1, black] at (3.3,1.3) {$\Hb_1$};
\node[scale=1, black] at (1.7,-0.3) {$\Hb_0$};

\node[scale=0.8, black] at (1.2,-1) {Minkowski};

\draw[dashed] (0,-4)--(0,4);
\draw[dashed] (0,-4)--(4,0)--(0,4);
\draw[dashed] (0,0)--(2,2);
\draw[dashed] (0,2)--(3,-1);

\draw[->] (3.3,-0.6)--(3.6,-0.3) node[midway, sloped, above, scale=1] {$e_4$};
\draw[->] (2.4,-0.3)--(1.7,0.4) node[midway, sloped, above, scale=1] {$e_3$};

\filldraw[red] (2,2) circle (2pt);

\draw[thin] (2.1,2.5) node[right, scale=1] {$S_{-\frac{ a}{4},1}$, trapped surface} -- (2,2);

\filldraw[red] (4,0) circle (2pt);

\draw[thin] (4.1,0.5) node[right, scale=1] {\shortstack{$S_{u_\infty,1}$, $m \simeq  a$, $|Q| \simeq \ef a$}} -- (4,0);

\draw[->, decorate, decoration={snake, amplitude=0.5mm, segment length=2mm}, thin, red]  (4.3,-0.7) -- (3.3,0.3);
\draw[->, decorate, decoration={snake, amplitude=0.5mm, segment length=2mm}, thin, red]  (4.0,-1) -- (3.0,0);
\draw[->, decorate, decoration={snake, amplitude=0.5mm, segment length=2mm}, thin, red]  (3.7,-1.3) -- (2.7,-0.3);

\node[scale=1, anchor=west] at (4.2,-1.2) {Short pulse across $H_{u_{\infty}}$ of size $ a$};
\end{tikzpicture}
\caption{{\footnotesize The initial conditions in Theorem \ref{Maintheoremintro} can lead to the formation of a trapped surface $S_{-\frac{a}{4},1}$ in the future of $H_{u_\infty}\cup\Hb_0$. Moreover, the Hawking mass and electric charge of the final sphere $S_{u_\infty,1}$ on $H_{u_\infty}$ satisfy $m(S_{u_\infty,1}) \simeq a$ and $|Q(S_{u_\infty,1})| \simeq \ef a$, respectively.}}\label{maintheorempicture}
\end{figure}
\begin{rk}
    Theorem \ref{Maintheoremintro} can be interpreted as a semi-global existence result near the past null infinity. In fact, it holds for all $u_\infty \ll -a$, and all the estimates obtained are uniform as $u_\infty \to -\infty$.
\end{rk}
Theorem \ref{Maintheoremintro} is restated as Theorem \ref{maintheorem} in Section \ref{secmainstate} and proved in Section \ref{proofmain}. Theorem \ref{Maintheoremintro} implies the existence of the background spacetime.
\begin{thm}[Trapping Mechanism and Charging Process (first version)]\label{trappedsurfaceintro}
Under the assumptions of Theorem \ref{Maintheoremintro}, suppose in addition that the following lower bound conditions hold:\footnote{See Section \ref{secmatterfields} for detailed definitions of null components of matter fields. In particular, $\Psi_4=e_4\psi$ under the Gauge choice $\A_4=0$.}
    \begin{align}
        \int_0^1|u_\infty|^2\left(|\hch|^2+|\bF|^2+|\Psi_4|^2\right)(u_\infty,\ub',x^1,x^2)d\ub'&\geq a,\qquad \forall \;(x^1,x^2),\label{geqaintro}\\
        \left|\int_{H_{u_\infty}^{(0,1)}}\Om\Im(\psi\Psi_4^\dag)\right|&\geq a.\label{chargelowerboundintro}
    \end{align}
    Then, as illustrated in Figure \ref{maintheorempicture},  we have:
    \begin{itemize}
        \item the $2$--sphere $S_{-\frac{a}{4},1}$ is a trapped surface,
        \item the Hawking mass of the sphere $S_{u_\infty,1}$ satisfies
    \begin{equation}\label{massintro}
        m(S_{u_\infty,1})\simeq a,
    \end{equation}
        \item the electric charge of the sphere $S_{u_\infty,1}$ satisfies
    \begin{equation}\label{chargeintro}
        |Q(S_{u_\infty,1})|\simeq \ef a.
    \end{equation}
    \end{itemize}
\end{thm}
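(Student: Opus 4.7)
The plan is to follow Christodoulou's short-pulse mechanism for trapped-surface formation, enhanced by the two extra matter sources (Maxwell field and charged scalar field), and supplemented by a charge-integration argument along $H_{u_\infty}$ for the electric charge. Since $S_{u,\ub}$ is trapped iff $\trch<0$ and $\trchb<0$ on it, the bulk of the work is to establish these two signs at $S_{-a/4,1}$. The sign of $\trchb$ is the easier one: the $\nab_4\trchb$ null structure equation, together with $\trchb|_{\ub=0}=-2/|u|$ from the Minkowskian data on $\Hb_0$ and the bounds of Theorem \ref{Maintheoremintro}, yields $\trchb(u,1)=-2/|u|+O(a^{1/2}|u|^{-2})$, which is strictly negative at $u=-a/4$ for $a$ large.

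The decisive step is to show $\trch(-a/4,1)<0$ via the Raychaudhuri equation
\[
\Om\nab_4(\Om\trch)=-\tfrac{1}{2}\Om^2(\trch)^2-\Om^2|\hch|^2-\Om^2\T^{EM}_{44}-\Om^2\T^{CSF}_{44},
\]
with matter source $\T^{EM}_{44}+\T^{CSF}_{44}=2|\bF|^2+2|\Psi_4|^2$. Integrating in $\ub$ along $H_u$ from $0$ to $1$ using $(\Om,\trch)|_{\ub=0}=(1,2/|u|)$, one obtains schematically
\[
\trch(u,1)\;\leq\;\frac{2}{|u|}-\int_0^1\bigl(|\hch|^2+2|\bF|^2+2|\Psi_4|^2\bigr)(u,\ub',x)\,d\ub'+\text{error},
\]
where the error is controlled by the scale-critical norms of Theorem \ref{Maintheoremintro}. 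To close, we need a \emph{transport lemma}: combining the $\nab_3$ null structure equation for $\hch$ with the corresponding Bianchi, Maxwell, and wave propagation equations for $\bF$ and $\Psi_4$, and the main-theorem bounds, we propagate the short-pulse concentration from $H_{u_\infty}$ down to $H_u$, yielding
\[
\int_0^1|u|^2\bigl(|\hch|^2+|\bF|^2+|\Psi_4|^2\bigr)(u,\ub',x)\,d\ub'\;\geq\;a\bigl(1-O(a^{-1/4})\bigr)
\]
for every $u\in[u_\infty,-a/4]$ and $x=(x^1,x^2)$, with \eqref{geqaintro} as the seed at $u=u_\infty$. Setting $u=-a/4$ gives $\trch(-a/4,1)\leq 8/a-(16/a)(1-O(a^{-1/4}))<0$, and combined with the sign of $\trchb$ this shows $S_{-a/4,1}$ is trapped.

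For \eqref{massintro}, we plug into the Hawking mass formula
\[
m(S_{u_\infty,1})=\frac{r}{2}\Bigl(1+\frac{1}{16\pi}\int_{S_{u_\infty,1}}\trch\,\trchb\,\dvol_{\slg}\Bigr)
\]
the estimates $r\simeq|u_\infty|$, $\trchb(u_\infty,1)\approx-2/|u_\infty|$, and $\trch(u_\infty,1)\approx 2/|u_\infty|-a/|u_\infty|^2$ (obtained from the same Raychaudhuri integration as above but applied directly on $H_{u_\infty}$, so without any transport error). A direct computation yields $\int_S\trch\,\trchb\simeq-16\pi+8\pi a/|u_\infty|$, hence $m(S_{u_\infty,1})\simeq a$. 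For \eqref{chargeintro}, we integrate the Maxwell source equation $\D^\mu\F_{\mu\nu}=-2\ef\,\Im(\psi(D_\nu\psi)^\dag)$ over $H_{u_\infty}^{(0,1)}$: the angular divergence terms vanish by Stokes on the closed spheres $S_{u_\infty,\ub}$, and using $Q(S_{u_\infty,0})=0$ (Minkowski) reduces the charge to
\[
Q(S_{u_\infty,1})\;=\;\frac{\ef}{2\pi}\int_{H_{u_\infty}^{(0,1)}}\Om\,\Im(\psi\Psi_4^\dag)+\text{error},
\]
so that \eqref{chargelowerboundintro} and the smallness of $\ef$ give $|Q(S_{u_\infty,1})|\simeq\ef a$.

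The main obstacle I expect is the transport lemma along $e_3$: the three quantities $\hch$, $\bF$, $\Psi_4$ satisfy coupled $\nab_3$ equations, and the \emph{pointwise}-in-$(x^1,x^2)$ nature of the lower bound \eqref{geqaintro} must be preserved all the way to $u=-a/4$. This demands that the $|u|^p$--weighted scale-critical norms of Theorem \ref{Maintheoremintro} beat the naive decay, and in particular that the ``abnormal'' matter components generated by the nontrivial charging process \eqref{chargelowerboundintro}---the genuinely new feature of the EMCSF problem relative to \cite{Yu,AnAth,Zhao}---do not contaminate the Raychaudhuri error estimate. The smallness of $\ef$ enters precisely here to absorb the additional coupling terms produced by $D=\D+i\ef\A$.
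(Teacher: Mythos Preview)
Your proposal is correct and follows essentially the same approach as the paper: the transport of the pointwise lower bound for $|u|^2(|\hch|^2+|\bF|^2+|\Psi_4|^2)$ along $e_3$ via the $\nabs_3$-equations for $\hch$, $\bF$, $\Psi_4$, followed by integration of Raychaudhuri for $\trch$, is exactly the paper's Proposition \ref{trappedsurface}, and your charge computation matches Proposition \ref{charging}. The only minor difference is in the Hawking mass: the paper differentiates $m$ along $e_4$ and integrates (Proposition \ref{massincreasing}), whereas you plug the Raychaudhuri-derived values of $\trch,\trchb$ directly into the mass formula; both arguments yield $m\simeq a$.
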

Theorem \ref{trappedsurfaceintro} is restated as Theorem \ref{thmtrappedsurface} in Section \ref{sectrapping}. It shows that a trapped surface $S_{-\frac{a}{4},1}$ can form dynamically, although the initial data along $H_{u_\infty}$ and $\Hb_0$ are initially free of trapped surfaces. Moreover, \eqref{chargeintro} shows that the spacetime becomes charged after a short pulse. Combining \eqref{massintro} and \eqref{chargeintro}, we find that the charge-to-mass ratio of $S_{u_\infty,1}$ is bounded above by $e_0$.
\begin{thm}[Main Theorem (rescaled version)]\label{scalingintro}
Let $\de>0$ be a fixed constant. Then there exist constants $a_0(\de)$ sufficiently large and $e_0>0$ sufficiently small\footnote{Here, $e_0$ coincides with the $e_0$ in Theorem \ref{Maintheoremintro} and is independent of $a_0$ and $\de$. In particular, the charge-to-mass ratio $\de \ef$ of $S_{u_\infty,\de}$ remains bounded above by $e_0$.} such that the following holds. For $a>a_0$ and $0<\de\ef<e_0$, with initial data satisfying:
    \begin{itemize}
        \item $\displaystyle \sum_{i\leq 10,k\leq 4}\afd|u_\infty|\left\|(\de\nabs_4)^k(|u_\infty|\nabs)^i(\hch,\bF,\psi)\right\|_{L^\infty(S_{u_\infty,\ub})}\leq1$ along $u=u_\infty$,
        \item Minkowskian initial data along $\ub=0$,
        \item the following lower bound condition holds:
    \begin{align}
        \int_0^\de|u_\infty|^2\left(|\hch|^2+|\bF|^2+|\Psi_4|^2\right)(u_\infty,\ub',x^1,x^2)d\ub'&\geq \de a,\qquad \forall \;(x^1,x^2),\label{scalinglowerboundintro}\\
        \left|\int_{H_{u_\infty}^{(0,\de)}}\Im(\psi\Om\Psi_4^\dag)\right|&\geq\de a.
    \end{align}
    \end{itemize}
    \begin{figure}[H]
    \centering
    \begin{tikzpicture}[scale=0.9, decorate]
\fill[yellow] (1,1)--(2,2)--(4,0)--(3,-1)--cycle;

\draw[very thick] (1,1)--(2,2)--(4,0)--(3,-1)--cycle;

\node[scale=1, black] at (1.1,1.9) {$H_{-\frac{\de a}{4}}$};
\node[scale=1, black] at (3.3,1.3) {$\Hb_\de$};
\node[scale=1, black] at (1.7,-0.3) {$\Hb_0$};

\node[scale=0.8, black] at (1.2,-1) {Minkowski};

\draw[dashed] (0,-4)--(0,4);
\draw[dashed] (0,-4)--(4,0)--(0,4);
\draw[dashed] (0,0)--(2,2);
\draw[dashed] (0,2)--(3,-1);

\draw[->] (3.3,-0.6)--(3.6,-0.3) node[midway, sloped, above, scale=1] {$e_4$};
\draw[->] (2.4,-0.3)--(1.7,0.4) node[midway, sloped, above, scale=1] {$e_3$};

\filldraw[red] (2,2) circle (2pt);

\draw[thin] (2.1,2.5) node[right, scale=1] {$S_{-\frac{\de a}{4},\de}$, trapped surface} -- (2,2);

\filldraw[red] (4,0) circle (2pt);

\draw[thin] (4.1,0.5) node[right, scale=1] {\shortstack{$S_{u_\infty,\de}$, $m \simeq \de a$, $|Q| \simeq \de^2 \ef a$}} -- (4,0);

\draw[->, decorate, decoration={snake, amplitude=0.5mm, segment length=2mm}, thin, red]  (4.3,-0.7) -- (3.3,0.3);
\draw[->, decorate, decoration={snake, amplitude=0.5mm, segment length=2mm}, thin, red]  (4.0,-1) -- (3.0,0);
\draw[->, decorate, decoration={snake, amplitude=0.5mm, segment length=2mm}, thin, red]  (3.7,-1.3) -- (2.7,-0.3);

\node[scale=1, anchor=west] at (4.2,-1.2) {Short pulse across $H_{u_{\infty}}$ of size $\de a$};
\end{tikzpicture}
    \caption{{\footnotesize The initial conditions in Theorem \ref{scalingintro} can lead to the formation of a trapped surface $S_{-\frac{\de a}{4},\de}$ in the future of $H_{u_\infty}\cup\Hb_0$. Moreover, the Hawking mass and electric charge of the final sphere $S_{u_\infty,\de}$ on $H_{u_\infty}$ satisfy $m(S_{u_\infty,\de}) \simeq \de a$ and $|Q(S_{u_\infty,\de})| \simeq \de^2\ef a$, respectively.}}
    \label{scalingpicture}
\end{figure}
    Then the Einstein--Maxwell--charged scalar field system \eqref{EMCSF} admits a unique smooth solution in the colored region of Figure \ref{scalingpicture}. Moreover:
    \begin{itemize}
    \item the $2$--sphere $S_{-\frac{\de a}{4},\de}$ is a trapped surface,
    \item the Hawking mass of the sphere $S_{u_\infty,\de}$ has size $\de a$,
    \item the electric charge of the sphere $S_{u_\infty,\de}$ has size $\de^2\ef a$,
    \item we obtain detailed control of all the quantities associated with the double null foliations of spacetime; see Theorem \ref{thmscaling}.
    \end{itemize}
\end{thm}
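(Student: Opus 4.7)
The strategy is to reduce Theorem \ref{scalingintro} to the previously established Theorem \ref{Maintheoremintro} and Theorem \ref{trappedsurfaceintro} by the standard short-pulse rescaling, made scale-invariant by the signature $s_2$ formalism employed throughout the paper. The starting point is the coordinate rescaling $\ub' := \de^{-1}\ub$ (keeping $u$ unchanged), so that the slab $\{0\le\ub\le\de\}$ in the original double null foliation is mapped to the slab $\{0\le\ub'\le 1\}$. This is accompanied by the induced rescaling of the null frame $e_4' := \de\, e_4$, $e_3' := \de^{-1}\, e_3$ (which preserves $\g(e_3',e_4')=-2$), together with compatible rescalings of the gauge potential $\A$, the Faraday tensor $\F$, and the coupling constant to $\ef' := \de\ef$. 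The rescaling of each geometric and matter quantity is dictated by its signature $s_2$; the upshot is that (i) the EMCSF system \eqref{EMCSF} is mapped to an EMCSF system of the same form with primed parameters, (ii) the gauge condition $\A_4=0$ is preserved, and (iii) the scale-invariant norms employed in the paper are invariant.

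The next step is to verify that the hypotheses of Theorem \ref{scalingintro} transform cleanly into those of Theorem \ref{Maintheoremintro} and Theorem \ref{trappedsurfaceintro}. Since $\de\nabs_4=\nabs_{4'}$, the initial data norm in the hypothesis becomes exactly the one in Theorem \ref{Maintheoremintro} in the primed variables; the two lower bounds on $\hch,\bF,\Psi_4$ and on the charge integral become \eqref{geqaintro} and \eqref{chargelowerboundintro}; and the smallness $0<\de\ef<e_0$ becomes $0<\ef'<e_0$, matching the assumption of Theorem \ref{Maintheoremintro}. Applying Theorem \ref{Maintheoremintro} and Theorem \ref{trappedsurfaceintro} to the rescaled problem then yields existence, the full set of detailed estimates, the trapped surface $S'_{-a/4,1}$, and the bounds $m(S'_{u_\infty,1})\simeq a$ and $|Q(S'_{u_\infty,1})|\simeq\ef' a=\de\ef a$ in the rescaled variables.

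Finally, translating back through the inverse of the rescaling gives the conclusions in the original variables. The trapped surface $S'_{-a/4,1}$ corresponds to $S_{-\de a/4,\de}$, which is therefore trapped. The Hawking mass picks up one factor of $\de$ under the rescaling, yielding $m(S_{u_\infty,\de})\simeq\de a$. The electric charge, being the flux of $\F$ through a $2$-sphere, picks up an extra factor of $\de$ from the rescalings of $\F$ and of the frame, giving $|Q(S_{u_\infty,\de})|\simeq\de^2\ef a$. The main technical point is the signature bookkeeping: one must identify the correct $s_2$-weight of every geometric, matter, and derived quantity—in particular the Hawking mass and the charge integral—so that the rescaled problem is genuinely an instance of the earlier theorems and the resulting estimates match the conclusions in the form stated in Theorem \ref{scalingintro}.
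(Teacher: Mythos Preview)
There is a genuine gap in your rescaling. The paper does \emph{not} rescale only $\ub$; it performs a full homothetic rescaling of all four coordinates, $u'=\de u$, $\ub'=\de\ub$, ${x'}^A=\de x^A$, so that $\g'=\de^2\g$ and every frame vector rescales uniformly as $e'_\mu=\de^{-1}e_\mu$. Under this transformation $\Om'=\Om$, the renormalized quantities such as $\trcht'=\trch'-2/|u'|=\de^{-1}\trcht$ and $\trchbt'=\de^{-1}\trchbt$ transform homogeneously, and the Maxwell equation forces the coupling constant to change as $\ef'=\de^{-1}\ef$ (this is Proposition~\ref{Prop rescale 4}); the signature $s_2$ is precisely the bookkeeping for this conformal scaling.

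Your proposal---rescale only $\ub$ and boost the null pair by $e_4'=\de e_4$, $e_3'=\de^{-1}e_3$---does not fit the paper's framework. First, it is internally inconsistent with the double null normalization: with $u'=u$ and $\ub'=\de^{-1}\ub$ one has $L'=L$, $\Lb'=\de^{-1}\Lb$, hence $\Om'=\de^{1/2}\Om$ and $e_4'=\de^{1/2}e_4$, $e_3'=\de^{-1/2}e_3$, not the weights you wrote. Second, since $|u'|=|u|$ is unchanged, quantities like $\trcht=\trch-2/|u|$ do not rescale homogeneously, the lapse no longer satisfies $\Om'\approx 1$ on $\Hb_0$, and the $u$-slab remains $[u_\infty,-\de a/4]$, so you are forced to apply Theorem~\ref{Maintheoremintro} with parameter $a'=\de a$; but then the initial data bound acquires incompatible powers of $\de$ for $\hch,\bF$ (which boost) versus $\psi$ (which does not), and the hypotheses fail to match. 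Third, a frame boost leaves the spacetime and the physical coupling constant unchanged, so $\ef'=\ef$, not $\de\ef$; the transformation of $\ef$ you need is a genuinely conformal effect. In short, the reduction to Theorems~\ref{Maintheoremintro}--\ref{trappedsurfaceintro} requires the full coordinate dilation used in Section~\ref{secscaling}, not an anisotropic null boost.
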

Theorem \ref{scalingintro} is restated as Theorem \ref{thmscaling} in Section \ref{secscaling}. This provides a rescaled version of Theorems \ref{Maintheoremintro} and \ref{trappedsurfaceintro}, related to the main results of \cite{AnLuk,Chr}. 
\begin{rk}
In terms of $L^2$--based Sobolev spaces, there exist initial data satisfying the assumptions of Theorem \ref{scalingintro} such that the metric is only large in $H^\frac{3}{2}$, but can be small in $H^s$ for any $s<\frac{3}{2}$. Thus, we establish a scale-critical trapped surface formation result; see Remarks 1 and 2 in \cite{An12} and the discussion after Corollary 1.4 in Section 1 of \cite{AnLuk} for further details.
\end{rk}
\subsection{Main difficulty and new ingredients}\label{overview}
This section summarizes the proof of the main Theorem \ref{Maintheoremintro} and highlights the key ideas and new ingredients introduced in this paper. Our analysis is primarily based on the derivation of correct estimates for the Weyl curvature components, Ricci coefficients, the scalar field and the Maxwell field in the Einstein--Maxwell--charged scalar field system \eqref{EMCSF}. The definitions of all quantities used in this section are provided in Section \ref{doublenullfoliation}.
\subsubsection{\texorpdfstring{$|u|^p$}{}--weighted estimates}\label{secupintro}
The core of this paper is an energy-type estimate built upon a general treatment of Bianchi pairs without any symmetry assumptions. Specifically, we establish the $|u|^p$--weighted estimate method for general Bianchi equations in Proposition \ref{keyintegral}. This method combines the advantages of the signature techniques introduced by Klainerman--Rodnianski \cite{kr} and further developed by An \cite{An12,An}. Similar ideas also appear in \cite{AnLuk,Chen-K,holzegel,Shen22}. In addition, we prove Theorem \ref{Junkmanthm}, which provides a unified treatment of all nonlinear error terms arising in the energy estimates under the bootstrap assumption. Theorem \ref{Junkmanthm} can be regarded as a semi-global extension of Theorem 5.8 in \cite{Shen23}, which unified the treatment of nonlinear error terms for Minkowski stability.
\subsubsection{Maxwell fields and complex scalar fields}\label{secFintro}
We recast the Maxwell equations and the complex scalar wave equation in \eqref{EMCSF} into systems of Bianchi pairs.\footnote{See Remark \ref{Bianchitype} for more details.} Applying the $|u|^p$--weighted estimate framework yields $L^2$ estimates for the electromagnetic field $\F$ and the gauge covariant derivatives $\Psi$ of the charged scalar field, up to fluxes across the relevant (incoming or outgoing) null cones. See Section \ref{secmatterfields} for detailed definition of their null components. Taking components $(\bF, (\rhoF, \siF))$ of Maxwell fields as an example, they satisfy the following Bianchi pair system
\begin{align}
\nabs_3\bF+\frac{1}{2}\trchb\bF&=-\sld_1^*(\rhoF,\siF) -2\ef\Im\left(\psi\Psisl^\dag\right)+\lot,\label{ex1}\\
\nabs_4(\rhoF,\siF)&=\sld_1\bF+2\ef\left(\Im(\psi\Psi_4^\dag),0\right)+\lot,\label{ex2}
\end{align}
where all quantities are defined in Section \ref{secmatterfields} and $\lot$ denotes the lower order terms, which are ignored here for simplicity. Multiplying \eqref{ex1} by $\bF$ and \eqref{ex2} by $(\rhoF,\siF)$ and taking the sum, we infer
\begin{align*}
&\bdiv\left(|\bF|^2e_3\right)+\bdiv\left(|(\rhoF,\siF)|^2e_4\right)\\
=\;&2\sdivs\left(\bF\rhoF\right)+2\curls\left(\bF\siF\right)+4\ef\Im\left(\rhoF\psi\Psi_4^\dag-\bF\psi\Psisl^\dag\right)+\lot
\end{align*}
Integrating it in the region $V:=V(u,\ub):=\{[u_\infty,u]\times[0,\ub]\}$, we obtain
\begin{align*}
    \int_{H_u\cap V} |\bF|^2+\int_{\Hb_\ub\cap V} |(\rhoF,\siF)|^2&\les \int_{H_{u_\infty}\cap V}|\bF|^2+\ef\int_V|\rhoF||\psi||\Psi_4|\\
    &+\ef\int_V|\bF||\psi||\Psisl|+\lot
\end{align*}
Based on the initial assumption in Theorem \ref{Maintheoremintro}, the matter fields have the following abnormal behavior along the initial hypersurface $H_{u_\infty}$:
\begin{align}
|\psi,\Psi_4,\bF|\simeq \frac{\af}{|u_\infty|},\qquad\qquad |\rhoF,\Psisl|\simeq \frac{\af+\ef a}{|u_\infty|^2},
\end{align}
see Section \ref{secsignatures}, We then make the following bootstrap assumptions in $V$:
\begin{align}\label{abnormalintro}
    |\psi,\Psi_4,\bF|\les \frac{\bo\af}{|u|},\qquad\qquad |\rhoF,\Psisl|\les \frac{\bo\af+\bo\ef a}{|u|^2},
\end{align}
where $b\gg 1$ is an auxiliary constant; see Section \ref{smallconstant}. Thus, we deduce
\begin{align*}
    \ef\int_V|\rhoF||\psi||\Psi_4|+|\bF||\psi||\Psisl|\les\ef\int_V\frac{b^\frac{3}{4}a^\frac{3}{2}}{|u|^4}+\frac{\ef b^\frac{3}{4}a^2}{|u|^4}\les b(\ef\af+\ef^2a).
\end{align*}
Combining with the initial assumption, we infer
\begin{align*}
    \int_{H_u\cap V}|\bF|^2+\int_{\Hb_\ub\cap V}|\rhoF,\siF|^2\les a+b(\ef\af+\ef^2a).
\end{align*}
In order to the deduce desired decay rates to close the bootstrap argument \eqref{abnormalintro}, it is necessary to impose the following smallness condition for the coupling constant $\ef$:
\begin{align}\label{efsmallintro}
    b\ef\ll 1.
\end{align}
Under the assumption \eqref{efsmallintro}, we have
\begin{align*}
     \int_{H_u\cap V}|\bF|^2+\int_{\Hb_\ub\cap V}|\rhoF,\siF|^2\les a.
\end{align*}
The higher order estimates for $\bF$, $\rhoF$ and $\siF$ can be deduced similarly by applying commutation identities. Combining with Sobolev inequalities, we obtain the following bounds in $V$:
\begin{align}\label{abnormal}
|\bF| \lesssim \frac{\af}{|u|}, \qquad\qquad |\rhoF, \siF| \lesssim \frac{\af+\ef a}{|u|^2}.
\end{align}
The other components of the matter fields, $\Psi_4$, $\Psisl$, and $\bbF$, are treated similarly, except that $\Psi_3$ requires an appropriate renormalization; see~\eqref{dfPsit3}. This renormalization is necessary due to the insufficient decay of $\Psi_3$ and is inspired by~\cite{AAG}; see Remark~\ref{AAGmethod} for further details.
\begin{rk}
    Compared to the corresponding problem for the Einstein--Maxwell system and the Einstein--scalar field system, the Einstein--Maxwell charged scalar field (EMCSF) system \eqref{EMCSF} causes additional technical difficulties not encountered in the previous cases due to the abnormal properties of the coupling constant $\ef$:
    \begin{enumerate}
    \item The terms involving $\ef$ behaves $\af$ worse than the other terms; see the estimate for $\rhoF$ and $\Psisl$ in \eqref{abnormal}. In other words, the estimates for the components of Maxwell fields $\rhoF$ and scalar fields $\Psisl$ behaves normally if $\ef\simeq\afd$. However, we only assumed $0<\ef<e_0\ll 1$ in Theorem \ref{Maintheoremintro}, where $e_0$ is a constant independent of $a$. This leads to additional difficulties in the estimations of nonlinear error terms.
    \item The signature $s_2$, introduced in \cite{An}, is a powerful tool that simplifies the analysis in the Einstein--Vacuum and Einstein--Maxwell cases by being conserved in all equations. To maintain this conservation in the Einstein--Maxwell--charged scalar field system, we must set $s_2(\ef) = 0.5$, as deduced from \eqref{ex1}.\footnote{See Section \ref{secsignatures} for the signature $s_2$ of all quantities and their desired decay rates.} However, since $\ef$ is a fixed constant throughout the paper, this assignment leads to abnormal behavior in all quantities involving $\ef$, causing additional difficulties in estimating nonlinear error terms via $s_2$ and scale-invariant norms.
    \end{enumerate}
\end{rk}
\subsubsection{Electromagnetic potential}
Given the Faraday tensor $\F$, there exists an electromagnetic potential $\A$ satisfying
\begin{align}\label{dA=F}
\D_\mu \A_\nu - \D_\nu \A_\mu = \F_{\mu\nu}.
\end{align}
Denoting the null components of $\A$ by
\begin{align*}
U := \A_4 = \A(e_4), \qquad \Ub := \A_3 = \A(e_3), \qquad \Asl_B := \A_B = \A(e_B),
\end{align*}
we deduce the following consequences of \eqref{dA=F}:\footnote{See Proposition \ref{lem:FdAeq} for the precise equations.}
\begin{align}
\begin{split}\label{dA=Feqintro}
\rhoF &= \frac{1}{2} \nabs_3 U - \frac{1}{2} \nabs_4 \Ub + \lot,\\
\siF &= -\curls \Asl,\\
\bF &= \nabs U - \nabs_4 \Asl + \lot,\\
\bbF &= \nabs \Ub - \nabs_3 \Asl - \chib \c \Asl + \lot.
\end{split}
\end{align}
Since the gauge choice for $\A$ is not unique, we first consider the natural Lorenz gauge condition $\bdiv \A = 0$, which implies
\begin{align*}
\frac{1}{2} \nabs_3 U + \frac{1}{2} \nabs_4 \Ub + \frac{1}{2} \trchb U = \sdivs \Asl + \lot.
\end{align*}
Combining this with \eqref{dA=Feqintro}, we derive the following pairing Bianchi system:
\begin{align*}
\nabs_3(U,0) + \frac{1}{2} \trchb (U,0) &= \sld_1 \Asl + (\rhoF,\siF) + \lot,\\
\nabs_4 \Asl &= -\sld_1^*(U,0) - \bF + \lot,\\
\nabs_4(\Ub,0) &= \sld_1 \Asl + (-\rhoF, \siF) + \lot,\\
\nabs_3 \Asl + \frac{1}{2} \trchb \Asl &= -\sld_1^*(\Ub,0) - \bbF + \lot.
\end{align*}
However, estimating this system is difficult due to the lack of integrability of the bulk terms involving components of $\F$ when applying the $|u|^p$--weighted estimates from Section \ref{secupintro}. Motivated by the spherically symmetric results for the EMCSF system in \cite{Anlim,KU}, we adopt the gauge choice
\begin{equation}\label{GaugeU=0}
U=0.
\end{equation}
Substituting \eqref{GaugeU=0} into \eqref{dA=Feqintro}, the system for $(\A,\F)$ simplifies to an elliptic-transport structure analogous to the null structure equations for $(\Ga, \R)$:
\begin{align*}
\nabs_4 \Ub &= -2\rhoF + \lot,\\
\nabs_4 \Asl &= -\bF + \lot.
\end{align*}
Hence, the estimates for $\Asl$ and $\Ub$ follow directly from the estimates for $\bF$ and $\rhoF$. As an immediate consequence, $\A$ has the same regularity as $\F$ and $\Psi$.
\subsubsection{Weyl components}\label{secRintro}
The Bianchi equations for Weyl components take the following schematic form:
\begin{align}
    \begin{split}\label{WBianchiintro}
        \nabs_3\W_{(1)}&=\slD \W_{(2)}+\Ga\c\W+\D\T,\\
        \nabs_4\W_{(2)}&=-\slD^*\W_{(1)}+\Ga\c\W+\D\T.
    \end{split}
\end{align}
With the matter fields $\T$ controlled, we can derive energy-type estimates for the Weyl curvature components $\W$. The strategy again involves recasting the Bianchi equations for $\W$ into Bianchi pairs and applying the $|u|^p$--weighted estimate method introduced in Section \ref{secupintro}. However, due to the presence of $\D \T$ in \eqref{WBianchiintro} and the definition of $\T$ in \eqref{Tdfeqintro}, the Weyl components $\W$ are one order less regular than $\F$, $\Psi$, $\A$ and $\Ga$.\footnote{The fact that $\Ga$ has the same regularity as $\F$ and $\Psi$ follows directly from the Maxwell equations and the wave equations.} Moreover, suitable renormalizations of $\W$ are needed to handle problematic terms arising from matter couplings. We now briefly illustrate the key ideas in the estimate for each Bianchi pairs.\footnote{See \eqref{defr} for the definitions of the null components of $\W$.}
\paragraph{The Bianchi pair $(\a,\b)$}
The Bianchi equations for $(\a,\b)$ take the following form:
\begin{align*}
    \nabs_3\a+\frac{1}{2}\trchb\,\a&=-2\sld_2^*\b+\lot\\
    \nabs_4\b&=\sld_2\a+\frac{1}{2}\nabs_4\Ric_{4\bu}+\lot,
\end{align*}
where $\lot$ denotes the lower order terms we ignore for instance. Notice that $\nabs_4\Ric_{4A}$ contains the term $e_4(e_4\psi)$, which has not been controlled in Section \ref{secFintro}. In order to eliminate this term, we introduce the following renormalization:\footnote{This renormalization is also used in \cite{AnAth,AMY,LL}.}
\begin{align*}
    \bt:=\b-\frac{1}{2}\Ric_{4A}.
\end{align*}
Then, we deduce the following Bianchi equations for $(\a,\bt)$:
\begin{align*}
\nabs_3\a&=-2\sld_2^*\bt+\lot,\\
\nabs_4\bt&=\sld_2\a+\lot
\end{align*}
Applying $|u|^p$--weighted estimates, we can deduce the desired estimates for $(\a,\bt)$; see Proposition \ref{estab}.
\paragraph{The Bianchi pair $(\b,(\rho,-\si))$}
The Bianchi equations for $(\b,(\rho,-\si))$ take the following form:
\begin{align*}
\nabs_3\b+\trchb\,\b&=-\sld_1^*(\rho,-\si)+\lot,\\
\nabs_4(\rho,-\si)&=\sld_1\b-\frac{1}{2}(\hchb\c\a,-\hchb\wedge\a)+\lot
\end{align*}
Under the bootstrap assumption, we have the following estimates in $V$:
\begin{align}\label{bootrho}
    |\a|\les \frac{\bo\af}{|u|},\qquad |\hchb|\les \frac{\bo\af}{|u|^2},\qquad |\rho,\si|\les \frac{\bo a}{|u|^3}.
\end{align}
Observing the equation of $\nabs_4(\rho,-\si)$, we can only deduce the following estimate:
\begin{align*}
    |\rho,\si|\les \frac{b^\frac{1}{2}a}{|u|^3},
\end{align*}
which cannot close the bootstrap arguments due to the largeness of $b$. To overcome this difficult, we first notice that the bootstrap assumption of $\a$ is already improved in the estimate for $(\a,\b)$. Moreover, we can also improve the bootstrap assumptions for $\hch$ and $\hchb$ by their equations in the direction $e_4$ and the bounds obtained for $\a$. More precisely, we have
\begin{align}
    |\a|\les \frac{\af}{|u|},\qquad\quad |\hch|\les \frac{\af}{|u|},\qquad\quad |\hchb|\les\frac{\af}{|u|^2},\label{Gabimprovedintro}
\end{align}
see Proposition \ref{esthchhchb} for the precise statements for \eqref{Gabimprovedintro}. See also Propositions 4.3 and 4.4 in \cite{An} for similar arguments. Applying \eqref{Gabimprovedintro}, we can deduce
\begin{align*}
    |\b|\les \frac{\af}{|u|^2},\qquad |\rho,\si|\les \frac{a}{|u|^3},
\end{align*}
which improves the bootstrap bounds \eqref{bootrho}; see Proposition \ref{estbr}.
\paragraph{The Bianchi pair $((\rho,\si),\bb)$}
The Bianchi equations for $((\rho,\si),\bb)$ take the following form:
\begin{align*}
\nabs_3(\rho,\si)+\frac{3}{2}\trchb(\rho,\si)&=-\sld_1\bb-\frac{1}{2}(\hch\c\aa,\hch\wedge\aa)-\frac{1}{2}\nabs_3\Ric_{34}+\lot,\\
\nabs_4\bb+\trch\,\bb&=\sld_1^*(\rho,\si)+\lot
\end{align*}
Notice that the term $e_3(e_3\psi)$ appears in $\nabs_3\Ric_{34}$, which has not been controlled in Section \ref{secFintro}. Moreover, the appearance of $\aa$ in these equations may also lead additional difficulties. To overcome these difficulties, we proceed as in \cite{LL}. We define the following renormalized quantities:
\begin{align*}
    \Kc:=\K-\frac{1}{|u|^2},\qquad\quad \sic:=\si-\frac{1}{2}\hch\wedge\hchb,\qquad\quad \mu:=\Kc-\sdivs\eta.
\end{align*}
We then have the following Bianchi equations for $((\Kc,-\sic),\bb)$:
\begin{align}
\begin{split}\label{Kcsicintro}
\nabs_3(\Kc,-\sic)+\frac{3}{2}\trchb(\Kc,-\sic)&=\sld_1\bb+\frac{1}{2}\trchb\,\mu+\lot,\\
\nabs_4\bb+\trch\,\bb&=-\sld_1^*(\Kc,-\sic)+\lot
\end{split}
\end{align}
It is easy to verify that the term $\trchb\,\mu$ in \eqref{Kcsicintro} has borderline decay in $|u|$ when we apply the $|u|^p$--weighted estimates for $((\Kc,\sic),\bb)$. To overcome this lack of integrability, we first notice that the transport equation for $\mu$ takes the following form:
\begin{align*}
    \nabs_4\mu=-\frac{1}{|u|^2}\trch+\lot.
\end{align*}
According to the behavior of $\trch$, we can easily deduce that $\muc$ has better decay than $\ov{\mu}$,\footnote{$\ov{\mu}$ and $\muc$ are defined in Definition \ref{average}.} see Proposition \ref{fluxmu}. Thus, the $\ell\geq 1$ modes of $(\Kc,\sic)$ can be control by the improved estimate for $\muc$. Next, we have from Gauss-Bonnet theorem that $\ov{\Kc}$ is sufficiently small. Moreover, we have from the torsion equation \eqref{torsion} that $\ov{\sic}=0$. Combining the above facts, we can obtain the energy estimate for $(\Kc,\sic)$; see Proposition \ref{estrb}.
\paragraph{The Bianchi pair $(\bb,\aa)$} 
The last Bianchi pair $(\bb,\aa)$ satisfies the following equations:
\begin{align}
\begin{split}\label{bbaaintro}
\nabs_3 \bb + 2\trchb\,\bb&=-\sld_2\aa+\frac{1}{2}\nabs\Ric_{33}-\frac{1}{2}\nabs_3 \Ric_{3\bu}+\lot,\\
\nabs_4 \aa &= 2\sld_2^*\bb+\lot.
\end{split}
\end{align}
It can be easily verified that the term $\nabs \Ric_{33}$ in \eqref{bbaaintro} has borderline decay in $|u|$, which would introduce additional difficulties if treated directly. However, we observe that $\aa$ is decoupled from the Bianchi systems for $(\a, \b)$, $(\b, (\rho, -\si))$, and $((\Kc, -\sic), \bb)$. Moreover, $\aa$ appears only in the null structure equation for $\nabs_3 \hchb$, which is unnecessary for our estimates since $\hchb$ can be controlled through the $\nabs_4 \hchb$ equation and the Codazzi equation. Therefore, we can eliminate $\aa$ from the bootstrap assumptions and close all estimates in $V$ without analyzing the Bianchi system \eqref{bbaaintro} for $(\bb,\aa)$. This idea of removing $\aa$ from the system was first introduced by An--Luk in \cite{AnLuk}; see the discussion in Section 1.3.5 of \cite{AnLuk}.
\subsubsection{Ricci coefficients}
As mentioned in Section \ref{secRintro}, after estimating the first Bianchi pair $(\a, \bt)$, we further improve the bootstrap bounds for $\hch$ and $\hchb$. These quantities are particularly delicate in Christodoulou’s short pulse ansatz, as they act as the source of the pulse and must be controlled first in the hierarchy. To control all other quantities, we first establish lower-order estimates by applying the null structure equations viewed as transport equations. The top-order estimates for Ricci coefficients are more subtle. In particular, the top-order estimates for $\trch$ and $\trchb$ can be obtained directly from their Raychaudhuri equations, as these are decoupled from the Weyl components. The remaining null structure equations take the following schematic form:
\begin{align}\label{nullintro}
    \D \Ga=\W+\T+\Ga\c\Ga.
\end{align}
Recall from Section \ref{secRintro} that the Weyl components $\W$ are of one order less regular than $\F$, $\Psi$, $\A$ and $\Ga$. Thus, we cannot get the top-order estimates for $\Ga$ by considering \eqref{nullintro} as transport equations directly. To overcome this issue, we proceed as in \cite{kr} to introduce the following renormalized quantities:
\begin{align}
\begin{split}\label{renorvkpintro}
\vkp&:=\sld_1^*(-\om,\omd)-\frac{1}{2}\b,\qquad\quad\vkpb:=-\sld_1^*(\omb,\ombd)+\frac{1}{2}\bb.
\end{split}
\end{align}
where $\omd$ and $\ombd$ are defined by the solution of the following transport equations:
\begin{align}
\begin{split}\label{dfomdintro}
    \nabs_3\omd&=\frac{1}{2}\si,\qquad\quad\,\omd|_{H_{u_\infty}}=0,\\
    \nabs_4\ombd&=\frac{1}{2}\si,\qquad\quad\;\;\;\ombd|_{\Hb_0}=0.
\end{split}
\end{align}
We also define the mass aspect functions as follows:
\begin{equation}\label{murenorintro}
    \mu:=\K-\frac{1}{|u|^2}-\sdivs\eta,\qquad\quad\mub:=\K-\frac{1}{|u|^2}-\sdivs\etab.
\end{equation}
Then, the transport equations for renormalized quantities $X\in\{\vkp,\vkpb,\mu,\mub\}$ take the following schematic form:\footnote{See Proposition \ref{null} for more details.}
\begin{align}\label{renornullintro}
    \D X= \W+\D\T+\D(\Ga\c\Ga).
\end{align}
Applying \eqref{renornullintro} as a transport equation, we obtain the top order estimate for $X$, which has the same regularity as $\W$. Finally, we apply the $2D$--elliptic estimates to the system \eqref{renorvkpintro}, \eqref{murenorintro}, Codazzi equations and torsion equations to deduce the top-order estimates for $\eta,\etab,\om,\omb,\hch$, and $\hchb$. Combining these with the null structure identities \eqref{nullidentities} for the double null foliation concludes the derivation of all Ricci coefficient estimates.
\subsubsection{Trapping mechanism and charging process}
In this section, we briefly explain the trapping mechanism and charging process stated in Theorem \ref{Maintheoremintro}; see Section \ref{sectrapping} for more details. The key equations involved here are the Raychauduri's equation and one of the Maxwell equations\footnote{See Sections \ref{sec-nullstr} and \ref{sec-Maxwell} for more details of the equations.}
\begin{align*}
    \nabs_4\trch+\frac{1}{2}(\trch)^2&=-|\hch|^2-2\om\trch-\T_{44},\\
    \nabs_4\rhoF+\trch\rhoF&=\sdivs\bF+(\ze+\etab)\c\bF+2\ef\Im\left(\psi \Psi_4^\dag\right).
\end{align*}
The initial data given on $H_{u_\infty}$ satisfies
\begin{equation}\label{shortpulseintro}
    |\a,\hch,\nabs_4\bF,\bF,e_4\psi,\psi|\simeq \frac{\af}{|u_\infty|},
\end{equation}
together with the following lower bounds:
\begin{align}
        |\hch|^2+|\bF|^2+|\Psi_4|^2&\geq\frac{a}{|u_\infty|^2},\label{trappingintro}\\
        \int_{H_{u_\infty}^{(0,1)}}\Im(\psi\Om\Psi_4^\dag)&\geq a.\label{chargingintro}
    \end{align}
We also assume Minkowski initial data on $\Hb_0$. Thus, we infer on $H_{u_\infty}$
\begin{align*}
    |e_4\trch|\les |\trch|^2+|\hch|^2+|\bF|^2+|e_4\psi|^2\les\frac{a}{|u_\infty|^2},
\end{align*}
which implies the existence of a universal constant $C > 0$ such that
\begin{align*}
    |\trch(u_\infty,\ub)-\trch(u_\infty,0)|\leq \frac{Ca}{|u_\infty|^2}.
\end{align*}
Hence, for $|u_\infty| \geq Ca$, we deduce
\begin{align*}
    \trch(u_\infty, \ub) \geq \frac{2}{|u_\infty|} - \frac{C a}{|u_\infty|^2} \geq \frac{1}{|u_\infty|} > 0,
\end{align*}
showing that the initial foliation is free of trapped surfaces. Next, applying the estimates obtained in the whole region $V_*:=V(-\frac{a}{4},1)$, we easily deduce
\begin{align*}
    \nabs_3\left(|u|^2(|\hch|^2+|\bF|^2+|\Psi_4|^2)\right)\les \frac{a^\frac{3}{2}}{|u|^2}+\frac{\ef a^2}{|u|^2}.
\end{align*}
Integrating it along $e_3$ and applying \eqref{trappingintro}, we obtain for $\ef\ll 1$ and $a\gg 1$
\begin{align*}
    |\hch|^2+|\bF|^2+|\Psi_4|^2\geq\frac{3a}{4|u|^2}.
\end{align*}
Thus, we infer from Raychauduri's equation
\begin{align*}
    \nabs_4\trch\leq -|\hch|^2-|\bF|^2-|\Psi_4|^2+\lot
\end{align*}
Noticing that $\trch=\frac{8}{a}$ on $S_{-\frac{a}{4},0}$, we obtain
\begin{align*}
    \trch\left(-\frac{a}{4},1\right)\leq \trch\left(-\frac{a}{4},0\right)-\frac{a}{|u|^2}=\frac{8}{a}-\frac{12}{a}=-\frac{4}{a}<0.
\end{align*}
It is obviously that $\trchb<0$ on $S_{-\frac{a}{4},1}$. Thus, $S_{-\frac{a}{4},1}$ is a trapped surface.
\begin{rk}
The above argument shows that the initial foliation along $H_{u_\infty}^{(0,1)}$ and $\Hb_0$ is free of trapped surfaces, but a trapped surface $S_{-\frac{a}{4},1}$ forms dynamically in their future domain of dependence. Moreover, the smallness of the coupling constant $\ef$ is essential to ensure that the lower bound conditions persist throughout $V_*$.
\end{rk}

To analyze the charging process, observe
\begin{align*}
\Om\nabs_4\rhoF+\Om\trch\rhoF=\sdivs(\Om\bF)+2\ef\Im\left(\psi\Om\Psi_4^\dag\right),
\end{align*}
which implies from \eqref{chargingintro}
\begin{align*}
    \int_{S_{u_\infty,1}}\rhoF d\slg=\int_{H_{u_\infty}^{(0,1)}} 2\ef \Im\left(\psi\Om\Psi_4^\dag\right)\geq 2\ef a.
\end{align*}
Thus, the electric charge on the sphere $S_{u_\infty,1}$ satisfies
\begin{align}\label{chargeeq}
    Q(u_\infty,1)\geq \frac{1}{4\pi}\int_{S_{u_\infty,1}}\rhoF d\slg \geq \frac{\ef a}{2\pi}.
\end{align}
\begin{rk}\label{rkcharge}
The inequality \eqref{chargeeq} describes the charging process. Initially, $Q(u_\infty,0) = 0$ since $\rhoF = 0$ on $S_{u_\infty,0}$. However, \eqref{chargeeq} shows that the charge on the final sphere $S_{u_\infty,1}$ is of size $\ef a$. This implies that the spacetime becomes charged after the short pulse. Thus, the initial data in Theorem \ref{Maintheoremintro} capture both the classical trapping phenomenon in $V_*$ and a nontrivial charging process along $H_{u_\infty}$. The latter requires solving a large-data problem not only for $\hch$ and $\a$, but also for $e_4\psi$, $\psi$, and $\rhoF$.
\end{rk}
\begin{rk}
We also compute that the Hawking mass of the sphere $S_{u_\infty,1}$ is of size $a$; see Proposition \ref{massincreasing}. Combining this with Remark \ref{rkcharge}, we deduce that the charge-to-mass ratio of the final sphere $S_{u_\infty,1}$ on the initial null hypersurface is of size $\ef$, with a universal upper bound $e_0 \ll 1$.
\end{rk}
\subsection{Discussion on Extensions to Other Matter Field Models}
In this work, the gravitational and matter components are treated separately throughout the analysis; see the treatment of general matter field contributions in the Bianchi equations for the Weyl components in Section~\ref{sec-Bianchi}. The main difficulty in this paper arises from the abnormal behavior of the Maxwell–charged scalar fields near past null infinity, caused by a nontrivial charging process. Nevertheless, the semi-global existence result and the formation of a trapped surface remain valid.\\ \\
For other Einstein–matter systems, as long as the behavior of the matter fields is not worse than in the model considered here, the same framework applies. In particular, the gravitational part follows the same structure as in the Einstein–Maxwell–charged scalar field case, while the matter part can be estimated independently based on its own structure and suitable bootstrap assumptions on the Ricci coefficients.
\subsection{Structure of the paper}
\begin{itemize}
    \item In Section \ref{secpre}, we recall the fundamental notions of double null foliation and state the null structure equations and the Bianchi equations. Moreover, we deduce the pairing Bianchi equations for the components of Maxwell fields $\F$ and the gauge covariant derivatives of the complex scalar field $\psi$.
    \item In Section \ref{secmain}, we define all the norms used in this paper and present the main theorem. We then state intermediate results and prove the main theorem. The rest of the paper focuses on the proof of these intermediary results.
    \item In Section \ref{secboot}, we make bootstrap assumptions and prove the first consequences. These consequences will be used frequently in the remainder of the paper.
    \item In Section \ref{secBianchi}, we develop the $|u|^p$--weighted estimates for the general Bianchi equations. More precisely, we first prove Proposition \ref{keyintegral}, which provides a unified treatment of all linear estimates for the Bianchi pairs. We also prove Theorem \ref{Junkmanthm}, which provides a unified treatment of all nonlinear terms appearing in energy estimates and transport estimates.
    \item In Section \ref{secF}, we apply $|u|^p$--weighted estimates to the pairing Bianchi equations of the Maxwell fields and the scalar field to control the $L^2$--flux of them. We also estimate their lower order $L^2(S)$--norms by considering the Bianchi equations as transport equations.
    \item In Section \ref{secR}, we apply $|u|^p$--weighted estimates to the pairing Bianchi equations of Weyl components to control their $L^2$--flux. We also estimate their lower order $L^2(S)$--norms by considering the Bianchi equations as transport equations.
    \item In Section \ref{secO}, we first estimate the $L^2(S)$--norms of the Ricci coefficients by integrating the null structure equations along the incoming and outgoing null cones. By introducing the renormalized quantities, we estimate their top order $L^2$--flux.
    \item In Section \ref{sectrapping}, we prove the formation of a trapped surface under an additional lower bound assumption for the size of initial data. Moreover, the charge on the last sphere of the initial data have been computed under an additional condition of the lower bound of the electric current.
    \item In Section \ref{secscaling}, by exploring a scaling argument to Theorem \ref{Maintheoremintro}, we obtain Theorem \ref{scalingintro}, which constitutes a criterion for the formation of trapped surfaces of the EMCSF system in the region close to the center. This also provides a connection to the results in \cite{AnLuk,Chr}.
\end{itemize}
\subsection{Acknowledgments}
The authors would like to thank Xinliang An for many helpful discussions.
\section{Geometric setup}\label{secpre}
\subsection{Double null foliation}\label{doublenullfoliation}
We first introduce the geometric setup. We denote $(\MM,\g)$ a spacetime $\MM$ with the Lorentzian metric $\g$ and $\D$ its Levi-Civita connection. Let $u$ and $\unu$ be two optical functions on $\MM$, that is
\begin{equation*}
    \g(\grad u,\grad u)=\g(\grad\ub,\grad\ub)=0.
\end{equation*}
The spacetime $\M$ is foliated by the level sets of $u$ and $\ub$ respectively, and the functions $u,\ub$ are required to increase towards the future. We use $H_u$ to denote the outgoing null hypersurfaces which are the level sets of $u$ and use $\Hb_\ub$ to denote the incoming null hypersurfaces which are the level sets of $\ub$. We denote
\begin{equation}
    S_{u,\ub}:=H_u\cap\Hb_\ub,
\end{equation}
which are spacelike $2$--spheres. We introduce the vectorfields $L$ and $\Lb$ by
\begin{equation*}
    L:=-\grad u,\qquad\quad\Lb:=-\grad\ub.
\end{equation*}
We define a positive function $\Om$ by the formula
\begin{equation}\label{deflapse}
    \g(L,\Lb)=-2\Om^{-2},
\end{equation}
where $\Om$ is called the lapse function. We then define the normalized null pair $(e_3,e_4)$ by
\begin{equation*}
   e_3=\Om\Lb,\qquad\quad e_4=\Om L, 
\end{equation*}
and define another null pair by
\begin{equation*}
    \underline{N}=\Om e_3, \qquad\quad N=\Om e_4.
\end{equation*}
On a given $2$--sphere $S(u,\ub)$, we choose a local frame $(e_1,e_2)$, we call $(e_1,e_2,e_3,e_4)$ a null frame. Generally, throughout the paper, we use capital Latin letters $A,B,C,...$ to denote an index from $1$ to $2$, lower case Latin letters $i,j,k,...$ to denote an index from $1$ to $3$ and Greek letters $\a,\b,\ga,...$ to denote an index from $1$ to $4$, e.g., $e_A$ denotes either $e_1$ or $e_2$.\\ \\
The spacetime metric $\g$ induces a Riemannian metric $\slg$ on $S(u,\unu)$. We use $\nabs$ to denote the Levi-Civita connection of $\slg$ on $S(u,\unu)$. Using $(u,\unu)$, we introduce a local coordinate system $(u,\ub,x^A)$ on $\M$ with $e_4(x^A)=0$. In that coordinates system, the metric $\g$ takes the form:
\begin{equation}\label{metricg}
\g=-2\Om^2 (d\ub\otimes du+du\otimes d\ub)+\slg_{AB}(dx^A-\bbb^Adu)\otimes (dx^B-\bbb^Bdu),
\end{equation}
and we have 
\begin{equation*}
   \underline{N}=\pa_u+\bbb,\qquad N=\pa_{\ub},\qquad \bbb:=\bbb^A \pr_{x^A}.
\end{equation*}
\subsubsection{Ricci coefficients and Weyl components}
We recall the null decomposition of the Ricci coefficients and Weyl components of the null frame $(e_1,e_2,e_3,e_4)$ as follows:
\begin{align}
\begin{split}\label{defga}
\chib_{AB}&=\g(\D_A e_3, e_B),\qquad\quad \chi_{AB}=\g(\D_A e_4, e_B),\\
\xib_A&=\frac 1 2 \g(\D_3 e_3,e_A),\qquad\quad\,\,  \xi_A=\frac 1 2 \g(\D_4 e_4, e_A),\\
\omb&=\frac 1 4 \g(\D_3e_3 ,e_4),\qquad\quad \,\,\,\,\, \om=\frac 1 4 \g(\D_4 e_4, e_3), \\
\etab_A&=\frac 1 2 \g(\D_4 e_3, e_A),\qquad\quad\,  \eta_A=\frac 1 2 \g(\D_3 e_4, e_A),\\
\ze_A&=\frac 1 2 \g(\D_{e_A}e_4, e_3),
\end{split}
\end{align}
and
\begin{align}
\begin{split}\label{defr}
\a_{AB} &=\W(e_A, e_4, e_B, e_4) , \qquad \,\,\;\aa_{AB} =\W(e_A, e_3, e_B, e_3), \\
\b_{A} &=\frac 1 2 \W(e_A, e_4, e_3, e_4),\qquad\,\;\;\bb_{A}=\frac 1 2 \W(e_A, e_3, e_3, e_4),\\
\rho&= \frac 1 4 \W(e_3, e_4, e_3, e_4),\qquad\,\;\;\,\,\;\, \si =\frac{1}{4}{^*\W}( e_3, e_4, e_3, e_4),
\end{split}
\end{align}
where $^*\W$ denotes the Hodge dual of the Weyl tensor $\W$. The null second fundamental forms $\chi, \chib$ are further decomposed in their traces $\trch$ and $\trchb$, and traceless parts $\hch$ and $\hchb$:
\begin{align*}
\trch&:=\de^{AB}\chi_{AB},\qquad\quad \,\hch_{AB}:=\chi_{AB}-\frac{1}{2}\de_{AB}\trch,\\
\trchb&:=\de^{AB}\chib_{AB},\qquad\quad \, \hchb_{AB}:=\chib_{AB}-\frac{1}{2}\de_{AB}\trchb.
\end{align*}
We define the horizontal covariant operator $\nabs$ as follows:
\begin{equation*}
\nabs_X Y:=\D_X Y-\frac{1}{2}\chib(X,Y)e_4-\frac{1}{2}\chi(X,Y)e_3.
\end{equation*}
We also define $\nabs_4 X$ and $\nabs_3 X$ to be the horizontal projections:
\begin{align*}
\nabs_4 X&:=\D_4 X-\frac{1}{2} \g(X,\D_4e_3)e_4-\frac{1}{2} \g(X,\D_4e_4)e_3,\\
\nabs_3 X&:=\D_3 X-\frac{1}{2} \g(X,\D_3e_3)e_3-\frac{1}{2} \g(X,\D_3e_4)e_4.
\end{align*}
A tensor field $\phi$ defined on $\MM$ is called tangent to $S$ if it is a priori defined on the spacetime $\M$ and all possible contractions of $\phi$ with either $e_3$ or $e_4$ are zero. We use $\nabs_3 \phi$ and $\nabs_4 \phi$ to denote the projection to $S_{u,\ub}$ of the usual derivatives $\D_3\phi$ and $\D_4\phi$. As a direct consequence of \eqref{defga}, we have the Ricci formulae:
\begin{align}
\begin{split}\label{ricciformulas}
    \D_A e_B&=\nabs_A e_B+\frac{1}{2}\chi_{AB} e_3+\frac{1}{2}\chib_{AB}e_4,\\
    \D_A e_3&=\chib_{AB}e_B+\ze_A e_3,\\
    \D_A e_4&=\chi_{AB}e_B-\ze_A e_4,\\
    \D_3 e_A&=\nabs_3 e_A+\eta_A e_3+\xib_A e_4,\\
    \D_4 e_A&=\nabs_4 e_A+\etab_A e_4+\xi_A e_3,\\
    \D_3 e_3&=-2\omb e_3+2\xib_B e_B,\\ 
    \D_3 e_4&=2\omb e_4+2\eta_B e_B,\\
    \D_4 e_4&=-2\om e_4+2\xi_B e_B,\\
    \D_4 e_3&=2\om e_3+2\etab_B e_B.
\end{split}
\end{align}
The following identities hold for a double null foliation:
\begin{align}
\begin{split}\label{nullidentities}
    \nabs\log\Om&=\frac{1}{2}(\eta+\etab),\qquad\;\;\;\,\om=-\frac{1}{2}\nabs_4(\log\Om), \qquad\;\;\;\omb=-\frac{1}{2}\nabs_3(\log\Om),\\ 
    \eta&=\ze+\nabs\log\Om,\qquad \etab=-\zeta+\nabs\log\Om,\qquad\quad \xi=\xib=0.
\end{split}
\end{align}
see for example (6) in \cite{kr}.
\subsubsection{Null components of matter fields}\label{secmatterfields}
Let $\F$ be the \emph{Faraday tensor} in \eqref{EMCSF}. We define the following null decomposition of $\F$:
\begin{align}
\begin{split}\label{defF}
 \bF_{A}&:=\F(e_A,e_4),\qquad\quad\;\bbF_{A}:=\F(e_A,e_3),\\
 \rhoF&:=\frac{1}{2}\F(e_3,e_4),\qquad\quad\siF:=\frac{1}{2}\dual\F(e_3,e_4).
\end{split}
\end{align}
As a consequence, one has $\F(e_A,e_B)=-\slep_{AB}\siF$. Given the Faraday tensor $\F$, there exists an electromagnetic potential $\A$ which satisfies:
\begin{align*}
    \D_\mu\A_\nu-\D_\nu\A_\mu=\F_{\mu\nu}.
\end{align*}
We denote the components of electromagnetic potential $\A$ as follows:
\begin{equation}\label{dfUUbAsl}
    U:=\A_4=\A(e_4),\qquad\Ub:=\A_3=\A(e_3),\qquad\Asl_B:=\A_B=\A(e_B).
\end{equation}
Moreover, we define $\Psi$ as the gauge covariant derivative $D\psi$ and the follow null components of $\Psi$:
\begin{equation}
    \Psi_4=\Psi(e_4),\qquad \Psi_3=\Psi(e_3),\qquad \Psisl_B=\Psi(e_B).
\end{equation}
More explicitly, they are given by
\begin{equation}
    \Psi_3=e_3\psi+i\ef\Ub\psi,\qquad  \Psi_4=e_4\psi+i\ef U\psi,\qquad \Psisl_B=\nabs_B\psi+i\ef\Asl_B\psi.
\end{equation}
We also define the following \emph{Schouten tensor} $\S$ and \emph{Cotton tensor} $J$:
\begin{align}
\begin{split}\label{dfSchouten}
    \S_{\mu\nu}:&=\Ric_{\mu\nu}-\frac{1}{6}\g_{\mu\nu}\R,\\
    J_{\la\mu\nu}:&=\frac{1}{2}(\D_\mu\S_{\la\nu}-\D_\nu\S_{\la\mu}).
\end{split}
\end{align}
\subsection{Integral formulae}\label{ssecave}
We define the $S$-average of scalar functions.
\begin{df}\label{average}
Given any scalar function $f$, we denote its average and its average free part by
\begin{equation*}
    \overline{f}:=\frac{1}{|S|}\int_{S}f\,d\slg,\qquad \fc:=f-\overline{f}.
\end{equation*}
\end{df}
\begin{lem}\label{chav}
For any two scalar functions $u$ and $v$, we have
\begin{equation*}
    \overline{uv}=\overline{u}\,\overline{v}+\overline{\widecheck{u}\widecheck{v}},
\end{equation*}
and
\begin{equation*}
uv-\overline{uv}=\widecheck{u}\overline{v}+\overline{u}\widecheck{v}+\left(\widecheck{u}\widecheck{v}-\overline{\widecheck{u}\widecheck{v}}\right).
\end{equation*}
\end{lem}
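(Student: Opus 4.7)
The plan is to prove both identities by a direct algebraic expansion, using only the definition $\widecheck{f}=f-\overline{f}$ and two immediate properties of the $S$-average from Definition \ref{average}: (i) the average is $\RRR$-linear, and (ii) for any scalar $f$, the quantity $\overline{f}$ is constant on $S$, so it pulls out of any average, and in particular $\overline{\widecheck{f}}=\overline{f}-\overline{f}=0$.

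For the first identity, I would begin with the decomposition
\begin{equation*}
uv=(\overline{u}+\widecheck{u})(\overline{v}+\widecheck{v})=\overline{u}\,\overline{v}+\overline{u}\,\widecheck{v}+\overline{v}\,\widecheck{u}+\widecheck{u}\,\widecheck{v}.
\end{equation*}
Taking the $S$-average of both sides and using linearity together with the fact that $\overline{u}$ and $\overline{v}$ are constant on $S$, we obtain
\begin{equation*}
\overline{uv}=\overline{u}\,\overline{v}+\overline{u}\,\overline{\widecheck{v}}+\overline{v}\,\overline{\widecheck{u}}+\overline{\widecheck{u}\,\widecheck{v}}.
\end{equation*}
Since $\overline{\widecheck{u}}=\overline{\widecheck{v}}=0$, the middle two terms drop out and the first identity follows.

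For the second identity, I would simply subtract the first identity from the pointwise expansion of $uv$ displayed above. This gives
\begin{equation*}
uv-\overline{uv}=\bigl(\overline{u}\,\overline{v}+\overline{u}\,\widecheck{v}+\overline{v}\,\widecheck{u}+\widecheck{u}\,\widecheck{v}\bigr)-\bigl(\overline{u}\,\overline{v}+\overline{\widecheck{u}\,\widecheck{v}}\bigr)=\widecheck{u}\,\overline{v}+\overline{u}\,\widecheck{v}+\bigl(\widecheck{u}\,\widecheck{v}-\overline{\widecheck{u}\,\widecheck{v}}\bigr),
\end{equation*}
which is the desired formula.

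There is no real obstacle here: the lemma is a purely algebraic bookkeeping statement with no geometric content beyond the constancy of the average on each fixed sphere $S=S_{u,\ub}$. The only point worth flagging explicitly in the write-up is that $\overline{u}$ and $\overline{v}$, despite being scalar functions on $\MM$, restrict to constants on each $S$, which is what legitimizes pulling them outside the average in the step $\overline{\overline{u}\,\widecheck{v}}=\overline{u}\,\overline{\widecheck{v}}$.
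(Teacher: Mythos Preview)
Your proof is correct and is precisely the straightforward verification the paper has in mind; the paper's own proof consists only of the sentence ``Straightforward verifications.'' and your expansion is the natural way to carry it out.
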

\begin{proof}
    Straightforward verifications.
\end{proof}
We recall the following integral formulae, which will be used repeatedly in this paper.
\begin{lem}\label{dint}
For any scalar function $f$, the following identity hold:
\begin{align*}
    \Om e_4\left(\int_{S_{u,\ub}}fd\slg\right)&=\int_{S_{u,\ub}}\left(\Om e_4(f)+\Om\trch f\right) d\slg ,\\
    \Om e_3\left(\int_{S_{u,\ub}}fd\slg\right)&=\int_{S_{u,\ub}}\left(\Om e_3(f)+ \Om\trchb f\right) d\slg .
\end{align*}
Taking $f=1$, we obtain
\begin{equation*}
    e_4(r)=\frac{\ov{\Om\trch}}{2\Om}r,\qquad e_3(r)=\frac{\ov{\Om\trchb}}{2\Om}r,\label{e3e4r}
\end{equation*}
where $r$ is the \emph{areal radius} defined by
\begin{equation*}
    r(u,\ub):=\sqrt{\frac{|S_{u,\ub}|}{4\pi}}.
\end{equation*}
\end{lem}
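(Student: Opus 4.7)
\noindent\textbf{Proof proposal for Lemma \ref{dint}.}

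The plan is to compute the two derivatives in the coordinate system $(u,\ub,x^A)$ introduced in \eqref{metricg}, and then deduce the identities for $r$ as a simple corollary by setting $f=1$. Recall that in this chart, the gauge choice $e_4(x^A)=0$ yields $N=\Om e_4=\pa_\ub$, while $\underline{N}=\Om e_3=\pa_u+\bbb$ with $\bbb=\bbb^A\pa_{x^A}$ tangent to $S_{u,\ub}$. The induced volume form on $S_{u,\ub}$ is $d\slg=\sqrt{\det\slg}\,dx^1\wedge dx^2$.

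\textbf{The $e_4$-identity.} Since $\Om e_4=\pa_\ub$ and the scalar $\int_{S_{u,\ub}}f\,d\slg$ depends only on $(u,\ub)$, I would write
\[
\Om e_4\!\left(\int_{S_{u,\ub}} f\,d\slg\right) = \int \left(\pa_\ub f + \tfrac{1}{2}\slg^{AB}\pa_\ub\slg_{AB}\,f\right)\sqrt{\det\slg}\,dx^1\,dx^2.
\]
Because $N=\pa_\ub$ is orthogonal to $S_{u,\ub}$ and $e_4(x^A)=0$, one has $\pa_\ub\slg_{AB}=\mathcal{L}_N\slg_{AB}=2\Om\chi_{AB}$, so $\tfrac{1}{2}\slg^{AB}\pa_\ub\slg_{AB}=\Om\trch$. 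Since $\pa_\ub f=\Om e_4(f)$, this yields the first identity.

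\textbf{The $e_3$-identity.} This step is the main obstacle because $\underline{N}=\pa_u+\bbb$ has a nontrivial shift along $S_{u,\ub}$. I would again use that the scalar $\int_S f\,d\slg$ depends only on $(u,\ub)$, so $\Om e_3\!\int_S f\,d\slg=\pa_u\!\int_S f\,d\slg$. Differentiating under the integral gives
\[
\pa_u\!\int_S f\,d\slg = \int\left(\pa_u f + \tfrac{1}{2}\slg^{AB}\pa_u\slg_{AB}\,f\right)\sqrt{\det\slg}\,dx^1\,dx^2.
\]
From $\mathcal{L}_{\underline{N}}\slg_{AB}=2\Om\chib_{AB}$ and $\mathcal{L}_{\underline{N}}=\pa_u+\mathcal{L}_{\bbb}$, I get $\pa_u\slg_{AB}=2\Om\chib_{AB}-\mathcal{L}_{\bbb}\slg_{AB}$, hence $\tfrac{1}{2}\slg^{AB}\pa_u\slg_{AB}=\Om\trchb-\sdiv_\slg\bbb$. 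Using $\pa_u f=\Om e_3(f)-\bbb^A\pa_{x^A}f$, the extra terms combine into the total divergence
\[
\sdiv_\slg(f\bbb)=f\,\sdiv_\slg\bbb + \bbb^A\pa_{x^A}f,
\]
which integrates to zero on the closed surface $S_{u,\ub}$ by Stokes' theorem. The remaining terms give the claimed second identity.

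\textbf{The identities for $r$.} Taking $f=1$ in the two formulas produces
\[
\Om e_4|S_{u,\ub}|=\int_S\Om\trch\,d\slg=\ov{\Om\trch}\,|S_{u,\ub}|,
\]
and analogously for $e_3$. Since $|S_{u,\ub}|=4\pi r^2$, dividing by $8\pi r\,\Om$ yields $e_4(r)=\frac{\ov{\Om\trch}}{2\Om}r$, and similarly for $e_3$. The only delicate point in the whole proof is the Lie-derivative decomposition and the observation that the $\bbb$-dependent remainder is an exact divergence; everything else is bookkeeping.
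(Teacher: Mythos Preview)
Your proof is correct. The paper itself does not give an argument but simply refers to Lemma~2.26 in \cite{kl-ro} and Lemma~3.1.3 in \cite{kn}; your computation is precisely the standard direct proof one finds in those references, using the coordinate expressions $\Om e_4=\pa_\ub$, $\Om e_3=\pa_u+\bbb$, the first-variation identities $\pa_\ub\slg_{AB}=2\Om\chi_{AB}$ and $\pa_u\slg_{AB}=2\Om\chib_{AB}-\mathcal{L}_{\bbb}\slg_{AB}$, and the observation that the shift contribution is an exact divergence on the closed sphere.
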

\begin{proof}
See Lemma 2.26 in \cite{kl-ro} and Lemma 3.1.3 in \cite{kn}.
\end{proof}
\subsection{Hodge systems}\label{ssec7.2}
\begin{df}\label{tensorfields}
For tensor fields defined on a $2$--sphere $S$, we denote by $\sk_0:=\sk_0(S)$ the set of pairs of scalar functions, $\sk_1:=\sk_1(S)$ the set of $1$--forms and $\sk_2:=\sk_2(S)$ the set of symmetric traceless $2$--tensors.
\end{df}
\begin{df}\label{def7.2}
Given $\xi\in\sk_1$, we define its Hodge dual
\begin{equation*}
    {^*\xi}_A := \ins_{AB}\xi^B.
\end{equation*}
Given $U \in \sk_2$, we define its Hodge dual
\begin{equation*}
{^*U}_{AB}:= \ins_{AC} {U^C}_B.
\end{equation*}
\end{df}
\begin{df}
    Given $\xi,\eta\in\sk_1$ and $U,V\in\sk_2$, we denote
\begin{align*}
    \xi\c\eta&:= \de^{AB}\xi_A \eta_B,\\
    \xi\wedge\eta&:= \ins^{AB} \xi_A \eta_B,\\
    (\xi\hot\eta)_{AB}&:=\xi_A \eta_B +\xi_B \eta_A -\de_{AB}\xi\c \eta,\\
    (\xi \c U)_A&:= \de^{BC} \xi_B U_{AC},\\
    U\wedge V&:=\ins^{AB}U_{AC}V_{CB}.
\end{align*}
\end{df}
\begin{df}
For a given $\xi\in\sk_1$, we define the following differential operators:
    \begin{align*}
    \sdivs \xi&:= \de^{AB} \nabs_A\xi_B,\\
    \curls \xi&:= \ins^{AB} \nabs_A \xi_B,\\
    (\nabs\hot\xi)_{AB}&:=\nabs_A \xi_B+\nabs_B\xi_A-\de_{AB}(\sdivs\xi).
\end{align*}
\end{df}
\begin{df}\label{hodgeop}
    We define the following Hodge type operators:
    \begin{itemize}
        \item $\sld_1$ takes $\sk_1$ into $\sk_0$ and is given by:
        \begin{equation*}
            \sld_1 \xi :=(\sdivs\xi,\curls \xi),
        \end{equation*}
        \item $\sld_2$ takes $\sk_2$ into $\sk_1$ and is given by:
        \begin{equation*}
            (\sld_2 U)_A := \nabs^{B} U_{AB}, 
        \end{equation*}
        \item $\sld_1^*$ takes $\sk_0$ into $\sk_1$ and is given by:
        \begin{align*}
            \sld_1^*(f,f_*)_{A}:=-\nabs_A f +{\ins_A}^{B}\nabs_B f_*,
        \end{align*}
        \item $\sld_2^*$ takes $\sk_1$ into $\sk_2$ and is given by:
        \begin{align*}
            \sld_2^* \xi := -\frac{1}{2} \nabs \hot \xi.
        \end{align*}
    \end{itemize}
\end{df}
We have the following identities:
\begin{align}
    \begin{split}\label{dddd}
        \sld_1^*\sld_1&=-\Des_1+\mathbf{K},\qquad\qquad\sld_1\sld_1^*=-\Des_{{0}},\\
        \sld_2^*\sld_2&=-\frac{1}{2}\Des_2+\mathbf{K},\qquad\quad\sld_2\sld_2^*=-\frac{1}{2}(\Des_1+\mathbf{K}).
    \end{split}
\end{align}
where $\mathbf{K}$ denotes the Gauss curvature on $S$; see (2.2.2) in \cite{Ch-Kl}. 
\begin{df}\label{dfdkb}
We define the weighted angular derivatives $\dkb$ as follows:
\begin{align*}
    \dkb U &:= \af\sld_2 U,\qquad \forall U\in \sk_2,\\
    \dkb \xi&:=\af\sld_1 \xi,\qquad\,\,\, \forall \xi\in \sk_1,\\
    \dkb f&:= \af\sld_1^* f,\qquad \,\,\forall f\in \sk_0.
\end{align*}
We denote for any tensor $h\in\sk_k$, $k=0,1,2$,\footnote{We also use the convention that $h^{(i)}=0$ for $i<0$.}
\begin{equation*}
    h^{(0)}:=h,\qquad\quad h^{(i)}:=(h,\dkb h,...,\dkb^i h).
\end{equation*}
\end{df}
\begin{df}\label{Lpnorms}
Let $f$ be a tensor field on a $2$--sphere $S$, we denote its $L^p(S)$--norm:
\begin{equation}
    \|f\|_{L^p(S)}:= \left(\int_S |f|^p \right)^\frac{1}{p}.
\end{equation}
\end{df}
\begin{prop}\label{ellipticLp}
Assume that $S$ is an arbitrary compact 2-surface with positive bounded Gauss curvature. Then the following statements hold for all $p\in(1,+\infty)$:
\begin{enumerate}
\item Let $\phi\in\sk_0$ be a solution of $\Des\phi=f$. Then we have
\begin{align*}
    \|\nabs^2\phi\|_{L^p(S)(S)}+|u|^{-1}\|\nabs\phi\|_{L^p(S)}+|u|^{-2}\|\phi-\ov{\phi}\|_{L^p(S)}\les \|f\|_{L^p(S)}.
\end{align*}
\item Let $\xi\in\sk_1$ be a solution of $\sld_1\xi=(f,f_*)$. Then we have
\begin{align*}
    \|\nabs\xi\|_{L^p(S)}+|u|^{-1}\|\xi\|_{L^p(S)}\les\|(f,f_*)\|_{L^p(S)}.
\end{align*}
\item Let $U\in\sk_2$ be a solution of $\sld_2 U=f$. Then we have
\begin{align*}
\|\nabs U\|_{L^p(S)}+|u|^{-1}\|U\|_{L^p(S)}\les\|f\|_{L^p(S)}.
\end{align*}
\end{enumerate}
\end{prop}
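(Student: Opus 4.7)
\textbf{Proof proposal for Proposition \ref{ellipticLp}.} The plan is to reduce everything to a standard $L^p$ Calderón–Zygmund estimate on a compact surface of bounded geometry, combined with the algebraic identities \eqref{dddd}, and then restore the correct powers of $|u|$ by rescaling. Throughout, the crucial input is that $S_{u,\ub}$ is comparable to a round sphere of radius $|u|$, so that $\K\sim|u|^{-2}$, and a conformal rescaling $\widetilde{\slg}:=|u|^{-2}\slg$ produces a unit-sized surface with bounded Gauss curvature and bounded covariant derivatives of curvature.

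First I would handle the $L^2$ case directly through the identities in \eqref{dddd}. For part (2), pair $\sld_1^*\sld_1\xi=\sld_1^*(f,f_*)$ with $\xi$ and integrate by parts, using $\sld_1^*\sld_1=-\Des_1+\mathbf{K}$, to obtain
\begin{equation*}
\int_S|\nabs\xi|^2\,d\slg+\int_S\mathbf{K}|\xi|^2\,d\slg=\int_S|(f,f_*)|^2\,d\slg.
\end{equation*}
Since $\mathbf{K}\gtrsim|u|^{-2}$, this yields $\|\nabs\xi\|_{L^2(S)}+|u|^{-1}\|\xi\|_{L^2(S)}\lesssim\|(f,f_*)\|_{L^2(S)}$. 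Part (3) is identical after replacing $\sld_1$ by $\sld_2$ and using $\sld_2^*\sld_2=-\tfrac{1}{2}\Des_2+\mathbf{K}$. For part (1), pair $\Des\phi=f$ with $\phi-\ov{\phi}$ to control $\|\nabs\phi\|_{L^2(S)}$, then use the Poincaré inequality $\|\phi-\ov{\phi}\|_{L^2(S)}\lesssim|u|\|\nabs\phi\|_{L^2(S)}$ (which follows from the spectral gap $\lambda_1(-\Des)\gtrsim|u|^{-2}$ on a surface with $\mathbf{K}\sim|u|^{-2}$) to obtain the lower-order bounds, and finally invoke the Bochner identity $\int|\nabs^2\phi|^2=\int|\Des\phi|^2-\int\mathbf{K}|\nabs\phi|^2$ to close the top-order estimate.

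Next I would upgrade to general $p\in(1,\infty)$ by rescaling. Writing $\widetilde{\slg}=|u|^{-2}\slg$, the rescaled surface $(\widetilde{S},\widetilde{\slg})$ has area $\sim 1$, Gauss curvature $\sim 1$, and the Hodge operators $\widetilde{\sld}_i$, $\widetilde{\Des}$ satisfy $\widetilde{\sld}_i=|u|\sld_i$, $\widetilde{\Des}=|u|^2\Des$. On such a fixed compact $2$-surface with positive bounded Gauss curvature, the classical $L^p$ Calderón–Zygmund estimates for elliptic Hodge systems give, for $p\in(1,\infty)$,
\begin{equation*}
\|\widetilde{\nabs}^2\widetilde{\phi}\|_{L^p(\widetilde{S})}+\|\widetilde{\phi}-\ov{\widetilde{\phi}}\|_{L^p(\widetilde{S})}\lesssim\|\widetilde{\Des}\widetilde{\phi}\|_{L^p(\widetilde{S})},
\end{equation*}
and analogously for $\widetilde{\sld}_1,\widetilde{\sld}_2$. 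Unwinding the rescaling inserts precisely the factors $|u|^{-1}$ and $|u|^{-2}$ claimed, together with the change in volume measure which is handled by the uniform comparability between $(S,\slg)$ and $(\widetilde{S},\widetilde{\slg})$ guaranteed by the bootstrap assumptions (which will be made in Section \ref{secboot}).

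The main obstacle is verifying the $L^p$ Calderón–Zygmund step with uniform constants as $u\to-\infty$. Concretely, one needs that the metric on the rescaled sphere and its curvature are uniformly bounded, and that the Hodge-elliptic operators are uniformly elliptic with smooth coefficients in a controlled sense; this is supplied by the bootstrap control on $\Ga$ (in particular on $\slg$, $\trch$, $\trchb$ and $\K$) available throughout $V$. Once uniformity is in place, the statement is a direct consequence of classical results; I would cite the analogous $L^p$ Hodge estimates in \cite{Chr,Ch-Kl,kl-ro,AnLuk} rather than reprove them from scratch.
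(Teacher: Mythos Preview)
Your proposal is correct and actually provides substantially more detail than the paper does: the paper's entire proof is a one-line citation to Corollary 2.3.1.1 in \cite{Ch-Kl}, treating these as standard elliptic Hodge estimates. Your sketch---$L^2$ via the identities \eqref{dddd} and integration by parts, then $L^p$ via rescaling to a unit surface and classical Calder\'on--Zygmund theory---is precisely the argument that underlies the cited result, so the approaches coincide in substance; you have simply unpacked what the paper defers to the literature.
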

\begin{proof}
See Corollary 2.3.1.1 in \cite{Ch-Kl}.
\end{proof}
\subsection{Commutation identities}
We also have the following version of the commutation identities.
\begin{lem}\label{comm}
Let $U_{A_1...A_k}$ be an $S$-tangent $k$-covariant tensor on $(\M,\g)$. Then
\begin{align*}
    [\Om\nabs_4,\nabs_B]U_{A_1...A_k}&=-\Om\chi_{BC}\nabs_CU_{A_1...A_k}+\sum_{i=1}^k \Om(\chi_{A_iB}\,\etab_C-\chi_{BC}\,\etab_{A_i}+\ins_{A_iC}{^*\beta}_B)U_{A_1...C...A_k},\\
    [\Om\nabs_3,\nabs_B]U_{A_1...A_k}&=-\Om\chib_{BC}\nabs_C U_{A_1...A_k}+\sum_{i=1}^k\Om(\chib_{A_iB}\,\eta_C-\chib_{BC}\,\eta_{A_i}+\ins_{A_iC}{^*\bb}_B)U_{A_1...C...A_k},\\
    [\Om\nabs_3,\Om\nabs_4]U_{A_1...A_k}&=4\Om^2\ze_B\nabs_B U_{A_1...A_k}+2\Om^2\sum_{i=1}^k(\etab_{A_i}\,\eta_C-\etab_{A_i}\,\eta_C+\ins_{A_iC}\si)U_{A_1...C...A_k}.
\end{align*}
\end{lem}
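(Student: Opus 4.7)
The plan is to derive all three commutators from the general spacetime identity
\begin{equation*}
[\D_X,\D_Y]U = \D_{[X,Y]}U + \R(X,Y)U
\end{equation*}
applied to the $S$-tangent tensor $U$, and then project the result onto $TS$ using the Ricci formulae \eqref{ricciformulas}.

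First I would compute the frame brackets. From \eqref{ricciformulas} together with $\xi=\xib=0$ and $\nabs\log\Om=\tfrac{1}{2}(\eta+\etab)$ from \eqref{nullidentities}, a direct computation yields
\begin{align*}
[e_4,e_B] &= \nabs_4 e_B - \chi_{BC}e_C + (\etab_B+\ze_B)e_4,\\
[e_3,e_B] &= \nabs_3 e_B - \chib_{BC}e_C + (\eta_B-\ze_B)e_3,\\
[e_3,e_4] &= -2\om e_3 + 2\omb e_4 + 2(\eta_B-\etab_B)e_B,
\end{align*}
and the $\Om$-weighted versions of these brackets follow by Leibniz, using $\eta-\etab=2\ze$ and $\nabs\log\Om=\tfrac{1}{2}(\eta+\etab)$.

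Next I would turn to $[\Om\nabs_4,\nabs_B]U$. Writing $\nabs_4 U$ and $\nabs_B U$ as the horizontal projections of $\D_4 U$ and $\D_B U$, and iterating, the differences $\D_4\nabs_B U - \nabs_4\nabs_B U$ and $\D_B\nabs_4 U - \nabs_B\nabs_4 U$ produce projection corrections which, by \eqref{ricciformulas}, contract into precisely the $\chi\cdot\etab$ pattern appearing on the right-hand side. Adding the transport contribution $\Om\D_{[e_4,e_B]}U$ then produces the leading $-\Om\chi_{BC}\nabs_C U$ term and further $\chi\cdot\etab$ contributions. Finally the curvature piece $\Om\R(e_4,e_B)U$, expanded in the null frame via the decomposition \eqref{defr}, produces only the horizontal components $\W(e_{A_i},e_C,e_4,e_B)$ acting on the indices of $U$, and these contract to give $\Om\ins_{A_iC}{}^{*}\b_B$; the Ricci-tensor contributions of the full Riemann tensor are exactly what is needed to convert $\D$ into $\nabs$ in the remaining correction terms, so no extra matter terms survive in the final identity. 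The second identity follows by the obvious $3\leftrightarrow 4$ symmetry (swapping $\chi,\etab,\b$ with $\chib,\eta,\bb$). For $[\Om\nabs_3,\Om\nabs_4]U$ the same scheme applies, with the $4\Om^2\ze_B\nabs_B U$ term arising from the horizontal part of $[\Om e_3,\Om e_4]$ via $\eta-\etab=2\ze$ and $\nabs\log\Om=\tfrac{1}{2}(\eta+\etab)$, and the $\Om^2\ins_{A_iC}\si$ contributions coming from ${}^{*}\W(e_3,e_4,e_3,e_4)=4\si$ paired with the $S$-tangent indices of $U$.

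The main obstacle is purely bookkeeping: one must carefully track the many Christoffel-type corrections produced by repeated application of \eqref{ricciformulas}, absorb the $\Om$ factors via $\nabs\log\Om=\tfrac{1}{2}(\eta+\etab)$, and verify that the non-Weyl parts of $\R$ conspire with those corrections so that only the Weyl null components $\b$, $\bb$, and $\si$ survive on the right-hand side. The overall scheme is classical — compare Lemma 7.3.3 in \cite{Ch-Kl} and Section 3 of \cite{kr} — and we are essentially transcribing that derivation into the signature conventions used here.
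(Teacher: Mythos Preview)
Your proposal is correct and follows essentially the same approach as the paper: the paper's own proof is simply a citation to Lemma~7.3.3 in \cite{Ch-Kl} and Lemma~2.23 in \cite{Shen22}, and your outline reproduces precisely that classical derivation (commutator $=$ bracket term $+$ curvature term $+$ projection corrections via the Ricci formulae), even citing the same Christodoulou--Klainerman lemma. The only point worth flagging is your remark that the Ricci-tensor contributions of the full Riemann tensor cancel against the projection corrections so that only the Weyl components $\b,\bb,\si$ survive: this is correct but is the one place where the bookkeeping is genuinely delicate in the non-vacuum setting, so when carrying it out you should verify it index by index rather than asserting it.
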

\begin{proof}
See Lemma 7.3.3 in \cite{Ch-Kl} and Lemma 2.23 in \cite{Shen22}.
\end{proof}
\subsection{Null structure equations}\label{sec-nullstr}
\begin{prop}\label{nulles}
We have the following null structure equations:
\begin{align}
\begin{split}\label{basicnull}
\nabs_4\eta&=-\chi\c(\eta-\etab)-\b-\frac{1}{2}\Ssl_4,\\
\nabs_3\etab&=-\chib\c(\etab-\eta)+\bb-\frac{1}{2}\Ssl_3,\\
\nabs_4\hch+(\trch)\hch&=-2\om\hch-\a,\\
\nabs_4\trch+\frac{1}{2}(\trch)^2&=-|\hch|^2-2\om\trch-\S_{44},\\
\nabs_3\hchb+(\trchb)\hchb&=-2\omb\hchb-\aa,\\
\nabs_3\trchb+\frac{1}{2}(\trchb)^2&=-|\hchb|^2-2\omb\trchb-\S_{33},\\
\nabs_4\hchb+\frac{1}{2}(\trch)\hchb&=\nabs\hot\etab+2\om\hchb-\frac{1}{2}\trchb\,\hch+\etab\hot\etab+\Sslh,\\
\nabs_3\hch+\frac{1}{2}(\trchb)\hch&=\nabs\hot\eta+2\omb\hch-\frac{1}{2}\trch\,\hchb+\eta\hot\eta+\Sslh,\\
\nabs_4\trchb+\frac{1}{2}(\trch)\trchb&=2\om\trchb+2\rho-\hch\c\hchb+2\sdivs\etab+2|\etab|^2+\Tr\S,\\
\nabs_3\trch+\frac{1}{2}(\trchb)\trch&=2\omb\trch+2\rho-\hch\c\hchb+2\sdivs\eta+2|\eta|^2+\Tr\S,
\end{split}
\end{align}
where we denoted
\begin{align*}
    (\Ssl_3)_A:=\S_{3A},\qquad (\Ssl_4)_A:=\S_{4A},\qquad \Ssl_{AB}:=\S_{AB},\qquad \Tr\S:=\g^{\mu\nu}\S_{\mu\nu}.
\end{align*}
and $\Sslh$ denotes the traceless part of $\Ssl$. We also have the Codazzi equations:
\begin{align}
\begin{split}\label{codazzi}
\sdivs\hch&=\frac{1}{2}\nabs\trch-\ze\c\left(\hch-\frac{1}{2}\trch\right)-\b+\frac{1}{2}\Ssl_4,\\ 
\sdivs\hchb&=\frac{1}{2}\nabs\trchb+\ze\c\left(\hchb-\frac{1}{2}\trchb\right)+\bb+\frac{1}{2}\Ssl_3,
\end{split}
\end{align}
the torsion equation:
\begin{equation}\label{torsion}
\curls\eta=-\curls\etab=\si-\frac{1}{2}\hch\wedge\hchb,
\end{equation}
and the Gauss equation:
\begin{equation}\label{gauss}
    \K=-\frac{1}{4}\trch\trchb+\frac{1}{2}\hch\c\hchb-\rho+\frac{1}{2}\tr\Ssl.
\end{equation}
Moreover, we have
\begin{align*}
\nabs_4\omb&=2\om\omb+\frac{3}{4}|\eta-\etab|^2-\frac{1}{4}(\eta-\etab)\c(\eta+\etab)-\frac{1}{8}|\eta+\etab|^2+\frac{1}{2}\rho+\frac{1}{4}(\tr\Ssl-\Tr\S),\\
\nabs_3\om&=2\om\omb+\frac{3}{4}|\eta-\etab|^2+\frac{1}{4}(\eta-\etab)\c(\eta+\etab)-\frac{1}{8}|\eta+\etab|^2+\frac{1}{2}\rho+\frac{1}{4}(\tr\Ssl-\Tr\S).
\end{align*}
\end{prop}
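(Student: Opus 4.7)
The plan is to derive every equation from the curvature identity $\R(X,Y)Z=\D_X\D_Y Z-\D_Y\D_X Z-\D_{[X,Y]}Z$ together with the Ricci formulae \eqref{ricciformulas} and the $4$--dimensional algebraic decomposition of the Riemann tensor
\begin{align*}
\R_{\mu\nu\rho\si}=\W_{\mu\nu\rho\si}+\frac{1}{2}\bigl(\g_{\mu\rho}\S_{\nu\si}-\g_{\mu\si}\S_{\nu\rho}+\g_{\nu\si}\S_{\mu\rho}-\g_{\nu\rho}\S_{\mu\si}\bigr),
\end{align*}
where $\S$ is the Schouten tensor of \eqref{dfSchouten}. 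The key principle is that each null structure equation in the vacuum case admits a direct analogue in which specific null components of $\S$ appear as a source on the right-hand side; the task reduces to identifying which components appear, with the correct coefficients. The EMCSF structure \eqref{EMCSF}--\eqref{Tdfeqintro} never enters this section explicitly; only the null-frame decomposition of $\S$ into $\S_{33}, \S_{44}, \Ssl_3, \Ssl_4, \Sslh, \Tr\S$ and $\tr\Ssl$ is needed.

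To establish the evolution equations for $\chi, \chib, \eta, \etab$, I would start from the definition $\chi_{AB}=\g(\D_A e_4, e_B)$, apply $e_4$, commute derivatives using \eqref{ricciformulas}, and invoke the curvature identity to produce $\R(e_4, e_A)e_4$. The algebraic decomposition separates the Weyl component $\W(e_A, e_4, e_B, e_4)=\a_{AB}$ from a Schouten contribution whose trace yields the $-\S_{44}$ term in the Raychaudhuri equation for $\trch$ and whose traceless part vanishes for $\hch$; the equations for $\trchb, \hchb$ follow by the $(e_3,e_4)$ symmetry. The transport equation for $\eta$ is derived analogously from $\eta_A=\frac{1}{2}\g(\D_3 e_4, e_A)$, with $\R(e_3, e_4, e_A, e_4)$ contributing $-2\b_A$ plus $\Ssl_4$; the mixed equations $\nabs_4\hchb, \nabs_3\hch$ use $\R(e_4, e_A, e_3, e_B)$ and produce the trace-free $\Sslh$ correction. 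The Codazzi equations \eqref{codazzi} are obtained by antisymmetrizing $\nabs_A\chi_{BC}-\nabs_B\chi_{AC}$, tracing over $B=C$, and identifying the resulting $\R_{AB4C}$ component as $\b+\frac{1}{2}\Ssl_4$; the torsion equation \eqref{torsion} follows from taking $\curls$ of $\eta=\ze+\nabs\log\Om$ (via \eqref{nullidentities}) and using that the Schouten part of $\R_{AB34}$ is trivial; and the Gauss equation \eqref{gauss} is the Gauss--Codazzi identity $\K=\R_{1212}$ corrected by the null second fundamental forms, where $\R_{1212}$ decomposes into $-\rho+\frac{1}{2}\tr\Ssl$. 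Finally, the equations for $\nabs_4\omb$ and $\nabs_3\om$ come from differentiating $\omb=-\frac{1}{2}\nabs_3(\log\Om)$ and $\om=-\frac{1}{2}\nabs_4(\log\Om)$, commuting $\nabs_3$ and $\nabs_4$ via Lemma \ref{comm}, and using the identity $\R(e_3, e_4, e_3, e_4)=4\rho+\Tr\S-\tr\Ssl$ together with the quadratic connection terms coming from \eqref{ricciformulas}.

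The main obstacle is not conceptual but combinatorial: the Schouten decomposition mixes several independent null components of $\S$, and care is required to identify which linear combination produces the source in each equation, with correct numerical factors (note in particular that $\g(e_3,e_4)=-2$ produces many cross-terms in the null frame). Since the purely Weyl and connection parts of each equation coincide verbatim with the vacuum derivation (cf.\ Chapter 7 of \cite{Ch-Kl} or Section 2 of \cite{AnLuk}), the verification reduces to a careful bookkeeping of the Schouten contributions together with a routine use of the identities \eqref{nullidentities}.
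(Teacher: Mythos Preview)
Your proposal is correct and outlines the standard derivation of the null structure equations with matter via the Weyl--Schouten decomposition of Riemann. The paper itself does not carry out this computation at all: its proof consists solely of citations to \cite{An24} and \cite{AMY}, whose derivations are precisely the ones you sketch, so there is nothing further to compare.
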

\begin{proof}
    See (2.5)--(2.7) in \cite{An24} and (2.17)--(2.34) in \cite{AMY}.
\end{proof}
\begin{prop}\label{nullrenor}
We define the following renormalized quantities:
\begin{align}
\begin{split}\label{renorvkp}
\vkp&:=\sld_1^*(-\om,\omd)-\frac{1}{2}\b,\qquad\quad\vkpb:=-\sld_1^*(\omb,\ombd)+\frac{1}{2}\bb.
\end{split}
\end{align}
where $\omd$ and $\ombd$ are defined by the solution of
\begin{align}
\begin{split}\label{dfomd}
    \nabs_3\omd&=\frac{1}{2}\si,\qquad\quad\;\omd|_{H_{u_\infty}}=0,\\
    \nabs_4\ombd&=\frac{1}{2}\si,\qquad\quad\;\;\;\ombd|_{\Hb_0}=0.
\end{split}
\end{align}
We also define the mass aspect functions as follows:
\begin{equation}\label{murenor}
    \mu:=\K-\frac{1}{|u|^2}-\sdivs\eta,\qquad\quad\mub:=\K-\frac{1}{|u|^2}-\sdivs\etab.
\end{equation}
Then, we have the following equations:
\begin{align*}
\nabs_4\mu+\trch\,\mu&=-\frac{1}{|u|^2}\trch-2(\eta+\etab)\c\b-\trch\,\sdivs\etab+\sdivs\left(\chi\c(\eta-\etab)\right)\\
&+\frac{1}{2}\hch\c\nabs\hot(\eta+\etab)+\frac{1}{2}\hch\c\etab\hot\etab+\sdivs\Ssl_4+\frac{1}{2}(3\etab+2\ze)\c\Ssl_4\\
&-\trch|\etab|^2+\frac{1}{2}\trch(\eta-\etab)\c\ze+\etab\c\hch\c(2\eta-\etab)+2\ze\c\hch\c\ze,\\
\nabs_3\mub+\trchb\,\mub&=-\frac{1}{|u|^2}\trchbt+2(\eta+\etab)\c\bb-\trchb \,\sdivs\eta+\sdivs(\chib\c(\etab-\eta))\\
&+\frac{1}{2}\hchb\c\nabs\hot(\eta+\etab)-\trchb|\eta|^2+\frac{1}{2}\hchb\c\eta\hot\eta+\sdivs\Ssl_3+\frac{1}{2}(3\eta-2\ze)\c\Ssl_3\\
&+\frac{1}{2}\trchb(\eta-\etab)\c\ze-\eta\c\hchb\c\eta+2\eta\c\hchb\c\etab+\ze \c\hchb\c \eta -\ze \c\hchb\c \etab,\\
\nabs_4\vkpb+\frac{1}{2}\trch\,\vkpb&=-\frac{1}{4}\trch\,\bb+\om\bb+\hchb\c\b-\frac{3}{2} (\etab\rho-{^*\etab}\si)+\frac{1}{2}\rho(\ze+\etab)\\
&-\frac{1}{2}\si(\dual\ze+\dual\etab)+\hch\c\sld_1^*(\omb,\ombd) +F(\ze+\etab)+\nabs F -\frac{1}{2}J_{4\bu3},\\
\nabs_3\vkp+\frac{1}{2}\trchb\,\vkp&=\frac{1}{4}\trchb\,\b-\omb\b-\hch\c\b-\frac{3}{2} (\eta\rho+{^*\eta}\si)+\frac{1}{2}\rho(-\ze+\eta)\\
&-\frac{1}{2}\si (-\dual\ze+\dual\eta)-\hchb\c \sld_1^*(-\om,\omd) +\Fb(-\ze+\eta)+\nabs \Fb -\frac{1}{2}J_{3\bu4}.
\end{align*}
where we denoted
\begin{align*}
    F:&=2\om\omb+\frac{3}{4}|\eta-\etab|^2-\frac{1}{4}(\eta-\etab)\c(\eta+\etab)-\frac{1}{8}|\eta+\etab|^2+\frac{1}{4}(\tr\Ssl-\Tr\S),\\
    \Fb:&=2\om\omb+\frac{3}{4}|\eta-\etab|^2+\frac{1}{4}(\eta-\etab)\c(\eta+\etab)-\frac{1}{8}|\eta+\etab|^2+\frac{1}{4}(\tr\Ssl-\Tr\S).
\end{align*}
\end{prop}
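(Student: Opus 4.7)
The plan is to derive each of the four transport equations by directly differentiating the renormalization formulas in \eqref{renorvkp}--\eqref{murenor} along the appropriate null direction, and then systematically substituting the null structure equations of Proposition \ref{nulles}, the Bianchi equations (to be stated in Section \ref{sec-Bianchi}), the defining transport equations \eqref{dfomd} for $\omd,\ombd$, and the commutation identities of Lemma \ref{comm}. The matter contributions enter only through the $\S$-dependent source terms of Proposition \ref{nulles} and through the Bianchi pairs for $\a,\b,(\rho,\si),\bb,\aa$, which contain derivatives of the Cotton tensor $J$; all of these appear linearly in the final source terms.

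For $\mu$ and $\mub$, I would start from $\mu=\K-|u|^{-2}-\sdivs\eta$. Differentiating the Gauss equation \eqref{gauss} in the direction $e_4$ yields $\nabs_4\K$ in terms of $\nabs_4\trch$, $\nabs_4\trchb$, $\nabs_4\hch$, $\nabs_4\hchb$, and $\nabs_4\rho$, all of which are furnished by \eqref{basicnull} and the Bianchi equation $\nabs_4\rho=\sdivs\b+\ldots$. Next, I would compute $\nabs_4\sdivs\eta$ by commuting $\nabs_4$ with $\nabs$ via Lemma \ref{comm} and substituting $\nabs_4\eta$ from \eqref{basicnull}. The crucial algebraic cancellation is that the $\sdivs\b$ coming from $\nabs_4\rho$ is exactly annihilated by the $-\sdivs\b$ produced through $\sdivs(\nabs_4\eta)$; this is precisely what motivates the renormalization and leaves only Ricci$\,\cdot\,$Ricci, $\chi\cdot(\text{curvature})$, and $\S$-type terms, which are then reorganized using \eqref{nullidentities} and the Codazzi/torsion equations \eqref{codazzi}--\eqref{torsion}. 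The proof of $\nabs_3\mub$ is strictly parallel, exchanging $3\leftrightarrow4$, $\eta\leftrightarrow\etab$, $\chi\leftrightarrow\chib$, $\b\leftrightarrow -\bb$.

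For $\vkpb$ and $\vkp$, I would differentiate $\vkpb=-\sld_1^*(\omb,\ombd)+\frac{1}{2}\bb$ along $e_4$. The term $\frac{1}{2}\nabs_4\bb$ is given by a Bianchi pair that schematically reads $\nabs_4\bb+\trch\,\bb=-\sld_1^*(\rho,\si)+\ldots$, producing a top-order $\sld_1^*(\rho,\si)$. The counterterm $-\nabs_4\sld_1^*(\omb,\ombd)$ is expanded using the commutator $[\nabs_4,\nabs]$ from Lemma \ref{comm}, the last equation of Proposition \ref{nulles} which provides $\nabs_4\omb$ (involving $\frac{1}{2}\rho$), and the defining equation $\nabs_4\ombd=\frac{1}{2}\si$ from \eqref{dfomd}. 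The $\rho,\si$ produced here are packaged into $\sld_1^*(\rho,\si)$ and cancel exactly against the Bianchi source, again by design of the renormalization. What remains is the $\hch\cdot\sld_1^*(\omb,\ombd)$ term from commutation, the curvature$\,\times\,$Ricci terms $\hchb\cdot\b$, $\frac{3}{2}(\etab\rho-{}^\ast\etab\si)$, and the matter terms $\frac{1}{4}\trch\bb$, $\om\bb$, $\nabs F$, $F(\ze+\etab)$, $-\frac{1}{2}J_{4\bu3}$. The equation for $\vkp$ is obtained symmetrically.

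The main obstacle is the bookkeeping: several intermediate terms carrying Ricci-coefficient $\times$ curvature, together with $\ze,\eta,\etab$ coming both from the commutators and from the torsion identity \eqref{torsion}, must be combined using $\eta+\etab=2\nabs\log\Om$, $\eta-\etab=2\ze$, and the identity $\si=\curls\eta+\frac{1}{2}\hch\wedge\hchb$ in order to land precisely on the stated right-hand sides with no $\sld_1^*(\rho,\si)$ surviving. Keeping track of the Schouten/Cotton contributions carefully — distinguishing $\Tr\S$ versus $\tr\Ssl$ and recognizing that $\nabs F$ and $\nabs\Fb$ encode the only derivatives of matter entering $\vkpb,\vkp$ — is the delicate point; the analogous computations in \cite{An,AMY,LL} serve as a template, with the EMCSF model contributing only additional $J$-terms that appear schematically as $J_{4\bu3}$ and $J_{3\bu4}$.
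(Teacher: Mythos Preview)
Your proposal is correct and outlines precisely the computation that underlies the paper's proof: the paper simply cites (125) and (127) in \cite{kr} and Section 2.2 in \cite{LL}, where exactly the derivation you describe---differentiating the renormalized quantities, substituting the null structure and Bianchi equations, commuting via Lemma \ref{comm}, and observing the cancellation of the top-order $\sdivs\b$ and $\sld_1^*(\rho,\si)$ terms---is carried out. Your identification of the key cancellations and of the role of the $\S$- and $J$-contributions matches the structure of those references.
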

\begin{proof}
See (125) and (127) in \cite{kr} and Section 2.2 in \cite{LL}.
\end{proof}
\subsection{Bianchi equations}\label{sec-Bianchi}
\begin{prop}\label{bianchiequations}
We have the following Bianchi equations:
\begin{align*}
\nabs_3\a+\frac{1}{2}\trchb\,\a&=\nabs\hot\b+4\omb\a-3(\hch\rho+{^*\hch}\si)+(\ze+4\eta)\hot\b+\slJ_4,\\
\nabs_4\b+2\trch\,\b&=\sdivs\a-2\om\b+(2\ze+\etab)\c\a-J_{4\bu4},\\
\nabs_3\b+\trchb\,\b&=\nabs\rho+{^*\nabs}\si+2\omb\b+2\hch\c\bb+3(\eta\rho+{^*\eta}\si)+J_{3\bu4},\\
\nabs_4\rho+\frac{3}{2}\trch\,\rho&=\sdivs\b-\frac{1}{2}\hchb\c\a+\ze\c\b+2\etab\c\b-\frac{1}{2}J_{434},\\
\nabs_4\si+\frac{3}{2}\trch\,\si&=-\curls\b+\frac{1}{2}\hchb\c{^*\a}-\ze\c{^*\b}-2\etab\c{^*\b}-\frac{1}{2}{}^*J_{434},\\
\nabs_3\rho+\frac{3}{2}\trchb\,\rho&=-\sdivs\bb-\frac{1}{2}\hch\c\aa+\ze\c\bb-2\eta\c\bb-\frac{1}{2}J_{343},\\
\nabs_3\si+\frac{3}{2}\trchb\,\si&=-\curls\bb+\frac{1}{2}\hch\c{^*\aa}-\ze\c{^*\bb}-2\eta\c{^*\bb}+\frac{1}{2}{}^*J_{343},\\
\nabs_4\bb+\trch\,\bb&=-\nabs\rho+{^*\nabs\si}+2\om\bb+2\hchb\c\b-3(\etab\rho-{^*\etab}\si)-J_{4\bu3},\\
\nabs_3\bb+2\trchb\,\bb&=-\sdivs\aa-2\omb\bb+(2\ze-\eta)\c\aa+J_{3\bu3},\\
\nabs_4\aa+\frac{1}{2}\trch\,\aa&=-\nabs\hot\bb+4\om\aa-3(\hchb\rho-{^*\hchb}\si)+(\ze-4\etab)\hot\bb+\slJ_3,
\end{align*}
where we denoted
\begin{align*}
(\slJ_4)_{AB}&:=J_{AB4}+J_{BA4}-\frac{1}{2}J_{434}\,\slg_{AB},\qquad\;\; (\slJ_3)_{AB}:=J_{AB3}+J_{BA3}-\frac{1}{2}J_{343}\,\slg_{AB},\\
\dual J_{434}&:=(J_{AB4}-J_{BA4})\ins^{AB},\qquad\qquad\qquad\,\,\dual J_{343}:=(J_{AB3}-J_{BA3})\ins^{AB},\\
(J_{\la\bu\nu})_A&:=J_{\la A\nu},
\end{align*}
with $J_{\la\mu\nu}$ the Cotton tensor defined in \eqref{dfSchouten}. 
\end{prop}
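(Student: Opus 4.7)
The plan is to derive these equations from the second Bianchi identity for the Weyl tensor and then project onto the null frame, treating the matter content entirely through the Cotton tensor $J$ introduced in \eqref{dfSchouten}.

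First, I would recall the standard decomposition in dimension $3+1$,
\begin{equation*}
\R_{\mu\nu\la\si}=\W_{\mu\nu\la\si}+\frac{1}{2}\bigl(\g_{\mu\la}\S_{\nu\si}-\g_{\mu\si}\S_{\nu\la}+\g_{\nu\si}\S_{\mu\la}-\g_{\nu\la}\S_{\mu\si}\bigr),
\end{equation*}
together with the contracted second Bianchi identity $\D^\mu\R_{\mu\nu\la\si}=\D_\la\Ric_{\nu\si}-\D_\si\Ric_{\nu\la}$. Substituting the decomposition and using the definition of $\S$ and $J$ yields the schematic identity
\begin{equation*}
\D^\mu\W_{\mu\nu\la\si}=J_{\nu\la\si},
\end{equation*}
so all matter contributions to the divergence of $\W$ are packaged into $J$, independently of which specific EMCSF model we consider. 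This is the only input from the field equations \eqref{EMCSF} needed for the Bianchi system; everything downstream is purely geometric.

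Second, I would contract the identity $\D^\mu\W_{\mu\nu\la\si}=J_{\nu\la\si}$ against appropriate pairs of null frame vectors from $(e_3,e_4,e_A)$ and use the null decomposition \eqref{defr} to read off the schematic transport/Hodge form for each pair $(\a,\b)$, $(\b,(\rho,\si))$, $((\rho,\si),\bb)$, $(\bb,\aa)$. For instance, contracting with $(e_4,e_A,e_4)$ produces $\nabs_4\b$ plus trace-Ricci corrections from $\D^\mu$ on the $\nu$-slot, with source $J_{4A4}=(J_{4\bu4})_A$; contracting with $(e_3,e_A,e_4)$ produces the $\nabs_3\b$ equation with source $J_{3\bu4}$, and so on. The left-hand side covariant derivatives are then converted into frame derivatives $\nabs_3,\nabs_4,\nabs$ using the Ricci formulae \eqref{ricciformulas}; this is where the $\trch,\trchb,\om,\omb,\hch,\hchb,\ze,\eta,\etab$ linear/quadratic terms in the stated equations are generated. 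Symmetrizing and tracing on the $A,B$ indices for the $\a$ and $\aa$ equations produces the symmetrized sources $\slJ_4$, $\slJ_3$ (with the trace removed since $\a,\aa$ are symmetric traceless), while antisymmetrization in the $(\rho,\si)$ equations produces the $\dual J_{434}$, $\dual J_{343}$ terms.

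The main obstacle is not conceptual but bookkeeping: correctly tracking the connection corrections when converting $\D^\mu\W_{\mu\cdots}$ into $\nabs_3,\nabs_4,\nabs$ derivatives of the null components, since each contraction generates many terms from \eqref{ricciformulas} that must be collected and checked against the claimed coefficients (for instance the coefficient $2$ in $\nabs_4\b+2\trch\,\b$ comes from combining the $\chi$-part of $\D^\mu$ with the $\chi$-part of moving $e_4$ past $\D_A$). I would cross-check the vacuum skeleton of each equation against the known formulas in Christodoulou--Klainerman \cite{Ch-Kl} and the Einstein--Yang--Mills version in \cite{AMY}, whose matter contribution is structurally identical (it enters only through the Cotton tensor), so that the only new work is verifying the identification of each source term with the components $J_{4\bu4}$, $J_{3\bu4}$, $J_{4\bu3}$, $J_{3\bu3}$, $J_{434}$, $J_{343}$, $\slJ_3$, $\slJ_4$ listed after the displayed equations.
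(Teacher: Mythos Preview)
Your approach is correct and is precisely the standard derivation underlying these formulas; the paper's own proof is simply a citation to Section 2.3.4 of \cite{Giorgi} (which carries out exactly the computation you describe, with matter entering only through the Cotton tensor $J$), so there is no substantive difference in method---you have just spelled out what the reference does.
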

\begin{proof}
See Section 2.3.4 in \cite{Giorgi}.
\end{proof}
\begin{lem}\label{Jformula}
The Cotton tensor $J$ has the following null components:
\begin{align*}
J_{434}&=\frac{1}{2}\nabs_3\S_{44}-2\omb\S_{44}-\frac{1}{2}\nabs_4\S_{34}-(2\eta_B+\etab_B)\S_{B4}, \\   
J_{343}&=\frac{1}{2}\nabs_4\S_{33}-2\om\S_{33}-\frac{1}{2}\nabs_3\S_{34}-(2\etab_B+\eta_B)\S_{B3}, \\ 
J_{4A4}&=\frac{1}{2}\nabs_A\S_{44}+\left(\ze_A+\frac{1}{2}\etab_A\right)  
\S_{44}-\frac{1}{2} \nabs_4\S_{4A}-\chi_{AB}\S_{B4}+\om\S_{4A},\\ 
J_{3A3}&=\frac{1}{2}\nabs_A\S_{33}+\left(-\ze_A+\frac{1}{2}\eta_A\right)  
\S_{33}-\frac{1}{2} \nabs_3\S_{3A}-\chib_{AB}\S_{B3}+\omb\S_{3A},\\ 
J_{3A4}&=\frac{1}{2}\nabs_A\S_{34}-\frac{1}{2}\etab_A\S_{34}-\frac{1}{2}\nabs_4\S_{3A}-\frac{1}{2}\chi_{AB}\S_{3B}+\om\S_{3A}-\frac{1}{2}\chib_{AB}\S_{4B}+\etab_B\S_{AB},\\
J_{4A3}&=\frac{1}{2}\nabs_A\S_{34}-\frac{1}{2}\etab_A\S_{34}-\frac{1}{2}\nabs_3\S_{4A}-\frac{1}{2}\chib_{AB}\S_{4B}+\omb\S_{4A}-\frac{1}{2}\chi_{AB}\S_{3B}+\eta_B\S_{AB},\\
J_{AB4}&=\frac{1}{2}\nabs_B\S_{A4} +\frac{1}{2}(\ze_B+\etab_B)\S_{A4}+\frac{1}{2}\etab_A\S_{B4}-\frac{1}{2}\nabs_4\S_{AB}-\frac{1}{2}\chi_{BC}\S_{AC}-\frac{1}{4}\chi_{AB}\S_{34}-\frac{1}{4}\chib_{AB}\S_{44},\\
J_{AB3}&=\frac{1}{2}\nabs_B\S_{A3} +\frac{1}{2}(\eta_B-\ze_B)\S_{A3}+\frac{1}{2}\eta_A\S_{B3}-\frac{1}{2}\nabs_3\S_{AB}-\frac{1}{2}\chib_{BC}\S_{AC}-\frac{1}{4}\chib_{AB}\S_{34}-\frac{1}{4}\chi_{AB}\S_{33},\\
{}^*J_{434}&=-\curls\,\Ssl_{4}-\ze\wedge\Ssl_{4}+\hch\wedge\Sslh,\\
{}^*J_{343}&=-\curls\,\Ssl_{3}+\ze\wedge\Ssl_{3}+\hchb\wedge\Sslh.
\end{align*}
\end{lem}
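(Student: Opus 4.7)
The strategy is a direct component-by-component computation from the definition $J_{\la\mu\nu} = \frac{1}{2}(\D_\mu \S_{\la\nu} - \D_\nu \S_{\la\mu})$ combined with the null-frame Ricci formulae \eqref{ricciformulas}. For each assignment of $\la, \mu, \nu$ to frame labels, I would apply the Leibniz rule
\[
(\D_{e_\mu}\S)(e_\la, e_\nu) = e_\mu(\S_{\la\nu}) - \S(\D_{e_\mu}e_\la, e_\nu) - \S(e_\la, \D_{e_\mu}e_\nu),
\]
substitute the expressions for $\D_X e_Y$ from \eqref{ricciformulas} (using $\xi = \xib = 0$ and the symmetry of $\S$), and collect terms. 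For scalar entries of $\S$ the action of $e_\mu$ coincides with $\nabs_\mu$; for tangential entries, the horizontal part of $\D_{e_\mu} e_A$ (that is, $\nabs_3 e_A$, $\nabs_4 e_A$, or $\nabs_B e_A$) is absorbed into $\nabs_\mu \S_{A\bu}$, while the normal part of $\D_{e_\mu} e_A$ generates the explicit Ricci-coefficient contractions appearing in the stated formulae.

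I would treat the components in increasing combinatorial complexity. The scalar entries $J_{434}$ and $J_{343}$ only use $\D_3 e_4, \D_4 e_3, \D_3 e_3, \D_4 e_4$; the $\om, \omb$ contributions partially cancel by $\S$-symmetry, leaving the $\nabs_{3,4}$ derivatives and $\eta, \etab$ pairings with $\Ssl_{3,4}$. The mixed entries $J_{4A4}, J_{3A3}, J_{3A4}, J_{4A3}$ require, in addition, $\D_A e_3 = \chib_{AB}e_B + \ze_A e_3$, $\D_A e_4 = \chi_{AB}e_B - \ze_A e_4$ and $\D_3 e_A = \nabs_3 e_A + \eta_A e_3$, $\D_4 e_A = \nabs_4 e_A + \etab_A e_4$, which produce the $\ze, \eta, \etab$ and $\chi, \chib$ terms. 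The purely tangential entries $J_{AB4}, J_{AB3}$ further use $\D_B e_A = \nabs_B e_A + \frac{1}{2}\chi_{AB}e_3 + \frac{1}{2}\chib_{AB}e_4$, which is exactly what produces the trace pieces $\chi_{AB}\S_{34}, \chib_{AB}\S_{44}, \chi_{AB}\S_{33}, \chib_{AB}\S_{34}$.

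The dual identities for ${}^*J_{434}$ and ${}^*J_{343}$ follow from the tangential entries by contracting the two $S$-indices with $\ins^{AB}$. Under this antisymmetrization the symmetric trace pieces $\chi_{AB}\S_{34}$ and $\chib_{AB}\S_{44}$ (and their analogues) drop out, the $\nabs_B \Ssl_{4A}$ and $\nabs_B \Ssl_{3A}$ derivatives yield $-\curls \Ssl_4$ and $-\curls \Ssl_3$ respectively, the $\ze$ contributions reduce to $-\ze \wedge \Ssl_4$ and $+\ze \wedge \Ssl_3$, and the $\chi_{BC}\Ssl_{AC}$ (respectively $\chib_{BC}\Ssl_{AC}$) piece produces $\hch \wedge \Sslh$ (respectively $\hchb \wedge \Sslh$) after splitting $\chi = \hch + \frac{1}{2}\trch\slg$, with the trace part annihilated by $\ins^{AB}$.

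No single step is deep; the only obstacle is bookkeeping. The easiest slips are (i) the partial cancellation of the $\om, \omb$ terms in $J_{434}$ and $J_{343}$, where identical coefficients arise on both sides of the antisymmetrization in $\mu, \nu$ and collapse by the symmetry of $\S$; and (ii) in the tangential case, the careful separation of $\D_B e_A$ into its horizontal part (absorbed into the $\nabs$-derivative) and its normal part (producing the explicit trace terms). A useful cross-check at the end is the manifest antisymmetry $J_{\la\mu\nu} = -J_{\la\nu\mu}$ inherited from the definition.
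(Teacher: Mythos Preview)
Your proposal is correct and is exactly the approach the paper takes: the paper's proof consists of the single sentence ``It follows directly from \eqref{ricciformulas} and \eqref{dfSchouten},'' and your outline simply spells out what that direct computation entails. Your level of detail exceeds the paper's, but the method is identical.
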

\begin{proof}
    It follows directly from \eqref{ricciformulas} and \eqref{dfSchouten}.
\end{proof}
\begin{lem}\label{Jrelations}
    We have the following identities:
    \begin{align*}
        J_{34C}+J_{43C}=2\slg^{AB}J_{ABC},\qquad J_{343}=2\slg^{AB}J_{AB3},\qquad J_{434}=2\slg^{AB}J_{AB4}.
    \end{align*}
\end{lem}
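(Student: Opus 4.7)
All three identities are manifestations of the fact that the Cotton tensor defined in \eqref{dfSchouten} is trace-free on its first two indices, i.e.\ $\g^{\mu\la}J_{\la\mu\nu}=0$ for every $\nu$; my plan is to prove this once at the abstract level and then read off the three identities from its null-frame decomposition.

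For the trace identity itself, I would contract the definition $J_{\la\mu\nu}=\tfrac{1}{2}(\D_\mu\S_{\la\nu}-\D_\nu\S_{\la\mu})$ with $\g^{\mu\la}$ to obtain $\g^{\mu\la}J_{\la\mu\nu}=\tfrac{1}{2}\D^\la\S_{\la\nu}-\tfrac{1}{2}\D_\nu(\Tr\S)$. The twice-contracted second Bianchi identity for the Einstein tensor gives $\D^\mu\Ric_{\mu\nu}=\tfrac{1}{2}\D_\nu\R$, which combined with $\S_{\mu\nu}=\Ric_{\mu\nu}-\tfrac{1}{6}\g_{\mu\nu}\R$ produces $\D^\la\S_{\la\nu}=\tfrac{1}{3}\D_\nu\R$; since the trace of the Schouten tensor is $\Tr\S=\tfrac{1}{3}\R$, the two terms cancel exactly.

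Having established $\g^{\mu\la}J_{\la\mu\nu}=0$, I would decompose it in the null frame using $\g^{AB}=\slg^{AB}$, $\g^{34}=\g^{43}=-\tfrac{1}{2}$, and the vanishing of the mixed $3A$, $4A$ components, to get $\slg^{AB}J_{AB\nu}=\tfrac{1}{2}(J_{34\nu}+J_{43\nu})$. Setting $\nu=C$ is exactly the first claimed identity. Setting $\nu=3$ gives $2\slg^{AB}J_{AB3}=J_{343}+J_{433}$, and $J_{433}=\tfrac{1}{2}(\D_3\S_{43}-\D_3\S_{43})=0$ follows directly from the definition, recovering the second identity; the $\nu=4$ case is entirely analogous, with $J_{344}=0$ in place of $J_{433}$.

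I do not anticipate a genuine obstacle: the only substantive input is the contracted second Bianchi identity for $\Ric$, after which everything reduces to algebraic bookkeeping in the null frame. A direct verification starting from the explicit Ricci-coefficient expressions in Lemma \ref{Jformula} would also work, but would require a tedious cancellation of $\nabs$-derivative terms and Ricci-coefficient contractions against one another, so the trace-free viewpoint is clearly preferable.
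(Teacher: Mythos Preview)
Your proposal is correct and is essentially the same argument as the paper's: both establish the trace identity $\g^{\la\mu}J_{\la\mu\nu}=0$ via the contracted second Bianchi identity applied to the Schouten tensor, and then read off the three claimed relations by setting $\nu=C,3,4$ in the null frame. Your write-up is slightly more explicit about why the terms $J_{433}$ and $J_{344}$ vanish (antisymmetry in the last two indices), which the paper leaves implicit.
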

\begin{proof}
    Notice that, by the contracted second Bianchi identity, we have
    \begin{align*}
        \g^{\la\mu}J_{\la\mu\nu}&=\frac{1}{2}\g^{\la\mu}\left(\D_{\mu}\S_{\la\nu}-\D_{\nu}\S_{\la\mu}\right)\\
        &=\frac{1}{2}\left(\D^\la\left(\Ric_{\la\nu}-\frac{1}{6}\R\g_{\la\nu}\right)-\D_\nu\left(\R-\frac{2}{3}\R\right)\right)\\
        &=\frac{1}{2}\left(\D^\la\Ric_{\la\nu}-\frac{1}{6}\D_\nu\R-\frac{1}{3}\D_\nu\R\right)\\
        &=\frac{1}{2}\D^\la\left(\Ric_{\la\nu}-\frac{1}{2}\R\g_{\la\nu}\right)\\
        &=0.
    \end{align*}
    Taking $\nu=A,3,4$, we obtain
    \begin{align*}
        \slg^{AB}J_{ABC}+\g^{34}J_{34C}+\g^{34}J_{43C}&=0,\\
        \slg^{AB}J_{AB3}+\g^{34}J_{343}+\g^{34}J_{433}&=0,\\
        \slg^{AB}J_{AB4}+\g^{34}J_{344}+\g^{34}J_{434}&=0.
    \end{align*}
    This concludes the proof of Lemma \ref{Jrelations}.
\end{proof}
\begin{cor}\label{J343J434}
    We have the following expressions:
        \begin{align*}
        J_{434}&=\sdivs\Ssl_{4}+(\ze+2\etab)\c\Ssl_4-\nabs_4(\tr\Ssl)-\hch\c\Ssl-\frac{1}{2}\trch(\tr\Ssl+\S_{34})-\frac{1}{2}\trchb\S_{44},\\
        J_{343}&=\sdivs\Ssl_{3}+(2\eta-\ze)\c\Ssl_3-\nabs_3(\tr\Ssl)-\hchb\c\Ssl-\frac{1}{2}\trchb(\tr\Ssl+\S_{34})-\frac{1}{4}\trch\S_{33}.
    \end{align*}
\end{cor}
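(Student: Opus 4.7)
The plan is to bypass a direct calculation of $J_{434}$ and $J_{343}$ from their definitions in \eqref{dfSchouten} and instead exploit the contracted second Bianchi identity already packaged in Lemma \ref{Jrelations}, namely
\begin{equation*}
J_{434} = 2\slg^{AB}J_{AB4}, \qquad J_{343} = 2\slg^{AB}J_{AB3}.
\end{equation*}
This reduces the corollary to a purely algebraic exercise: tracing the explicit expressions for $J_{AB4}$ and $J_{AB3}$ provided by Lemma \ref{Jformula} against $\slg^{AB}$.

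First, I would substitute the formula for $J_{AB4}$ into $2\slg^{AB}J_{AB4}$ and evaluate each of the resulting contractions. The derivative pieces immediately give $\slg^{AB}\nabs_B\S_{A4} = \sdivs\Ssl_4$ and $\slg^{AB}\nabs_4\S_{AB} = \nabs_4(\tr\Ssl)$; the latter uses the compatibility $\nabs_4\slg = 0$ on horizontal tensors, standard in the double null frame. The three pieces linear in $\eta,\etab,\ze$ recombine into $(\ze + 2\etab)\c\Ssl_4$, the extra $\etab\c\Ssl_4$ arising from the symmetrized $\etab_A\S_{B4}$ term. Writing $\chi = \hch + \frac{1}{2}\trch\,\slg$, the term $\slg^{AB}\chi_{BC}\S_{AC}$ splits into $\hch\c\Ssl + \frac{1}{2}\trch\,\tr\Ssl$, while $\slg^{AB}\chi_{AB}\S_{34}$ and $\slg^{AB}\chib_{AB}\S_{44}$ produce $\trch\,\S_{34}$ and $\trchb\,\S_{44}$. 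Grouping the trace contributions $\frac{1}{2}\trch\,\tr\Ssl$ and $\frac{1}{2}\trch\,\S_{34}$ into $\frac{1}{2}\trch(\tr\Ssl + \S_{34})$ then yields exactly the stated identity for $J_{434}$.

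The formula for $J_{343}$ is obtained by the same procedure, starting from $2\slg^{AB}J_{AB3}$ and using the expression for $J_{AB3}$. The computation is structurally identical to the one for $J_{434}$ under the systematic substitutions $e_3 \leftrightarrow e_4$, $\chi \leftrightarrow \chib$, $\eta \leftrightarrow \etab$, together with a sign change of $\ze$; the latter accounts for the combination $(2\eta - \ze)\c\Ssl_3$ appearing in the conclusion.

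There is no serious obstacle to this proof: the entire argument is a careful but routine bookkeeping of null-frame contractions. The only points requiring genuine attention are the metric compatibility identity $\slg^{AB}\nabs_\mu\S_{AB} = \nabs_\mu(\tr\Ssl)$ for $\mu \in \{3,4\}$, and the organization of the multiple contributions into the compact trace-plus-remainder form on the right-hand side of the corollary.
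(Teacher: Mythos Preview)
Your proposal is correct and follows exactly the paper's approach: the paper's proof reads in full ``It follows directly from Lemmas \ref{Jformula} and \ref{Jrelations},'' which is precisely the trace computation you outline. Your expanded bookkeeping of the $2\slg^{AB}J_{AB4}$ and $2\slg^{AB}J_{AB3}$ contractions is accurate (note incidentally that the coefficient $\frac{1}{4}\trch\,\S_{33}$ in the stated $J_{343}$ formula appears to be a typo for $\frac{1}{2}\trch\,\S_{33}$, as your symmetry argument and the explicit contraction both confirm).
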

\begin{proof}
It follows directly from Lemmas \ref{Jformula} and \ref{Jrelations}.
\end{proof}
\begin{prop}\label{BianchiKcsic}
We introduce the following renormalized quantities\footnote{These renormalized quantities are used in \cite{An24,AnAth,AMY,LL}.}
\begin{align}
\begin{split}\label{dfKcsic}
    \bt_A&:=\b_A-\frac{1}{2}\S_{4A},\qquad\qquad\bbt_A:=\bb_A+\frac{1}{2}\S_{3A},\\ \Kc&:=\K-\frac{1}{|u|^2},\qquad\qquad\quad\;\,\sic:=\si-\frac{1}{2}\hch\wedge\hchb,\\
    \trcht&:=\trch-\frac{2}{|u|},\qquad\qquad\, \trchbt:=\trchb+\frac{2}{\Om|u|}.
\end{split}
\end{align}
    We have the following Bianchi equations:
\begin{align*}
\nabs_3\a+\frac{1}{2}\trchb\,\a&=\nabs\hot\bt+4\omb\a-3(\hch\rho+{^*\hch}\si)+(\ze+4\eta)\hot\bt\\
&+\slJ_4+\frac{1}{2}\nabs\hot\Ssl_4+\frac{1}{2}(\ze+4\eta)\hot\Ssl_4,\\
\nabs_4\bt+2\trch\,\bt&=\sdivs\a-2\om\bt+(2\ze+\etab)\c\a\\
&-\frac{1}{2}\nabs\S_{44}-\frac{1}{2}\left(2\ze+\etab\right)\S_{44}-\hch\c\Ssl_4,\\
\nabs_3\b+\trchb\,\b&=\sld_1^*(\Kc,\sic)-\frac{1}{4}\nabs(\trch\trchb)+\frac{1}{2}\nabs(\hch\c\hchb)+\frac{1}{2}{^*\nabs}(\hch\wedge\hchb)+2\omb\b+2\hch\c\bb\\
&+3(\eta\rho+{^*\eta}\si)+J_{3\bu4},\\
\nabs_4\K+\trch\,\K&=-\sdivs\b-(2\etab+\ze)\c\b-\frac{1}{2}\trch\sdivs\etab-\frac{1}{2}\trch|\etab|^2\\
&+\frac{1}{2}\hch\c\nabs\hot\etab+\frac{1}{2}\hch\c\etab\hot\etab+\frac{1}{2}\sdivs\Ssl_4+\frac{1}{2}(2\etab+\ze)\c\Ssl_4,\\
\nabs_4\sic+\frac{3}{2}\trch\sic&=-\curls\b-\frac{1}{2}\hch\wedge\nabs\hot\etab-(\ze+2\etab)\wedge\b-\hch\wedge\etab\hot\etab\\
&+\frac{1}{2}\curls\Ssl_4+\frac{1}{2}\ze\c\Ssl_4-\hch\wedge\Sslh,\\
\nabs_3\Kc+\frac{3}{2}\trchb\Kc&=\sdivs\bb+(2\eta-\ze)\c\bb-\frac{1}{2}\trchb|\eta|^2-\trchbt\frac{1}{|u|^2}+\frac{1}{2}\trchb\mu\\
&+\frac{1}{2}\hchb\c\nabs\hot\eta+\frac{1}{2}\hchb\c\eta\hot\eta+\frac{1}{2}\sdivs\Ssl_3+\frac{1}{2}(2\eta-\ze)\c\Ssl_3,\\
\nabs_3\sic+\frac{3}{2}\trchb\,\sic&=-\curls\bb-\frac{1}{2}\hchb\wedge\nabs\hot\etab-(2\eta-\ze)\wedge\bb-\hchb\wedge\eta\hot\eta\\
&-\frac{1}{2}\curl\Ssl_3+\frac{1}{2}\ze\wedge\Ssl_3+\hchb\wedge\Sslh,\\
\nabs_4\bb+\trch\,\bb&=-\sld_1^*(\Kc,-\sic)+\frac{1}{4}\nabs(\trch\trchb)-\frac{1}{2}\nabs(\hch\c\hchb)+\frac{1}{2}\nabs(\hch\wedge\hchb)+2\om\bbt+2\hchb\c\bt\\
&-3(\etab\rho-{^*\etab}\si)-J_{4\bu3}.
\end{align*}
\end{prop}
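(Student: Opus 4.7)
The plan is to derive each identity from the original Bianchi equations of Proposition \ref{bianchiequations} by substituting the renormalizations in \eqref{dfKcsic}, then simplifying using the Gauss equation \eqref{gauss}, the torsion equation \eqref{torsion}, the Cotton-tensor formulae of Lemma \ref{Jformula} and Corollary \ref{J343J434}, and the null structure equations of Proposition \ref{nulles}.

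I would first handle the pair $(\a,\bt)$. The equation for $\nabs_3\a$ is immediate: writing $\b=\bt+\frac{1}{2}\Ssl_4$ in the original $\nabs_3\a$ Bianchi equation produces the extra terms $\frac{1}{2}\nabs\hot\Ssl_4+\frac{1}{2}(\ze+4\eta)\hot\Ssl_4$. For $\nabs_4\bt=\nabs_4\b-\frac{1}{2}\nabs_4\Ssl_4$, the crucial point is that the $-\frac{1}{2}\nabs_4\S_{4A}$ term appearing inside $J_{4A4}$ (see Lemma \ref{Jformula}) exactly cancels the $-\frac{1}{2}\nabs_4\Ssl_4$ produced by the renormalization. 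This cancellation is the whole purpose of the choice $\bt=\b-\frac{1}{2}\Ssl_4$, which eliminates the most singular matter derivative. The symmetric computation using $J_{3A3}$ yields the corresponding identity for $\nabs_4\bbt$ on the $\bbt$ side.

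Next, I would treat the pair $(\bt,(\Kc,\sic))$. Using the Gauss equation in the form $\rho=-\K+\frac{1}{2}\hch\c\hchb-\frac{1}{4}\trch\trchb+\frac{1}{2}\tr\Ssl$ together with the definition $\sic=\si-\frac{1}{2}\hch\wedge\hchb$, I trade $\rho,\si$ for $\K,\sic$ in the $\nabs_3\b$ Bianchi equation; this converts $\nabs\rho+\dual\nabs\si$ into $\sld_1^*(\Kc,\sic)$ up to the lower-order pieces $-\frac{1}{4}\nabs(\trch\trchb)+\frac{1}{2}\nabs(\hch\c\hchb)+\frac{1}{2}\dual\nabs(\hch\wedge\hchb)$, together with $\frac{1}{2}\nabs(\tr\Ssl)$ which merges into the $J_{3\bu4}$ contribution. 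The $\nabs_4\bb$ identity is derived symmetrically. For the transport equations of $\K$, $\Kc$ and $\sic$, I would differentiate the Gauss equation and the defining identity $\sic=\si-\frac{1}{2}\hch\wedge\hchb$ along $e_4$ or $e_3$ and substitute the null structure equations of Proposition \ref{nulles} for $\trch,\trchb,\hch,\hchb$, together with the Bianchi equations for $\nabs_4\rho,\nabs_4\si,\nabs_3\rho,\nabs_3\si$. In the $e_3$ direction the derivative $e_3(|u|^{-2})$ combines with $\frac{3}{2}\trchb/|u|^2$, via the identity $\trchbt=\trchb+2(\Om|u|)^{-1}$ and the mass aspect function \eqref{murenor}, to produce the stated $-\trchbt/|u|^2+\frac{1}{2}\trchb\,\mu$ terms.

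The main obstacle, and the real substance of the proof, is the meticulous bookkeeping of matter-field contributions, which enter simultaneously through the Cotton tensor in the original Bianchi equations, through $\Ssl_3,\Ssl_4,\Sslh,\tr\Ssl$ in the null structure equations used for the substitution, and through $\tr\Ssl$ in the Gauss equation. The renormalizations \eqref{dfKcsic} are tailored precisely so that after all these cancellations no $\nabs_3$ or $\nabs_4$ derivative of the Schouten tensor survives on the right-hand side: every matter contribution reduces either to first-order angular expressions such as $\nabs\hot\Ssl_4,\sdivs\Ssl_3,\curls\Ssl_3$, or to algebraic contractions of $\S$ with $\hch,\hchb,\ze,\eta,\etab$, matching the identities stated.
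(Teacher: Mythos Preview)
Your proposal is correct and matches the standard derivation carried out in the references the paper cites; the paper itself gives no details beyond ``See (2.14)--(2.19) in \cite{An24} and Section 2.2 in \cite{LL}.'' Your outline---substituting the renormalizations \eqref{dfKcsic} into the raw Bianchi equations of Proposition~\ref{bianchiequations}, using Lemma~\ref{Jformula} to cancel the $\nabs_4\Ssl_4$ and $\nabs_3\Ssl_3$ terms via the Cotton tensor, and invoking the Gauss equation \eqref{gauss} together with the null structure equations of Proposition~\ref{nulles} to rewrite $\rho,\si$ in terms of $\K,\sic$---is exactly the mechanism behind these identities.
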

\begin{proof}
    See (2.14)--(2.19) in \cite{An24} and Section 2.2 in \cite{LL}.
\end{proof}
\begin{rk}\label{noaark}
By performing renormalized Bianchi equations, the estimate for $\a$ completely decouples. At the same time, we also do not need the estimate for $\aa$, which is hard to control based on the presence of $\S_{33}$ in the R.H.S. of the equations of $\nabs_3\bb$. See also \cite{AnLuk} for this idea of eliminating $\aa$ from the system.
\end{rk}
\subsection{Maxwell equations}\label{sec-Maxwell}
\begin{prop}\label{Maxwellequations}
The Maxwell equations take the following form:
\begin{align*}
\nabs_3\bF+\frac{1}{2}\trchb\bF&=\nabs\rhoF-\dual\nabs\siF+2\omb\bF+2\rhoF\eta-2\siF\dual\eta+\hch\c\bbF -2\ef\Im\left(\psi\Psisl^\dag\right),\\
\nabs_4\rhoF+\trch\rhoF&=\sdivs\bF+(\ze+\etab)\c\bF+2\ef\Im\left(\psi \Psi_4^\dag\right),\\
\nabs_4\siF+\trch\siF&=\curls\bF+(\ze+\etab)\wedge\bF,\\
\nabs_3\rhoF+\trchb\rhoF&=-\sdivs\bbF-(-\ze+\eta)\c\bbF-2\ef\Im\left(\psi\Psit_3^\dag\right),\\
\nabs_3\siF+\trchb\siF&=\curls\bbF+(-\ze+\eta)\wedge\bbF,\\
\nabs_4\bbF+\frac{1}{2}\trch\bbF&=-\nabs\rhoF-\dual\nabs\siF+2\om\bbF-2\rhoF\etab-2\siF\dual\etab+\hchb\c\bF -2\ef\Im\left(\psi\Psisl^\dag\right).
\end{align*}
\end{prop}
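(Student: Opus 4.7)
The plan is to derive the six null Maxwell equations from the two tensorial identities satisfied by the Faraday tensor $\F$: the inhomogeneous Maxwell equation
\begin{equation*}
\D^\mu \F_{\mu\nu} = -2\ef \Im\left(\psi (D_\nu\psi)^\dag\right),
\end{equation*}
and the Bianchi-type identity $\D_\a \F_{\b\ga} + \D_\b \F_{\ga\a} + \D_\ga \F_{\a\b} = 0$, which is automatic from $\F = d\A$. First, I would expand each covariant derivative $\D_\mu\F_{\nu\la}$ in the null frame $(e_1,e_2,e_3,e_4)$ through the Ricci formulae \eqref{ricciformulas}, writing
\begin{equation*}
\D_\mu \F_{\nu\la} = e_\mu(\F_{\nu\la}) - \F(\D_\mu e_\nu, e_\la) - \F(e_\nu, \D_\mu e_\la),
\end{equation*}
and then substituting the null decomposition \eqref{defF}, namely $\F(e_3,e_4) = 2\rhoF$, $\F(e_A,e_4) = \bF_A$, $\F(e_A,e_3) = \bbF_A$, and $\F(e_A,e_B) = -\slep_{AB}\siF$. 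This reduces the derivation to a bookkeeping exercise in Ricci coefficients.

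Next, I specialize the inhomogeneous Maxwell equation to the three choices $\nu = e_4$, $\nu = e_3$, and $\nu = e_A$, by exploiting the identity
\begin{equation*}
\g^{\mu\nu}\D_\mu\F_{\nu\la} = -\tfrac{1}{2}\D_4 \F_{3\la} - \tfrac{1}{2}\D_3 \F_{4\la} + \slg^{AB}\D_A \F_{B\la},
\end{equation*}
coming from $\g^{34}=-\tfrac12$ in the null frame. The cases $\nu = e_4$ and $\nu = e_3$ directly produce the transport equations for $\rhoF$ along $e_4$ and along $e_3$, together with the source terms $2\ef\Im(\psi\Psi_4^\dag)$ and $-2\ef\Im(\psi\Psit_3^\dag)$, where $\Psit_3$ is the renormalized version of $\Psi_3$ introduced in Section \ref{secFintro} and absorbs borderline contributions. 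The case $\nu = e_A$ supplies the $e_A$-components of $\D^\mu\F_{\mu A}$, which, after combining with the appropriate piece of the Bianchi identity, gives the $\nabs_3\bF$ and $\nabs_4\bbF$ transport equations, each carrying the angular source $-2\ef\Im(\psi\Psisl^\dag)$.

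Finally, I would expand the Bianchi identity on the cyclic triples $(e_3,e_A,e_B)$, $(e_4,e_A,e_B)$, and $(e_3,e_4,e_A)$. The triples with a pair of angular indices isolate the transport equations for $\siF$ along $e_3$ and along $e_4$ (with the sign inherited from $\F_{AB}=-\slep_{AB}\siF$), while the triple $(e_3,e_4,e_A)$ supplies the $\nabs\rhoF$ and $\dual\nabs\siF$ contributions that complete the $\nabs_3\bF$ and $\nabs_4\bbF$ equations. The main technical obstacle is the careful bookkeeping of all connection coefficients entering via the Ricci formulae—in particular the couplings $\chi\c\bbF$, $\chib\c\bF$, $\rhoF\eta$, $\siF\dual\eta$, and their $e_3\leftrightarrow e_4$ analogues—and the consistent tracking of Hodge-dual sign conventions using $\dual\nabs_A f = \ins_{AB}\nabs_B f$. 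The computation parallels the vacuum-Maxwell derivation (see, e.g., the setup in \cite{Yu}), with the only new inputs being the source terms of magnitude $\ef$ produced by the coupling to the charged scalar field via the gauge covariant derivative $D\psi$.
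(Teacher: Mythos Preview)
Your proposal is correct and matches the paper's approach in Appendix \ref{AppendixM}: project the inhomogeneous Maxwell equation onto $\nu\in\{3,4,A\}$ (Lemma \ref{lem:DivF}) and the Bianchi identity $d\F=0$ onto the relevant null triples (Lemma \ref{lem:CurlF}), then add and subtract the two $e_A$-equations to decouple $\nabs_3\bF$ from $\nabs_4\bbF$. One minor correction: the appearance of $\Psit_3$ in the $\nabs_3\rhoF$-equation is not absorbing a borderline contribution here, since $\Psit_3-\Psi_3=-\tfrac{1}{\Om|u|}\psi$ and $\Im(\psi\psi^\dag)=0$ force $\Im(\psi\Psit_3^\dag)=\Im(\psi\Psi_3^\dag)$ identically.
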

\begin{proof}
    See Appendix \ref{AppendixM}.
\end{proof}
\begin{cor}\label{corteukolsky}
    We have the following Teukolsky equation for $\bF$:
    \begin{align*}
        \left(\nabs_3+\frac{1}{2}\trchb\right)(\nabs_4\bF)&=\nabs(\sdivs\bF)-\dual\nabs(\curls\bF)\\
        &+\chi\c\nabs\rhoF+\nabs\left(-\trch\rhoF+(\ze+\etab)\c\bF\right)\\
        &-\chi\c\dual\nabs\siF-\dual\nabs\left(-\trch\siF+(\ze+\etab)\wedge\bF\right)\\
        &+\nabs_4\left(2\omb \bF +2\rhoF \eta -2\siF\dual\eta +\hch\c\bbF)\right)\\
        &-2\om \nabs_3\bF +2\omb \nabs_4\bF+4(\ze\c\nabs)\bF\\
        &+2(\etab\c\bF)\eta -2(\eta\c\bF)\etab +2\si \dual\bF\\
        &-2\ef\nabs_4(\Im(\psi\Psisl^\dag))+2\ef\nabs(\Im(\psi \Psi_4^\dag)),\\
        \left(\nabs_4+\frac{1}{2} \trch\right)\sdivs\bF&=\sdivs(\nabs_4\bF)-\hch\c\nabs\bF+\trch(\etab\c\bF)-\etab\c\chi\c\bF+\b\c\bF,\\
        \left(\nabs_4+\frac{1}{2}\trch\right)\curls\bF&=\curls(\nabs_4\bF)-\hch\c\nabs\dual\bF -\dual\etab\c\hch\c \bF-\dual\b\c\bF
    \end{align*}
\end{cor}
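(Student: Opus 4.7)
My plan is to derive the three identities directly by combining the Maxwell equations in Proposition \ref{Maxwellequations} with the commutation identities in Lemma \ref{comm}. None of the steps is conceptually delicate; the difficulty is purely algebraic bookkeeping.

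For the first identity, I would start from the $\nabs_3$-Maxwell equation for $\bF$ and apply $\nabs_4$ to both sides. On the left I use $[\nabs_3,\nabs_4]$ from Lemma \ref{comm} to rewrite $\nabs_4\nabs_3\bF$ as $\nabs_3\nabs_4\bF$ plus commutator terms, which produce precisely $2\omb\nabs_4\bF - 2\om\nabs_3\bF + 4(\ze\c\nabs)\bF + 2(\etab\c\bF)\eta - 2(\eta\c\bF)\etab + 2\si\dual\bF$. The additional $\frac{1}{2}(\nabs_4\trchb)\bF$ that appears from differentiating the $\frac{1}{2}\trchb\bF$ term must be cancelled against the $\nabs_4\omb$ contribution inside $\nabs_4(2\omb\bF)$, using the $\nabs_4\omb$ equation from Proposition \ref{nulles}; for instance, the $-\rho\bF$ piece of $-\frac{1}{2}(\nabs_4\trchb)\bF$ absorbs the $+\rho\bF$ piece of $2(\nabs_4\omb)\bF$, leaving the packaged expression $\nabs_4(2\omb\bF + 2\rhoF\eta - 2\siF\dual\eta + \hch\c\bbF)$ displayed in the claim. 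On the right, I swap $\nabs_4$ past the horizontal derivative acting on the scalars $\rhoF$ and $\siF$, which by Lemma \ref{comm} (for $k=0$) contributes the $\chi\c\nabs\rhoF$ and $-\chi\c\dual\nabs\siF$ terms, and then substitute the $\nabs_4$-Maxwell equations
\begin{align*}
\nabs_4\rhoF &= \sdivs\bF - \trch\rhoF + (\ze+\etab)\c\bF + 2\ef\Im(\psi\Psi_4^\dag),\\
\nabs_4\siF &= \curls\bF - \trch\siF + (\ze+\etab)\wedge\bF,
\end{align*}
into $\nabs(\nabs_4\rhoF)$ and $\dual\nabs(\nabs_4\siF)$. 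This produces the principal $\nabs\sdivs\bF - \dual\nabs\curls\bF$ together with the structured remainders $\nabs(-\trch\rhoF+(\ze+\etab)\c\bF)$, $-\dual\nabs(-\trch\siF+(\ze+\etab)\wedge\bF)$, and the matter source $2\ef\nabs(\Im(\psi\Psi_4^\dag))$. The undifferentiated tail $-2\ef\Im(\psi\Psisl^\dag)$ of the original right-hand side contributes $-2\ef\nabs_4(\Im(\psi\Psisl^\dag))$.

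The second and third identities are direct commutator computations. Applying $\nabs_4$ to $\sdivs\bF = \slg^{AB}\nabs_A\bF_B$ and invoking Lemma \ref{comm} with $k=1$,
\[
[\Om\nabs_4,\nabs_B]\bF_A = -\Om\chi_{BC}\nabs_C\bF_A + \Om\left(\chi_{AB}\etab_C - \chi_{BC}\etab_A + \ins_{AC}\dual\b_B\right)\bF^C,
\]
I contract with $\slg^{AB}$ and decompose $\chi = \frac{1}{2}\trch\,\slg + \hch$. The $\frac{1}{2}\trch$ piece of $-\chi_{BC}\nabs_C\bF_A$ combines with the left-hand side to produce $(\nabs_4+\frac{1}{2}\trch)\sdivs\bF$, and the traceless piece contributes $-\hch\c\nabs\bF$; the remaining index contractions of $\chi_{AB}\etab_C - \chi_{BC}\etab_A + \ins_{AC}\dual\b_B$ yield exactly $\trch(\etab\c\bF) - \etab\c\chi\c\bF + \b\c\bF$. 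The curl identity is derived identically, replacing $\slg^{AB}$ by $\ins^{AB}$ in the contraction; the $\ins$ converts $\nabs\bF$ into $\nabs\dual\bF$ and $\b$ into $\dual\b$, giving the stated $-\hch\c\nabs\dual\bF - \dual\etab\c\hch\c\bF - \dual\b\c\bF$.

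The main obstacle is bookkeeping: one has to track the cancellation between $-\frac{1}{2}(\nabs_4\trchb)\bF$ (expanded via Proposition \ref{nulles}) and the undifferentiated $\nabs_4$-derivatives on the right-hand side so that the final expression collapses into the specific packaged form displayed in the Corollary, and one must distinguish the scalar vs.~vector cases of Lemma \ref{comm} when swapping $\nabs_4$ with $\nabs$. No new analytic input is needed beyond Proposition \ref{Maxwellequations}, Proposition \ref{nulles}, and Lemma \ref{comm}.
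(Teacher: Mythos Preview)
Your approach is the same as the paper's: the proof there reads in full ``This follows from Propositions \ref{comm} and \ref{Maxwellequations},'' and your plan of applying $\nabs_4$ to the $\nabs_3\bF$--Maxwell equation, commuting via Lemma \ref{comm}, and substituting the $\nabs_4\rhoF$, $\nabs_4\siF$ equations is exactly what is intended. The second and third identities are likewise pure applications of Lemma \ref{comm} with $k=1$, contracted against $\slg^{AB}$ and $\ins^{AB}$.

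One point of confusion: your treatment of the extra $\frac{1}{2}(\nabs_4\trchb)\bF$ term is not right. You claim it cancels against the $2(\nabs_4\omb)\bF$ piece of the package $\nabs_4(2\omb\bF+\ldots)$, ``leaving the packaged expression'' on the right-hand side. But the package is kept \emph{intact} in the displayed identity, so you cannot simultaneously extract $2(\nabs_4\omb)\bF$ to perform a cancellation and then claim the full package remains. Expanding both $\nabs_4\trchb$ and $\nabs_4\omb$ via Proposition \ref{nulles} shows that while the $\rho\bF$ pieces do cancel, many other terms (e.g.\ $\trch\trchb\bF$, $\sdivs\etab\,\bF$, $\om\omb\bF$) do not, so the combination does not collapse back to the package. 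The honest statement is that $-\frac{1}{2}(\nabs_4\trchb)\bF$, together with the $\frac{\eta+\etab}{2}\nabs_4\rhoF$--type contributions coming from the $\nabs\Om$ corrections when unpacking $[\Om\nabs_4,\nabs]$, are additional lower-order terms of schematic type $\Gab\c\Gab$. Whether the paper's displayed formula absorbs them implicitly or simply omits them is immaterial: the Corollary is only ever used in its schematic form in Proposition \ref{Maxwell}, where all such terms land in $(\Gab\c\Gab)^{(1)}$. In particular, Proposition \ref{nulles} is not needed as an input, consistent with the paper citing only Lemma \ref{comm} and Proposition \ref{Maxwellequations}.
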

\begin{proof}
    This follows from Propositions \ref{comm} and \ref{Maxwellequations}.
\end{proof}
\subsection{Electromagnetic Potential equations}
\begin{prop}\label{lem:FdAeq}
Let $U$, $\Ub$ and $\slA$ defined in \eqref{dfUUbAsl}. Then, the null components of $\F$ can be expressed in terms of the null components of $\A$ as follows:
\begin{align*}
    \rhoF&=\frac{1}{2}\nabs_3U-\frac{1}{2}\nabs_4\Ub-\omb U+\om\Ub +(\etab-\eta)\c \Asl,\\
    \siF&=-\curls\Asl,\\
    \bF&=\nabs U-\nabs_4\Asl-\chi\c\Asl+U(\ze+\etab),\\
    \bbF&=\nabs \Ub-\nabs_3\Asl-\chib\c\Asl+\Ub(-\ze+\eta).
\end{align*}
\end{prop}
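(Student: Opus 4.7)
The strategy is to start from the defining relation $\F_{\mu\nu}=\D_\mu\A_\nu-\D_\nu\A_\mu$, written in frame form as
\[
\F(e_\mu,e_\nu)=e_\mu(\A(e_\nu))-\A(\D_{e_\mu}e_\nu)-e_\nu(\A(e_\mu))+\A(\D_{e_\nu}e_\mu),
\]
and then to substitute the Ricci formulas \eqref{ricciformulas} for the connection coefficients of the null frame. Each of the four identities is obtained by plugging in the appropriate pair of frame vectors and rearranging. Throughout, I would use that $U=\A(e_4)$ and $\Ub=\A(e_3)$ are scalar functions on $\M$, so $\nabs_4 U=e_4(U)$ and $\nabs_3\Ub=e_3(\Ub)$, while $\Asl$ is horizontal, so that $(\nabs_4\Asl)_A=e_4(\Asl_A)-\Asl(\nabs_4 e_A)$ and $(\nabs_3\Asl)_A=e_3(\Asl_A)-\Asl(\nabs_3 e_A)$.

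For $\bF_A=\F(e_A,e_4)$, I would insert $\D_Ae_4=\chi_{AB}e_B-\ze_Ae_4$ and $\D_4e_A=\nabs_4 e_A+\etab_Ae_4+\xi_Ae_3$. The key step is that the term $\Asl(\nabs_4 e_A)$ coming from $\A(\D_4e_A)$ cancels the corresponding term hidden in $e_4(\Asl_A)$, leaving exactly $-(\nabs_4\Asl)_A$. Combined with $\xi=0$ from \eqref{nullidentities}, this produces the stated formula $\bF=\nabs U-\nabs_4\Asl-\chi\c\Asl+U(\ze+\etab)$. The computation for $\bbF$ is entirely analogous, using $\D_Ae_3=\chib_{AB}e_B+\ze_Ae_3$, $\D_3e_A=\nabs_3 e_A+\eta_Ae_3+\xib_Ae_4$, and $\xib=0$.

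For $\rhoF=\frac{1}{2}\F(e_3,e_4)$, I would use $\D_3e_4=2\omb e_4+2\eta_Be_B$ and $\D_4e_3=2\om e_3+2\etab_Be_B$; the substitution gives $\A(\D_3e_4)=2\omb U+2\eta\c\Asl$ and $\A(\D_4e_3)=2\om\Ub+2\etab\c\Asl$, which combine to produce the coefficients $-\omb U$, $\om\Ub$, and $(\etab-\eta)\c\Asl$ together with the scalar derivatives $\frac{1}{2}\nabs_3 U-\frac{1}{2}\nabs_4\Ub$. For $\siF$, I would use $\siF=-\frac{1}{2}\slep^{AB}\F(e_A,e_B)$, which follows from the identity $\F(e_A,e_B)=-\slep_{AB}\siF$ stated in Section~\ref{secmatterfields}. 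The symmetric Ricci coefficients $\chi_{AB}$ and $\chib_{AB}$ arising in $\A(\D_Ae_B)-\A(\D_Be_A)$ drop out under contraction with the antisymmetric $\slep^{AB}$, and what remains equals $2\slep^{AB}\nabs_A\Asl_B=2\curls\Asl$, yielding $\siF=-\curls\Asl$.

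The proof is computationally routine, so the main obstacle is purely bookkeeping: correctly handling the horizontal projection when converting $\D_3$ and $\D_4$ derivatives of horizontal quantities into $\nabs_3,\nabs_4$, and tracking the Hodge-duality sign for $\siF$. Once these conventions are pinned down, all four identities follow by direct expansion from \eqref{ricciformulas} together with $\xi=\xib=0$.
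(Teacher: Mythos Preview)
Your proposal is correct and follows essentially the same approach as the paper: both start from $\F_{\mu\nu}=(\D_\mu\A)(e_\nu)-(\D_\nu\A)(e_\mu)$ and substitute the Ricci formulas \eqref{ricciformulas} for $\D_3e_4$, $\D_4e_3$, $\D_Ae_4$, $\D_4e_A$, etc., then identify the horizontal projections to recover $\nabs_3$, $\nabs_4$, and $\nabs$. The only cosmetic difference is that for $\siF$ the paper invokes the symmetry of Christoffel symbols to reduce $\F_{BC}$ directly to $\nabs_B\Asl_C-\nabs_C\Asl_B$, whereas you observe that the symmetric pieces $\chi_{AB},\chib_{AB}$ drop out under contraction with $\slep^{AB}$; these are the same observation phrased slightly differently.
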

\begin{proof}
See Appendix \ref{AppendixA}
\end{proof}
\begin{cor}\label{cordA=F}
In the case of $U=0$, we have
\begin{align*}
    \rhoF&=-\frac{1}{2}\nabs_4\Ub+\om\Ub+(\etab-\eta)\c\Asl,\\
    \siF&=-\curls\Asl,\\
    \bF&=-\nabs_4\Asl-\frac{1}{2}\trch\Asl-\hch\c\Asl,\\
    \bbF&=\nabs \Ub-\nabs_3\Asl-\frac{1}{2}\trchb\Asl-\hchb\c\Asl+\Ub(-\ze+\eta).
\end{align*}
\end{cor}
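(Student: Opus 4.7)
The plan is to derive this corollary as an immediate consequence of Proposition \ref{lem:FdAeq} by specializing to the gauge $U=0$. There is essentially no new content to prove; the work is purely bookkeeping and trace/traceless decomposition.

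First I would substitute $U=0$ into each of the four identities of Proposition \ref{lem:FdAeq}. For $\rhoF$, the terms $\frac{1}{2}\nabs_3 U$ and $-\omb U$ vanish, leaving exactly the stated expression. For $\siF$, the formula does not involve $U$ at all, so it carries over unchanged. For $\bF$, the terms $\nabs U$ and $U(\ze+\etab)$ drop, leaving $\bF=-\nabs_4\Asl-\chi\c\Asl$. For $\bbF$, no $U$-terms are present in the formula of Proposition \ref{lem:FdAeq}, so it carries over as $\bbF=\nabs\Ub-\nabs_3\Asl-\chib\c\Asl+\Ub(-\ze+\eta)$.

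The remaining step is to replace the full null second fundamental forms $\chi$ and $\chib$ by their trace/traceless splittings. Using the decompositions
\begin{align*}
\chi_{AB}=\frac{1}{2}\trch\,\slg_{AB}+\hch_{AB},\qquad \chib_{AB}=\frac{1}{2}\trchb\,\slg_{AB}+\hchb_{AB},
\end{align*}
together with the convention $(\chi\c\Asl)_A=\chi_{AB}\Asl^B$ and likewise for $\chib$, one obtains
\begin{align*}
\chi\c\Asl=\frac{1}{2}\trch\,\Asl+\hch\c\Asl,\qquad \chib\c\Asl=\frac{1}{2}\trchb\,\Asl+\hchb\c\Asl,
\end{align*}
and substituting these into the expressions for $\bF$ and $\bbF$ yields precisely the stated identities.

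Since the proof is purely algebraic and no estimates or commutator identities are required, there is no real obstacle. The only care point is to track signs and to verify that the conventional splitting of $\chi$ into trace and traceless parts is the one the authors have fixed in Section \ref{sec-nullstr}; given the definitions in \eqref{defga} and the formulas $\trch=\de^{AB}\chi_{AB}$, $\hch_{AB}=\chi_{AB}-\frac{1}{2}\de_{AB}\trch$, this is immediate.
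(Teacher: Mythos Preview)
Your proposal is correct and takes essentially the same approach as the paper, which simply states that the corollary follows directly from Proposition~\ref{lem:FdAeq}. You have merely spelled out the substitution $U=0$ and the trace/traceless decomposition of $\chi,\chib$ that the paper leaves implicit.
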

\begin{proof}
    It follows directly from Proposition \ref{lem:FdAeq}.
\end{proof}
\subsection{Wave equation for complex scalar field}
\begin{prop}\label{waveequation}
    We have the following equations for the complex scalar field:
    \begin{align*}
        \nabs_3(\Psi_4)+\frac{1}{2}\trchb\Psi_4-2\omb\Psi_4+i\ef \Ub\Psi_4&=\sdivs\Psisl+2\eta\c\Psisl+i\ef \Asl\c\Psisl-\frac{1}{2}\trch\Psi_3+i\ef\rhoF\psi,\\
        \nabs_4\Psisl+\frac{1}{2}\trch \Psisl+\hch\c\Psisl+i\ef U\Psisl&=\nabs(\Psi_4)+\frac{\eta+\etab}{2}\Psi_4+i\ef\Asl\Psi_4-i\ef \bF \psi,\\
        \nabs_3\Psisl+\frac{1}{2}\trchb \Psisl+\hchb\c\Psisl+i\ef \Ub\Psisl&=\nabs(\Psi_3)+\frac{\eta+\etab}{2}\Psi_3+i\ef \Asl \Psi_3-i\ef \bbF \psi,\\
        \nabs_4(\Psi_3)+\frac{1}{2}\trch\Psi_3-2\om\Psi_3+i\ef U\Psi_3&=\sdivs\Psisl+2\etab\c\Psisl+i\ef\Asl\c\Psisl-\frac{1}{2}\trchb\Psi_4-i\ef\rhoF\psi.
    \end{align*}
    We also have
    \begin{align*}
        \nabs_3\Psisl+\trchb \Psisl+\hchb\c\Psisl+i\ef \Ub\Psisl&=\nabs(\Psit_3)+\frac{\eta+\etab}{2}\Psit_3+\frac{1}{2}\trchbt\Psisl+i\ef \Asl \Psit_3-i\ef \bbF \psi,\\
        \nabs_4(\Psit_3)+\frac{1}{2}\trch\Psit_3-2\om\Psit_3+i\ef U\Psit_3&=\sdivs\Psisl+2\etab\c\Psisl+i\ef\Asl\c\Psisl-\frac{1}{2}\trchbt\Psi_4-\frac{\trch}{2\Om|u|}\psi-i\ef\rhoF\psi.
    \end{align*}
    where we denoted
    \begin{equation}\label{dfPsit3}
    \Psit_3:=|u|^{-1}e_3(|u|\psi)+i\ef\Ub\psi.
    \end{equation}
\end{prop}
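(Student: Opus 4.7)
My plan is to derive the six equations in two stages: first the four basic equations by combining the gauge-covariant wave equation with the abelian curvature identity $[D_\mu, D_\nu]\psi = i\ef\F_{\mu\nu}\psi$, and then the two renormalized equations by a direct substitution in the $e_3$-direction. I would view $D^2\psi$ as the (generally non-symmetric) $(0,2)$-tensor $(D^2\psi)(X, Y) := X(D_Y\psi) + i\ef\A(X) D_Y\psi - D_{\D_X Y}\psi$. Since $D = \D + i\ef\A$, $\F = d\A$, and $[\D_\mu, \D_\nu]\psi = 0$ on scalars, its antisymmetric part satisfies $(D^2\psi)(X, Y) - (D^2\psi)(Y, X) = i\ef\F(X, Y)\psi$. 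The wave equation $\g^{\mu\nu}D_\mu D_\nu\psi = 0$, decomposed in the null frame using $\g^{34}=\g^{43}=-\frac{1}{2}$ and $\g^{AB} = \delta^{AB}$, reads
\begin{equation*}
-\frac{1}{2}\left[(D^2\psi)(e_3, e_4) + (D^2\psi)(e_4, e_3)\right] + \delta^{AB}(D^2\psi)(e_A, e_B) = 0.
\end{equation*}

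The next step is to expand each null component using the Ricci formulas \eqref{ricciformulas}. For example, $\D_3 e_4 = 2\omb e_4 + 2\eta_B e_B$ gives
\begin{equation*}
(D^2\psi)(e_3, e_4) = \nabs_3\Psi_4 + i\ef\Ub\Psi_4 - 2\omb\Psi_4 - 2\eta\c\Psisl,
\end{equation*}
and analogously for the other mixed term. Using $\D_A e_B = \nabs_A e_B + \frac{1}{2}\chi_{AB}e_3 + \frac{1}{2}\chib_{AB}e_4$, the horizontal trace reduces to $\sdivs\Psisl + i\ef\Asl\c\Psisl - \frac{1}{2}\trch\Psi_3 - \frac{1}{2}\trchb\Psi_4$. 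Adding the wave equation to the antisymmetric identity at $(e_3, e_4)$ (with $\F(e_3, e_4) = 2\rhoF$) isolates $\nabs_3\Psi_4$ and produces the first equation; subtracting produces the fourth. The antisymmetric identity at $(e_4, e_A)$, using $\F(e_4, e_A) = -\bF_A$, $\D_4 e_A = \nabs_4 e_A + \etab_A e_4$, $\D_A e_4 = \chi_{AB}e_B - \ze_A e_4$, and the identity $\etab + \ze = \frac{1}{2}(\eta + \etab)$ from \eqref{nullidentities}, then yields the second equation; the analogous identity at $(e_3, e_A)$ yields the third.

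To obtain the two renormalized equations, I would substitute $\Psi_3 = \Psit_3 - |u|^{-1}e_3(|u|)\psi$, which is immediate from \eqref{dfPsit3}. A brief computation from $L = -\grad u$, $\g(L, \Lb) = -2\Om^{-2}$, and $e_3 = \Om\Lb$ yields $e_3(u) = 2/\Om$, so $|u|^{-1}e_3(|u|)$ matches the difference between $\trchb$ and $\trchbt$ fixed by \eqref{dfKcsic}. Substituting this expression for $\Psi_3$ into the third and fourth basic equations and simplifying using $e_4(|u|) = 0$ (since $e_4(u)=0$) together with $e_4(\Om) = -2\om\Om$ (from $\om = -\frac{1}{2}\nabs_4\log\Om$ in \eqref{nullidentities}) produces the two renormalized equations.

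The main technical obstacle lies precisely in this final step. One needs to verify that the $\nabs_4$-derivative of the correction term $\Om^{-1}|u|^{-1}\psi$, combined with the lower-order contributions of $-2\om$, $\frac{1}{2}\trch$, and $i\ef U$ acting on the same correction, cancels exactly against the renormalization of $\trchb$ into $\trchbt$, so as to leave the source $-\frac{\trch}{2\Om|u|}\psi$ on the right-hand side with the stated coefficient. This delicate cancellation is the technical heart of the derivation; everything else is a routine unfolding of the gauge-covariant wave equation in the null frame.
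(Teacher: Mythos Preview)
Your approach is essentially the paper's: combine $\g^{\mu\nu}D_\mu D_\nu\psi=0$ with the curvature identity $D_\mu D_\nu\psi-D_\nu D_\mu\psi=i\ef\F_{\mu\nu}\psi$, expand each component via the Ricci formulas \eqref{ricciformulas}, and then substitute $\Psi_3=\Psit_3+(\Om|u|)^{-1}\psi$ for the two renormalized equations; the only cosmetic difference is that the appendix derives the $\nabs_4\Psisl$ equation by unpacking $\Om\nabs_4(\nabs\psi+i\ef\Asl\psi)$ with the commutator of Lemma~\ref{comm} and Proposition~\ref{lem:FdAeq}, which is exactly your antisymmetric identity at $(e_4,e_A)$. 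One numerical caution: the appendix uses $e_3(|u|)=-\Om^{-1}$ (from $\underline{N}=\Om e_3=\pa_u+\bbb$), not $-2\Om^{-1}$, so that $\Psit_3-\Psi_3=-(\Om|u|)^{-1}\psi$; with your value $e_3(u)=2/\Om$ taken from \eqref{deflapse} you would double this correction and mismatch the $\tfrac12\trchbt$ and $\tfrac{\trch}{2\Om|u|}$ coefficients in the last two displayed equations.
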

\begin{proof}
See Appendix \ref{AppendixPsi}.
\end{proof}
\begin{rk}\label{AAGmethod}
The motivation for introducing $\Psit_3$ in \eqref{dfPsit3} is to eliminate the radial solution $|u|^{-1}$ of the wave equation. Specifically, $\Psit_3$ exhibits better decay in $|u|$ compared to $\Psi_3$. Similar techniques are used in \cite{AAG}, where the radial field $r^{-1}$ is eliminated using the vector field $re_4(r\c)$.
\end{rk}
\subsection{Hawking mass and electric charge}
We recall the following definitions of Hawking mass and electric charge.
\begin{df}\label{Hawkingmass}
Let $S$ be a $2$--sphere. The Hawking mass of $S$ is defined by
\begin{align}\label{Hawkingdf}
    \frac{2m(S)}{r}:=1+\frac{1}{16\pi}\int_{S}\trch\trchb\, d\slg.
\end{align}
\end{df}
\begin{df}\label{dfcharge}
Let $S$ be a $2$--sphere. The electric charge enclosed by $S$ is defined by
    \begin{align*}
        Q(S):=\frac{1}{4\pi}\int_{S} \rhoF\, d\slg.
    \end{align*}
\end{df}
\subsection{Signatures and scale invariant norms}\label{secsignatures}
As taken in \cite{Chr}, we prescribe $\hch$ such that
\begin{align*}
    |\hch|\simeq \frac{\af}{|u_\infty|}\qquad\quad \mbox{ along }\; H_{u_\infty}^{(0,1)}.
\end{align*}
Following the same procedures in Chapter 2 of \cite{Chr}, we obtain the following estimates on $H_{u_\infty}^{(0,1)}$:
\begin{align*}
    |\a|&\les\frac{\af}{|u_\infty|},\qquad |\b|\les\frac{\af}{|u_\infty|^2}, \qquad |\rho,\si|\les\frac{a}{|u_\infty|^3},\qquad |\bb|\les\frac{a}{|u_\infty|^4},\qquad |\aa|\les\frac{a^\frac{3}{2}}{|u_\infty|^5},\\
    |\om,\trch|&\les\frac{1}{|u_\infty|},\qquad\qquad\qquad\qquad\,\,|\trchbt,\omb|\les\frac{a}{|u_\infty|^3},\qquad\qquad\qquad\quad\;\,\,|\eta,\etab,\hchb|\les\frac{\af}{|u_\infty|^2}.
\end{align*}
We also assume that on $H_{u_\infty}^{(0,1)}$:
\begin{align*}
    |\nabs_4\bF,\bF,\Psi_4,\psi|\simeq\frac{\af}{|u_\infty|}.
\end{align*}
Applying Propositions \ref{Maxwellequations} and \ref{waveequation} and Corollary \ref{cordA=F}, proceeding as in Chapter 2 of \cite{Chr}, we obtain the following estimates on $H_{u_\infty}^{(0,1)}$:
\begin{align*}
    |\rhoF,\siF,\Psisl|&\les\frac{\af+a\ef}{|u_\infty|^2},\qquad\quad|\bbF,\Psit_3|\les\frac{\af+\ef a+\ef^2 a^\frac{3}{2}}{|u_\infty|^3},\\
    |\Asl|&\les\frac{\af}{|u_\infty|},\qquad\qquad\qquad\quad|\Ub|\les\frac{\af+a\ef}{|u_\infty|^2}.
\end{align*}
It will be difficult to treat these weights $|u|$ and $a$ term by term. We hope to design a \emph{scale-invariant} $L^\infty_{sc}(S_{u,\ub})$ with $|u|$ and $a$ weights built in, such that for most geometric quantities $\phi$, we have
\begin{align*}
    \|\phi\|_{L^\infty_{sc}(S_{u,\ub})}\les 1.
\end{align*}
To achieve this, we use the signature $s_2$ introduced by An in \cite{An}. First, we need to relax the above estimates for $\bb$ and $\aa$ as follows:
\begin{align*}
    |\bb|\les\frac{a}{|u_\infty|^4}\les\frac{a^\frac{3}{2}}{|u_\infty|^4},\qquad\qquad|\aa|\les\frac{a^\frac{3}{2}}{|u_\infty|^5}\les\frac{a^2}{|u_\infty|^5},
\end{align*}
and keeping the other estimates for now, we find a systematical way to define $L^\infty_{sc}(S_{u,\ub})$.
\begin{df}\label{signature}
We first introduce the signature for decay rates to $\phi$, we assign signatures $s_2(\phi)$ according to the rule:
\begin{align*}
    s_2(\phi)=\frac{1}{2} N_A(\phi)+N_3(\phi)-1,
\end{align*}
with $N_3(\phi)$ is the number of times $e_3$ appears in the definition of $\phi$. Similarly, we define $N_A(\phi)$ where $A=1,2$.
\end{df}
\begin{rk}
Following Definition \ref{signature}, we have the following signature table:
\begin{center}
\begin{tabular}{|c|c|c|c|c|c|c|c|c|c|c|c|c|c|c|c|}
\hline
{} & $\a$ & $\b$ & $\rho$ & $\si$ & $\bb$ & $\aa$ & $\chi$ & $\om$& $\Om$ & $\ze$ & $\eta$ & $\etab$ & $\chib$ & $\omb$ & $\slg$ \\ 
$s_2$ & $0$ & $0.5$ & $1$ & $1$ & $1.5$ & $2$ & $0$ & $0$ & $0$ & $0.5$ & $0.5$ & $0.5$ & $1$ & $1$ & $0$ \\
\hline
\end{tabular}
\end{center}
We have the following signature table for the quantities of matter fields:
\begin{center}
\begin{tabular}{|c|c|c|c|c|c|c|c|c|c|c|c|c|c|c|c|}
\hline
{} & $\bF$ & $\rhoF$ & $\siF$ & $\bbF$ & $\psi$ & $\Psi_4$& $\Psisl$ & $\Psi_3$ & U &$\Ub$ & $\Asl$ & $\ef$ \\ 
$s_2$ & $0$ & $0.5$ & $0.5$ & $1$ & $0$ & $0$ & $0.5$ & $1$ & $-0.5$ & $0$ & $0.5$ & $0.5$\\
\hline
\end{tabular}
\end{center}
We also have the following signature table for the renormalized quantities:
\begin{center}
\begin{tabular}{|c|c|c|c|c|c|c|c|c|c|c|c|c|c|c|c|}
\hline
{} & $\mu$ & $\mub$ & $\vkp$ & $\vkpb$ & $\K$ & $\sic$\\ 
$s_2$ & $1$ & $1$ & $0.5$ & $1.5$ & $1$ & $1$\\
\hline
\end{tabular}
\end{center}
Moreover, we have
\begin{equation}
    s_2(\nabs_4\phi)=s_2(\phi),\qquad s_2(\nabs\phi)=s_2(\phi)+\frac{1}{2},\qquad s_2(\nabs_3\phi)=s_2(\phi)+1.
\end{equation}
\end{rk}
For any horizontal tensorfield $\phi$ with signature $s_2(\phi)$, we define the scale invariant norms:
\begin{align}
    \begin{split}\label{dfsc}
        \|\phi\|_{L^\infty_{sc}(S_{u,\ub})}&:=a^{-s_2(\phi)}|u|^{2s_2(\phi)+1}\|\phi\|_{L^\infty(S_{u,\ub})},\\
        \|\phi\|_{L^2_{sc}(S_{u,\ub})}&:=a^{-s_2(\phi)}|u|^{2s_2(\phi)}\|\phi\|_{L^2(S_{u,\ub})},\\
        \|\phi\|_{L^1_{sc}(S_{u,\ub})}&:=a^{-s_2(\phi)}|u|^{2s_2(\phi)-1}\|\phi\|_{L^1(S_{u,\ub})}.
    \end{split}
\end{align}
For convenience, we also define the scale invariant norms along null hypersurfaces
\begin{align}
\begin{split}\label{phifluxsc}
        \|\phi\|_{L^2_{sc}(\cuv)}^2&:=\int_0^\ub\|\phi\|_{L^2_{sc}(S_{u,\ub'})}^2d\ub',\\
        \|\phi\|_{L^2_{sc}(\ucuv)}^2&:=\int_{u_\infty}^u\frac{a}{|u'|^2}\|\phi\|_{L^2_{sc}(S_{u',\ub})}^2du'.
\end{split}
\end{align}
As an immediate consequence of \eqref{dfsc} and H\"older inequality, we have the following proposition.
\begin{prop}\label{Holder}
    We have the following inequalities:
    \begin{align*}
        \|\phi_1\c\phi_2\|_{L^2_{sc}(S_{u,\ub})}&\leq\frac{1}{|u|}\|\phi_1\|_{L^\infty_{sc}(S_{u,\ub})}\|\phi_2\|_{L^2_{sc}(S_{u,\ub})},\\
        \|\phi_1\c\phi_2\|_{L^1_{sc}(S_{u,\ub})}&\leq\frac{1}{|u|}\|\phi_1\|_{L^\infty_{sc}(S_{u,\ub})}\|\phi_2\|_{L^1_{sc}(S_{u,\ub})},\\
        \|\phi_1\c\phi_2\|_{L^1_{sc}(S_{u,\ub})}&\leq\frac{1}{|u|}\|\phi_1\|_{L^2_{sc}(S_{u,\ub})}\|\phi_2\|_{L^2_{sc}(S_{u,\ub})}.
    \end{align*}
\end{prop}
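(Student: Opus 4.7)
The plan is to obtain all three inequalities as essentially one-line consequences of the classical H\"older inequalities on the $2$-sphere $S_{u,\ub}$, combined with the additivity of the signature $s_2$ under products. My first step is to verify that for any two horizontal tensor fields $\phi_1$ and $\phi_2$, Definition \ref{signature} yields $s_2(\phi_1\c\phi_2)=s_2(\phi_1)+s_2(\phi_2)$: a contraction of two angular slots lowers $\frac{1}{2}N_A$ by one unit in the combined count, but this is exactly compensated because the product inherits a single $-1$ offset rather than two. This additivity is the only algebraic input beyond the definitions in \eqref{dfsc}.

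With additivity in hand, each inequality reduces to unfolding the scale-invariant norms via \eqref{dfsc}, applying the appropriate ordinary H\"older bound on $S_{u,\ub}$ (namely $\|\phi_1\c\phi_2\|_{L^2}\leq\|\phi_1\|_{L^\infty}\|\phi_2\|_{L^2}$ for the first inequality, $\|\phi_1\c\phi_2\|_{L^1}\leq\|\phi_1\|_{L^\infty}\|\phi_2\|_{L^1}$ for the second, and $\|\phi_1\c\phi_2\|_{L^1}\leq\|\phi_1\|_{L^2}\|\phi_2\|_{L^2}$ for the third), and then redistributing the powers of $a$ and $|u|$ so as to reconstitute the scale-invariant norms on the right-hand side. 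In every case the powers of $a$ match exactly by additivity of $s_2$, while the powers of $|u|$ leave one uncompensated factor of $|u|^{-1}$: for instance, in the first inequality $\|\phi_1\|_{L^\infty_{sc}(S_{u,\ub})}$ carries weight $|u|^{2s_2(\phi_1)+1}$ whereas $\|\phi_1\c\phi_2\|_{L^2_{sc}(S_{u,\ub})}$ is weighted only by $|u|^{2s_2(\phi_1)+2s_2(\phi_2)}$. Analogous mismatches produce the same $|u|^{-1}$ prefactor in the remaining two cases.

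Since the argument is purely algebraic, I do not foresee any real obstacle. The only slightly delicate point worth spelling out once is the additivity of $s_2$ under products, so as to confirm that it respects both contractions and the tensor product on a generic pair of tensors of arbitrary rank. Once that verification is made, the rest is pure bookkeeping of $a$ and $|u|$ weights and does not involve the geometry of the double null foliation at all, which is why the proposition is labeled as an immediate consequence of \eqref{dfsc} and the H\"older inequality.
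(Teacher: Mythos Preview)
Your proposal is correct and follows exactly the approach the paper intends: the proposition is stated as ``an immediate consequence of \eqref{dfsc} and H\"older inequality,'' and your argument---additivity of $s_2$ on products, unfolding the definitions in \eqref{dfsc}, applying the standard H\"older inequality on $S_{u,\ub}$, and then repacking the $a$ and $|u|$ weights---is precisely that computation. The observation that the $|u|^{-1}$ prefactor arises from the offset between the $L^\infty_{sc}$ weight $|u|^{2s_2+1}$ (or the $L^1_{sc}$ weight $|u|^{2s_2-1}$) and the $L^2_{sc}$ weight $|u|^{2s_2}$ is exactly the point.
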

\begin{rk}
   Note that in the study region we have $\frac{1}{|u|}\leq\frac{4}{a}\ll 1$. This means if all terms are normal, the nonlinear terms could be treated as lower order terms.
\end{rk}
\section{Main theorem}\label{secmain}
In the sequel, we always denote
\begin{align}
    V:=V(u,\ub):=\{(u',\ub')\in\left[u_\infty,u\right]\times[0,\ub]\},\qquad V_*:=V\left(-\frac{a}{4},1\right).\label{Vdf}
\end{align}
\subsection{Key parameters}\label{smallconstant}
Before starting our main theorem, we first summarize the properties of the following constants, which will be used throughout this paper.
\begin{itemize}
    \item The coupling constant $\ef$ satisfies $0<\ef<e_0\ll 1$.
    \item The size of the short pulse $a$ satisfies $a\gg 1$.
    \item An auxiliary constant $b$ for the purpose of bootstrap argument satisfies \begin{align}\label{bootsize}
        b\gg 1,\qquad\quad b\ll a^\frac{1}{6},\qquad\quad b^3 e_0 \ll 1.
    \end{align}
\end{itemize}
\subsection{Fundamental norms}\label{secnorms}
For any quantity $X$, we define\footnote{Here, we use the convention that $X^{(q)}=0$ for $q<0$.}
\begin{align}\label{dfXq}
    X^{(q)}:=\bigcup_{i=0}^q\left\{(\af\nabs)^iX\right\},\qquad \forall\; q\in\mathbb{Z}.
\end{align}
\subsubsection{\texorpdfstring{$\TT$}{}--norms (matter fields)}
We define the following $L^2$--flux norms:
\begin{align*}
    \mf_i(u,\ub)&:=\afd\|\bF^{(i)}\|_{L^2_{sc}(\cuv)}+\afd\|(\nabs_4\bF)^{(i-1)}\|_{L^2_{sc}(\cuv)},\\
    &+b\afd\|(\rhoF,\siF)^{(i)}\|_{L^2_{sc}(\cuv)},\\
    \uf_i(u,\ub)&:=\afd\|(\bF,\rhoF,\siF)^{(i)}\|_{L^2_{sc}(\ucuv)}+ b\afd\|\bbF^{(i)}\|_{L^2_{sc}(\ucuv)},\\
    \pf_i(u,\ub)&:=\afd\|\Psi_4^{(i)}\|_{L^2_{sc}(\cuv)}+b\afd\|\Psisl^{(i)}\|_{L^2_{sc}(\cuv)},\\
    \pfb_i(u,\ub)&:=\afd\|\Psisl^{(i)}\|_{L^2_{sc}(\ucuv)}+b\afd\|\Psit_3^{(i)}\|_{L^2_{sc}(\ucuv)},\\
    \AAb_i(u,\ub)&:=\afd\|\Asl^{(i)}\|_{L^2_{sc}(\ucuv)}+\afd\|\Ub^{(i)}\|_{L^2_{sc}(\ucuv)}.
\end{align*}
We also define the following $L^2_{sc}(S_{u,\ub})$--norms:
\begin{align*}
    \mo_i[T](u,\ub)&:=\afd\|(\bF,\Psi_4,\Asl,\Ub)^{(i)}\|_{L^2_{sc}(S_{u,\ub})}+b\afd\|(\rhoF,\siF,\bbF,\Psisl,\Psit_3)^{(i)}\|_{L^2_{sc}(S_{u,\ub})}\\
    &+\afd\|(\nabs_4\bF)^{(i-1)}\|_{L^2_{sc}(S_{u,\ub})}+\afd\|\psi\|_{L^2_{sc}(S_{u,\ub})}.
\end{align*}
Finally, we denote
\begin{align*}
\TT&:=\sup_{V_*}\sum_{i=0}^4\big(\mf_i(u,\ub)+\uf_i(u,\ub)+\pf_i(u,\ub)+\pfb_i(u,\ub)+\AAb_i(u,\ub)\big)+\sum_{i=0}^3\OO_i[T](u,\ub).
\end{align*}
\subsubsection{\texorpdfstring{$\mw$}{}--norms (Weyl components)}\label{secRnorms}
We define
\begin{align*}
    \mw_i(u,\ub)&:=\afd\|\a^{(i)}\|_{L^2_{sc}(\cuv)}+\|(\b,\rho,\si)^{(i)}\|_{L^2_{sc}(\cuv)},\\
    \uw_i(u,\ub)&:=\afd\|\b^{(i)}\|_{L^2_{sc}(\ucuv)}+\|(\rho,\si,\bb)^{(i)}\|_{L^2_{sc}(\ucuv)}.
\end{align*}
We also define
\begin{align*}
    \mo_i[W](u,\ub):=\afd\|\a^{(i)}\|_{L^2_{sc}(S_{u,\ub})}+\|(\b,\rho,\si,\bb)^{(i)}\|_{L^2_{sc}(S_{u,\ub})}.
\end{align*}
Finally, we denote
\begin{align*}
\mw:=\sup_{V_*}\left(\sum_{i=0}^3\left(\mw_i(u,\ub)+\uw_i(u,\ub)\right)+\sum_{i=0}^2\OO_i[W](u,\ub)\right).
\end{align*}
\subsubsection{\texorpdfstring{$\mg$}{}--norms (Ricci coefficients)}\label{secOnorms}
We define
\begin{align*}
    \mg_i(u,\ub)&:=\afd\|(\hch,\etab,\om,\omd)^{(i)}\|_{L^2_{sc}(\cuv)}+\|(\mub,\vkp)^{(i-1)}\|_{L^2_{sc}(\cuv)},\\
    \ug_i(u,\ub)&:=\frac{\af}{|u|}\|\hchb^{(i)}\|_{L^2_{sc}(\ucuv)}+\afd\|(\eta,\omb,\ombd)^{(i)}\|_{L^2_{sc}(\ucuv)}\\
    &+\|\vkpb^{(i-1)}\|_{L^2_{sc}(\ucuv)}+\frac{|u|}{a}\|\muc^{(i-1)}\|_{L^2_{sc}(\ucuv)}.
\end{align*}
We also define\footnote{Notice that the top-order estimates for $\trcht$ and $\trchbt$ are worse than their lower order estimates.}
\begin{align*}
    \mo_i[G](u,\ub)&:=\afd\|\hch^{(i-1)}\|_{L^2_{sc}(S_{u,\ub})}+\frac{\af}{|u|}\|\hchb^{(i-1)}\|_{L^2_{sc}(S_{u,\ub})}\\
    &+\|(\eta,\etab,\om,\omd,\omb,\ombd,\trchbt)^{(i-1)}\|_{L^2_{sc}(S_{u,\ub})}+\frac{|u|}{a}\|\trcht^{(i-1)}\|_{L^2_{sc}(S_{u,\ub})}\\
    &+\afd\|\trcht^{(i)}\|_{L^2_{sc}(S_{u,\ub})}+|u|^{-\frac{1}{2}}\|\trchbt^{(i)}\|_{L^2_{sc}(S_{u,\ub})}.
\end{align*}
Finally, we denote
\begin{align*}
    \mg:=\sup_{V_*}\sum_{i=0}^4\left(\mg_i(u,\ub)+\ug_i(u,\ub)+\mo_i[G](u,\ub)\right).
\end{align*}
\subsubsection{\texorpdfstring{$\II_{(0)}$}{}--norms (Initial data)}\label{initialO0}
We introduce the following norms on $H_{u_\infty}$:
\begin{align*}
\mo_{(0)}&:=\sup_{H_{u_\infty}}\sum_{i=0}^4\left(\mo_i[G](u_\infty,\ub)+\mo_i[G](u_\infty,\ub)+\mo_i[G](u_\infty,\ub)\right),\\
\mr_{(0)}&:=\sup_{H_{u_\infty}}\sum_{i=0}^4\left(\mf_i(u,\ub)+\PP_i(u,\ub)+\mw_i(u,\ub)+\mg_i(u,\ub)\right).
\end{align*}
Then, we denote
\begin{align*}
    \II_{(0)}:=\mo_{(0)}+\mr_{(0)},
\end{align*}
which controls all the quantities on $H_{u_\infty}^{(0,1)}$.
\subsection{Statement of the main theorem}\label{secmainstate}
\begin{thm}\label{maintheorem}
There exists a sufficiently large constant $a_0>0$ and a sufficiently small constant $e_0>0$ such that the following holds. For $a>a_0$ and $0<\ef<e_0$, with an initial data that satisfies: \begin{itemize}
        \item the following estimate hold along $u=u_\infty$:
        $$
        \sum_{i\leq 10,k\leq 4}\afd|u_\infty|\left\|\nabs_4^k(|u_\infty|\nabs)^i\left(\hch,\bF,\psi\right)\right\|_{L^\infty(S_{u_\infty,\ub})}\leq 1,
        $$
        \item Minkowskian initial data along $\ub=0$.
    \end{itemize}
    Einstein--Maxwell--charged scalar field system \eqref{EMCSF} admits a unique smooth solution in $V_*$ which satisfies:\footnote{Here, $A\ll B$ means that $CA<B$ where $C$ is the largest universal constant among all the constants involved in the proof via $\les$.}
    \begin{equation}\label{finalestimates}
        \mg\les 1,\qquad\quad \mw\les 1,\qquad\quad \TT\les 1.
    \end{equation}
    The estimate \eqref{finalestimates} implies, in particular, on every sphere $S_{u,\ub}\subseteq V_*$:
    \begin{align*}
    |\hch,\a|&\les\frac{a^\frac{1}{2}}{|u|},\qquad |\b|\les\frac{a^\frac{1}{2}}{|u|^2}, \qquad |\rho,\si|\les\frac{a}{|u|^3},\qquad |\bb|\les\frac{a^\frac{3}{2}}{|u|^4},\qquad |\aa|\les\frac{a^2}{|u|^5},\\
    |\om,\trch,\log\Om|&\les\frac{1}{|u|},\qquad\qquad\qquad\quad\;\;|\trchbt,\omb|\les\frac{a}{|u|^3},\qquad\quad\;|\eta,\etab,\ze,\hchb,\Psi_3,\bbb|\les\frac{a^\frac{1}{2}}{|u|^2},\\
    |\bF,\Psi_4,\psi,\Asl|&\les\frac{\af}{|u|},\qquad\qquad|\rhoF,\siF,\Psisl,\Ub|\les\frac{a}{|u|^2},\qquad\qquad\quad\;\;\;\,|\bbF,\Psit_3|\les\frac{a^\frac{3}{2}}{|u|^3}.
\end{align*}
\end{thm}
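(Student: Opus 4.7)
The proof runs via a continuity/bootstrap argument on the region $V_*$. I would fix a large auxiliary constant $b$ satisfying $1\ll b\ll a^{1/6}$ and $b^3e_0\ll 1$, and assume that $\mg+\mw+\TT\le b$ on every sub-region $V(u,\ub)\subset V_*$ on which the solution exists. The goal is to improve this to $\mg+\mw+\TT\lesssim 1$, which by continuity extends the solution to all of $V_*$. The local existence result allows me to propagate the initial bounds from $H_{u_\infty}\cup\Hb_0$ into a thin slab, and then the bootstrap argument extends it up to $u=-a/4$, $\ub=1$. The first step, carried out in Section~\ref{secboot}, is to derive Sobolev-type $L^\infty$ consequences of the bootstrap assumption (using the signature $s_2$ and scale-invariant norms) so that all nonlinear error terms are seen to carry either an extra factor of $|u|^{-1}\lesssim a^{-1}$, an extra $b^{-1}$, or an extra $\ef$.

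\paragraph{Estimates for the matter fields.}
Building on the $|u|^p$-weighted energy machinery of Proposition~\ref{keyintegral} and the unified nonlinear estimate of Theorem~\ref{Junkmanthm}, I would first close the bounds on the Maxwell components $(\bF,\rhoF,\siF,\bbF)$ and the gauge-covariant derivatives $(\Psi_4,\Psisl,\Psit_3)$ of $\psi$, using the Bianchi-pair structure recast from Propositions~\ref{Maxwellequations} and~\ref{waveequation}. The renormalization $\Psit_3$ in \eqref{dfPsit3} is crucial to handle the insufficient decay of $\Psi_3$. The bilinear interaction term $\ef\,\Im(\psi\,\Psi_4^\dag)$ etc., is controlled using the abnormal bootstrap bounds \eqref{abnormalintro} together with the smallness $b\ef\ll 1$, giving $a$-bounds on the fluxes. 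Higher-order estimates follow by commutation and Sobolev on the spheres. The potential components $(\Ub,\Asl)$ are then recovered from Corollary~\ref{cordA=F} in the gauge $U=0$, simply integrating along $e_4$, which inherits the regularity of $(\rhoF,\bF)$.

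\paragraph{Estimates for Weyl and Ricci.}
With the matter fields controlled, I would estimate the Weyl components by running the $|u|^p$-weighted energy estimate on the renormalized Bianchi pairs listed in Proposition~\ref{BianchiKcsic}: first $(\a,\bt)$, then $(\b,(\Kc,\sic))$ after improving $\hch,\hchb$ via the Codazzi-type equation \eqref{Gabimprovedintro} and the $\nabs_4\hch$ equation, then $((\Kc,-\sic),\bb)$ using in addition the improved decay of $\muc$ obtained from the transport equation for $\mu$ (Proposition~\ref{nullrenor}) and the vanishing of $\overline{\sic}$ from the torsion equation \eqref{torsion} plus the Gauss--Bonnet smallness of $\overline{\Kc}$. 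The pair $(\bb,\aa)$ is deliberately dropped since $\aa$ is decoupled from all remaining hierarchies (Remark~\ref{noaark}). For the Ricci coefficients I would close the lower-order bounds directly from the null structure equations of Proposition~\ref{nulles} used as transport equations, treat $\trch,\trchb$ via their Raychaudhuri equations (which decouple from $\W$), and obtain the top-order estimates for $\eta,\etab,\omega,\ombd,\hch,\hchb$ by $2D$ elliptic estimates (Proposition~\ref{ellipticLp}) applied to the renormalizations $\vkp,\vkpb,\mu,\mub$ defined in \eqref{renorvkp}--\eqref{murenor}, whose transport equations have the right regularity because the $\D\T$-type source terms are absorbed into them. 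The null identities \eqref{nullidentities} then close the remaining quantities.

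\paragraph{Main obstacle.}
The hardest step will be the integrability of the error terms involving the coupling constant $\ef$. Because $\ef$ is held fixed and does not scale like $a^{-1/2}$, the natural signature $s_2(\ef)=1/2$ forces $\rhoF,\Psisl$ and $\bbF,\Psit_3$ to display the abnormal bootstrap bounds \eqref{abnormalintro}; the quadratic matter currents $\ef\,\Im(\psi\,\Psi_4^\dag)$ etc.\ then generate borderline contributions in the $|u|^p$-weighted energy identity, which only close thanks to the tightly-linked smallness condition $b\ef\ll 1$ together with the convention $b\ll a^{1/6}$. A secondary but delicate point is the treatment of the borderline $\trchb\,\mu$ term arising in the Bianchi pair for $((\Kc,-\sic),\bb)$: this cannot be handled by brute force, and requires splitting into average and mean-free parts, extracting the improved decay of $\muc$ from its transport equation, and exploiting $\overline{\sic}=0$ together with Gauss--Bonnet to control $\overline{\Kc}$. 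Once these two structural difficulties are resolved, all remaining estimates are routine applications of the $|u|^p$-weighted energy method and Theorem~\ref{Junkmanthm}, and one can close the bootstrap with universal constants, deducing the pointwise bounds stated at the end of the theorem by a Sobolev embedding on each $S_{u,\ub}$.
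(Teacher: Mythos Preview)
Your proposal is correct and follows essentially the same approach as the paper: the bootstrap/continuity argument with auxiliary constant $b$ (the paper uses the bound $b^{1/4}$ rather than $b$, but this is only a choice of normalization), followed by the hierarchy matter fields $\to$ Weyl $\to$ Ricci via Theorems~\ref{M1}--\ref{M3}, with exactly the renormalizations, the dropping of $(\bb,\aa)$, the $\muc$/$\ov{\mu}$ splitting, and the Gauss--Bonnet/torsion treatment of $(\ov{\Kc},\ov{\sic})$ that the paper employs. One small inaccuracy: the improvement of $\hch,\hchb$ after the $(\a,\bt)$ estimate is carried out via the $\nabs_4\hch$ and $\nabs_4\hchb$ transport equations (Proposition~\ref{esthchhchb}), not via Codazzi; the Codazzi equations are only used later for the top-order flux estimates of $\hch,\hchb$ in Section~\ref{secO}.
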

The proof of Theorem \ref{maintheorem} is given in Section \ref{proofmain}. It hinges on three basic theorems stated in Section \ref{intermediate}, concerning estimates for the norms $\TT$, $\mw$ and $\mg$.
\subsection{Main intermediate results}\label{intermediate}
\begin{thm}\label{M1}
Under the assumptions of Theorem \ref{maintheorem} and let $V$ be the region defined in \eqref{Vdf}. Moreover, we assume that the norms in $V$ satisfy the following bounds:
\begin{equation}
   \II_{(0)}\les 1,\qquad\TT\leq\bo,\qquad \mw\leq\bo,\qquad \mg\leq\bo.
\end{equation}
Then, we have
\begin{equation}
    \TT\les 1.
\end{equation}
\end{thm}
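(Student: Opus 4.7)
The plan is to improve the bootstrap bound $\TT\leq\bo$ down to $\TT\les 1$ by applying the $|u|^p$--weighted estimates of Proposition~\ref{keyintegral} and the unified error-term treatment of Theorem~\ref{Junkmanthm} to every Bianchi pair of matter quantities, exploiting the smallness $b^3 e_0\ll 1$ from \eqref{bootsize} to absorb the anomalous contributions generated by the coupling constant $\ef$.

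First I would treat the Maxwell field. Using Proposition~\ref{Maxwellequations}, the two relevant Bianchi pairs are $(\bF,(\rhoF,\siF))$ (which controls $\mf_0$ and the $(\rhoF,\siF)$--part of $\uf_0$) and $((\rhoF,\siF),\bbF)$ (which controls the $\bbF$--part of $\uf_0$). For each pair, the $|u|^p$--weighted energy identity yields a bound on the associated null fluxes in terms of initial data on $H_{u_\infty}$ (bounded by $\II_{(0)}\les 1$) plus bulk errors. By the signature assignments of Section~\ref{secsignatures}, the sources $\ef\Im(\psi\Psisl^\dag)$, $\ef\Im(\psi\Psi_4^\dag)$, and $\ef\Im(\psi\Psit_3^\dag)$ appearing on the right-hand sides produce bulk contributions of schematic form $\ef\c\psi\c\Psi$. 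Combined with Proposition~\ref{Holder} and the bootstrap $\TT\leq\bo$, these are controlled by a constant times $\ef\,\bo\,\TT^2$ and thus absorbed using $b^3 e_0\ll 1$. The residual quadratic Ricci--matter terms such as $\trchb\,\bF$, $\etab\c\bF$, and $\hchb\c\bF$ are treated by Theorem~\ref{Junkmanthm} under the bootstrap $\mg\leq\bo$.

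Second, I would treat the complex scalar field in the same manner. The pairs $(\Psi_4,\Psisl)$ and $(\Psisl,\Psit_3)$ from Proposition~\ref{waveequation} are handled by the identical scheme, with $\Psi_3$ systematically replaced by the renormalization $\Psit_3$ of \eqref{dfPsit3}. This renormalization eliminates the radial mode $|u|^{-1}$ and restores integrability of the bulk source $\tfrac{\trch}{2\Om|u|}\psi$; without it, $\Psi_3$ alone does not admit a favorable transport estimate (see Remark~\ref{AAGmethod}). The sources $\ef\rhoF\psi$ and $\ef\bF\psi$ are absorbed exactly as in the Maxwell step. Having controlled the lowest-order fluxes, I would commute with up to four weighted angular derivatives $\dkb$ using Lemma~\ref{comm}; each commutator produces lower-order Ricci--matter terms that again fit into the framework of Theorem~\ref{Junkmanthm}, yielding $\mf_i,\uf_i,\pf_i,\pfb_i$ for $i\leq 4$. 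The $\nabs_4\bF$--flux contributing to $\mf_i$ is obtained from the Teukolsky-type system of Corollary~\ref{corteukolsky}, fed with the bounds for $\bF$ and its angular derivatives just established. For the electromagnetic potential norms $\AAb_i$, I exploit the gauge choice $U=0$: by Corollary~\ref{cordA=F}, $\Asl$ and $\Ub$ satisfy linear $e_4$--transport equations sourced by $\bF$ and $\rhoF$, which integrate in $\ub$ from vanishing initial data. Finally, the pointwise $\mo_i[T]$ norms are recovered by using the $\nabs_4$--equations of Propositions~\ref{Maxwellequations} and \ref{waveequation} as transport equations along each $H_u$ starting from $H_{u_\infty}$; the field $\psi$ itself is then recovered from $\Psi_4=e_4\psi$ (since $U=0$) by integration in $\ub$.

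The main obstacle will be the bookkeeping of the $\ef$--dependent bulk terms. Unlike in the vacuum or Einstein--Maxwell cases, the sources in the Bianchi pairs mix the Maxwell and scalar components multiplicatively through $\ef$, producing errors with an anomalous $s_2$--signature that cannot be absorbed by the bare linear energy inequality; at each such occurrence one must invoke $b^3 e_0\ll 1$ from \eqref{bootsize} as the decisive smallness. A subtler point is verifying that across the four commutations no cumulative power of $b$ beyond $b^3$ is produced; this is guaranteed by the scale-invariant H\"older inequality of Proposition~\ref{Holder} and the built-in $|u|$-- and $a$--weights of the scale-invariant norms \eqref{dfsc}, which ensure that every nonlinear product pays an extra factor $|u|^{-1}\leq 4/a\ll 1$.
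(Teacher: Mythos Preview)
Your proposal is correct and follows essentially the same route as the paper: Proposition~\ref{keyintegral} applied to the Bianchi pairs of Propositions~\ref{Maxwell} and~\ref{wave}, Theorem~\ref{Junkmanthm} for the quadratic errors (with the anomalous $\ef$--terms absorbed via \eqref{bootsize}), Corollary~\ref{corteukolsky} for $\nabs_4\bF$, the $e_4$--transport of Corollary~\ref{cordA=F} for $\A$, and finally transport equations for the $L^2_{sc}(S)$--norms.

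Two small points to adjust. First, for the pointwise norms of $\bF$ and $\Psi_4$ the paper integrates their $\nabs_3$--equations from $H_{u_\infty}$ (Proposition~\ref{est3FPsi}), not $\nabs_4$--equations; there is no $\nabs_4\bF$ or $\nabs_4\Psi_4$ equation in Propositions~\ref{Maxwellequations}--\ref{waveequation}, so your phrase ``$\nabs_4$--equations along each $H_u$ starting from $H_{u_\infty}$'' needs to be split: $\nabs_4$--transport from $\Hb_0$ for $\rhoF,\siF,\bbF,\Psisl,\Psit_3$, and $\nabs_3$--transport from $H_{u_\infty}$ for $\bF,\Psi_4$. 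Second, to close $\TT\les 1$ you must recover the $b$--weighted pieces of $\mf_i,\uf_i,\pf_i,\pfb_i$, i.e.\ the sharper bound $\les b^{-1}\af$ for $(\rhoF,\siF)$ on $H_u$, $\bbF$ on $\Hb_\ub$, $\Psisl$ on $H_u$, and $\Psit_3$ on $\Hb_\ub$; this comes specifically from the second set of Bianchi pairs (Proposition~\ref{estrbbF}), where the quadratic source carries an extra factor $\frac{|u|}{a}$ and the initial data on $H_{u_\infty}$ are already of size $b^{-2}a$, so that both $b^{\frac{3}{4}}\af$ and $b^{\frac{3}{4}}\ef a$ are bounded by $b^{-2}a$ using $b\ll a^{\frac{1}{6}}$ and $b^3 e_0\ll 1$ from \eqref{bootsize}.
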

Theorem \ref{M1} is proved in Section \ref{secF}. The $L^2$--flux estimates are made by applying the $|u|^p$--weighted estimates to the pairing Bianchi equations of $\F$ and $\Psi$ in Propositions \ref{Maxwellequations} and \ref{waveequation}. Then, the electromagnetic potentials $\A$ are estimated by considering the equations in Corollary \ref{cordA=F} as transport equations. Moreover, the lower order $L^2(S_{u,\ub})$--norms of matter fields are controlled by considering the Bianchi equations as transport equations.
\begin{thm}\label{M2}
Under the assumptions of Theorem \ref{maintheorem} and let $V$ be the region defined in \eqref{Vdf}. Moreover, we assume that the norms in $V$ satisfy the following bounds:
\begin{equation}
\II_{(0)}\les 1,\qquad\TT\les 1,\qquad \mw\leq\bo,\qquad \mg\leq\bo.
\end{equation}
Then, we have
\begin{equation}
    \mw\les 1.
\end{equation}
\end{thm}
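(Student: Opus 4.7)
The plan is to apply the $|u|^p$-weighted energy framework of Proposition \ref{keyintegral} to each of the three renormalized Bianchi pairs from Proposition \ref{BianchiKcsic}, namely $(\a,\bt)$, $(\b,(\Kc,\sic))$, and $((\Kc,\sic),\bb)$, and then to upgrade the resulting $L^2$-fluxes to lower-order $L^2_{sc}(S_{u,\ub})$-bounds via transport arguments. The pair $(\bb,\aa)$ is deliberately avoided in the spirit of \cite{AnLuk}: since $\aa$ enters only through the $\nabs_3\hchb$ null structure equation, which is unnecessary because $\hchb$ can be controlled via its $\nabs_4$-equation and Codazzi, one can drop $\aa$ from the hierarchy entirely. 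The standing bootstrap $\mw,\mg\leq\bo$ together with the already proven $\TT\lesssim 1$ from Theorem \ref{M1} feeds all nonlinear and matter-source contributions into the unified product machinery of Theorem \ref{Junkmanthm}, while the smallness conditions in \eqref{bootsize} and $b\ef\ll 1$ absorb any $\bo$-loss coming from the bootstrap.

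First I would estimate the pair $(\a,\bt)$. The renormalization $\bt=\b-\frac{1}{2}\Ssl_4$ eliminates the $\nabs_4\Ssl_4$ term that would otherwise arise on the right-hand side of the $\nabs_4\b$ equation, and the resulting coupled system has schematic form $\nabs_3\a=\nabs\hot\bt+\Ga\c W+\T$, $\nabs_4\bt=\sdivs\a+\Ga\c W+\T$. Proposition \ref{keyintegral} then yields the $L^2_{sc}(\cuv)$-flux bound for $\a$ and the $L^2_{sc}(\ucuv)$-flux bound for $\bt$, with matter sources absorbed via $\TT\lesssim 1$. Substituting this improved bound for $\a$ into the transport equation $\nabs_4\hch+\trch\hch=-2\om\hch-\a$ and into Codazzi for $\hchb$, I would then upgrade the pointwise bootstrap on $\hch,\hchb$ to the $\bo$-free bounds of \eqref{Gabimprovedintro}, which is essential for the next step.

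Next, the pair $(\b,(\Kc,\sic))$ is treated by Proposition \ref{keyintegral}. The only term that would produce a $\bo$-loss is the $\hchb\c\a$ coupling in the $\nabs_4\K$ equation, but this loss is removed by the improved $\hchb$ and $\a$ bounds obtained above; remaining nonlinearities and matter sources ($J_{3\bu4}$, $\Ssl_4$, $\hch\wedge\Sslh$) are routine via Theorem \ref{Junkmanthm}. For the third pair $((\Kc,\sic),\bb)$, the principal obstruction is the term $\frac{1}{2}\trchb\,\mu$ on the right-hand side of the $\nabs_3(\Kc,-\sic)$ equation, whose $|u|$-decay is borderline and would produce a logarithmic loss if treated directly. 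To handle it I would split $\mu=\ov{\mu}+\muc$: the average $\ov{\mu}$ is controlled via Gauss--Bonnet for $\ov{\Kc}$, the identity $\overline{\sdivs\eta}=0$, and $\ov{\sic}=0$ from the torsion equation \eqref{torsion}; the checked part $\muc$ enjoys strictly better decay through its own transport equation $\nabs_4\mu=-|u|^{-2}\trch+\lot$ (see Proposition \ref{fluxmu}), exploiting cancellation between the radial part of $\trch$ and the $|u|^{-2}$ factor. The $\ell\geq 1$ modes of $(\Kc,\sic)$ then absorb the residual $\trchb\,\muc$ source with integrable decay.

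Finally, higher-order flux estimates are obtained by commuting with $\dkb$ up to order three via Lemma \ref{comm}; the commutator products $\Ga\c W$ and $\nabs\Ga\c W$ are accommodated by Theorem \ref{Junkmanthm} under the bootstrap, while differentiated matter sources are handled by the $\TT$-bound. Lower-order $L^2_{sc}(S_{u,\ub})$-norms $\OO_i[W]$ are then recovered by reading each Bianchi equation as a transport equation along $e_3$ or $e_4$, integrating from $H_{u_\infty}$ (for $\a,\b,\rho,\si$) or from $\Hb_0$ (for $\bb$), and controlling the right-hand side in scale-invariant H\"older form via Proposition \ref{Holder}. The main obstacle I expect is the $\trchb\,\mu$ term above, together with the need to verify that the split $\mu=\ov{\mu}+\muc$ does not lose a factor of $\af$ when combined with the abnormal $\af$-behavior of the matter sources $\Ssl_3,\Ssl_4$ inherited from Theorem \ref{M1}; this requires careful accounting of the signature $s_2$ and of the $\ef$-dependence, so that the smallness $b\ef\ll 1$ ultimately closes the estimate.
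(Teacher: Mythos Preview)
Your proposal is correct and follows essentially the same route as the paper's proof in Section \ref{secR}: the three renormalized Bianchi pairs $(\a,\bt)$, $(\b,(\K,\sic))$, $((\Kc,\sic),\bb)$ are handled in that order via Proposition \ref{keyintegral} and Theorem \ref{Junkmanthm}, with the intermediate improvement of $\hch,\hchb,\a$ (Proposition \ref{esthchhchb}) inserted between the first and second pairs to remove the $\bo$-loss, the $\trchb\,\mu$ obstruction in the third pair resolved by exactly the $\ell=0/\ell\geq1$ split you describe (Gauss--Bonnet plus torsion for the average, bootstrap control of $\muc$ for the rest), and the $\OO_i[W]$ norms recovered by transport (Proposition \ref{estR}). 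Two small corrections of detail: in the paper's renormalized $\nabs_4(\K,\sic)$ equation the term requiring the improved bounds is $\trchb|\hch|^2$ rather than $\hchb\c\a$ (the latter being the form before passing from $(\rho,\si)$ to $(\K,\sic)$), and the lower-order $L^2_{sc}(S)$ control of $\b,\rho,\si,\bb$ is obtained via the $\nabs_4$-equations from $\Hb_0$ for $(\bt,\Kc,\sic,\bb)$ followed by algebraic recovery of $\rho,\si,\b$, not by $\nabs_3$-transport from $H_{u_\infty}$ (which for $\rho,\si$ would reintroduce $\aa$).
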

Theorem \ref{M2} is proved in Section \ref{secR}. The proof is done by applying the $|u|^p$--weighted estimates to the Bianchi equations in Proposition \ref{bianchiequations}. As in Theorem \ref{M1}, the lower order $L^2(S_{u,\ub})$--norms of Weyl components are deduced by considering the Bianchi equations as transport equations. Similar arguments are also used in \cite{An,AnLuk,Shen24}.
\begin{thm}\label{M3}
Under the assumptions of Theorem \ref{maintheorem} and let $V$ be the region defined in \eqref{Vdf}. Moreover, we assume that the norms in $V$ satisfy the following bounds:
\begin{equation}
    \II_{(0)}\les 1,\qquad\TT\les 1,\qquad \mw\les 1,\qquad \mg\leq\bo.
\end{equation}
Then, we have
\begin{equation}
    \mg\les 1.
\end{equation}
\end{thm}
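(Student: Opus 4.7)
The plan is to improve the bootstrap bound $\mg \leq b^{1/4}$ to $\mg \lesssim 1$ by exploiting the already-closed bounds $\TT \lesssim 1$ and $\mw \lesssim 1$ together with the null structure equations of Proposition \ref{nulles}, the renormalized transport equations of Proposition \ref{nullrenor}, and the $2$D elliptic estimates of Proposition \ref{ellipticLp}. The proof splits naturally into a lower-order hierarchy (estimates on $\OO_i[G]$ and the lower-order $L^2$ fluxes in $\mg_i,\ug_i$ for $i\le 3$) and a top-order step ($i=4$) that requires renormalization to avoid loss of derivatives.

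First I would handle the lower-order $L^2(S)$ estimates by integrating the null structure equations as transport equations, following the natural ``short pulse hierarchy'' with the shears $\hch,\hchb$ first. For $\hch$, integrate $\nabs_4\hch+\trch\hch=-2\om\hch-\a$ along $e_4$ starting from $H_{u_\infty}$; the source $\a$ is controlled by $\mw\lesssim1$, and the matter contribution $\S_{44}$ is controlled by $\TT\lesssim1$. Next $\hchb$ is controlled through its $\nabs_4$-equation (not $\nabs_3$, thereby avoiding $\aa$), integrated from $\Hb_0$ where it vanishes. The traces $\trcht,\trchbt$ are obtained directly from their Raychauduri equations, which decouple from Weyl. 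The torsion/mass quantities $\eta,\etab,\om,\omb$ are then controlled through their transport equations (with $\b,\bb,\rho,\sigma$ on the right-hand side), and $\omd,\ombd$ from their defining relations \eqref{dfomd}. At each stage, the nonlinear $\Ga\c\Ga$, $\F\c\F$, $\Psi\c\Psi$ remainders are absorbed using $\TT,\mw\lesssim1$ and the smallness factors $1/|u|\lesssim a^{-1}$ and $b\ef\ll1$ from Section \ref{smallconstant}; this is the unified mechanism encoded in Theorem \ref{Junkmanthm}.

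For the top-order flux $i=4$ the naive transport approach loses a derivative because $\W$ is one order less regular than $\Ga,\F,\Psi,\A$. Here I would use the renormalized quantities $\vkp,\vkpb,\mu,\mub$ of Proposition \ref{nullrenor}, whose transport equations have right-hand side of schematic type $\W+\D\T+\D(\Ga\c\Ga)$ with \emph{no} bare $\D\W$. Integrating these along the appropriate null direction yields top-order $L^2$-flux bounds for $\vkp,\vkpb,\mu,\mub$ in terms of $\mw$ and $\TT$; the borderline $|u|^{-2}\trch$ term in the $\mu$ equation is controlled, as indicated in the introduction, via the improved $\muc$-estimate of Proposition \ref{fluxmu}, while $\overline{\sic}=0$ by the torsion equation \eqref{torsion} and $\overline{\Kc}$ is small by Gauss--Bonnet. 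Given these, I would invert the Hodge systems
\begin{align*}
\sld_1^{*}(-\om,\omd)-\tfrac{1}{2}\b&=\vkp, & -\sld_1^{*}(\omb,\ombd)+\tfrac{1}{2}\bb&=\vkpb,\\
\sld_1\eta&=(-\mu+\Kc,\sic-\tfrac12\hch\wedge\hchb), & \sld_1\etab&=(-\mub+\Kc,-\sic+\tfrac12\hch\wedge\hchb),
\end{align*}
together with the Codazzi equations \eqref{codazzi} for $\hch,\hchb$, via Proposition \ref{ellipticLp}, to recover top-order angular control of $\om,\omd,\omb,\ombd,\eta,\etab,\hch,\hchb$. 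The null identities \eqref{nullidentities} then close the remaining relations with $\Om$ and $\ze$.

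The main obstacle will be the top-order step: controlling the error terms in the renormalized transport equations that involve the matter fields and their derivatives with abnormal $\ef$-dependent decay, e.g.\ contributions from $\nabs_3\S_{44}$ and $\sdivs\Ssl_3$ in the Cotton tensor components via Corollary \ref{J343J434}. These must be bounded using $\TT\lesssim1$ together with the smallness $b^3 e_0\ll 1$ from \eqref{bootsize}, precisely so that the factors of $\bo$ arising from the product with bootstrap-size Ricci quantities are absorbed. This is the charged-scalar analogue of the mechanism described in Section \ref{secFintro}, and it is what forces the coupling $b\ef\ll1$. Once these matter-error estimates are in hand, the linear $|u|^p$-weighted transport estimates from Proposition \ref{keyintegral} applied to $\vkp,\vkpb,\mu,\mub$ yield the improvement $\mg\lesssim 1$, completing the proof.
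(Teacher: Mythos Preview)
Your proposal is essentially correct and follows the same route as the paper: lower-order $L^2(S)$ control via transport along the null directions (Propositions \ref{estom}--\ref{fluxtrchbc}, together with Proposition \ref{esthchhchb} for $\hch,\hchb$), then top-order flux estimates for the renormalized quantities $\mu,\mub,\vkp,\vkpb$ followed by elliptic recovery through the Codazzi equations and the Hodge systems \eqref{renorvkp}, \eqref{murenor} (Propositions \ref{fluxhch}--\ref{fluxvkp}). Two small corrections: the renormalized quantities satisfy ordinary transport equations handled by Lemmas \ref{evolution} and \ref{3evolution}, not the Bianchi-pair machinery of Proposition \ref{keyintegral}; and the Cotton-tensor matter errors you flag as the main obstacle are already absorbed into the schematic $\frac{|u|}{ab}(\Gab\c\Gab)^{(1)}$ of Corollary \ref{RicGagGab} and Proposition \ref{null} using only $\TT\lesssim 1$, so no further appeal to $b\ef\ll 1$ is needed at this stage.
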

Theorem \ref{M3} is proved in Section \ref{secO}. The proof is done by first integrating the transport equations for the renormalized quantities in Propositions \ref{nulles} and \ref{nullrenor} along the incoming and outgoing null cones. Then, applying the $2D$--elliptic equations for the Ricci coefficients, we obtain the top-order $L^2$--flux estimates for the Ricci coefficients. The lower order $L^2(S_{u,\ub})$--norms of Weyl components are deduced by integrating the null structure equations along the incoming and outgoing null cones.
\subsection{Proof of the main theorem}\label{proofmain}
We now use Theorems \ref{M1}--\ref{M3} to prove Theorem \ref{maintheorem}.
\begin{df}\label{bootstrap}
For any $u_\infty\leq u_*\leq -\frac{a}{4}$, let $\aleph(u_*)$ be the set of spacetimes $V(u_*,1)$ associated with a double null foliation $(u,\ub)$ in which we have the following bounds:
\begin{align}
    \TT\leq\bo,\qquad\quad\mw\leq\bo,\qquad\quad\mg\leq\bo.\label{B2}
\end{align}
\end{df}
\begin{df}\label{defboot}
We denote by $\mathcal{U}$ the set of values $u_*$ such that $\aleph(u_*)\ne\emptyset$.
\end{df}
Combining the initial assumption, \eqref{bootsize} and the results in Section \ref{secsignatures}, we obtain
\begin{equation*}
    \II_{(0)}\leq 1.
\end{equation*}
Combining with the local existence theorem, we deduce that \eqref{B2} holds if $u_*$ is sufficiently close to $u_\infty$. So, we have $\mathcal{U}\ne\emptyset$.\\ \\
Define $u_*$ as the supremum of the set $\mathcal{U}$. We assume by contradiction that $u_*<-\frac{a}{4}$. In particular, we may assume $u_*\in\mathcal{U}$. We consider the region $V(u_*,1)$. Applying Theorem \ref{M1}, we obtain
\begin{equation*}
    \TT\les1.
\end{equation*}
Then, we apply Theorem \ref{M2} to obtain
\begin{equation*}
    \mw\les1.
\end{equation*}
Next, we apply Theorem \ref{M3} to infer
\begin{align*}
    \mg\les 1.
\end{align*}
Applying the characteristic local existence result,\footnote{See Theorem 1 in \cite{luk}.} we can extend $V(u_*,1)$ to $V(u_*+\nu,1)$ for $\nu>0$ sufficiently small. We denote $\widetilde{\TT}$, $\widetilde{\mw}$ and $\widetilde{\mg}$ the norms in the extended region $V(u_*+\nu,1)$. Then, we have the following estimates:
\begin{equation*}
    \widetilde{\TT}\les1,\qquad\quad\widetilde{\mw}\les 1,\qquad\quad\widetilde{\mg}\les 1,
\end{equation*}
as a consequence of continuity. Thus, we deduce that $V(u_*+\nu,1)$ satisfies all the properties of Definition \ref{bootstrap}, and so $\aleph(u_*+\nu)\ne\emptyset$, which is a contradiction. Thus, we have $u_*=-\frac{a}{4}$. Moreover, we have
\begin{equation*}
    \TT\les 1,\qquad\quad \mw\les 1,\qquad\quad\mg\les 1\qquad\mbox{ in }\; V_*.
\end{equation*}
Combining with the definitions of the norms in Section \ref{secnorms}, this concludes the proof of Theorem \ref{maintheorem}.
\section{Bootstrap assumptions and first consequences}\label{secboot}
In the rest of the paper, we always make the following bootstrap assumptions:
\begin{align}
    \TT\leq\bo,\quad\qquad\mw\leq\bo,\qquad\quad \mg\leq\bo.\label{B1}
\end{align}
\subsection{Schematic notation \texorpdfstring{$\Gag$}{} and \texorpdfstring{$\Gab$}{}}
We introduce the following schematic notations.
\begin{df}\label{gammag}
We divide the quantities into two parts:
\begin{align*}
\Gag&:=\left\{\trch,\,\frac{|u|}{a}\trcht,\,\frac{a}{|u|}\trchbt,\,\eta,\,\etab,\,\ze,\,\om,\,\omd,\,\omb,\,\ombd\right\},\\
\Gab&:=\left\{\hch,\,\frac{a}{|u|}\hchb,\,\bF,\,\psi,\,\Psi_4,\,\frac{a}{|u|}\Psi_3,\,\Asl\right\}\bigcup b\left\{\rhoF,\,\siF,\,\bbF,\,\Psisl,\,\Psit_3,\,\Ub\right\}.
\end{align*}
We also denote:
\begin{align*}
    \Gag^{(1)}&:=(a^\frac{1}{2}\nabs)^{\leq 1}\Gag,\\
    \Gab^{(1)}&:=(a^\frac{1}{2}\nabs)^{\leq 1}\Gab\bigcup\left\{\a,\nabs_4\bF\right\}\bigcup\af\left\{\b,\,\rho,\,\si,\,\bb,
    \,\Kc,\,\mu,\,\frac{|u|}{a}\muc,\,\mub,\,\vkp,\,\vkpb\right\}.
\end{align*}
Moreover, we define for $i\geq 1$
\begin{align*}
    \Gag^{(i+1)}:=(\af\nabs)^{\leq 1}\Gag^{(i)},\qquad\qquad \Gab^{(i+1)}:=(\af\nabs)^{\leq 1}\Gab^{(i)}.
\end{align*}
\end{df}
The following consequences of \eqref{B1} will be used frequently throughout this paper.
\begin{lem}\label{decayGagGab}
    Under the bootstrap assumption \eqref{B1}, we have
    \begin{align}
        \|\Gag^{(1)}\|_{L^\infty_{sc}(S_{u,\ub})}&\les\bo,\qquad \qquad\, \|\Gag^{(3)}\|_{L^2_{sc}(S_{u,\ub})}\les\bo,\label{Gagapriori}\\
        \|\Gab^{(1)}\|_{L^\infty_{sc}(S_{u,\ub})}&\les\bo\af,\qquad \quad \|\Gab^{(3)}\|_{L^2_{sc}(S_{u,\ub})}\les\bo\af.\label{Gabapriori}
    \end{align}
    Moreover, for all $\Up\in\Gag\cup\Gab$, we can write
    \begin{align*}
        \Up=\Up_1+\Up_2,
    \end{align*}
    with $\Up_1$ and $\Up_2$ satisfy:
    \begin{align*}
        \left\|\Up_1^{(4)}\right\|_{L^2_{sc}(\cuv)}\les\bo\af,\qquad\quad \left\|\Up_2^{(4)}\right\|_{L^2_{sc}(\ucuv)}\les\bo\af.
    \end{align*}
\end{lem}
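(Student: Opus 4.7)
The lemma is essentially a translation of the bootstrap assumption \eqref{B1} into two convenient forms: pointwise $L^\infty_{sc}$ control on low-order derivatives, and an outgoing/incoming flux decomposition. The plan is to proceed in three conceptual stages, corresponding to the three claims.

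First I would extract $L^2_{sc}(S_{u,\ub})$ control on $(\afd\nabs)^{\leq 3}$ derivatives directly from the bootstrap. Each of $\mo_i[G], \mo_i[W], \mo_i[T]$, by construction in Section \ref{secnorms}, is a sum over basic Ricci, Weyl, or matter components of $\|\cdot^{(i)}\|_{L^2_{sc}(S_{u,\ub})}$ norms, with signature-adjusted scalings and a factor $b$ attached to exactly those matter quantities placed in the $b\{\cdot\}$ group of Definition \ref{gammag}. Consequently, the bootstrap bounds $\mg, \mw, \TT \leq \bo$ translate immediately into $\|\Gag^{(3)}\|_{L^2_{sc}(S_{u,\ub})} \les \bo$ and $\|\Gab^{(3)}\|_{L^2_{sc}(S_{u,\ub})} \les \bo\af$ (the extra $\af$ factor coming from the $\afd$ prefactor in the $\mo_i$ norms for matter-field components). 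The renormalized quantities $\mu, \mub, \vkp, \vkpb, \Kc, \sic$ appear in $\Gag^{(i)}, \Gab^{(i)}$ only from $i \geq 1$; at the top level they are controlled directly through $\mg_{i-1}, \ug_{i-1}$, and at lower levels by unpacking the defining relations \eqref{renorvkp}, \eqref{murenor}, \eqref{dfKcsic}, which express them as first-order derivatives of basic Ricci coefficients plus harmless algebraic combinations.

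Second, to upgrade these $L^2_{sc}$ estimates to $L^\infty_{sc}$, I would invoke the scale-invariant form of the Sobolev embedding on the 2-sphere $S_{u,\ub}$: using that the areal radius $r$ is comparable to $|u|$ (a consequence of the bootstrap control on $\trcht, \trchbt$ together with Lemma \ref{dint}), the embedding $H^2(S) \hookrightarrow L^\infty(S)$ takes the scale-invariant form
\begin{equation*}
\|\Up\|_{L^\infty_{sc}(S_{u,\ub})} \les \|\Up\|_{L^2_{sc}(S_{u,\ub})} + \|\afd\nabs \Up\|_{L^2_{sc}(S_{u,\ub})} + \|(\afd\nabs)^2 \Up\|_{L^2_{sc}(S_{u,\ub})}.
\end{equation*}
Applied to any element of $\Gag^{(1)}$ or $\Gab^{(1)}$, this loses two angular derivatives, exactly the amount supplied by the $L^2_{sc}$ bound on $\Gag^{(3)}, \Gab^{(3)}$ from the first stage, yielding \eqref{Gagapriori} and \eqref{Gabapriori}.

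Third, for the splitting, observe that each flux norm controls a specific subset of the basic fields along either outgoing cones $\cuv$ (namely $\mg_i, \mw_i, \mf_i, \pf_i$) or incoming cones $\ucuv$ (namely $\ug_i, \uw_i, \uf_i, \pfb_i, \AAb_i$). For each $\Up \in \Gag \cup \Gab$, I would write $\Up = \Up_1 + \Up_2$ by assigning $\Up$ entirely to $\Up_1$ when it is controlled along $\cuv$ only, entirely to $\Up_2$ when along $\ucuv$ only, and arbitrarily (e.g.\ $\Up = \Up + 0$) when controlled on both. The uniform bound $\les \bo\af$ absorbs the $\afd$ normalization and the $b$ weights. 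The main technical point, rather than a true obstacle, is the treatment of quantities not appearing as direct entries in the flux norms: the trace parts $\trcht, \trchbt$ and the renormalized scalars $\mu, \mub, \vkp, \vkpb, \Kc, \sic$. For the traces, their $L^2_{sc}(\cuv), L^2_{sc}(\ucuv)$ flux control follows by integrating the Raychaudhuri equations from Proposition \ref{nulles} along the appropriate null direction and using the already-controlled curvature and matter terms on the right-hand side. For the renormalized scalars, flux-type bounds come for free from $\mg_i, \ug_i$ at the top order. These bookkeeping considerations account for the one-derivative gap between $(\afd\nabs)^{\leq 3}$ in the sphere-norm bounds and $(\afd\nabs)^{\leq 4}$ in the flux splitting.
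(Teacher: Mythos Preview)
Your proposal is correct and matches the paper's approach, which simply states that the lemma ``follows directly from \eqref{B1} and the definitions of the norms in Section \ref{secnorms}.'' Your three-stage elaboration---sphere norms read off from the $\mo_i[\cdot]$ pieces of the bootstrap, $L^\infty_{sc}$ control via the scale-invariant Sobolev inequality (this is exactly Proposition \ref{sobolev}), and the flux splitting by inspecting which of $\mg_i,\ug_i,\mw_i,\uw_i,\mf_i,\uf_i,\pf_i,\pfb_i,\AAb_i$ carries each quantity---is precisely the unpacking the paper leaves implicit.

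One small simplification: for $\trcht,\trchbt$ you do not need to integrate the Raychaudhuri equations. The norm $\mo_i[G]$ already contains $\afd\|\trcht^{(i)}\|_{L^2_{sc}(S_{u,\ub})}$ and $|u|^{-\frac{1}{2}}\|\trchbt^{(i)}\|_{L^2_{sc}(S_{u,\ub})}$ at the top order $i=4$, so you have uniform-in-$(u,\ub)$ sphere bounds; integrating these directly in $\ub$ (respectively in $u$ against the weight $a|u'|^{-2}$ defining $\|\cdot\|_{L^2_{sc}(\ucuv)}$) already yields the required flux control without revisiting the null structure equations.
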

\begin{proof}
    It follows directly from \eqref{B1} and the definitions of the norms in Section \ref{secnorms}.
\end{proof}
\begin{rk}
    Lemma \ref{decayGagGab} implies that the $L^2_{sc}(S_{u,\ub})$--norms of the elements in $\Gag$ behave $\afd$ better than the elements in $\Gab$. However, they have the same estimate for the $L^2$--flux at the top-order.
\end{rk}
\begin{rk}\label{GagGabrk}
In the sequel, we choose the following conventions:
\begin{itemize}
    \item For a quantity $h$ satisfying the same or even better decay and regularity as $\Ga_{i}$, for $i=g,b$, we write
    \begin{equation*}
        h\in\Ga_i,\qquad i=g,b.
    \end{equation*}
    \item For a sum of schematic notations, we ignore the terms which have same or even better decay and regularity. For example, we write\footnote{These schematic identities follow directly from \eqref{B1} and Proposition \ref{Holder}.}
    \begin{equation*}
        \Gag+\Gab=\Gab,\qquad\qquad (\Gab\c\Gab)^{(i)}=\Gab\c\Gab^{(i)}.
    \end{equation*}
    \item For a quantity $h$ satisfying the same or even better decay and regularity than $X^{(q)}$, we write
    \begin{equation*}
        h=X^{(q)},
    \end{equation*}
    where $X^{(q)}$ is defined in \eqref{dfXq}.
\end{itemize}
\end{rk}
\begin{prop}\label{computationGagGab}
    We have the following properties:
    \begin{align*}
        \Gab\c\Gab&=\frac{\af\bo}{|u|}\Gab,\qquad\qquad \trchb\,\Gab=\frac{|u|}{a}\Gab,\qquad\qquad \Om\Gab=\Gab,\\
        \ef\Gab\c\Gab&=b^{-1}\Gab,\qquad\qquad\quad\;\;\;\nabs\Gab=\afd\Gab^{(1)}.
    \end{align*}
\end{prop}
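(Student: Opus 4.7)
The plan is to verify the five schematic identities one by one by combining the scale-invariant Hölder inequality of Proposition \ref{Holder} with the a priori $L^\infty_{sc}$ bounds supplied by \eqref{Gagapriori}--\eqref{Gabapriori} in Lemma \ref{decayGagGab} and the structural content of Definitions \ref{gammag}. Each identity is a statement about how products and multiplications against $\trchb$, $\Om$, $\ef$, or $\nabs$ interact with the schematic classes, so the proof is essentially careful bookkeeping of the weights in $a$, $b$, and $|u|$.

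For $\Gab \cdot \Gab = \frac{\af \bo}{|u|}\Gab$, I would take $\Up_1, \Up_2 \in \Gab$ and apply Proposition \ref{Holder} to get $\|\Up_1 \c \Up_2\|_{L^2_{sc}(S_{u,\ub})} \leq |u|^{-1}\|\Up_1\|_{L^\infty_{sc}(S_{u,\ub})}\|\Up_2\|_{L^2_{sc}(S_{u,\ub})}$, and then use \eqref{Gabapriori} to bound the $L^\infty_{sc}$-factor by $\bo \af$. For $\trchb \c \Gab = \frac{|u|}{a}\Gab$, I would decompose $\trchb = -\frac{2}{\Om|u|} + \trchbt$; since $\frac{a}{|u|}\trchbt \in \Gag$ and the background piece has $L^\infty$ size $\les |u|^{-1}$, a signature count gives $\|\trchb\|_{L^\infty_{sc}(S_{u,\ub})} \les \frac{|u|^2}{a}$, after which Hölder produces the stated factor $\frac{|u|}{a}$. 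The identity $\Om \Gab = \Gab$ follows because the bootstrap bound on $\om, \omb$ yields $\|\log\Om\|_{L^\infty} \les |u|^{-1} \ll 1$, so multiplication by $\Om$ preserves every scale-invariant norm.

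For $\ef \Gab \cdot \Gab = b^{-1}\Gab$, I would combine the first identity with the hierarchy \eqref{bootsize}: since $|u| \geq a/4 \geq \af$ in $V_*$ we have $\frac{\af}{|u|} \leq 1$, and then $\ef \bo \leq e_0 b^{1/4} \ll b^{-1}$ follows from $b^3 e_0 \ll 1$. Finally, the identity $\nabs \Gab = \afd\, \Gab^{(1)}$ is a direct unpacking of Definition \ref{gammag}: for any $\Up \in \Gab$, the element $\af \nabs \Up$ is included in $\Gab^{(1)}$ via the operator $(\af\nabs)^{\leq 1}$, so $\nabs \Up = \afd(\af \nabs \Up) \in \afd\, \Gab^{(1)}$.

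There is no real obstacle; the only point requiring some care is the fourth identity, where one must verify that the smallness condition $b^3 e_0 \ll 1$ is enough to absorb the amplification $\bo \af / |u|$ against the gain from $\ef$. All other identities are immediate from the definitions and the a priori bounds, and I would present them together in a short verification list.
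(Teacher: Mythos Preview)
Your proposal is correct and follows essentially the same route as the paper: H\"older in scale-invariant norms combined with the $L^\infty_{sc}$ bounds from Lemma~\ref{decayGagGab}, plus direct unpacking of Definition~\ref{gammag} for the last identity. Two small points are worth flagging. First, the schematic identities must hold at every derivative level $i\le 4$, and the paper verifies this explicitly by distributing $\dkb^i$ across the product (e.g.\ splitting $(\trchb\,\Gab)^{(i)}$ into a term with $\trchbc^{(i)}$ and a term with $\Gab^{(i)}$); your sketch is written at order zero, so you would need to add that Leibniz step. Second, for the fourth identity the paper does not use $\af/|u|\le 1$ but instead treats $\ef$ through its signature $s_2(\ef)=0.5$, computing $\|\ef\|_{L^\infty_{sc}}=|u|^2\ef/\af$ and applying H\"older once more; this makes the $|u|$-weights cancel exactly rather than being thrown away. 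Your shortcut still lands on the correct condition $\ef\bo\ll b^{-1}$, but the signature-based computation is the systematic way to see it within the paper's framework.
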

\begin{proof}
    We have from Proposition \ref{Holder}, \eqref{dfsc} and \eqref{B1} that for $i\leq 4$
    \begin{align*}
        \|(\Gab\c\Gab)^{(i)}\|_{L^2_{sc}(S_{u,\ub})}\les \frac{1}{|u|}\|\Gab\|_{L^\infty_{sc}(S_{u,\ub})}\|\Gab^{(i)}\|_{L^2_{sc}(S_{u,\ub})}\les \frac{\af\bo}{|u|}\|\Gab^{(i)}\|_{L^2_{sc}(S_{u,\ub})}.
    \end{align*}
    We also have for $i\leq 4$
    \begin{align*}
        \|(\trchb\Gab)^{(i)}\|_{L^2_{sc}(S_{u,\ub})}&\les\|(\dkb\trchb)^{(i-1)}\Gab\|_{L^2_{sc}(S_{u,\ub})}+\|\trchb\Gab^{(i)}\|_{L^2_{sc}(S_{u,\ub})}\\
        &\les\frac{1}{\af|u|}\|\trchbc^{(i)}\|_{L^2_{sc}(S_{u,\ub})}\|\Gab\|_{L^\infty_{sc}(S_{u,\ub})}+\frac{1}{|u|}\|\trchb\|_{L^\infty_{sc}(S_{u,\ub})}\|\Gab^{(i)}\|_{L^2_{sc}(S_{u,\ub})}\\
        &\les\|\Gag^{(i)}\|_{L^2_{sc}(S_{u,\ub})}+\frac{1}{|u|}\frac{|u|^3}{a}|\trchb|_{L^\infty(S_{u,\ub})}\|\Gab^{(i)}\|_{L^2_{sc}(S_{u,\ub})}\\
        &\les\frac{|u|}{a}\|\Gab^{(i)}\|_{L^2_{sc}(S_{u,\ub})}.
    \end{align*}
    Similarly, we have for $i\leq 4$
    \begin{align*}
        \|(\Om\Gab)^{(i)}\|_{L^2_{sc}(S_{u,\ub})}&\les\|(\dkb\Om)^{(i-1)}\Gab\|_{L^2_{sc}(S_{u,\ub})}+\|\Om\Gab^{(i)}\|_{L^2_{sc}(S_{u,\ub})}\\
        &\les\frac{1}{|u|}\|(\log\Om)^{(i)}\|_{L^2_{sc}(S_{u,\ub})}\|\Gab\|_{L^2_{sc}(S_{u,\ub})}+\frac{1}{|u|}\|\Om\|_{L^\infty_{sc}(S_{u,\ub})}\|\Gab^{(i)}\|_{L^2_{sc}(S_{u,\ub})}\\
        &\les\frac{\af\bo}{|u|}\|\Gag^{(i)}\|_{L^2_{sc}(S_{u,\ub})}+\|\Gab^{(i)}\|_{L^2_{sc}(S_{u,\ub})}\\
        &\les\|\Gab^{(i)}\|_{L^2_{sc}(S_{u,\ub})}.
    \end{align*}
    Next, we have from $s_2(\ef)=0.5$ and \eqref{dfsc} for $i\leq 4$
    \begin{align*}
        \|\ef(\Gab\c\Gab)^{(i)}\|_{L^2_{sc}(S_{u,\ub})}&\les\frac{1}{|u|}\|\ef\|_{L^2_{sc}(S_{u,\ub})}\|\Gab\|_{L^\infty_{sc}(S_{u,\ub})}\|\Gab^{(i)}\|_{L^2_{sc}(S_{u,\ub})}\\
        &\les\frac{1}{|u|^2}\frac{|u|^2}{\af}\ef\af\bo\|\Gab^{(i)}\|_{L^2_{sc}(S_{u,\ub})}\\
        &\les b^{-1}\|\Gab^{(i)}\|_{L^2_{sc}(S_{u,\ub})}.
    \end{align*}
    Finally, we have from \eqref{dfsc}
    \begin{align*}
        \nabs\Gab=\afd\dkb\Gab=\afd\Gab^{(1)}.
    \end{align*}
    Combining with Remark \ref{GagGabrk}, this concludes the proof of Proposition \ref{computationGagGab}.
\end{proof}
Remark \ref{GagGabrk} and Proposition \ref{computationGagGab} will be used frequently in the rest of the paper without explicit mention them.
\subsection{First consequences of bootstrap assumptions}\label{secfirstconsequence}
\begin{lem}\label{evolution}
For any $S$--tangent tensor field $\phi$, we have
\begin{align*}
    \|\phi\|_{L^2_{sc}(S_{u,\ub})}\les\|\phi\|_{L^2_{sc}(S_{u,0})}+\int_0^\ub \|\nabs_4\phi\|_{L^2_{sc}(S_{u,\ub'})}d\ub'.
\end{align*}
\end{lem}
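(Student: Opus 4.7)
The plan is to reduce the claim to a standard Grönwall argument applied to the unweighted $\|\phi\|_{L^2(S_{u,\ub})}^2$, and then restore the scale-invariant weights. Since the integration is performed along $H_u$ at fixed $u$, the weight $a^{-s_2(\phi)}|u|^{2s_2(\phi)}$ in the definition \eqref{dfsc} is constant along the path of integration, so it suffices to prove the analogous estimate for the unweighted $L^2(S_{u,\ub})$ norm.

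First I would apply Lemma \ref{dint} with $f = |\phi|^2$, viewed as a scalar function on $S_{u,\ub}$. Using $N = \partial_\ub = \Om e_4$ and the standard identity $e_4(|\phi|^2) = 2\langle \nabs_4 \phi, \phi\rangle$ for horizontal tensors (which follows from the metric compatibility of $\nabs$ with $\slg$ and the way $\nabs_4$ is defined as the horizontal projection of $\D_4$), I obtain
\begin{equation*}
\partial_{\ub} \int_{S_{u,\ub}} |\phi|^2 \, d\slg \;=\; \int_{S_{u,\ub}}\left(2\Om \langle \nabs_4 \phi, \phi\rangle + \Om \trch |\phi|^2\right) d\slg.
\end{equation*}
Applying Cauchy--Schwarz to the first term and using $\Om \lesssim 1$ (which follows from the bootstrap \eqref{B1} on $\log\Om$), and then dividing by $2\|\phi\|_{L^2(S_{u,\ub})}$, this reduces to the differential inequality
\begin{equation*}
\partial_{\ub}\|\phi\|_{L^2(S_{u,\ub})} \;\lesssim\; \|\nabs_4\phi\|_{L^2(S_{u,\ub})} + \|\Om \trch\|_{L^\infty(S_{u,\ub})}\|\phi\|_{L^2(S_{u,\ub})}.
\end{equation*}

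Next I would invoke Grönwall's inequality. The essential observation is that the accumulated Grönwall factor is harmless: on $V_*$ the bootstrap assumption gives the pointwise bound $|\Om\trch| \lesssim |u|^{-1} \lesssim a^{-1}$, so
\begin{equation*}
\int_0^{\ub}\|\Om\trch\|_{L^\infty(S_{u,\ub'})}\, d\ub' \;\lesssim\; a^{-1} \;\ll\; 1,
\end{equation*}
since $\ub \in [0,1]$. Hence the Grönwall exponential is $1 + O(a^{-1}) = O(1)$, and the unweighted bound
\begin{equation*}
\|\phi\|_{L^2(S_{u,\ub})} \;\lesssim\; \|\phi\|_{L^2(S_{u,0})} + \int_0^{\ub}\|\nabs_4\phi\|_{L^2(S_{u,\ub'})}\, d\ub'
\end{equation*}
follows. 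Restoring the common weight $a^{-s_2(\phi)}|u|^{2s_2(\phi)}$ converts the inequality into the claim.

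There is essentially no serious obstacle; the only point requiring any care is confirming that the Grönwall factor is $O(1)$ uniformly on $V_*$, which is immediate from the smallness of $a^{-1}$ under $a \gg 1$. The identity $e_4(|\phi|^2) = 2\langle\nabs_4\phi,\phi\rangle$ for horizontal tensors is standard and can be taken from, e.g., the analogous Lemma 3.1.3 of \cite{kn} referenced in the proof of Lemma \ref{dint}.
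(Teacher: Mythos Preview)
Your proposal is correct and is the standard argument for this type of transport lemma; the paper itself simply cites Proposition~3.6 in \cite{An} rather than giving a proof, and your derivation via Lemma~\ref{dint}, Cauchy--Schwarz, and Gr\"onwall with the bound $\|\Om\trch\|_{L^\infty}\lesssim |u|^{-1}\leq 4a^{-1}$ is exactly what that reference does. The only minor imprecision is that the bootstrap \eqref{B1} gives $\|\trch\|_{L^\infty}\lesssim b^{1/4}|u|^{-1}$ rather than $|u|^{-1}$, but since $b^{1/4}a^{-1}\ll 1$ by \eqref{bootsize} this makes no difference to the conclusion.
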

\begin{proof}
See Proposition 3.6 in \cite{An}.
\end{proof}
\begin{lem}\label{3evolution}
    Let $\phi$ and $F$ be $S$--tangent tensor fields satisfying the following transport equation:
    \begin{align*}
        \nabs_3\phi+\la_0\trchb\phi=F.
    \end{align*}
    Denoting $\la_1=2\la_0-1$, we have
    \begin{align*}
        |u|^{\la_1-2s_2(\phi)}\|\phi\|_{L^2_{sc}(S_{u,\ub})}\les |u_\infty|^{\la_1-2s_2(\phi)}\|\phi\|_{L^2_{sc}(S_{u_\infty,\ub})}+a\int_{u_\infty}^u|u'|^{\la_1-2s_2(F)}\|F\|_{L^2_{sc}(S_{u',\ub})}du'.
    \end{align*}
\end{lem}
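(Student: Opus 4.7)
The plan is a standard weighted $L^2(S)$--energy argument along $e_3$, with the weight $|u|^{2\la_1}$ tailored to cancel the principal part of $\Om\trchb$.

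First I would pair the transport equation with $\phi$ to obtain the pointwise identity $\nabs_3|\phi|^2 = -2\la_0\trchb|\phi|^2 + 2\langle F,\phi\rangle$, and then apply Lemma \ref{dint} with $f=|\phi|^2$. Using that $\Om e_3$ acts on functions of $(u,\ub)$ alone as $\partial_u$, this produces the ODE
\[
\partial_u\|\phi\|_{L^2(S_{u,\ub})}^2 = -\la_1\!\int_{S_{u,\ub}}\!\Om\trchb\,|\phi|^2\,d\slg \,+\, 2\!\int_{S_{u,\ub}}\!\Om\langle F,\phi\rangle\,d\slg,
\]
with $\la_1 = 2\la_0-1$ arising from combining the $-2\la_0$ coefficient (from pairing) with the $+1$ contribution $\Om\trchb|\phi|^2$ coming from Lemma \ref{dint}.

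The key algebraic point is that the weight $|u|^{2\la_1}$ is engineered so that the principal part of $-\la_1\Om\trchb$ cancels $\partial_u|u|^{2\la_1}$. Explicitly, splitting $\Om\trchb = -\tfrac{2}{|u|}+\Om\trchbt$ using the definition \eqref{dfKcsic}, and noting that $\partial_u|u|^{2\la_1} = -\tfrac{2\la_1}{|u|}|u|^{2\la_1}$, the $-\tfrac{2}{|u|}$ contribution exactly matches the derivative of the weight, leaving only a $\trchbt$--remainder and the inner product with $F$ on the right. Under the bootstrap \eqref{B1}, Lemma \ref{decayGagGab} yields $\|\Om\|_{L^\infty(S)}\les 1$ and $\|\trchbt\|_{L^\infty(S)}\les \bo/|u|^2$, so the $\trchbt$--term is a negligible perturbation once integrated (since $\int_{u_\infty}^u|u'|^{-2}\,du' \leq |u_\infty|^{-1}\ll 1$). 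Applying Cauchy--Schwarz to the $F$--term and taking square roots via $\partial_u\sqrt{H} \leq \partial_u H/(2\sqrt{H})$ for $H(u) := |u|^{2\la_1}\|\phi\|_{L^2(S_{u,\ub})}^2$, Grönwall then delivers the $L^2(S)$--version
\[
|u|^{\la_1}\|\phi\|_{L^2(S_{u,\ub})} \les |u_\infty|^{\la_1}\|\phi\|_{L^2(S_{u_\infty,\ub})} + \int_{u_\infty}^u |u'|^{\la_1}\|F\|_{L^2(S_{u',\ub})}\,du'.
\]

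The last step is translation into the scale--invariant norms \eqref{dfsc}. Since $\nabs_3$ raises $s_2$ by one and $\trchb$ has $s_2=1$, the transport equation forces $s_2(F) = s_2(\phi)+1$. Substituting $\|\phi\|_{L^2(S)} = a^{s_2(\phi)}|u|^{-2s_2(\phi)}\|\phi\|_{L^2_{sc}(S)}$ and the analogous identity for $F$, and then dividing through by $a^{s_2(\phi)}$, turns the weights on both sides into $|u|^{\la_1-2s_2(\phi)}$ and $|u'|^{\la_1-2s_2(F)}$, respectively, while the factor $a^{s_2(F)} = a\cdot a^{s_2(\phi)}$ is what produces the prefactor $a$ in front of the integral. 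The only nonroutine step is recognizing the exact cancellation between $\partial_u|u|^{2\la_1}$ and the leading $-2/|u|$ piece of $\la_1\Om\trchb$; everything else (Grönwall for the $\trchbt$ remainder, Cauchy--Schwarz, and signature arithmetic) is routine.
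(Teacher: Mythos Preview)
Your proof is correct and complete. The paper itself does not give a proof but simply cites Proposition~3.7 in \cite{An}; your weighted $L^2(S)$ energy argument along $e_3$---pairing with $\phi$, using Lemma~\ref{dint} to produce the $(1-2\la_0)\Om\trchb$ coefficient, splitting $\Om\trchb=-\tfrac{2}{|u|}+\Om\trchbt$ so that the principal part cancels against $\partial_u|u|^{2\la_1}$, absorbing the $\trchbt$ remainder via Gr\"onwall using the bootstrap bound, and then converting to scale-invariant norms via the signature relation $s_2(F)=s_2(\phi)+1$---is exactly the standard argument that reference contains.
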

\begin{proof}
    See Proposition 3.7 in \cite{An}.
\end{proof}
\begin{prop}\label{sobolev}
    We have the following Sobolev inequality:
    \begin{align}
        \|\phi\|_{L^\infty_{sc}(S_{u,\ub})}\les\|\phi^{(2)}\|_{L^2_{sc}(S_{u,\ub})}.
    \end{align}
\end{prop}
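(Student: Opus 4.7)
\textbf{Plan for Proposition \ref{sobolev}.} The plan is to reduce the scale-invariant Sobolev inequality to the standard $H^2\hookrightarrow L^\infty$ embedding on the $2$--surface $S_{u,\ub}$, and then track the weights in $a$ and $|u|$ through the definitions \eqref{dfsc}.

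First I would verify that, under the bootstrap assumption \eqref{B1} and the first consequences derived in Section \ref{secfirstconsequence}, the induced metric $\slg$ on $S_{u,\ub}$ is comparable to the standard round metric of radius $r\simeq |u|$, with Gauss curvature controlled by $|u|^{-2}$ up to lower-order corrections (this follows from the definition $r=\sqrt{|S_{u,\ub}|/4\pi}$ together with the Gauss equation \eqref{gauss} and the control of $\trch,\trchb,\hch,\hchb,\rho$ from $\mg\leq\bo$, $\mw\leq\bo$). After rescaling the metric by $r^{-2}$ to obtain a surface of area $\simeq 1$, the classical Sobolev embedding $H^2(S)\hookrightarrow L^\infty(S)$ applies uniformly, and pulling back the rescaling yields the geometric inequality
\begin{equation*}
\|\phi\|_{L^\infty(S_{u,\ub})}\les |u|^{-1}\|\phi\|_{L^2(S_{u,\ub})}+\|\nabs\phi\|_{L^2(S_{u,\ub})}+|u|\|\nabs^2\phi\|_{L^2(S_{u,\ub})}.
\end{equation*}

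The second step is purely algebraic: convert each term to scale-invariant norms using Definition \ref{dfdkb} and the signature rule $s_2(\nabs^k\phi)=s_2(\phi)+k/2$, which gives $\|\dkb^k\phi\|_{L^2_{sc}(S_{u,\ub})}=a^{-s_2(\phi)}|u|^{2s_2(\phi)+k}\|\nabs^k\phi\|_{L^2(S_{u,\ub})}$ for $k=0,1,2$. Multiplying the displayed inequality through by $a^{-s_2(\phi)}|u|^{2s_2(\phi)+1}$, the three weights on the right-hand side line up exactly with $\|\phi\|_{L^2_{sc}}$, $\|\dkb\phi\|_{L^2_{sc}}$ and $\|\dkb^2\phi\|_{L^2_{sc}}$, which recombine into $\|\phi^{(2)}\|_{L^2_{sc}(S_{u,\ub})}$ by Definition \ref{dfdkb}.

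The only genuine obstacle is the first step, namely justifying a uniform (in $u,\ub$) classical Sobolev constant for $S_{u,\ub}$. This reduces to checking that the bootstrap bounds \eqref{B1} propagate enough control on the intrinsic geometry of $S_{u,\ub}$ to guarantee an isoperimetric constant comparable to the round sphere of radius $|u|$; this is by now standard in the scale-critical double null framework, and I would simply invoke it (e.g.\ along the lines of Proposition 3.8 in \cite{An} or the Sobolev estimates in \cite{Ch-Kl,AnLuk}) rather than redo the argument. Once that uniform Sobolev constant is in hand, the proof is immediate from the two bookkeeping steps above.
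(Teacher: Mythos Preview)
Your proposal is correct and is essentially the argument underlying the reference the paper invokes: the paper's own proof consists solely of the citation ``See Proposition 3.11 in \cite{An}'', and what you have outlined (uniform classical Sobolev on $S_{u,\ub}$ via control of the intrinsic geometry, followed by the algebraic conversion of the $|u|$-- and $a$--weights into the scale-invariant norms) is exactly how that proposition is established. The only minor point to make explicit in your write-up is that $\dkb$ is defined via specific Hodge operators rather than $\af\nabs$, so passing from $\|\nabs^k\phi\|_{L^2}$ to $\|\dkb^k\phi\|_{L^2}$ uses the elliptic equivalence of Proposition \ref{ellipticLp}; once that is said, your bookkeeping is complete.
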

\begin{proof}
    See Proposition 3.11 in \cite{An}.
\end{proof}
\begin{prop}\label{commutation}
We have the following schematic commutator formulae:
\begin{align*}
    [\Om\nabs_4,\dkb]&=\Gab\c\dkb+\afd\Gab^{(1)},\\
    [\Om\nabs_3,\dkb]&=\frac{|u|}{a}\Gab\c\dkb+\afd\Gab^{(1)}.
\end{align*}
\end{prop}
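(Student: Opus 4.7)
\emph{Proof proposal.} The plan is to deduce the schematic commutator formulae directly from the precise Ricci-formula commutators in Lemma~\ref{comm}. Since $a$ is a positive constant, $\af$ commutes with both $\Om\nabs_4$ and $\Om\nabs_3$, and with the Hodge operators hidden inside $\dkb$ (see Definition~\ref{dfdkb}), so schematically
\[
[\Om\nabs_4,\dkb]=\af[\Om\nabs_4,\nabs],\qquad [\Om\nabs_3,\dkb]=\af[\Om\nabs_3,\nabs],
\]
and the task reduces to rewriting the right-hand sides of Lemma~\ref{comm}, after multiplying by $\af$, in the schematic language of Definition~\ref{gammag}, Proposition~\ref{computationGagGab} and Remark~\ref{GagGabrk}.

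For the $e_4$ commutator, the principal term $-\af\Om\chi_{BC}\nabs_C\phi=-\Om\chi_{BC}\dkb_C\phi$ has coefficient $\Om\chi=\Om\hch+\frac{1}{2}\Om\trch\,\slg$. By Proposition~\ref{computationGagGab}, $\Om\c\Gab=\Gab$, so $\Om\hch\in\Gab$, while $\Om\trch\in\Gag$ is absorbed into $\Gab$ via Remark~\ref{GagGabrk}, producing the required $\Gab\c\dkb$ term. The zeroth-order contributions from Lemma~\ref{comm}, multiplied by $\af$, take the schematic form $\af\Om(\chi\c\etab+\etab\c\chi+\b)\c\phi$: the Ricci-product $\af\Om\chi\c\etab\in\af\Om\c\Gab\c\Gab$ simplifies via $\Gab\c\Gab=\frac{\af\bo}{|u|}\Gab$ and is absorbed into $\afd\Gab^{(1)}$ after a scale-invariant comparison on $V_*$, while the curvature term equals $\Om\cdot(\af\b)\in\Gab^{(1)}$, which sits inside $\afd\Gab^{(1)}$ on $V_*$ for the same reason.

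The $e_3$ case is analogous; the only substantive change is in the principal coefficient $\Om\chib=\Om\hchb+\frac{1}{2}\Om\trchb\,\slg$. The traceless piece satisfies $\frac{a}{|u|}\hchb\in\Gab$, so $\Om\hchb\in\frac{|u|}{a}\Gab$. Writing $\Om\trchb=-\frac{2}{|u|}+\Om\trchbt$ with $\frac{a}{|u|}\trchbt\in\Gag$, the remainder $\Om\trchbt$ belongs to $\frac{|u|}{a}\Gab$; and the leading asymptotic $-\frac{2}{|u|}$, while not small in absolute value, has scale-invariant $L^\infty$ norm on $S_{u,\ub}$ of order $|u|\cdot\frac{2}{|u|}=2$, much smaller than the generic $\Gab$--bound $\bo\af$, so it too is absorbed into $\frac{|u|}{a}\Gab$. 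The zeroth-order errors are handled exactly as in the $e_4$ case, using $\af\bb\in\Gab^{(1)}$. The main technical subtlety is precisely the absorption of the non-vanishing scalar $\frac{1}{|u|}$-type asymptotic of $\Om\trchb$ into the $\Gab$ class, which is valid throughout $V_*$ by direct comparison of scale-invariant norms; once this is in place, the schematic identities follow by pure book-keeping from Lemma~\ref{comm} and Proposition~\ref{computationGagGab}.
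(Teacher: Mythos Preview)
Your overall route—pulling out the factor $\af$, invoking Lemma~\ref{comm}, and then classifying each surviving term via Definition~\ref{gammag}—is exactly what the paper's one-line proof points to. The problem lies in the scale-invariant bookkeeping, where two of your absorptions go the wrong way.

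First, for the curvature piece you write ``$\Om\cdot(\af\b)\in\Gab^{(1)}$, which sits inside $\afd\Gab^{(1)}$ on $V_*$'', and similarly for $\af\bb$ in the $e_3$ case. Since $\afd=a^{-1/2}<1$, the inclusion runs the other way: a quantity in $\afd\Gab^{(1)}$ has scale-invariant norm $\lesssim\bo$, whereas $\Gab^{(1)}$ only guarantees $\lesssim\bo\af$. Under the bootstrap one has $\|\af\b\|_{L^\infty_{sc}}\sim\bo\af$, not $\bo$, so $\af\Om\b\notin\afd\Gab^{(1)}$. The same failure hits your Ricci product: passing through $\Gab\c\Gab=\frac{\af\bo}{|u|}\Gab$ and multiplying by the extra $\af$ gives a coefficient of scale-invariant size $\sim b^{1/2}$ on $V_*$, which exceeds the $\afd\Gab^{(1)}$ threshold $\bo$.

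Second, in the $e_3$ principal term you absorb the leading asymptotic $-\frac{2}{|u|}$ of $\Om\trchb$ into $\frac{|u|}{a}\Gab$ by computing its scale-invariant norm as $|u|\cdot\frac{2}{|u|}=2$. That calculation uses signature $0$, but $\trchb$ carries $s_2=1$; with the correct weight the norm is $\sim\frac{|u|^2}{a}$, which does not fit into $\frac{|u|}{a}\Gab$. In every application of Proposition~\ref{commutation} this $\frac{1}{2}\Om\trchb\cdot\dkb$ piece is never an error term at all: it is moved to the left-hand side and is precisely what increments the $\trchb$-coefficient from $\frac{1}{2}+s_2$ to $\frac{1+i}{2}+s_2$ in the commuted equations.

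What actually comes out of Lemma~\ref{comm} after multiplying by $\af$ is a principal piece $\Om\chi\c\dkb$ (resp.\ $\Om\hchb\c\dkb$ plus the explicit $\frac{1}{2}\Om\trchb\c\dkb$) together with a zeroth-order coefficient lying in $\Gab^{(1)}$, not $\afd\Gab^{(1)}$. The schematic statement in the paper is written loosely on this point, but in all of its applications the commutator is only ever applied to some $U\in\Gab$, and the resulting error $\Gab^{(1)}\c\Gab=(\Gab\c\Gab)^{(1)}$ is already absorbed by $\frac{|u|}{a}(\Gab\c\Gab)^{(i)}$; so the weaker bound suffices and is what you should prove.
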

\begin{proof}
    It follows directly from Lemma \ref{comm} and Definition \ref{gammag}.
\end{proof}
\subsection{Schematic Maxwell equations and wave equations}
\begin{prop}\label{Maxwell}
We have the following schematic Maxwell equations:
\begin{align*}
    \nabs_3\bF+\frac{1}{2}\trchb\bF&=-\sld_1^*(\rhoF,\siF)+\Gab\c\Gab+\ef\Gab\c\Gab,\\
    \nabs_4(\rhoF,\siF)&=\sld_1\bF+\Gab\c\Gab+\ef\Gab\c\Gab,\\
    \nabs_3(\rhoF,-\siF)+\trchb (\rhoF,-\siF)&=-\sld_1\bbF+\Gab\c\Gab+\ef\Gab\c\Gab,\\
    \nabs_4\bbF&=\sld_1^*(\rhoF,-\siF)+\frac{|u|}{a}\Gab\c\Gab+\ef\Gab\c\Gab.
\end{align*}
We also have the following schematic Teukolsky equation for $\bF$:
\begin{align*}
    \nabs_3(\nabs_4\bF)+\frac{1}{2}\trchb(\nabs_4\bF)&=-\sld_1^*(\sld_1\bF)+(\Gab\c\Gab)^{(1)}+\ef(\Gab\c\Gab)^{(1)},\\
    \nabs_4(\sld_1\bF)&=\sld_1(\nabs_4\bF)+(\Gab\c\Gab)^{(1)}.
\end{align*}
\end{prop}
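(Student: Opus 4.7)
The proof is essentially a direct bookkeeping exercise that translates the full Maxwell equations of Proposition \ref{Maxwellequations} and the Teukolsky equation of Corollary \ref{corteukolsky} into the schematic language of Definition \ref{gammag}. The plan is to treat each of the six equations in turn. First I would identify the Hodge-theoretic principal part on the right-hand side: for the $\bF$ equation along $e_3$, the combination $\nabs\rhoF - \dual\nabs\siF$ equals $-\sld_1^*(\rhoF,\siF)$ by Definition \ref{hodgeop}; for the $(\rhoF,\siF)$ equation along $e_4$, the pair $(\sdivs\bF,\curls\bF)$ equals $\sld_1\bF$; for the $(\rhoF,-\siF)$ equation along $e_3$, the pair $(-\sdivs\bbF,-\curls\bbF)$ equals $-\sld_1\bbF$; and for the $\bbF$ equation along $e_4$, the combination $-\nabs\rhoF - \dual\nabs\siF$ equals $\sld_1^*(\rhoF,-\siF)$.

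Next I would classify every remaining term according to Definition \ref{gammag}, using the conventions collected in Remark \ref{GagGabrk} and Proposition \ref{computationGagGab}. Terms like $\omb\bF$, $\rhoF\eta$, $\siF\,\dual\eta$, $\hch\cdot\bbF$ (after accounting for the $b$--rescaling), and the absorbed $\trch\rhoF$, $\trch\siF$, $\trchb\rhoF$ are all products of two factors each lying in $\Gag\cup\Gab$; by the convention $\Gag+\Gab=\Gab$, they all fit in the schematic type $\Gab\cdot\Gab$. For the $\nabs_4\bbF$ equation, the factor $\frac{|u|}{a}$ arises because $\hchb$ and $\Psi_3$ enter $\Gab$ only after multiplication by $\frac{a}{|u|}$, so that $\hchb\cdot\bF=\frac{|u|}{a}\Gab\cdot\Gab$; this is the only place where the rescaling is visible. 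The coupling terms $-2\ef\Im(\psi\Psisl^\dag)$, $2\ef\Im(\psi\Psi_4^\dag)$, $-2\ef\Im(\psi\Psit_3^\dag)$ are all of type $\ef\Gab\cdot\Gab$, since $\psi,\Psi_4,\Psisl,\Psit_3$ all lie in (a rescaling of) $\Gab$.

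For the Teukolsky part, I would begin with the first identity in Corollary \ref{corteukolsky} and recognize $\nabs(\sdivs\bF) - \dual\nabs(\curls\bF) = -\sld_1^*\sld_1\bF$. All other terms on the right-hand side carry one extra derivative but are still products of two $\Gag\cup\Gab$ quantities (or contain one explicit $\ef$), and therefore fall in $(\Gab\cdot\Gab)^{(1)}+\ef(\Gab\cdot\Gab)^{(1)}$ after applying Definition \ref{gammag}, Definition \ref{dfdkb}, and Proposition \ref{computationGagGab}, which says precisely that $\nabs\Gab=\afd\Gab^{(1)}$. The second Teukolsky identity is handled in the same way, with the additional observation that $\nabs_4$ on $\sld_1\bF$ produces only $\nabs$ applied to terms of schematic type $\Gab\cdot\Gab$ coming from the first Maxwell equation.

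The main obstacle, such as it is, is careful bookkeeping rather than genuine analysis: one must keep track of the rescaling factors $b$ and $\frac{a}{|u|}$ built into Definition \ref{gammag} so that $\bbF$, $\rhoF$, $\siF$, $\Psisl$, $\Psit_3$, $\hchb$, $\Psi_3$ appear with the right weights, and one must verify that quadratic terms involving these rescaled quantities indeed remain inside $\Gab\cdot\Gab$ or $\frac{|u|}{a}\Gab\cdot\Gab$ after one uses the pointwise bounds of Lemma \ref{decayGagGab} via Proposition \ref{computationGagGab}. Once this accounting is done, the statement is immediate.
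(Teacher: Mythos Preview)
Your proposal is correct and follows exactly the approach of the paper, which simply records that the statement ``follows directly from Proposition \ref{Maxwellequations}, Corollary \ref{corteukolsky} and Definition \ref{gammag}.'' You have spelled out the bookkeeping that the paper leaves implicit; the identification of principal parts via Definition \ref{hodgeop}, the placement of each quadratic remainder into $\Gab\cdot\Gab$ (with the $\frac{|u|}{a}$ weight in the $\nabs_4\bbF$ equation arising precisely from $\hchb\cdot\bF$), and the treatment of the Teukolsky commutator terms are all accurate.
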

\begin{proof}
    It follows directly from Proposition \ref{Maxwellequations}, Corollary \ref{corteukolsky} and Definition \ref{gammag}.
\end{proof}
\begin{prop}\label{wave}
We have the following schematic wave equations:
\begin{align*}
\nabs_3(\Psi_4,0)+\frac{1}{2}\trchb(\Psi_4,0)&=\sld_1\Psisl+\Gab\c\Gab+\ef\Gab\c\Gab,\\
\nabs_4\Psisl&=-\sld_1^*(\Psi_4,0)+\Gab\c\Gab+\ef\Gab\c\Gab,\\
\nabs_3\Psisl+\frac{1}{2}\trchb\Psisl&=-\sld_1^*(\Psit_3,0)+\frac{|u|}{a}\Gab\c\Gab+\ef\Gab\c\Gab,\\
\nabs_4(\Psit_3,0)&=\sld_1\Psisl+\frac{|u|}{a}\Gab\c\Gab+\ef\Gab\c\Gab.
\end{align*}
\end{prop}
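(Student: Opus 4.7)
The plan is to derive Proposition \ref{wave} as a direct schematic rewriting of the four explicit wave equations in Proposition \ref{waveequation} (plus the renormalized $\Psit_3$ versions), using the classification of quantities in Definition \ref{gammag} together with the bookkeeping conventions in Remark \ref{GagGabrk}. I do not expect any genuinely new PDE manipulation; the content is really a careful classification of the lower-order terms into the three error buckets $\Gab\c\Gab$, $\ef\Gab\c\Gab$, and $\frac{|u|}{a}\Gab\c\Gab$.

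First, I would take the first equation of Proposition \ref{waveequation} and move the terms $-2\omb\Psi_4$ and $i\ef\Ub\Psi_4$ on the left to the right. On the right, $\sdivs\Psisl$ becomes the first component of $\sld_1\Psisl=(\sdivs\Psisl,\curls\Psisl)$. Each remaining term is a product of two factors, each of which is either in $\Gag$ or in $\Gab$ (after dividing out appropriate powers of $\frac{a}{|u|}$, e.g.\ for $\trch\,\Psi_3$, since $\frac{a}{|u|}\Psi_3\in\Gab$ but $\trch\in\Gag$ has an $\afd$ gain, the product sits in $\Gab\c\Gab$). Products containing $\ef$ (the terms $i\ef\Asl\c\Psisl$, $i\ef\rhoF\psi$, $i\ef\Ub\Psi_4$) are assigned to the $\ef\Gab\c\Gab$ bucket, while the remaining bilinears ($\eta\c\Psisl$, $\omb\Psi_4$, $\trch\Psi_3$) drop into $\Gab\c\Gab$. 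Products of the form $\Gag\c\Gab$ are absorbed into $\Gab\c\Gab$ by Remark \ref{GagGabrk} since they are strictly better (they gain an $\afd$). The second component of the pair equation requires $0=\curls\Psisl+\text{error}$; this is handled by recalling $\Psisl=\nabs\psi+i\ef\Asl\psi$, so that $\curls\Psisl=i\ef\bigl((\curls\Asl)\psi+\Asl\wedge\nabs\psi\bigr)$, which belongs to $\ef\Gab\c\Gab$.

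The second equation ($\nabs_4\Psisl$) is handled identically: I move $\frac{1}{2}\trch\Psisl$, $\hch\c\Psisl$ and $i\ef U\Psisl$ to the right, rewrite $\nabs(\Psi_4)$ as $-\sld_1^*(\Psi_4,0)+{\ins_A}^B\nabs_B 0$ (so the pairing is consistent with Definition \ref{hodgeop}), and classify each remaining product. For the $\Psit_3$ equations (third and fourth), the only new feature is the presence of the extra terms $\frac{1}{2}\trchbt\Psisl$, $\frac{1}{2}\trchbt\Psi_4$ and $\frac{\trch}{2\Om|u|}\psi$, all of which pick up a factor $\frac{|u|}{a}$: since $\trchbt\in\frac{a}{|u|}\Gag$ and $\Psi_4,\Psisl,\psi\in\Gab$, they sit in the $\frac{|u|}{a}\Gab\c\Gab$ bucket. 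The curl-type second component again vanishes up to $\ef\Gab\c\Gab$ by the same argument applied to the renormalized $\Psit_3$.

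The only step that takes a bit of care, and which I would treat as the main obstacle, is the accounting for $\Psi_3$: it does not lie in $\Gab$ directly but only after multiplication by $\frac{a}{|u|}$, so bilinears involving $\Psi_3$ (in particular $\trchb\,\Psi_4$ in the $\Psit_3$ equation and $\trch\,\Psi_3$ in the $\Psi_4$ equation) must be tracked to confirm they land in $\Gab\c\Gab$ (with the $\afd$ gain of $\Gag$ killing the extra $\frac{a}{|u|}$) rather than a worse bucket. Once this is verified, the remaining terms are routine and the schematic equations follow term by term.
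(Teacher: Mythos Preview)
Your approach is exactly the paper's: a direct term-by-term classification of the explicit equations in Proposition~\ref{waveequation} against Definition~\ref{gammag}, and the paper's own proof is just the one-liner ``follows directly''. Most of your bookkeeping is fine, but your handling of the term $-\tfrac{1}{2}\trch\,\Psi_3$ in the first equation, which you correctly flag as the main obstacle, does not work as written.

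You argue that $\trch\in\Gag$ carries an $\afd$ gain that compensates for the loss in $\Psi_3$ (since $\frac{a}{|u|}\Psi_3\in\Gab$, i.e.\ $\Psi_3$ is $\frac{|u|}{a}$ worse than $\Gab$), so that $\trch\,\Psi_3$ lands in $\Gab\c\Gab$. But the net factor is $\afd\cdot\frac{|u|}{a}=\frac{|u|}{a^{3/2}}$, which is \emph{unbounded} on the region $\frac{a}{4}\le|u|\le|u_\infty|$. Concretely, by Proposition~\ref{Holder} one has $\|\trch\,\Psi_3\|_{L^2_{sc}}\les\frac{1}{|u|}\|\trch\|_{L^\infty_{sc}}\|\Psi_3\|_{L^2_{sc}}\les\frac{1}{|u|}\cdot 1\cdot\frac{|u|}{a}\bo\af=\frac{\bo}{\af}$, which is \emph{not} dominated by the generic $\Gab\c\Gab$ size $\frac{b^{1/2}a}{|u|}$ once $|u|\gtrsim a^{3/2}$. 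The honest classification is $\trch\,\Psi_3\in\frac{|u|}{a}\Gag\c\Gab\subset\frac{|u|}{a}\Gab\c\Gab$. This is harmless downstream: in Proposition~\ref{estbrF} the commutator with $\dkb^i$ already produces a $\frac{|u|}{a}$ factor on $h_{(1)}$, and in the transport estimate of Proposition~\ref{est3FPsi} the extra $\frac{|u|}{a}$ is still integrable against $\frac{a}{|u'|^2}$. So the stated schematic is only loosely correct on this term, and your specific ``$\afd$ kills the loss'' mechanism is not the reason it is acceptable. A related slip: you write $\trchbt\in\frac{a}{|u|}\Gag$, but Definition~\ref{gammag} gives $\frac{a}{|u|}\trchbt\in\Gag$, i.e.\ $\trchbt\in\frac{|u|}{a}\Gag$, the opposite inversion (your conclusion that $\trchbt\Psi_4\in\frac{|u|}{a}\Gab\c\Gab$ is nevertheless correct).
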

\begin{proof}
It follows directly from Proposition \ref{waveequation} and Definition \ref{gammag}.
\end{proof}
\subsection{Ricci curvature and Schouten tensor}
\begin{lem}\label{Ricmunu}
The null components of Ricci curvature take the following form:
    \begin{align}\label{EMCSFRic}
    \Ric_{\mu\nu}=2\Re\left(D_\mu\psi (D_\nu\psi)^\dag\right)+2\left(\F_{\mu\a}{\F_{\nu}}^\a-\frac{1}{4}\g_{\mu\nu}\F_{\a\b}\F^{\a\b}\right).
\end{align}
\end{lem}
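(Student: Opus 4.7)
The statement is an algebraic identity derived from the Einstein equation in \eqref{EMCSF} together with the definitions of $\T^{EM}$ and $\T^{CSF}$ in \eqref{Tdfeqintro}. The plan is to invert the Einstein equation for $\Ric_{\mu\nu}$ by taking its trace, and then verify that the pointwise $\g_{\mu\nu}$ contributions cancel precisely against the scalar curvature term.

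First, I would take the $\g^{\mu\nu}$--trace of both sides of the Einstein equation in \eqref{EMCSF} in dimension $1+3$. On the left this produces $\R - 2\R = -\R$. On the right, the electromagnetic energy--momentum tensor is traceless in $3+1$ dimensions: a direct computation gives $\g^{\mu\nu}\T^{EM}_{\mu\nu}=2\bigl(\F_{\mu\a}\F^{\mu\a}-\tfrac{1}{4}\cdot 4\cdot\F_{\a\b}\F^{\a\b}\bigr)=0$. The trace of the scalar part is
\[
\g^{\mu\nu}\T^{CSF}_{\mu\nu}=2\bigl(\Re(D^\mu\psi(D_\mu\psi)^\dag)-2\,D_\a\psi(D^\a\psi)^\dag\bigr)=-2\,D_\a\psi(D^\a\psi)^\dag,
\]
where I used that $D_\a\psi(D^\a\psi)^\dag$ is manifestly real (it equals its own complex conjugate). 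Therefore
\[
\R=2\,D_\a\psi(D^\a\psi)^\dag.
\]

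Next, I would substitute this expression for $\R$ back into the Einstein equation, writing $\Ric_{\mu\nu}=\tfrac{1}{2}\g_{\mu\nu}\R+\T^{EM}_{\mu\nu}+\T^{CSF}_{\mu\nu}$. Expanding using the definitions in \eqref{Tdfeqintro}, the $\tfrac{1}{2}\g_{\mu\nu}\R=\g_{\mu\nu}D_\a\psi(D^\a\psi)^\dag$ contribution exactly cancels the $-\g_{\mu\nu}D_\a\psi(D^\a\psi)^\dag$ term coming from $\T^{CSF}_{\mu\nu}$, leaving
\[
\Ric_{\mu\nu}=2\Re\!\left(D_\mu\psi(D_\nu\psi)^\dag\right)+2\!\left(\F_{\mu\a}{\F_{\nu}}^\a-\tfrac{1}{4}\g_{\mu\nu}\F_{\a\b}\F^{\a\b}\right),
\]
which is the claimed identity.

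There is no serious obstacle here; the only minor point that must be stated carefully is that $\T^{EM}$ is traceless precisely because the spacetime is four--dimensional (the factor $4$ from $\g^{\mu\nu}\g_{\mu\nu}$ matches the $\tfrac{1}{4}$ in the Maxwell stress), and that the reality of $D_\a\psi(D^\a\psi)^\dag$ allows one to drop the $\Re$ when taking the trace of $\T^{CSF}$. Once these two observations are made, the lemma follows by direct algebraic manipulation of \eqref{EMCSF} and \eqref{Tdfeqintro}.
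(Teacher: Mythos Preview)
Your proposal is correct and follows essentially the same approach as the paper: take the $\g^{\mu\nu}$--trace of the Einstein equation to solve for $\R$, then substitute back. The paper's proof is terser (it simply writes $\R=-\g^{\mu\nu}\T_{\mu\nu}^{CSF}=2\Re(D_\a\psi(D^\a\psi)^\dag)$ and injects it into \eqref{EMCSFeq}), but your added explanations of why $\T^{EM}$ is traceless in four dimensions and why $D_\a\psi(D^\a\psi)^\dag$ is real are accurate and fill in exactly the details the paper omits.
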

\begin{proof}
    Recall from \eqref{EMCSF} that the Einstein--Maxwell-complex scalar field (EMCSF) system is given by
\begin{align}
\begin{split}\label{EMCSFeq}
    \Ric_{\mu\nu}-\frac{1}{2}\R\g_{\mu\nu}&=\T^{EM}_{\mu\nu}+\T^{CSF}_{\mu\nu},
\end{split}
\end{align}
where
\begin{align*}
    \T^{EM}_{\mu\nu}&=2\left(\F_{\mu\a}{\F_{\nu}}^\a-\frac{1}{4}\g_{\mu\nu}\F_{\a\b}\F^{\a\b}\right),\\
    \T^{CSF}_{\mu\nu}&= 2 \left(\Re\left(D_\mu\psi (D_\nu\psi)^\dag\right)-\frac{1}{2}\g_{\mu\nu} D_\a\psi (D^\a\psi)^\dag\right).
\end{align*}
Contracting \eqref{EMCSFeq} with $\g^{\mu\nu}$, we obtain
\begin{align*}
    \R=-\g^{\mu\nu}T_{\mu\nu}^{CSF}=2\Re\left(D_\a\psi(D^\a\psi)^\dag\right).
\end{align*}
Injecting it into \eqref{EMCSFeq}, we obtain \eqref{EMCSFRic}. This concludes the proof of Lemma \ref{Ricmunu}.
\end{proof}
\begin{lem}\label{Ricciexpression}
We have the following expressions for the components of Ricci curvature:
\begin{align*}
\Ric_{AB}&=2\Re(\Psisl_A\Psisl_B^\dag)-2\left(\bF\hot\bbF\right)_{AB}+\slg_{AB}\left(\rhoF^2+\siF^2\right),\\
\Ric_{A3}&=2\Re(\Psisl_A\Psi_3^\dag)-2\rhoF\bbF_A+2\siF{}^*\bbF_A,\\
\Ric_{A4}&=2\Re(\Psisl_A\Psi_4^\dag)+2\rhoF\bF_A+2\siF{}^*\bF_A,\\
\Ric_{33}&=2|\Psi_3|^2+2|\bbF|^2,\\
\Ric_{34}&=2\Re(\Psi_3\Psi_4^\dag)+2\rhoF^2+2\siF^2,\\
\Ric_{44}&=2|\Psi_4|^2+2|\bF|^2.
\end{align*}
We also have the following expression for scalar curvature:
\begin{align*}
    \R=-2\Re(\Psi_3\Psi_4^\dag)+2|\Psisl|^2.
\end{align*}
\end{lem}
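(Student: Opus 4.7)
The plan is to substitute the null decompositions of $\F$ (namely $\bF$, $\bbF$, $\rhoF$, $\siF$) and $D\psi$ (namely $\Psi_4$, $\Psi_3$, $\Psisl$) from Section \ref{secmatterfields} into the Ricci formula supplied by Lemma \ref{Ricmunu},
\[
\Ric_{\mu\nu}=2\Re\!\left(D_\mu\psi(D_\nu\psi)^\dag\right)+2\F_{\mu\a}{\F_\nu}^\a-\tfrac{1}{2}\g_{\mu\nu}\F_{\a\b}\F^{\a\b},
\]
and to evaluate each of the six null components $(\mu,\nu)\in\{(A,B),(A,3),(A,4),(3,3),(3,4),(4,4)\}$ separately. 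For the scalar curvature I would reuse the intermediate identity $\R=2\Re(D_\a\psi(D^\a\psi)^\dag)$ that already appeared inside the proof of Lemma \ref{Ricmunu}, and expand $D^\a\psi=\g^{\a\b}D_\b\psi$ in the null frame.

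The first preparation is an index-raising dictionary, valid because $\g^{34}=\g^{43}=-\tfrac{1}{2}$, $\g^{AB}=\slg^{AB}$, and all other inverse-metric entries vanish. This yields ${\F_3}^3=-\rhoF$, ${\F_4}^4=\rhoF$, ${\F_A}^3=-\tfrac{1}{2}\bF_A$, ${\F_A}^4=-\tfrac{1}{2}\bbF_A$, and ${\F_A}^B=-{\slep_A}^B\siF$, together with the spatial contraction identity $\slep_{AC}{\slep_B}^C=\slg_{AB}$. Using these I would then compute the trace scalar $\F_{\a\b}\F^{\a\b}$ by summing over all sixteen index pairs, obtaining an expression of the form $c_1\rhoF^2+c_2\siF^2+c_3\,\bF\c\bbF$ which feeds into the trace-subtraction term $-\tfrac{1}{2}\g_{\mu\nu}\F^{\a\b}\F_{\a\b}$ for every component.

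Next I would evaluate $2\F_{\mu\a}{\F_\nu}^\a$ for each pair and combine with the matter bilinear and the trace term. The diagonal cases $(3,3)$ and $(4,4)$ are immediate, producing $2|\bbF|^2$ and $2|\bF|^2$. The null-null case $(3,4)$ requires pairing $\F_{34}{\F_4}^4=2\rhoF^2$ and $\F_{3A}{\F_4}^A=\bF\c\bbF$ against the non-vanishing trace term $-\tfrac{1}{2}\g_{34}\F_{\a\b}\F^{\a\b}=\F_{\a\b}\F^{\a\b}$; the $\bF\c\bbF$ cross-terms cancel and leave $2\rhoF^2+2\siF^2$. For the tangential-null cases $(A,3)$ and $(A,4)$, the identity ${\slep_A}^B\bF_B=\dual\bF_A$ (and its $\bbF$ analogue) generates the Hodge-dual contributions $2\siF\,\dual\bF_A$ and $2\siF\,\dual\bbF_A$.

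The only delicate case is the purely tangential component $(A,B)$: one first computes $\F_{A\a}{\F_B}^\a=-\tfrac{1}{2}(\bF_A\bbF_B+\bbF_A\bF_B)+\siF^2\slg_{AB}$, then uses the decomposition $\bF_A\bbF_B+\bF_B\bbF_A=(\bF\hot\bbF)_{AB}+\slg_{AB}\,\bF\c\bbF$ to isolate the symmetric traceless part, and finally combines the resulting pure-trace remainder with $-\tfrac{1}{2}\slg_{AB}\F_{\a\b}\F^{\a\b}$ so that the $\bF\c\bbF$ contributions balance and one recovers $\slg_{AB}(\rhoF^2+\siF^2)$. The main obstacle throughout is simply careful bookkeeping of the signs and factors of $\tfrac{1}{2}$ introduced by the null-frame index raising; no conceptual difficulty enters.
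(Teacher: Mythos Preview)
Your proposal is correct and follows exactly the paper's approach: the paper's proof simply recalls formula \eqref{EMCSFRic} from Lemma \ref{Ricmunu} and says ``combining with \eqref{defF}, this concludes the proof,'' so your detailed null-frame index-raising and component-by-component bookkeeping is precisely the computation the paper leaves implicit.
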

\begin{proof}
We recall from \eqref{EMCSFRic}
\begin{align*}
\Ric_{\mu\nu}=2\Re\left(D_\mu\psi(D_\nu\psi)^\dag\right)+2\left(\F_{\mu\a}{\F_{\nu}}^\a-\frac{1}{4}\g_{\mu\nu}\F_{\a\b}\F^{\a\b}\right).
\end{align*}
Combining with \eqref{defF}, this concludes the proof of Lemma \ref{Ricciexpression}.
\end{proof}
\begin{cor}\label{RicGagGab}
We have the following schematic expressions:
\begin{align}
\begin{split}\label{SGagGab}
\S_{44}&=\Gab\c\Gab,\qquad\quad\quad\;\S_{34}=\frac{|u|}{a}\Gab\c\Gab,\\
\Ssl,\Ssl_{4}&=b^{-1}\Gab\c\Gab,\qquad\quad\,\Ssl_{3}=\frac{|u|}{ab}\Gab\c\Gab.
\end{split}
\end{align}
We also have
\begin{align}\label{J4GagGab}
    \Jh_4=(\Gab\c\Gab)^{(1)},\qquad\quad J_{3A4},J_{4A3}=\frac{|u|}{ab}(\Gab\c\Gab)^{(1)}.
\end{align}
\end{cor}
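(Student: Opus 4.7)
The plan is to prove both \eqref{SGagGab} and \eqref{J4GagGab} by direct computation, starting from the null decomposition of $\Ric$ and $\R$ in Lemmas \ref{Ricmunu} and \ref{Ricciexpression} and the Cotton-tensor formulas in Lemma \ref{Jformula}. The schematic classification of each bilinear matter-field product will follow from Definition \ref{gammag}, together with the book-keeping rules in Remark \ref{GagGabrk} and Proposition \ref{computationGagGab}.

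First, for the Schouten identities \eqref{SGagGab}, one inserts the explicit formulas of Lemma \ref{Ricciexpression} into $\S_{\mu\nu} = \Ric_{\mu\nu} - \tfrac{1}{6}\g_{\mu\nu}\R$, using $\g_{44}=\g_{33}=\g_{3A}=\g_{4A}=0$, $\g_{34}=-2$ and $\g_{AB}=\slg_{AB}$. The resulting expressions for the null components of $\S$ are finite sums of matter-field bilinears from $\{\psi, \Psi_3, \Psi_4, \Psisl, \bF, \bbF, \rhoF, \siF\}$, whose schematic weight one reads off from Definition \ref{gammag}: the factors $\bF, \psi, \Psi_4, \Asl$ belong to $\Gab$ with weight $1$; $\Psi_3$ carries an extra factor $\tfrac{|u|}{a}$ since only $\tfrac{a}{|u|}\Psi_3 \in \Gab$; and each of $\rhoF, \siF, \bbF, \Psisl, \Psit_3, \Ub$ carries an extra factor $b^{-1}$. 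Keeping only the dominant contribution on $V_*$, where $\tfrac{|u|}{a} \gtrsim 1$ and $b^{-1} \ll 1$, yields the four claimed identities: for $\S_{34}$ the top term $\tfrac{4}{3}\Re(\Psi_3\Psi_4^\dag)$ has class $\tfrac{|u|}{a}\Gab\c\Gab$; for $\Ssl_3$ the top term $2\Re(\Psisl\Psi_3^\dag)$ has class $\tfrac{|u|}{ab}\Gab\c\Gab$; and for $\S_{44}, \Ssl, \Ssl_4$ the dominant bilinears are $|\Psi_4|^2, |\bF|^2, \Psisl\Psi_4, \rhoF\bF, \bF\bbF$, of class $\Gab\c\Gab$ or $b^{-1}\Gab\c\Gab$ as claimed.

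Next, for the Cotton-tensor identities in \eqref{J4GagGab}, one applies Lemma \ref{Jformula} to rewrite $\Jh_4$, $J_{3A4}$ and $J_{4A3}$ as linear combinations of $\nabs\S$, $\nabs_3\S$, $\nabs_4\S$ and $\Ga\c\S$. The angular derivative $\nabs$ precisely generates the superscript $(\cdot)^{(1)}$; the Ricci coefficients $\Ga$ are absorbed into the $\Gab$-hierarchy via Proposition \ref{computationGagGab}; and the longitudinal derivatives $\nabs_3, \nabs_4$ falling on the $\S$-components are re-expressed through the Maxwell and wave equations of Propositions \ref{Maxwell} and \ref{wave} as angular derivatives plus undifferentiated products of the same schematic class. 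Substituting the Schouten identities from the first step then yields the claimed sizes.

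The main obstacle is controlling the $\Re(\Psi_3\Psi_4^\dag)$ cross term, which is the uniquely worst-behaved matter bilinear: it appears in $\S_{34}$, in $\R$, and hence in $\tr\Ssl$, with schematic class $\tfrac{|u|}{a}\Gab\c\Gab$. In particular, for $J_{3A4}$ and $J_{4A3}$ one must verify that the $\nabs_A\S_{34}$ term of Lemma \ref{Jformula} respects the sharp bound $\tfrac{|u|}{ab}(\Gab\c\Gab)^{(1)}$ and not merely $\tfrac{|u|}{a}(\Gab\c\Gab)^{(1)}$. The decisive observation is that an angular derivative applied to the product $\Psi_3\Psi_4^\dag$, together with the gauge-covariant commutation identities $D_A\Psi_4 = D_4\Psisl_A + i\ef\bF_A\psi + \cdots$ and $D_A\Psi_3 = D_3\Psisl_A + i\ef\bbF_A\psi + \cdots$ (which follow from $[D_A, D_{3,4}] = i\ef\,\F_{A,3/4}$), always produces a factor lying in $b^{-1}\Gab$ through $\Psisl$, $\bbF$, or $\bF$, or an explicit $\ef$ which is more than compensated by the smallness $b\ef\ll 1$ from \eqref{bootsize}. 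Once this gauge-covariant book-keeping is carried out, the remainder of the computation is mechanical.
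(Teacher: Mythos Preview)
Your overall strategy matches the paper's: read off \eqref{SGagGab} from Lemma \ref{Ricciexpression} and Definition \ref{gammag}, then feed this into Lemma \ref{Jformula} and use Propositions \ref{Maxwell}--\ref{wave} to control the $\nabs_4$-- and $\nabs_3$--derivatives of $\S$ that appear in the Cotton formulas. You also correctly isolate $\nabs_A\S_{34}$ (the $\Re(\Psi_3\Psi_4^\dag)$ contribution) as the only term where the target class $\tfrac{|u|}{ab}(\Gab\c\Gab)^{(1)}$ is not immediate.

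However, your proposed mechanism for that term does not close. Writing $\nabs_A\Psi_4$ via the commutation $D_A\Psi_4=D_4\Psisl_A+i\ef\bF_A\psi$ is circular: by Proposition \ref{waveequation} the equation for $\nabs_4\Psisl$ has $\nabs\Psi_4$ itself as its principal right-hand side, so you recover nothing beyond $\nabs_A\Psi_4\in a^{-\frac12}\Gab^{(1)}$. In particular, no undifferentiated $\Psisl$ factor survives on that half of the product, and there is no extra $b^{-1}$ produced. (Your argument does work for the other half $(\nabs_A\Psi_3)\Psi_4^\dag$, since $\nabs\Psi_3$ can be rewritten through the $\nabs_3\Psisl$ equation in terms of $\Psit_3,\Psisl\in b^{-1}\Gab$.) The correct---and much simpler---reason, which is what the paper uses, is purely a scaling one: $\nabs_A=a^{-\frac12}\dkb$, so $\nabs_A\S_{34}\in \tfrac{|u|}{a^{3/2}}(\Gab\c\Gab)^{(1)}$ directly, and then \eqref{bootsize} gives $b\ll a^{\frac16}\ll a^{\frac12}$, hence $a^{-\frac12}\le b^{-1}$ and $\tfrac{|u|}{a^{3/2}}\le\tfrac{|u|}{ab}$. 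With this replacement your proof is complete and agrees with the paper's.
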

\begin{proof}
Notice that \eqref{SGagGab} follows directly from \eqref{dfSchouten}, Lemma \ref{Ricciexpression} and Definition \ref{gammag}. Next, we have from Lemma \ref{Jformula}
    \begin{align}
    \begin{split}\label{JAB4computation}
        J_{AB4}&=\frac{1}{2}\nabs_B\S_{A4} +\frac{1}{2}(\ze_B+\etab_B)\S_{A4}+\frac{1}{2}\etab_A\S_{B4}-\frac{1}{2}\nabs_4\S_{AB}\\
        &-\frac{1}{2}\chi_{BC}\S_{AC}-\frac{1}{4}\chi_{AB}\S_{34}-\frac{1}{4}\chib_{AB}\S_{44}.
    \end{split}
    \end{align}
    Notice that we have from Lemma \ref{Ricciexpression} and \eqref{dfSchouten}
    \begin{align*}
\nabs_4\S_{AB}&=\nabs_4\left(2\Re(\Psisl_A\Psisl_B^\dag)-2(\bF\hot\bbF)_{AB}+\de_{AB}(\rhoF^2+\siF^2)\right)\\
&=2\nabs_4\left(\Re(\Psisl_A\Psisl_B^\dag)-(\bF\hot\bbF)_{AB}\right)+\de_{AB}\nabs_4\left(\rhoF^2+\siF^2\right).
    \end{align*}
    Applying Propositions \ref{Maxwell} and \ref{wave}, we have from Remark \ref{GagGabrk}
    \begin{align*}
        \nabs_4\Sslh=\Gab\c\Gab^{(1)}+\ef\Gab\c\Gab\c\Gab=\Gab\c\Gab^{(1)}.
    \end{align*}
    Injecting it into \eqref{JAB4computation} and combining with Definition \ref{gammag} and \eqref{RicGagGab}, we obtain
    \begin{align*}
        \Jh_4=\Gab\c\Gab^{(1)}-\frac{1}{4}\S_{44}\,\hchb=\Gab\c\Gab+\frac{|u|}{a}\Gab\c\Gab\c\Gab=\Gab\c\Gab^{(1)}.
    \end{align*}
    Finally, we have from Lemma \ref{Ricciexpression} and Propositions \ref{Maxwell} and \ref{wave}
    \begin{align*}
        \nabs_4\S_{3A}&=2\nabs_4\left(\Re(\Psisl_A\Psi_3^\dag-\rhoF\bbF+\siF\bbF)\right)\\
        &=\frac{|u|}{a^\frac{3}{2}}\Gab\c\Gab^{(1)}+\frac{\ef|u|}{a}\Gab\c\Gab\c\Gab\\
        &=\frac{|u|}{ab}\Gab\c\Gab^{(1)}.
    \end{align*}
    Hence, we deduce from Lemma \ref{Jformula} and Proposition \ref{computationGagGab}
    \begin{align*}
        J_{3A4}&=\frac{1}{2}\nabs_A\S_{34}-\frac{1}{2}\etab_A\S_{34}-\frac{1}{2}\nabs_4\S_{3A}-\frac{1}{2}\chi_{AB}\S_{3B}+\om\S_{3A}-\frac{1}{2}\chib_{AB}\S_{4B}+\etab_B\S_{AB}\\
        &=\frac{|u|}{a^\frac{3}{2}}\Gab\c\Gab^{(1)}+\frac{|u|}{ab}\Gab\c\Gab^{(1)}+b^{-1}\trchb\,\Gab\c\Gab+\frac{|u|}{a}\Gab\c\Gab\c\Gab\\
        &=\frac{|u|}{ab}(\Gab\c\Gab)^{(1)}.
    \end{align*}
    Similarly, we also have
    \begin{align*}
        J_{4A3}=\frac{|u|}{ab}(\Gab\c\Gab)^{(1)}.
    \end{align*}
    This concludes the proof of Corollary \ref{RicGagGab}.
\end{proof}
\subsection{Schematic Bianchi equations}
We now deduce the following schematic Bianchi equations.
\begin{prop}\label{bianchi}
We have the following schematic Bianchi equations:
\begin{align*}
\nabs_3\a+\frac{1}{2}\trchb\,\a&=\nabs\hot\bt+(\Gab\c\Gab)^{(1)},\\
\nabs_4\bt&=\sdivs\a+(\Gab\c\Gab)^{(1)},\\
\nabs_3\b+\trchb\,\b&=\sld_1^*(\K,\sic)+\frac{|u|}{ab}(\Gab\c\Gab)^{(1)},\\
\nabs_4(\K,\sic)&=-\sld_1\b-\frac{1}{2}\trchb|\hch|^2+\bo\afd(\Gab\c\Gab)^{(1)},\\
\nabs_3(\Kc,-\sic)+\frac{3}{2}\trchb(\Kc,-\sic)&=\sld_1\bb+\frac{1}{2}\trchb\,\mu+\frac{|u|}{ab}(\Gab\c\Gab)^{(1)},\\
\nabs_4\bb&=-\sld_1^*(\Kc,-\sic)+\frac{|u|}{ab}(\Gab\c\Gab)^{(1)}.
\end{align*}
\end{prop}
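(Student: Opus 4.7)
The plan is a term-by-term transcription of each exact Bianchi equation of Proposition \ref{BianchiKcsic} into the claimed schematic form, relying on three systematic ingredients. First, Corollary \ref{RicGagGab} identifies every component of the Schouten tensor ($\S_{44},\S_{34},\Ssl,\Ssl_3,\Ssl_4$) and of the Cotton tensor ($\Jh_4,J_{3A4},J_{4A3}$) with a bilinear expression in $\Gab$ carrying the explicit $|u|/a$ and $b^{-1}$ prefactors of \eqref{SGagGab}-\eqref{J4GagGab}. Second, Definition \ref{gammag} together with Remark \ref{GagGabrk} place every Ricci coefficient into $\Gag\cup\Gab$ and the rescaled Weyl quantities $\a,\nabs_4\bF,\af\b,\af\rho,\af\si,\af\bb,\af\Kc,\af\sic,\af\mu,\af\mub,\af\vkp,\af\vkpb$ into $\Gab^{(1)}$, so that every Ricci-Weyl bilinear product automatically falls into $(\Gab\c\Gab)^{(1)}$. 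Third, Proposition \ref{computationGagGab} absorbs the benign factors via $\trchb\,\Gab=(|u|/a)\Gab$, $\Om\Gab=\Gab$, and $\ef\,\Gab\c\Gab=b^{-1}\Gab$.

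For the pair $(\a,\bt)$ the linear principal terms $\nabs\hot\bt$ and $\sdivs\a$ are manifest; the quadratic remainders $4\omb\a,-3(\hch\rho+{}^*\hch\si),(\ze+4\eta)\hot\bt,-2\om\bt,(2\ze+\etab)\c\a$ lie in $(\Gab\c\Gab)^{(1)}$ by Remark \ref{GagGabrk}, and the matter contributions $\slJ_4,\nabs\hot\Ssl_4,(\ze+4\eta)\hot\Ssl_4,\nabs\S_{44},(2\ze+\etab)\S_{44},\hch\c\Ssl_4$ are absorbed by Corollary \ref{RicGagGab}. For the $\nabs_3$-side of the pair $(\b,(\K,\sic))$, the identity $\sld_1^*(\Kc,\sic)=\sld_1^*(\K,\sic)$ holds because $|u|^{-2}$ is constant on each $S_{u,\ub}$; the dominant matter error $J_{3\bu4}=\frac{|u|}{ab}(\Gab\c\Gab)^{(1)}$ furnishes the schematic prefactor $\frac{|u|}{ab}$, while the renormalization terms $-\frac{1}{4}\nabs(\trch\trchb),\frac{1}{2}\nabs(\hch\c\hchb),\frac{1}{2}{}^*\nabs(\hch\wedge\hchb)$ and the products $2\omb\b,2\hch\c\bb,3(\eta\rho+{}^*\eta\si)$ are all finer in the scale-invariant hierarchy and so absorbed.

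The two remaining cases require one extra algebraic manipulation each. For $\nabs_4(\K,\sic)$ I would move $\trch\K$ and $\frac{3}{2}\trch\sic$ to the right-hand side, substitute the Gauss identity \eqref{gauss} $\K=-\frac{1}{4}\trch\trchb+\frac{1}{2}\hch\c\hchb-\rho+\frac{1}{2}\tr\Ssl$, and then use the $\nabs_4\hchb$-equation of Proposition \ref{nulles} together with the near-identity $\trch\simeq-\Om\trchb$ modulo $\trcht,\Om\trchbt\in\Gag$ to exchange $\hch\c\hchb$ for $|\hch|^2$ modulo $(\Gab\c\Gab)^{(1)}$-contributions; this isolates the borderline remainder $-\frac{1}{2}\trchb|\hch|^2$ explicitly kept in the schematic statement, everything else fitting into $\bo\afd(\Gab\c\Gab)^{(1)}$. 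The pair $((\Kc,-\sic),\bb)$ is handled analogously: the renormalization piece $-\trchbt/|u|^2$ of the $\nabs_3\Kc$-equation combines with \eqref{murenor} to produce the kept term $\frac{1}{2}\trchb\mu$, and the Cotton components $J_{3A3},J_{4A3}$ supply the $\frac{|u|}{ab}$ prefactor by Corollary \ref{RicGagGab}. I expect the main difficulty to lie not in any single reduction but in the uniform bookkeeping of the $\ef$- and $|u|/a$-weights across Corollary \ref{RicGagGab}: one must verify that every coupling-constant-dependent contribution inherits the $b^{-1}$ gain from $\ef\,\Gab\c\Gab=b^{-1}\Gab$ and that each of $J_{3A4},J_{4A3}$ genuinely saturates the $\frac{|u|}{ab}$ improvement when the matter cross-terms of Lemma \ref{Jformula} are fully expanded.
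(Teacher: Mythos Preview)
Your term-by-term transcription is largely correct and considerably more explicit than the paper's argument. The paper instead takes a structural shortcut for the last four equations: since these decouple from $\a$ and $\aa$, the only Ricci coefficient carrying an anomalous $|u|/a$ weight is $\hchb$, so every nonlinear interaction is dominated either by the quadratic pattern $\hchb\cdot\{\b,\rho,\si,\bb\}\in\frac{|u|}{a^{3/2}}(\Gab\cdot\Gab)^{(1)}$ or by the cubic pattern $\hchb\cdot\Gab\cdot\Gab\in\bo\afd(\Gab\cdot\Gab)$. Combining this with the $\S,J$ schematics from Corollary \ref{RicGagGab} yields all prefactors at once, bypassing the equation-by-equation bookkeeping you outline.

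Your treatment of the $\nabs_4(\K,\sic)$ equation has a gap. The proposed manipulation---substituting Gauss into $\trch\K$ and then using the $\nabs_4\hchb$ equation ``to exchange $\hch\cdot\hchb$ for $|\hch|^2$''---does not produce such an exchange: $\hch\cdot\hchb$ and $|\hch|^2$ are structurally independent, and the $\nabs_4\hchb$ equation only relates $\nabs_4\hchb$ to $-\frac{1}{2}\trchb\hch$ plus lower-order terms. In fact, if one carries the computation through via $\K=-\rho-\tfrac{1}{4}\trch\trchb+\tfrac{1}{2}\hch\cdot\hchb+\tfrac{1}{2}\tr\Ssl$, the $\trchb|\hch|^2$ contributions coming from $-\tfrac{1}{4}\nabs_4(\trch\trchb)$ and from $\tfrac{1}{2}\nabs_4(\hch\cdot\hchb)$ cancel exactly. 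The paper's own proof is equally silent on the literal origin of this term; in the downstream energy estimate (Proposition \ref{estbr}) the expression $\trchb(\hch\cdot\hch)$ serves as a schematic placeholder for the class of borderline interactions that require the improved bounds of Proposition \ref{esthchhchb}, rather than as a verbatim constituent of the renormalized $\nabs_4\K$ equation of Proposition \ref{BianchiKcsic}. One minor slip: for the $((\Kc,-\sic),\bb)$ pair the relevant Cotton component is $J_{4A3}$ (appearing in the $\nabs_4\bb$ equation), not $J_{3A3}$.
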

\begin{proof}
The first two equations follow directly from Proposition \ref{bianchiequations}, Definition \ref{gammag} and Corollary \ref{RicGagGab}. Next, note that the Bianchi equations for $\b$, $(\Kc,\sic)$ and $\bb$ decouple with $\a$ and $\aa$. Hence, the worst quadratic terms among them are given by $\hchb\c \{\b,\rho,\si,\bb\}$, which behaves like $|u|a^{-\frac{3}{2}}\Gab\c\Gab^{(1)}$. Moreover, the worst cubic terms among them have the form $\hchb\c\Gab\c\Gab$, which behaves like $|u|a^{-1}\Gab\c\Gab\c\Gab=\afd\bo\Gab\c\Gab$. Combining with the schematic form of $\S$ and $J$ in Corollary \ref{RicGagGab}, this concludes the schematic equations in Proposition \ref{bianchi}.
\end{proof}
\subsection{Schematic null structure equations}
\begin{prop}\label{null}
We have the following schematic null structure equations:
\begin{align*}
\nabs_4\eta&=-\b+\Gab\c\Gab,\\
\nabs_3\etab+\frac{1}{2}\trchb\,\etab&=\bb+\frac{1}{2}\trchb\,\eta+\frac{|u|}{a}\Gab\c\Gab,\\
\nabs_4\hch&=-\a+\Gab\c\Gab,\\
\nabs_4\trcht&=\Gab\c\Gab,\\
\nabs_3\hchb+\trchb\,\hchb&=-\aa+\frac{|u|}{a}\Gab\c\Gab,\\
\nabs_3\trchbt+\trchb\,\trchbt&=\frac{|u|^2}{a^2}\Gab\c\Gab,\\
\nabs_4\hchb&=\nabs\hot\etab-\frac{1}{2}\trchb\,\hch+\frac{|u|}{a}\Gab\c\Gab,\\
\nabs_3\hch+\frac{1}{2}\trchb\,\hch&=\nabs\hot\eta+\frac{|u|}{a}\Gab\c\Gab,\\
\nabs_3\om&=-\frac{1}{2}\K-\frac{1}{8}\trch\trchb+\frac{|u|}{a}\Gab\c\Gab,\\
\nabs_4\omb&=-\frac{1}{2}\K-\frac{1}{8}\trch\trchb+\frac{|u|}{a}\Gab\c\Gab,\\
\sdivs\hch&=\frac{1}{2}\nabs\trch-\b+\Gab\c\Gab,\\ 
\sdivs\hchb&=\frac{1}{2}\nabs\trchb+\bb-\frac{1}{2}\trchb\,\ze+\frac{|u|}{a}\Gab\c\Gab.
\end{align*}
We also have the following schematic equations for the renormalized quantities:
\begin{align*}
  \nabs_4\vkpb&=\frac{|u|}{ab}(\Gab\c\Gab)^{(1)},\\
  \nabs_4\mu&=-\frac{1}{|u|^2}\trch+\afd(\Gab\c\Gab)^{(1)},\\
  \nabs_3\vkp+\frac{1}{2}\trchb\vkp&=\frac{1}{4}\trchb\,\b+\frac{|u|}{ab}(\Gab\c\Gab)^{(1)},\\
  \nabs_3\mub+\trchb\mub&=-\frac{1}{|u|^2}\trchbt-\trchb\,\sdivs\eta+\frac{|u|}{a^\frac{3}{2}}(\Gab\c\Gab)^{(1)}.
\end{align*}
\end{prop}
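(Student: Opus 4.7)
The plan is to derive each equation in Proposition \ref{null} by taking the corresponding exact equation from Propositions \ref{nulles} and \ref{nullrenor}, substituting the schematic expressions for the Schouten tensor and Cotton tensor components from Corollary \ref{RicGagGab}, and then classifying every remaining term according to Definition \ref{gammag} using the schematic arithmetic summarized in Remark \ref{GagGabrk} and Proposition \ref{computationGagGab}. No new analytic content is needed; the content is purely a bookkeeping of which terms belong in $\Gab \c \Gab$, $\Gag \c \Gab$, $\frac{|u|}{a}\Gab\c\Gab$, etc.

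First I would handle the basic transport equations for $\eta,\etab,\hch,\hchb,\trcht,\trchbt,\om,\omb$ and the Codazzi/torsion identities. For instance, in the equation for $\nabs_4\eta$, the quadratic term $\chi\c(\eta-\etab)$ belongs to $\Gag\c\Gag\subset\Gag\c\Gab$ (via Remark \ref{GagGabrk}), and the Schouten contribution $\tfrac12\Ssl_4=b^{-1}\Gab\c\Gab\subset\Gab\c\Gab$ by \eqref{SGagGab}, leaving only the principal term $-\b$. For $\trcht$ and $\trchbt$, I would subtract the Minkowski Raychaudhuri identity $e_4(\tfrac{2}{|u|})+\tfrac12(\tfrac{2}{|u|})^2=0$ (which holds in our foliation since $e_4(|u|)=0$) so that the principal $\tfrac12(\trch)^2$ becomes $\trch\,\trcht+\tfrac12\trcht^2$, each of which enters $\Gag\c\Gag$ after the rescaling $\trcht\in\tfrac{a}{|u|}\Gag$ built into Definition \ref{gammag}. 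The matter sources $\S_{44},\Tr\S$ are $\Gab\c\Gab$ by \eqref{SGagGab}. The same mechanical reduction handles $\hch,\hchb,\om,\omb$; for the last two one uses Gauss \eqref{gauss} to write $\rho=-\K-\tfrac14\trch\trchb+\tfrac12\hch\c\hchb+\tfrac12\tr\Ssl$, keeping $\K$ and $\trch\trchb$ explicit and placing the remainder in $\tfrac{|u|}{a}\Gab\c\Gab$.

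The more delicate part, which I expect to be the main obstacle, is verifying the renormalized equations for $\mu,\mub,\vkp,\vkpb$. Starting from Proposition \ref{nullrenor}, many distinct terms appear: $\trch\,\sdivs\etab$, $\sdivs(\chi\c(\eta-\etab))$, $\hch\c\nabs\hot(\eta+\etab)$, $\hch\c\etab\hot\etab$, $\sdivs\Ssl_4$, and so on. Each must be tracked by its signature $s_2$ and its precise $|u|/a$ weight. For $\mu$, one keeps $-\tfrac{1}{|u|^2}\trch$ as the one non-integrable source and checks that all other terms, after one angular derivative has been distributed, are of the form $\afd(\Gab\c\Gab)^{(1)}$; for the $\sdivs\Ssl_4$ contribution this uses $\Ssl_4=b^{-1}\Gab\c\Gab$ together with the derivative count in the definition of $(\Gab\c\Gab)^{(1)}$, and for $\trchb|\etab|^2$ one applies $\trchb\,\Gab=\tfrac{|u|}{a}\Gab$ from Proposition \ref{computationGagGab}. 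For $\vkp$ and $\vkpb$, the Cotton tensor entries $J_{3\bu 4},J_{4\bu 3}$ come with the crucial factor $\tfrac{|u|}{ab}$ from \eqref{J4GagGab}; combined with the $\bo$ bound on Weyl components (which lets $\rho\,\eta,\si\,\eta,\hch\c\sld_1^*(\omb,\ombd)$ and the $F,\Fb$ remainders be absorbed into $\tfrac{|u|}{ab}(\Gab\c\Gab)^{(1)}$), this yields the stated form. For $\mub$, the additional $-\trchb\sdivs\eta$ must be kept explicit because a bare $\sdivs\eta$ has insufficient decay to be swallowed by $\Gab\c\Gab$ under the given weights.

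Throughout, the key repeated mechanism is: (i) convert a Ricci coefficient to its class in $\{\Gag,\Gab\}$, including the built-in rescalings $\tfrac{|u|}{a}\trcht,\tfrac{a}{|u|}\trchbt,\tfrac{a}{|u|}\hchb\in\Gag\cup\Gab$; (ii) apply Proposition \ref{computationGagGab} to reduce $\Gab\c\Gab$ and $\trchb\,\Gab$ and $\Om\,\Gab$; (iii) use Corollary \ref{RicGagGab} to absorb $\S_{\mu\nu}$ and $J_{\la\mu\nu}$; (iv) invoke the convention of Remark \ref{GagGabrk} to drop strictly better terms. The only step that is not entirely automatic is checking for each renormalized equation that no individual term is \emph{worse} than the stated bound, which requires one careful pass through the term-by-term expansion in Proposition \ref{nullrenor} and Corollary \ref{J343J434}.
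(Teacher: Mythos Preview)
Your proposal is correct and matches the paper's approach exactly; the paper's own proof is the single sentence ``It follows directly from Proposition \ref{nullrenor}, Corollary \ref{RicGagGab} and Definition \ref{gammag},'' and your plan is simply a fleshed-out version of that citation. One small slip: the ``Minkowski Raychaudhuri identity'' you invoke for $\trcht$ is false as stated (since $e_4(|u|)=0$ gives $e_4(\tfrac{2}{|u|})=0$, not $-\tfrac{2}{|u|^2}$), but the correct argument is even simpler---$\nabs_4\trcht=\nabs_4\trch$ directly, and every term on the right ($(\trch)^2$, $|\hch|^2$, $\om\trch$, $\S_{44}$) is already $\Gab\c\Gab$ since $\trch\in\Gag\subset\Gab$.
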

\begin{proof}
It follows directly from Proposition \ref{nullrenor}, Corollary \ref{RicGagGab} and Definition \ref{gammag}.
\end{proof}
\section{General Bianchi equations}\label{secBianchi}
\subsection{General Bianchi pairs}
The following lemma provides the general structure of Bianchi pairs.
\begin{lem}\label{keypoint}
Let $k=1,2$. Then, we have the following properties:
\begin{enumerate}
    \item If $\psi_{(1)},h_{(1)}\in\sk_k$ and $\psi_{(2)},h_{(2)}\in\sk_{k-1}$ satisfying
    \begin{align}
    \begin{split}\label{bianchi1}
    \nabs_3(\psi_{(1)})+\left(\frac{1}{2}+s_2(\psi_{(1)})\right)\trchb\,\psi_{(1)}&=-k\sld_k^*(\psi_{(2)})+h_{(1)},\\
    \nabs_4(\psi_{(2)})&=\sld_k(\psi_{(1)})+h_{(2)}.
    \end{split}
    \end{align}
Then, the pair $(\psi_{(1)},\psi_{(2)})$ satisfies for any real number $p$
\begin{align}
\begin{split}\label{div}
       &\bdiv(|u|^p|\psi_{(1)}|^2e_3)+k\bdiv(|u|^p|\psi_{(2)}|^2e_4)+\left(p-4s_2(\psi_{(1)})\right)|u|^{p-1}|\psi_{(1)}|^2\\
       =&2k|u|^p\sdivs\left[\Re\left(\psi_{(1)}^\dag\c\psi_{(2)}\right)\right]
       +2|u|^p\Re\left(\psi_{(1)}^\dag\cdot h_{(1)}\right)+2k|u|^p\Re\left(\psi_{(2)}^\dag\c h_{(2)}\right)\\
       +&a^{-1}|u|^{p+1}\Gag|\psi_{(1)}|^2+|u|^p\Gag|\psi_{(2)}|^2.
\end{split}
\end{align}
    \item If $\psi_{(1)},h_{(1)}\in\sk_{k-1}$ and $\psi_{(2)},h_{(2)}\in\sk_k$ satisfying
\begin{align}
\begin{split}\label{bianchi2}
\nabs_3(\psi_{(1)})+\left(\frac{1}{2}+s_2(\psi_1)\right)\trchb\,\psi_{(1)}&=\sld_k(\psi_{(2)})+h_{(1)},\\
\nabs_4(\psi_{(2)})&=-k\sld_k^*(\psi_{(1)})+h_{(2)}.
\end{split}
\end{align}
Then, the pair $(\psi_{(1)},\psi_{(2)})$ satisfies for any real number $p$
\begin{align}
\begin{split}\label{div2}
    &k\bdiv(|u|^p|\psi_{(1)}|^2e_3)+\bdiv(|u|^p|\psi_{(2)}|^2e_4)+k(p-4s_2(\psi_{(1)}))|u|^{p-1}|\psi_{(1)}|^2\\
    =&2|u|^p\sdivs\left[\Re\left(\psi_{(1)}^\dag\c\psi_{(2)}\right)\right]
    +2k|u|^p\Re\left(\psi_{(1)}^\dag\c h_{(1)}\right)+2|u|^p\Re\left(\psi_{(2)}^\dag\c h_{(2)}\right)\\
    +&a^{-1}|u|^{p+1}\Gag|\psi_{(1)}|^2+|u|^p\Gag|\psi_{(2)}|^2.
\end{split}
\end{align}
\end{enumerate}
\end{lem}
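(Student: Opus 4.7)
The plan is to derive both divergence identities from the pairing-Bianchi structure by contracting each equation with $|u|^p$ times the conjugate of the corresponding field, adding the two resulting identities, and using the pointwise adjointness of $\sld_k$ and $\sld_k^*$ to convert the cross terms into a single surface divergence.

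For Case 1, I first contract \eqref{bianchi1}$_1$ with $2\psi_{(1)}^\dag$ and \eqref{bianchi1}$_2$ with $2k\psi_{(2)}^\dag$, taking real parts, to obtain
\begin{align*}
e_3|\psi_{(1)}|^2 + \bigl(1+2s_2(\psi_{(1)})\bigr)\trchb\,|\psi_{(1)}|^2 &= -2k\Re\bigl\langle\psi_{(1)}^\dag,\sld_k^*\psi_{(2)}\bigr\rangle + 2\Re\bigl\langle\psi_{(1)}^\dag,h_{(1)}\bigr\rangle,\\
k\,e_4|\psi_{(2)}|^2 &= 2k\Re\bigl\langle\psi_{(2)}^\dag,\sld_k\psi_{(1)}\bigr\rangle + 2k\Re\bigl\langle\psi_{(2)}^\dag,h_{(2)}\bigr\rangle.
\end{align*}
Multiplying each by $|u|^p$ (which is constant on every $S_{u,\ub}$, so $\nabs|u|^p = 0$) and summing, the two cross terms collapse into a single surface divergence through the pointwise adjointness identity
\[
\Re\bigl\langle \sld_k\psi_{(1)},\psi_{(2)}\bigr\rangle + \Re\bigl\langle \psi_{(1)},\sld_k^*\psi_{(2)}\bigr\rangle = \sdivs\!\bigl[\Re(\psi_{(1)}^\dag\c\psi_{(2)})\bigr],
\]
a direct consequence of the explicit forms of $\sld_k,\sld_k^*$ in Definition \ref{hodgeop} together with the Leibniz rule for $\nabs$. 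Multiplied by $2k|u|^p$ this accounts for the $\sdivs$ term on the right-hand side of \eqref{div}.

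Next, I convert the left-hand-side transport terms into pure divergences via $\bdiv(fX) = X(f) + f\,\bdiv(X)$. From the Ricci formulas \eqref{ricciformulas} one computes $\bdiv(e_3) = \trchb - 2\omb$ and $\bdiv(e_4) = \trch - 2\om$, and from the double null structure $e_3|u|^p = -p|u|^{p-1}\Om^{-1}$ and $e_4|u|^p = 0$. Substituting these and decomposing $\trchb = -\tfrac{2}{\Om|u|} + \trchbt$ from \eqref{dfKcsic}, the $-p\Om^{-1}|u|^{p-1}$ contribution from $e_3|u|^p$ combines with the $-2s_2(\psi_{(1)})\cdot\tfrac{2}{\Om|u|}\cdot|u|^p$ piece of $(1+2s_2(\psi_{(1)}))\trchb-\trchb$ to produce $\tfrac{p-4s_2(\psi_{(1)})}{\Om}|u|^{p-1}|\psi_{(1)}|^2$. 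Writing $\Om^{-1} = 1 + (\Om^{-1}-1)$ and noting that $\Om^{-1}-1,\,\trchbt,\,\omb,\,\om \in \Gag$ per Definition \ref{gammag}, all residual pieces — namely the remaining $\trchbt|u|^p|\psi_{(1)}|^2$, $\omb|u|^p|\psi_{(1)}|^2$, $\om|u|^p|\psi_{(2)}|^2$ and $(\Om^{-1}-1)|u|^{p-1}|\psi_{(1)}|^2$ — fit into the stated error budget $a^{-1}|u|^{p+1}\Gag|\psi_{(1)}|^2 + |u|^p\Gag|\psi_{(2)}|^2$ after identifying the appropriate $\Gag$-scalings ($\trchbt$ carries a $|u|/a$ prefactor, matching $a^{-1}|u|^{p+1}$).

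Case 2 is symmetric: contract \eqref{bianchi2}$_1$ with $2k\psi_{(1)}^\dag$ and \eqref{bianchi2}$_2$ with $2\psi_{(2)}^\dag$, then apply the same adjointness-and-Leibniz manipulations to obtain \eqref{div2}. The factor of $k$ now migrates to the $e_3$-divergence and to the $|u|^{p-1}|\psi_{(1)}|^2$ term precisely because $-k\sld_k^*$ now appears in the $\psi_{(2)}$-equation rather than in the $\psi_{(1)}$-equation. The principal bookkeeping obstacle in both cases is the precise tracking of how $e_3|u|^p$ and the leading $-2/(\Om|u|)$ portion of $\trchb$ combine to produce exactly the coefficient $(p-4s_2(\psi_{(1)}))$ — not $(2p-4s_2(\psi_{(1)}))$ or any other numerical multiple — and verifying that all remaining terms indeed absorb into the $\Gag$ error budget at the weights indicated; the rest is a routine rearrangement.
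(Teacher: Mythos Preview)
Your proposal is correct and follows essentially the same route as the paper's proof: contract each equation with the conjugate field, use the pointwise adjointness of $\sld_k$ and $\sld_k^*$ to produce the $\sdivs$ term, rewrite the $e_3$ and $e_4$ derivatives as spacetime divergences via $\bdiv(fX)=X(f)+f\,\bdiv X$ with $\bdiv e_3=\trchb-2\omb$ and $\bdiv e_4=\trch-2\om$, and then combine the $e_3(|u|^p)=-p\Om^{-1}|u|^{p-1}$ piece with the leading $-2/(\Om|u|)$ part of the residual $\trchb$ coefficient to extract $(p-4s_2(\psi_{(1)}))|u|^{p-1}$, absorbing $\trchbt$, $\omb$, $\om$, $\trch$, and $\Om^{-1}-1$ into the $\Gag$ error budget. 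The only slip is a sign in your intermediate description: after cancellation the surviving $\trchb$ coefficient is $-2s_2(\psi_{(1)})\trchb$, whose leading part is $+4s_2(\psi_{(1)})/(\Om|u|)$, not $-4s_2(\psi_{(1)})/(\Om|u|)$; this is what makes the combination come out as $(4s_2-p)/\Om$ on the divergence side and hence $(p-4s_2)$ after moving it to the left, matching your stated result.
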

\begin{rk}\label{Bianchitype}
    Note that the Bianchi equations can be written as systems of equations of type \eqref{bianchi1} and \eqref{bianchi2}. In particular
    \begin{itemize}
        \item the Bianchi pair $(\a,\bt)$ satisfies \eqref{bianchi1} with $k=2$,
        \item the Bianchi pair $(\b,(\K,-\sic))$ satisfies \eqref{bianchi1} with $k=1$,
        \item the Bianchi pair $((\Kc,\sic),\bb)$ satisfies \eqref{bianchi2} with $k=1$,
        \item the Bianchi pair $(\bb,\aa)$ satisfies \eqref{bianchi2} with $k=2$,
        \item the Bianchi pair $(\bF,(\rhoF,\siF))$ satisfies \eqref{bianchi1} with $k=1$,
        \item the Bianchi pair $((\rhoF,-\siF),\bbF))$ satisfies \eqref{bianchi2} with $k=1$,
        \item the Bianchi pair $((\Psi_4,0),\Psisl)$ satisfies \eqref{bianchi1} with $k=1$,
        \item the Bianchi pair $(\Psisl,(\Psit_3,0))$ satisfies \eqref{bianchi2} with $k=1$.
    \end{itemize}
\end{rk}
\begin{proof}[Proof of Lemma \ref{keypoint}]
By a direct computation, we have
\begin{align*}
    \bdiv e_4&=-2\om+\trch\in\Gag,\qquad\qquad\bdiv e_3=-2\omb+\trchb=\trchb+\Gag.
\end{align*}
For $\psi_{(1)}$ and $\psi_{(2)}$ satisfying \eqref{bianchi1}, we compute
\begin{align*}
&\bdiv(|u|^p|\psi_{(1)}|^2 e_3)+k\bdiv(|u|^p |\psi_{(2)}|^2 e_4)\\
=&\nabs_3(|u|^p|\psi_{(1)}|^2)+ \bdiv(e_3)|u|^p|\psi_{(1)}|^2+k\nabs_4(|u|^p |\psi_{(2)}|^2)+k\bdiv(e_4)|u|^p|\psi_{(2)}|^2\\
=&2|u|^p\Re\left(\psi_{(1)}^\dag\c\nabs_3\psi_{(1)}\right)+p|u|^{p-1}e_3(|u|)|\psi_{(1)}|^2+\trchb|u|^p|\psi_{(1)}|^2\\
&+2k|u|^p\Re\left(\psi_{(2)}^\dag\c\nabs_4\psi_{(2)}\right)+|u|^p\Gag(|\psi_{(1)}|^2+|\psi_{(2)}|^2)\\
=&2|u|^p\Re\left[\psi_{(1)}^\dag\c\left(-\left(\frac{1}{2}+s_2(\psi_{(1)})\right)\trchb\psi_{(1)}-k\sld_k^*(\psi_{(2)})+h_{(1)}\right)\right]-(p+2)|u|^{p-1}|\psi_{(1)}|^2\\
&+2k|u|^p\Re\left[\psi_{(2)}^\dag\c\left(\sld_k(\psi_{(1)})+h_{(2)}\right)\right]+a^{-1}|u|^{p+1}\Gag|\psi_{(1)}|^2+|u|^p\Gag|\psi_{(2)}|^2\\
=&(4s_2(\psi_{(1)})-p)|u|^{p-1}|\psi_{(1)}|^2+2k|u|^p\sdivs\left[\Re\left(\psi_{(1)}^\dag\c\psi_{(2)}\right)\right]\\
&+2|u|^p\Re\left(\psi_{(1)}^\dag\c h_{(1)}\right)+2k|u|^p\Re\left(\psi_{(2)}^\dag\c h_{(2)}\right)+a^{-1}|u|^{p+1}\Gag|\psi_{(1)}|^2+|u|^p\Gag|\psi_{(2)}|^2,
\end{align*}
which implies \eqref{div}. The proof of \eqref{div2} is similar and is left to the reader. This concludes the proof of Lemma \ref{keypoint}.
\end{proof}
\begin{prop}\label{keyintegral}
Let $(\psi_{(1)},\psi_{(2)})$ be a Bianchi pair that satisfies \eqref{bianchi1} or \eqref{bianchi2}. Then, we have
\begin{align*}
&\;\|\psi_{(1)}\|_{L^2_{sc}(\cuv)}^2+\|\psi_{(2)}\|^2_{L^2_{sc}(\ucuv)}\\
\les&\;\|\psi_{(1)}\|_{L^2_{sc}(H_{u_\infty}^{(0,\ub)})}^2+\|\psi_{(2)}\|^2_{L^2_{sc}(\Hb_{0}^{(u_\infty,u)})}\\
+&\int_0^\ub\int_{u_\infty}^u\frac{a}{|u'|}\left\|\psi_{(1)}\c\left(h_{(1)}+|u'|a^{-1}\Gag\c\psi_{(1)}\right)\right\|_{L^1_{sc}(S_{u',\ub'})}du'd\ub'\\
+&\int_0^\ub\int_{u_\infty}^u\frac{a}{|u'|}\left\|\psi_{(2)}\c (h_{(2)}+\Gag\c\psi_{(2)})\right\|_{L^1_{sc}(S_{u',\ub'})}du'd\ub'.
\end{align*}
\end{prop}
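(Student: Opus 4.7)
The plan is to derive the inequality from the pointwise divergence identity of Lemma \ref{keypoint}, integrate it over $V(u,\ub)$, and unravel the scale-invariant definitions of Section \ref{secsignatures}. First, I would set $p=4s_2(\psi_{(1)})$ in \eqref{div} (respectively \eqref{div2}). This is the unique choice that kills the bulk weight term $(p-4s_2(\psi_{(1)}))|u|^{p-1}|\psi_{(1)}|^2$ on the left-hand side, leaving only two spacetime divergences, a $2$-surface divergence, the inhomogeneous terms, and the $\Gag$ quadratic error.

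Next, I would integrate against the spacetime volume form $\Om^2\,du'\,d\ub'\,d\slg$ on $V(u,\ub)$ and apply the fundamental theorem of calculus in the $e_3$ and $e_4$ directions (in the spirit of Lemma \ref{dint}). The term $\sdivs[\Re(\psi_{(1)}^\dag\c\psi_{(2)})]$ drops out because it integrates to zero on each closed $2$-sphere $S_{u',\ub'}$. The two remaining divergences convert into boundary fluxes on $H_u\cup H_{u_\infty}\cup \Hb_\ub\cup \Hb_0$; the $\Hb_0$ flux vanishes by the Minkowskian initial data and the $H_{u_\infty}$ flux is moved to the right-hand side as part of the initial data norm. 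Using the signature identity $s_2(\psi_{(2)})=s_2(\psi_{(1)})+\tfrac12$, which holds for both \eqref{bianchi1} and \eqref{bianchi2} (as is easily checked against the signature tables), the weight $|u|^{4s_2(\psi_{(1)})}$ at the $H_u$ boundary matches exactly the normalization defining $\|\psi_{(1)}\|_{L^2_{sc}(\cuv)}^2$; at $\Hb_\ub$ the same weight, combined with the built-in factor $a/|u'|^2$ from \eqref{phifluxsc}, reproduces $\|\psi_{(2)}\|_{L^2_{sc}(\ucuv)}^2$.

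For the right-hand side, the inhomogeneous products $\psi_{(i)}^\dag\c h_{(i)}$ and the quadratic errors $a^{-1}|u'|^{p+1}\Gag|\psi_{(1)}|^2$ and $|u'|^p\Gag|\psi_{(2)}|^2$ are passed through Proposition \ref{Holder} to land in $L^1_{sc}(S_{u',\ub'})$ form. The prefactor $a/|u'|$ displayed in the statement arises precisely from the discrepancy between $\Om^2|u'|^p\,du'\,d\ub'\,d\slg$ and the weighted scale-invariant measure $a^{-2s_2}|u'|^{4s_2-1}\,du'\,d\ub'\,d\slg$ once one inserts $p=4s_2(\psi_{(1)})$, and from the fact that $\Om\simeq 1$ under \eqref{B1}.

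The main obstacle is exactly this measure bookkeeping: one must verify that, for every admissible combination of signatures ($s_2(\psi_{(1)})$, $s_2(\psi_{(2)})$, $s_2(h_{(i)})$, and $s_2$ of the factors packaged into $\Gag$), the net $u'$-weight produced by combining the explicit $|u'|^p$ from the divergence identity, the $a/|u'|^2$ inside the definition of $\|\cdot\|_{L^2_{sc}(\ucuv)}$, and the $a^{-s_2}|u'|^{2s_2-1}$ rescaling in each $L^1_{sc}(S)$--norm, collapses to the uniform factor $a/|u'|$ stated, and this uniformly across the two pair types \eqref{bianchi1} and \eqref{bianchi2}. Once that invariance is established, the proof reduces to standard integration and a routine application of Proposition \ref{Holder} to absorb the quadratic $\Gag$-contributions into the claimed form.
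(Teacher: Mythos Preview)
Your approach is correct and essentially identical to the paper's: set $p=4s_2(\psi_{(1)})$ in Lemma~\ref{keypoint}, integrate over $V$, drop the sphere divergence, and convert everything to scale-invariant norms via the signature relations $s_2(\psi_{(2)})=s_2(\psi_{(1)})+\tfrac12$, $s_2(h_{(1)})=s_2(\psi_{(1)})+1$, $s_2(h_{(2)})=s_2(\psi_{(1)})+\tfrac12$ together with \eqref{dfsc} and \eqref{phifluxsc}. One minor slip: you should not drop the $\Hb_0$ flux ``by the Minkowskian initial data''---the proposition as stated retains $\|\psi_{(2)}\|^2_{L^2_{sc}(\Hb_{0}^{(u_\infty,u)})}$ on the right-hand side, and the Minkowski condition is only invoked later when the proposition is applied; simply carry that boundary term along.
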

\begin{proof}
Integrating \eqref{div} or \eqref{div2} in $V$ and taking $p=4s_2(\psi_1)$, we obtain
\begin{align*}
&\int_\cuv |u|^{4s_2(\psi_1)}|\psi_1|^2+\int_\ucuv |u|^{4s_2(\psi_1)}|\psi_2|^2\\
\les&\int_{H_{u_\infty}^{(0,\ub)}}|u_\infty|^{4s_2(\psi_1)}|\psi_1|^2+\int_{\Hb_0^{(u_\infty,u)}}|u|^{4s_2(\psi_1)}|\psi_2|^2\\
+&\int_V |u|^{4s_2(\psi_1)}\left(\psi_{(1)}\c h_{(1)}+\psi_{(2)}\c h_{(2)}+|u|a^{-1}\Gag|\psi_{(1)}|^2+\Gag|\psi_{(2)}|^2\right).
\end{align*}
Notice that we have from \eqref{bianchi1} or \eqref{bianchi2}
\begin{align*}
    s_2(\psi_2)=s_2(\psi_1)+\frac{1}{2},\qquad s_2(h_{(1)})=s_2(\psi_1)+1,\qquad s_2(h_{(2)})=s_2(\psi_1)+\frac{1}{2}.
\end{align*}
Combining with \eqref{dfsc}, we infer from Proposition \ref{Holder}
\begin{align*}
&\int_0^\ub\|\psi_1\|_{L^2_{sc}(S_{u,\ub'})}^2d\ub'+\int_{u_\infty}^u\frac{a}{|u'|^2}\|\psi_2\|^2_{L^2_{sc}(S_{u',\ub})}du'\\
\les&\int_{H_{u_\infty}^{(0,\ub)}}\|\psi_1\|_{L^2_{sc}(S_{u_\infty,u'})}^2d\ub'+\int_{\Hb_0^{(u_\infty,u)}}\frac{a}{|u'|^2}\|\psi_2\|^2_{L^2_{sc}(S_{u',0})}du'\\
+&\int_0^\ub\int_{u_\infty}^u\frac{a}{|u'|}\left\|\psi_{(1)}\c\left(h_{(1)}+|u'|a^{-1}\Gag\c\psi_{(1)}\right)\right\|_{L^1_{sc}(S_{u',\ub'})}du'd\ub'\\
+&\int_0^\ub\int_{u_\infty}^u\frac{a}{|u'|}\left\|\psi_{(2)}\c (h_{(2)}+\Gag\c\psi_{(2)})\right\|_{L^1_{sc}(S_{u',\ub'})}du'd\ub'.
\end{align*}
This concludes the proof of Proposition \ref{keyintegral}.
\end{proof}
\subsection{Estimates for nonlinear terms}
The following theorem provides a unified treatment of all nonlinear error terms in the estimates for matter fields, curvature components, and Ricci coefficients.
\begin{thm}\label{Junkmanthm}
We have the following estimates:
\begin{align}
\int_0^\ub\int_{u_\infty}^u\left\|\Gab^{(4)}\c(\Gab\c\Gab)^{(4)}\right\|_{L_{sc}^1(S_{u',\ub'})}du'd\ub'&\les \af b^\frac{3}{4},\label{GabGabGab}\\
\int_0^\ub\int_{u_\infty}^u\left\|(\Gab\c\Gab)^{(4)}\right\|_{L_{sc}^2(S_{u',\ub'})}^2du'd\ub'&\les ab.\label{GabGab}
\end{align}
\end{thm}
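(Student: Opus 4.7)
The plan is to reduce everything to four ingredients: (i) the Leibniz expansion writing $(\Gab\c\Gab)^{(4)}$ as a sum of terms $\Gab^{(i)}\c\Gab^{(j)}$ with $i+j\le 4$; (ii) the scale-invariant H\"older inequalities of Proposition \ref{Holder}; (iii) the Sobolev embedding $\|\phi\|_{L^\infty_{sc}(S)}\les\|\phi^{(2)}\|_{L^2_{sc}(S)}$ of Proposition \ref{sobolev}; and (iv) the flux decomposition $\Gab=\Gab_{(1)}+\Gab_{(2)}$ provided by Lemma \ref{decayGagGab}, combined with the fact that $|u_\infty|^{-1}\les a^{-1}$ and $\ub\le 1$ throughout $V_*$. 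I would prove \eqref{GabGab} first and then deduce \eqref{GabGabGab} from it by Cauchy-Schwarz.

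For \eqref{GabGab}, I split the Leibniz terms $\Gab^{(i)}\c\Gab^{(j)}$ by derivative profile. Whenever $\max(i,j)\le 3$ and the complementary index is $\le 1$, the complementary factor sits in $L^\infty_{sc}$ via Sobolev and Lemma \ref{decayGagGab}, with $\|\Gab^{(\le 1)}\|_{L^\infty_{sc}}\les\bo\af$, while the other sits in $L^2_{sc}(S)$ with $\|\Gab^{(\le 3)}\|_{L^2_{sc}(S)}\les\bo\af$; this is already better than needed. The two genuinely top-order profiles are $\{i,j\}=\{4,0\}$ and $\{i,j\}=\{2,2\}$. For the latter, Proposition \ref{sobolev} gives $\|\Gab^{(2)}\|_{L^\infty_{sc}}\les\|\Gab^{(4)}\|_{L^2_{sc}(S)}$, so Proposition \ref{Holder} yields in both cases
$$\|\Gab^{(i)}\c\Gab^{(j)}\|_{L^2_{sc}(S)}^2\les |u|^{-2}\bo^2 a\,\|\Gab^{(4)}\|_{L^2_{sc}(S)}^2.$$
Now I split $\Gab^{(4)}=\Gab_{(1)}^{(4)}+\Gab_{(2)}^{(4)}$ via Lemma \ref{decayGagGab}. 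The $\Gab_{(1)}^{(4)}$ piece is integrated first in $\ub'$, producing the flux $\|\Gab_{(1)}^{(4)}\|_{L^2_{sc}(\cuv)}^2\les\bo^2 a$, and then in $u'$ via $\int_{u_\infty}^u|u'|^{-2}du'\les|u_\infty|^{-1}\les a^{-1}$, giving a total of $\les\bo^2$. The $\Gab_{(2)}^{(4)}$ piece is integrated first in $u'$, producing the flux $\|\Gab_{(2)}^{(4)}\|_{L^2_{sc}(\ucuv)}^2\les\bo^2 a$ (after absorbing the weight $a|u'|^{-2}$), and then in $\ub'\le 1$, again giving $\les\bo^2$. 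Multiplying by the prefactor $\bo^2 a$ yields exactly $\bo^4 a=ab$, proving \eqref{GabGab}.

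The estimate \eqref{GabGabGab} then follows as a corollary. Proposition \ref{Holder} gives $\|\Gab^{(4)}\c(\Gab\c\Gab)^{(4)}\|_{L^1_{sc}(S)}\les |u|^{-1}\|\Gab^{(4)}\|_{L^2_{sc}(S)}\|(\Gab\c\Gab)^{(4)}\|_{L^2_{sc}(S)}$, and Cauchy-Schwarz in $(u',\ub')$ splits the spacetime integral as
$$\Bigl(\int_0^\ub\!\!\int_{u_\infty}^u|u'|^{-2}\|\Gab^{(4)}\|_{L^2_{sc}(S)}^2\,du'\,d\ub'\Bigr)^{\!1/2}\Bigl(\int_0^\ub\!\!\int_{u_\infty}^u\|(\Gab\c\Gab)^{(4)}\|_{L^2_{sc}(S)}^2\,du'\,d\ub'\Bigr)^{\!1/2}.$$
The first factor is $\les\bo$ by the same flux-decomposition computation used just above, and the second is $\les\af b^{1/2}$ by \eqref{GabGab}. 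Their product is $\les\af b^{3/4}$, as required.

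The main obstacle is the borderline Leibniz case $\{i,j\}=\{2,2\}$: since Lemma \ref{decayGagGab} only gives pointwise $L^\infty_{sc}$ control of $\Gab^{(\le 1)}$ and pointwise $L^2_{sc}(S)$ control of $\Gab^{(\le 3)}$, neither factor in $\Gab^{(2)}\c\Gab^{(2)}$ can be bounded pointwise on $S$ at top order. Invoking Proposition \ref{sobolev} forces a $\Gab^{(4)}$ to appear in $L^2_{sc}(S)$, which can only be absorbed because the flux decomposition of Lemma \ref{decayGagGab} rewrites the spacetime integral as a null-cone flux integral, and because in $V_*$ both $|u_\infty|^{-1}\les a^{-1}$ and $\ub\le 1$ provide the decisive small factors that kill the otherwise borderline $a$-dependence.
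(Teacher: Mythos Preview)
Your argument is correct and essentially the same as the paper's. The paper also reduces both bounds to the single spacetime integral $\int_0^{\ub}\int_{u_\infty}^u\frac{a}{|u'|^2}\|\Gab^{(4)}\|_{L^2_{sc}(S_{u',\ub'})}^2\,du'\,d\ub'\les ab^{1/2}$, controlled via the flux decomposition $\Gab=\Ga_1+\Ga_2$ of Lemma \ref{decayGagGab}, exactly as you do. Two cosmetic differences: the paper absorbs your explicit $\{2,2\}$ Leibniz case into the schematic convention $(\Gab\c\Gab)^{(i)}=\Gab\c\Gab^{(i)}$ of Remark \ref{GagGabrk}, and it proves \eqref{GabGabGab} and \eqref{GabGab} in parallel from the common bulk integral rather than deducing \eqref{GabGabGab} from \eqref{GabGab} by Cauchy--Schwarz; your route through Cauchy--Schwarz works just as well and yields the identical bound.
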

\begin{proof}
We have from \eqref{B1} and Proposition \ref{Holder}
\begin{align*}
    \left\|(\Gab\c\Gab)^{(4)}\right\|_{L^2_{sc}(S_{u,\ub})}^2&\les\left\|\Gab\c\Gab^{(4)}\right\|_{L^2_{sc}(S_{u,\ub})}\left\|\Gab\c\Gab^{(4)}\right\|_{L^2_{sc}(S_{u,\ub})}\\
    &\les\frac{b^\frac{1}{2}a}{|u|^2}\left\|\Gab^{(4)}\right\|_{L^2_{sc}(S_{u,\ub})}\left\|\Gab^{(4)}\right\|_{L^2_{sc}(S_{u,\ub})},\\
    \left\|\Gab^{(4)}\c(\Gab\c\Gab)^{(4)}\right\|_{L_{sc}^1(S_{u,\ub})}&\les \frac{1}{|u|}\left\|\Gab^{(4)}\right\|_{L_{sc}^2(S_{u,\ub})}\left\|\Gab^{(4)}\c\Gab\right\|_{L_{sc}^2(S_{u,\ub})}\\
    &\les\frac{\bo\af}{|u|^2}\left\|\Gab^{(4)}\right\|_{L_{sc}^2(S_{u,\ub})}\left\|\Gab^{(4)}\right\|_{L_{sc}^2(S_{u,\ub})}.
\end{align*}
Hence, we obtain
\begin{align}
\begin{split}\label{bulkest}
\int_0^\ub\int_{u_\infty}^u\left\|\Gab^{(4)}\c(\Gab\c\Gab)^{(4)}\right\|_{L_{sc}^1(S_{u',\ub'})}du'd\ub'&\les\bo\int_0^\ub\int_{u_\infty}^u\frac{\af}{|u'|^2}\left\|\Gab^{(4)}\right\|_{L_{sc}^2(S_{u',\ub'})}^2du'd\ub',\\
\int_0^\ub\int_{u_\infty}^u\left\|(\Gab\c\Gab)^{(4)}\right\|_{L_{sc}^2(S_{u',\ub'})}^2du'd\ub'&\les b^\frac{1}{2}\int_0^\ub\int_{u_\infty}^u\frac{a}{|u'|^2}\left\|\Gab^{(4)}\right\|_{L_{sc}^2(S_{u',\ub'})}^2du'd\ub'.
\end{split}
\end{align}
We have from Lemma \ref{decayGagGab} that $\Gab=\Ga_1+\Ga_2$ with
\begin{align}
    \|\Ga_1^{(4)}\|_{L^2_{sc}(\cuv)}&\les\bo\af,\label{Gagleft}\\
    \|\Ga_2^{(4)}\|_{L^2_{sc}(\ucuv)}&\les\bo\af.\label{Gagright}
\end{align}
Notice that we have
\begin{align*}
    \int_0^\ub\int_{u_\infty}^u\frac{a}{|u'|^2}\left\|\Ga_1^{(4)}\right\|_{L_{sc}^2(S_{u',\ub'})}^2du'd\ub'\les \int_{u_\infty}^u\frac{a^2b^\frac{1}{2}}{|u'|^2}du'\les\frac{a^2b^\frac{1}{2}}{|u|}\les ab^\frac{1}{2}.
\end{align*}
We also have
\begin{align*}
    \int_0^\ub\int_{u_\infty}^u\frac{a}{|u'|^2}\left\|\Ga_2^{(4)}\right\|_{L_{sc}^2(S_{u',\ub'})}^2du'd\ub'\les \int_{0}^\ub ab^\frac{1}{2}d\ub'\les ab^\frac{1}{2}.
\end{align*}
Injecting them into \eqref{bulkest}, this concludes the proof of Theorem \ref{Junkmanthm}.
\end{proof}
\section{Estimates for matter fields}\label{secF}
In this section, we prove Theorem \ref{M1} using the $|u|^p$--weighted estimate method developed in Section \ref{secBianchi} and the evolution lemmas stated in Section \ref{secfirstconsequence}.
\subsection{Estimates for Maxwell field and scalar field}
\subsubsection{Estimates for the Bianchi pairs \texorpdfstring{$(\bF,(\rhoF,\siF))$}{} and \texorpdfstring{$((\Psi_4,0),\Psisl)$}{}}
\begin{prop}\label{estbrF}
Let
\begin{equation*}
(\psi_1,\psi_2)\in\left\{(\bF,(\rhoF,\siF)), ((\Psi_4,0),\Psisl)\right\}.
\end{equation*}
We have the following estimate:
\begin{align*}
    \left\|\psi_1^{(4)}\right\|_{L^2_{sc}(\cuv)}+\left\|\psi_2^{(4)}\right\|_{L^2_{sc}(\ucuv)}\les\af.
\end{align*}
\end{prop}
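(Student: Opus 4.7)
The plan is to apply Proposition \ref{keyintegral} to each pair, commute with angular derivatives up to order four, and then absorb the resulting bulk error terms using Theorem \ref{Junkmanthm}. By Remark \ref{Bianchitype}, both pairs satisfy the type \eqref{bianchi1} system with $k=1$, and from Propositions \ref{Maxwell} and \ref{wave} the inhomogeneous terms $(h_{(1)}, h_{(2)})$ have the schematic form $\Gab \c \Gab + \ef\,\Gab \c \Gab$ (with the additional $\frac{|u|}{a}$ weight attached to the $\bbF$ and $\Psit_3$ entries, which does not enter here since those quantities belong to the complementary pairs).

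For the base case $i=0$, I feed the schematic equations into Proposition \ref{keyintegral}. The initial-data contribution on $H_{u_\infty}^{(0,\ub)}$ is bounded by $\mr_{(0)}^2 \les 1$, giving $|u_\infty|^{4s_2(\psi_1)}\|\psi_1\|_{L^2}^2 \les a$ after accounting for the $\afd$ normalization in the norm definitions of Section \ref{secnorms}. The two bulk integrands $\psi_1 \c h_{(1)}$ and $\psi_2 \c h_{(2)}$ are both of the form $\psi \c (\Gab \c \Gab)^{(0)}$, and I estimate them by Cauchy--Schwarz in $L^1_{sc}(S)$, using the bound \eqref{GabGab} of Theorem \ref{Junkmanthm}; the cubic $\Gag$-dressed terms in Proposition \ref{keyintegral} are smaller and handled analogously.

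For the commuted systems, I apply $\dkb^i$ for $1 \leq i \leq 4$. By Proposition \ref{commutation}, the commutators $[\Om\nabs_3, \dkb]$ and $[\Om\nabs_4, \dkb]$ produce only terms of schematic type $\Gab \c \dkb + \afd\, \Gab^{(1)}$ (with an extra $\frac{|u|}{a}$ in the $\nabs_3$ case), so the commuted pair $(\dkb^i \psi_1, \dkb^i \psi_2)$ still satisfies a type \eqref{bianchi1} Bianchi system with new inhomogeneities of schematic form $(\Gab \c \Gab)^{(i)} + \ef\,(\Gab \c \Gab)^{(i)}$. Proposition \ref{keyintegral} then reduces the top-order estimate to bounding $\int \|\psi^{(4)} \c (\Gab \c \Gab)^{(4)}\|_{L^1_{sc}}$ and $\int \|(\Gab \c \Gab)^{(4)}\|_{L^2_{sc}}^2$, both of which are controlled by Theorem \ref{Junkmanthm}.

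The main obstacle is the pair of coupling terms $\ef\,\Im(\psi \Psisl^\dag)$ and $\ef\,\Im(\psi \Psi_4^\dag)$ on the right-hand sides of the Maxwell equations (and symmetrically for the wave system): these terms behave $\afd$ worse than the purely gravitational terms, as emphasized in Section \ref{secFintro}. To handle them I rely on the identity $\ef\,\Gab \c \Gab = b^{-1}\,\Gab$ from Proposition \ref{computationGagGab}, which is a consequence of the smallness condition $b^3 e_0 \ll 1$ in \eqref{bootsize}. This is precisely where the asymmetric $b$-weights in the definitions of $\mf_i$, $\uf_i$, $\pf_i$, $\pfb_i$ on $(\rhoF, \siF), \bbF, \Psisl, \Psit_3$ are crucial: they provide exactly the slack needed to absorb the charging-induced loss while still closing the bootstrap with a right-hand side of size $\af$.
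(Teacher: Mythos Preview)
Your proposal is correct and follows essentially the same route as the paper's proof: write the schematic Bianchi pairs from Propositions \ref{Maxwell} and \ref{wave}, commute with $\dkb^i$ via Proposition \ref{commutation}, apply Proposition \ref{keyintegral}, and close the bulk with Theorem \ref{Junkmanthm}. The only organizational difference is in the treatment of the $\ef$-coupling terms: the paper factors $\ef$ out directly using $s_2(\ef)=0.5$ and $\|\ef\|_{L^\infty_{sc}}=a^{-1/2}|u|^2\ef$, then applies \eqref{GabGabGab} to the remaining cubic integrand to obtain $b^{3/4}\ef a\les a$, whereas you invoke the packaged identity $\ef\,\Gab\c\Gab\in b^{-1}\Gab$ from Proposition \ref{computationGagGab}; both routes yield the same bound. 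One small comment on your final paragraph: the $b$-weights attached to $(\rhoF,\siF),\bbF,\Psisl,\Psit_3$ in the norm definitions matter more for the complementary pairs in Proposition \ref{estrbbF} than here---for the present proposition the closing mechanism is simply the smallness condition $b^3 e_0\ll1$ on $\ef$, not the weighted norms.
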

\begin{proof}
    We have from Propositions \ref{Maxwell} and \ref{wave}
    \begin{align*}
    \nabs_3\psi_1+\left(\frac{1}{2}+s_2(\psi_1)\right)\trchb\,\psi_1&=-\slD^*\psi_2+\Gab\c\Gab+\ef\Gab\c\Gab,\\
    \nabs_4\psi_2&=\slD\psi_1+\Gab\c\Gab+\ef\Gab\c\Gab,
    \end{align*}
    where $\slD$ denotes a Hodge operator which depends on the type of $\psi_1$ and $\psi_2$.\footnote{See Lemma \ref{keypoint} and Remark \ref{Bianchitype} for the explicit form of $\slD$ and $\slD^*$ for $(\psi_1,\psi_2)$.} Differentiating it by $\dkb^i$ and applying Proposition \ref{commutation}, we infer
    \begin{align*}
    \nabs_3(\dkb^i\psi_1)+\left(\frac{1+i}{2}+s_2(\psi_1)\right)\trchb(\dkb^i\psi_1)&=-\slD^*(\dkb^i\psi_2)+\frac{|u|}{a}(\Gab\c\Gab)^{(i)}+\ef(\Gab\c\Gab)^{(i)},\\
    \nabs_4(\dkb^i\psi_2)&=\slD(\dkb^i\psi_1)+(\Gab\c\Gab)^{(i)}+\ef(\Gab\c\Gab)^{(i)},
    \end{align*}
where we used the fact that $\psi_1,\psi_2\in\Gab$. Applying Proposition \ref{keyintegral}, we deduce for $i\leq 4$
\begin{align*}
\|\dkb^i\psi_1\|_{L^2_{sc}(\cuv)}^2+\|\dkb^i\psi_2\|^2_{L^2_{sc}(\ucuv)}&\les\|\dkb^i\psi_1\|_{L^2_{sc}(H_{u_\infty}^{(0,\ub)})}^2+\|\dkb^i\psi_2\|^2_{L^2_{sc}(\Hb_{0}^{(u_\infty,u)})}\\
&+\int_0^\ub\int_{u_\infty}^u\left\|\Gab^{(4)}\c(\Gab\c\Gab)^{(4)}\right\|_{L^1_{sc}(S_{u',\ub'})}du'd\ub'\\
&+\int_0^\ub\int_{u_\infty}^u\frac{a}{|u'|}\left\|\ef\Gab^{(4)}\c(\Gab\c\Gab)^{(4)}\right\|_{L^1_{sc}(S_{u',\ub'})}du'd\ub'.
\end{align*}
We have from the initial assumption that
\begin{align*}
     \|\psi_1^{(4)}\|_{L^2_{sc}(H_{u_\infty}^{(0,\ub)})}^2+\|\psi_2^{(4)}\|^2_{L^2_{sc}(\Hb_{0}^{(u_\infty,u)})}\les a.
\end{align*}
Applying \eqref{GabGabGab} in Theorem \ref{Junkmanthm}, we deduce from \eqref{bootsize}
\begin{align*}
    \int_0^\ub\int_{u_\infty}^u\left\|\Gab^{(4)}\c(\Gab\c\Gab)^{(4)}\right\|_{L^1_{sc}(S_{u',\ub'})}du'd\ub'\les b^\frac{3}{4}\af\les a.
\end{align*}
Recalling that $s_2(\ef)=0.5$, we have from Lemma \ref{Holder} and \eqref{dfsc}
\begin{align*}
&\int_0^\ub\int_{u_\infty}^u\frac{a}{|u'|}\left\|\ef\Gab^{(4)}\c(\Gab\c\Gab)^{(4)}\right\|_{L^1_{sc}(S_{u',\ub'})}du'd\ub'\\
\les\;&\int_0^\ub\int_{u_\infty}^u\frac{a}{|u'|^2}\|\ef\|_{L^\infty_{sc}}\left\|\Gab^{(4)}\c(\Gab\c\Gab)^{(4)}\right\|_{L^1_{sc}(S_{u',\ub'})}du'd\ub'\\
\les\;&\af\ef\int_0^\ub\int_{u_\infty}^u\left\|\Gab^{(4)}\c(\Gab\c\Gab)^{(4)}\right\|_{L^1_{sc}(S_{u',\ub'})}du'd\ub'\\
\les\;&b^\frac{3}{4}\ef a\les a,
\end{align*}
where we used \eqref{GabGabGab} in Theorem \ref{Junkmanthm} and \eqref{bootsize} at the last step. Combining the above estimates and applying Proposition \ref{ellipticLp}, this concludes the proof of Proposition \ref{estbrF}.
\end{proof}
\subsubsection{Estimates for the Bianchi pairs \texorpdfstring{$((\rhoF,-\siF),\bbF)$}{} and \texorpdfstring{$(\Psisl,(\Psit_3,0))$}{}}
\begin{prop}\label{estrbbF}
Let
\begin{equation*}
(\psi_1,\psi_2)\in\left\{(\rhoF,-\siF),\bbF), (\Psisl,(\Psit_3,0))\right\}.
\end{equation*}
We have the following estimate:
\begin{align*}
    \left\|\psi_1^{(4)}\right\|_{L^2_{sc}(\cuv)}+\left\|\psi_2^{(4)}\right\|_{L^2_{sc}(\ucuv)}\les b^{-1}\af.
\end{align*}
\end{prop}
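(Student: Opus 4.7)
The approach is to apply the $|u|^p$-weighted estimate of Proposition \ref{keyintegral} to the Bianchi pairs $((\rhoF,-\siF),\bbF)$ and $(\Psisl,(\Psit_3,0))$, directly paralleling the proof of Proposition \ref{estbrF}. Both pairs fit the framework of Lemma \ref{keypoint} with $k=1$: the first as type \eqref{bianchi2} via the Maxwell equations of Proposition \ref{Maxwell}, the second as type \eqref{bianchi1} via the wave equations of Proposition \ref{wave}. The $\Psisl$-equation in Proposition \ref{wave} carries the coefficient $\tfrac{1}{2}\trchb$ instead of the matching $\trchb$ required by Lemma \ref{keypoint} (since $\tfrac{1}{2}+s_2(\Psisl)=1$); the plan is to move the missing $\tfrac{1}{2}\trchb\Psisl$ to the right-hand side and absorb it into the $\Gag|\psi_1|^2$ slot of Lemma \ref{keypoint}, treating $\tfrac{1}{2}\trchb\sim -|u|^{-1}$ as a $\Gag$-type coefficient producing only a tame Gronwall contribution.

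After commuting with $\dkb^i$ for $i\le 4$ using Proposition \ref{commutation}, Proposition \ref{keyintegral} delivers
\[
\|\psi_1^{(4)}\|_{L^2_{sc}(\cuv)}^2+\|\psi_2^{(4)}\|_{L^2_{sc}(\ucuv)}^2\lesssim \mathrm{(initial\ data)}+\iint_{V}\bigl(\|\psi_1^{(4)}\c h_1^{(4)}\|_{L^1_{sc}}+\|\psi_2^{(4)}\c h_2^{(4)}\|_{L^1_{sc}}\bigr)du'\,d\ub'.
\]
For the initial data on $H_{u_\infty}^{(0,1)}$, Section \ref{secsignatures} gives $|\rhoF,\siF,\Psisl|\lesssim(\af+\ef a)/|u_\infty|^2$, so the scale-invariant flux is $\lesssim 1+\ef^2 a$, which is $\lesssim b^{-2}a$ thanks to $b\ll a^{1/2}$ and $b^2\ef^2\le b^2 e_0^2\ll 1$ (both from \eqref{bootsize}); since $\bbF$ and $\Psit_3$ vanish on $\Hb_0$ by the Minkowskian data, there is no additional contribution from that side. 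Higher-order initial bounds follow similarly from iterated derivatives of the initial assumption on $(\hch,\bF,\psi)$ together with Propositions \ref{Maxwell}, \ref{wave} and Corollary \ref{cordA=F}.

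The decisive ingredient, and the main difficulty, is the bulk estimate. The source terms $h_1,h_2$ are of schematic form $\Gab\c\Gab+\ef\Gab\c\Gab$; crucially, Definition \ref{gammag} absorbs a factor $b$ into $\Gab$ for each of $\{\rhoF,\siF,\bbF,\Psisl,\Psit_3,\Ub\}$, so $\psi_j\in b^{-1}\Gab$ for $j=1,2$, yielding the key $b^{-1}$ gain
\[
\|\psi_j^{(4)}\c(\Gab\c\Gab)^{(4)}\|_{L^1_{sc}}\lesssim b^{-1}\|\Gab^{(4)}\c(\Gab\c\Gab)^{(4)}\|_{L^1_{sc}}.
\]
Applying \eqref{GabGabGab} from Theorem \ref{Junkmanthm} bounds the full bulk by $b^{-1}\cdot\af b^{3/4}=b^{-1/4}\af$. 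Since we are controlling $\|\psi\|_{L^2_{sc}}^2$ rather than $\|\psi\|_{L^2_{sc}}$, it suffices to check $b^{-1/4}\af\ll b^{-2}a=(b^{-1}\af)^2$, equivalent to $b^{7/4}\ll \af$, which follows from $b\ll a^{1/6}$. For the $\ef$-decorated terms, the extra $\ef<e_0$ factor combined with $b^3 e_0\ll 1$ from \eqref{bootsize} gives $\ef\, b^{-1/4}\af\lesssim b^{-2}a$. A final elliptic upgrade via Proposition \ref{ellipticLp} converts $\dkb^i$ control into $\nabs^i$ control and closes the estimate. The hard part is therefore not the analytic machinery—already packaged in Propositions \ref{keyintegral} and Theorem \ref{Junkmanthm}—but the careful accounting of $b$-powers: each factor $b^{1/4}$ coming from the bootstrap $\|\Gab^{(4)}\|_{L^2_{sc}}\lesssim b^{1/4}\af$ must be more than compensated by the $b^{-1}$ built into $\psi_j\in b^{-1}\Gab$, together with the structural gap $b^3\ll \af$ in \eqref{bootsize}.
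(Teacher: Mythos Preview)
Your approach mirrors the paper's: commute the Bianchi pairs from Propositions~\ref{Maxwell} and~\ref{wave} with $\dkb^i$, apply Proposition~\ref{keyintegral}, bound the bulk via Theorem~\ref{Junkmanthm}, and check that the result is $\lesssim b^{-2}a$. The paper does not use your $b^{-1}$ gain from $\psi_j\in b^{-1}\Gab$; it simply treats $\psi_j\in\Gab$ and closes using $b^{3/4}\af\lesssim b^{-2}a$ (i.e.\ $b^{11/4}\ll\af$) and $b^{3/4}\ef a\lesssim b^{-2}a$ (i.e.\ $b^{11/4}\ef\ll 1$), both from \eqref{bootsize}. Your refinement is valid but unnecessary. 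Also, your stated $\ef$-bulk bound $\ef\,b^{-1/4}\af$ drops a factor $\af$ coming from $s_2(\ef)=\tfrac12$; the correct expression is $b^{-1/4}\ef a$ (or $b^{3/4}\ef a$ without your gain), though this does not affect the conclusion.

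One caution on the coefficient point: your proposed Gr\"onwall would \emph{fail} if the sign went against you, since $\int_{u_\infty}^u |u'|^{-1}\,du'=\log(|u_\infty|/|u|)$ diverges as $u_\infty\to-\infty$, which is not ``tame'' in any useful sense. In fact no Gr\"onwall is needed: a smaller $\trchb$-coefficient on the left of \eqref{bianchi1} produces an additional \emph{nonnegative} bulk term $\int_V c\,|u|^{p-1}|\Psisl|^2$ in the identity of Lemma~\ref{keypoint} (since $\trchb<0$), which can simply be dropped. Alternatively, read off from the second displayed version in Proposition~\ref{waveequation} that the actual $\nabs_3\Psisl$-equation carries the full coefficient $\trchb$, matching $\tfrac12+s_2(\Psisl)=1$, with $\tfrac12\trchbt\Psisl$ on the right as a genuine $\frac{|u|}{a}\Gab\c\Gab$ source; the $\tfrac12\trchb$ in the schematic Proposition~\ref{wave} is a cosmetic rearrangement, and the paper's proof silently uses the matching coefficient.
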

\begin{proof}
    We have from Propositions \ref{Maxwell} and \ref{wave}
    \begin{align*}
    \nabs_3\psi_1+\left(\frac{1}{2}+s_2(\psi_1)\right)\trchb\,\psi_1&=-\slD^*\psi_2+\frac{|u|}{a}\Gab\c\Gab+\ef\Gab\c\Gab,\\
    \nabs_4\psi_2&=\slD\psi_1+\frac{|u|}{a}\Gab\c\Gab+\ef\Gab\c\Gab.
    \end{align*}
    Differentiating it by $\dkb^i$ and applying Proposition \ref{commutation}, we infer
    \begin{align*}
    \nabs_3(\dkb^i\psi_1)+\left(\frac{1+i}{2}+s_2(\psi_1)\right)\trchb(\dkb^i\psi_1)&=-\slD^*(\dkb^i\psi_2)+\frac{|u|}{a}(\Gab\c\Gab)^{(i)}+\ef(\Gab\c\Gab)^{(i)},\\
    \nabs_4(\dkb^i\psi_2)&=\slD(\dkb^i\psi_1)+\frac{|u|}{a}(\Gab\c\Gab)^{(i)}+\ef(\Gab\c\Gab)^{(i)},
    \end{align*}
where we used the fact that $\psi_1,\psi_2\in\Gab$. Applying Proposition \ref{keyintegral}, we deduce for $i\leq 4$
\begin{align*}
\|\dkb^i\psi_1\|_{L^2_{sc}(\cuv)}^2+\|\dkb^i\psi_2\|^2_{L^2_{sc}(\ucuv)}&\les\|\dkb^i\psi_1\|_{L^2_{sc}(H_{u_\infty}^{(0,\ub)})}^2+\|\dkb^i\psi_2\|^2_{L^2_{sc}(\Hb_{0}^{(u_\infty,u)})}\\
&+\int_0^\ub\int_{u_\infty}^u\left\|\Gab^{(4)}\c(\Gab\c\Gab)^{(4)}\right\|_{L^1_{sc}(S_{u',\ub'})}du'd\ub'\\
&+\int_0^\ub\int_{u_\infty}^u\frac{a}{|u'|}\left\|\ef\Gab^{(4)}\c(\Gab\c\Gab)^{(4)}\right\|_{L^1_{sc}(S_{u',\ub'})}du'd\ub'.
\end{align*}
We have from the initial assumption that
\begin{align*}
    \|\psi_1^{(4)}\|_{L^2_{sc}(H_{u_\infty}^{(0,\ub)})}^2+\|\psi_2^{(4)}\|^2_{L^2_{sc}(\Hb_{0}^{(u_\infty,u)})}\les b^{-2}a.
\end{align*}
Applying \eqref{GabGabGab} in Theorem \ref{Junkmanthm}, we deduce from \eqref{bootsize}
\begin{align*}
    \int_0^\ub\int_{u_\infty}^u\left\|\Gab^{(4)}\c(\Gab\c\Gab)^{(4)}\right\|_{L^1_{sc}(S_{u',\ub'})}du'd\ub'\les b^\frac{3}{4}\af\les b^{-2}a.
\end{align*}
Recalling that $s_2(\ef)=0.5$, we have from Lemma \ref{Holder} and \eqref{dfsc}
\begin{align*}
&\int_0^\ub\int_{u_\infty}^u\frac{a}{|u'|}\left\|\ef\Gab^{(4)}\c(\Gab\c\Gab)^{(4)}\right\|_{L^1_{sc}(S_{u',\ub'})}du'd\ub'\\
\les\;&\int_0^\ub\int_{u_\infty}^u\frac{a}{|u'|^2}\|\ef\|_{L^\infty_{sc}}\left\|\Gab^{(4)}\c(\Gab\c\Gab)^{(4)}\right\|_{L^1_{sc}(S_{u',\ub'})}du'd\ub'\\
\les\;&\af\ef\int_0^\ub\int_{u_\infty}^u\left\|\Gab^{(4)}\c(\Gab\c\Gab)^{(4)}\right\|_{L^1_{sc}(S_{u',\ub'})}du'd\ub'\\
\les\;&b^\frac{3}{4}\ef a\les b^{-2}a,
\end{align*}
where we used \eqref{GabGabGab} in Theorem \ref{Junkmanthm} and \eqref{bootsize} at the last step. Combining the above estimates and applying Proposition \ref{ellipticLp}, this concludes the proof of Proposition \ref{estrbbF}.
\end{proof}
\subsubsection{Estimate for the Bianchi pair \texorpdfstring{$(\nabs_4\bF,\sld_1\bF)$}{}}
\begin{prop}\label{estbF4}
We have the following estimate:
\begin{align*}
    \left\|(\nabs_4\bF)^{(3)}\right\|_{L^2_{sc}(\cuv)}+\left\|(\sld_1\bF)^{(3)}\right\|_{L^2_{sc}(\ucuv)}\les\af.
\end{align*}
\end{prop}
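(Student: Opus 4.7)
The plan is to view $(\nabs_4\bF,\sld_1\bF)$ as a Bianchi pair of the form \eqref{bianchi1} with $k=1$, using the schematic Teukolsky system stated at the end of Proposition \ref{Maxwell}. Since $s_2(\nabs_4\bF)=s_2(\bF)=0$, the lower-order coefficient $\tfrac{1}{2}\trchb$ in front of $\nabs_4\bF$ matches the required coefficient $(1/2+s_2)\trchb$, so this pair fits verbatim into the framework of Proposition \ref{keyintegral}. The structural difference from Proposition \ref{estbrF} is merely that the inhomogeneities are already of the form $(\Gab\c\Gab)^{(1)}$ and $\ef(\Gab\c\Gab)^{(1)}$, so we will commute with only $\dkb^i$ for $i\leq 3$ (rather than $i\leq 4$) to keep the effective source at level $(\Gab\c\Gab)^{(4)}$ — matching the range of estimates provided by Theorem \ref{Junkmanthm}.

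First, I would commute the Teukolsky pair with $\dkb^i$ for $i\leq 3$ using Proposition \ref{commutation}; this produces the induced coefficient $\bigl(\tfrac{1+i}{2}\bigr)\trchb$ on the first equation, while all commutator errors are absorbed into $\tfrac{|u|}{a}(\Gab\c\Gab)^{(i+1)}$ and $\ef(\Gab\c\Gab)^{(i+1)}$ since both $\nabs_4\bF\in\Gab^{(1)}$ and $\sld_1\bF\in\Gab^{(1)}$ (via Definition \ref{gammag}). Next, I would apply Proposition \ref{keyintegral} to the commuted pair. The flux on $\Hb_0^{(u_\infty,u)}$ vanishes by the Minkowskian data assumption, while the flux on $H_{u_\infty}^{(0,\ub)}$ is controlled by $a$ directly from the initial hypothesis $\afd|u_\infty|\|\nabs_4^k(|u_\infty|\nabs)^i(\hch,\bF,\psi)\|_{L^\infty(S_{u_\infty,\ub})}\leq 1$ (for $k\leq 4$, $i\leq 10$), which covers all $\dkb^i\nabs_4\bF$ and $\dkb^i\sld_1\bF$ with $i\leq 3$.

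The bulk nonlinear terms split into the $\ef$-free part, handled by \eqref{GabGabGab} of Theorem \ref{Junkmanthm} with contribution $\les \af b^{3/4}\les a$ by \eqref{bootsize}, and the $\ef$-part, where one extra factor $\af\ef$ is produced from $\|\ef\|_{L^\infty_{sc}}$ via $s_2(\ef)=1/2$, yielding $\les b^{3/4}\ef a\les a$ under the smallness $b^3 e_0\ll 1$. Dividing through by $a$ and applying $\ell^2$--elliptic estimates in the angular direction (Proposition \ref{ellipticLp}) to translate $\dkb^i$-norms back to $(\cdot)^{(3)}$-norms gives the stated bound $\les \af$. I do not foresee a real obstacle: the argument is a direct transcription of the proof of Proposition \ref{estbrF}, and the only delicate bookkeeping is verifying that the signatures line up so that $(\nabs_4\bF,\sld_1\bF)$ does sit inside the template of \eqref{bianchi1}.
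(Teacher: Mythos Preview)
Your proposal is correct and follows essentially the same route as the paper's proof: both treat $(\nabs_4\bF,\sld_1\bF)$ as a Bianchi pair fitting \eqref{bianchi1} with $k=1$ via the schematic Teukolsky system in Proposition \ref{Maxwell}, commute with $\dkb^i$ for $i\le 3$, invoke Proposition \ref{keyintegral}, and close using the initial data bound together with Theorem \ref{Junkmanthm} and Proposition \ref{ellipticLp}. Your explicit check that $s_2(\nabs_4\bF)=0$ makes the $\tfrac12\trchb$ coefficient compatible is a nice detail that the paper leaves implicit.
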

\begin{proof}
    We have from Proposition \ref{Maxwell}
    \begin{align*}
    \nabs_3(\nabs_4\bF)+\frac{1}{2}\trchb\,(\nabs_4\bF)&=-\sld_1^*\psi_2+(\Gab\c\Gab)^{(1)}+\ef(\Gab\c\Gab)^{(1)},\\
    \nabs_4\psi_2&=\sld_1(\nabs_4\bF)+(\Gab\c\Gab)^{(1)}+\ef(\Gab\c\Gab)^{(1)},
    \end{align*}
    Differentiating it by $\dkb^i$ and applying Proposition \ref{commutation}, we infer
    \begin{align*}
    \nabs_3(\dkb^i(\nabs_4\bF))+\left(\frac{1+i}{2}\right)\trchb(\dkb^i(\nabs_4\bF))&=-\slD^*(\dkb^i\sld_1\bF)+\frac{|u|}{a}(\Gab\c\Gab)^{(i+1)}\\
    &+\ef(\Gab\c\Gab)^{(i+1)},\\
    \nabs_4(\dkb^i\sld_1\bF)&=\slD(\dkb^i(\nabs_4\bF))+(\Gab\c\Gab)^{(i+1)}\\
    &+\ef(\Gab\c\Gab)^{(i+1)},
    \end{align*}
where we used the fact that $\nabs_4\bF,\sld_1\bF\in\Gab$. Applying Proposition \ref{keyintegral}, we deduce for $i\leq 3$
\begin{align*}
&\|\dkb^i(\nabs_4\bF)\|_{L^2_{sc}(\cuv)}^2+\|\dkb^i\sld_1\bF\|^2_{L^2_{sc}(\ucuv)}\\
\les\;&\|\dkb^i(\nabs_4\bF)\|_{L^2_{sc}(H_{u_\infty}^{(0,\ub)})}^2+\|\dkb^i(\sld_1\bF)\|^2_{L^2_{sc}(\Hb_{0}^{(u_\infty,u)})}\\
+&\int_0^\ub\int_{u_\infty}^u\left\|\Gab^{(4)}\c(\Gab\c\Gab)^{(4)}\right\|_{L^1_{sc}(S_{u',\ub'})}+\frac{a}{|u'|}\left\|\ef\Gab^{(4)}\c(\Gab\c\Gab)^{(4)}\right\|_{L^1_{sc}(S_{u',\ub'})}du'd\ub'.
\end{align*}
We have from the initial assumption that
\begin{align*}
    \|(\nabs_4\bF)^{(3)}\|_{L^2_{sc}(H_{u_\infty}^{(0,\ub)})}^2+\|(\sld_1\bF)^{(3)}\|^2_{L^2_{sc}(\Hb_{0}^{(u_\infty,u)})}\les a.
\end{align*}
Next, proceeding as in Proposition \ref{estbrF}, we have
\begin{align*}
\int_0^\ub\int_{u_\infty}^u\left\|\Gab^{(4)}\c(\Gab\c\Gab)^{(4)}\right\|_{L^1_{sc}(S_{u',\ub'})}du'd\ub'&\les a,\\
\int_0^\ub\int_{u_\infty}^u\frac{a}{|u'|}\left\|\ef\Gab^{(4)}\c(\Gab\c\Gab)^{(4)}\right\|_{L^1_{sc}(S_{u',\ub'})}du'd\ub'&\les a.
\end{align*}
Combining the above estimates and applying Proposition \ref{ellipticLp}, this concludes the proof of Proposition \ref{estbF4}.
\end{proof}
\subsection{Estimates for electromagnetic potential \texorpdfstring{$\A$}{}}
\begin{prop}\label{propestA}
We have the following estimate:
\begin{align*}
\left\|\slA^{(4)}\right\|_{L^2_{sc}(\ucuv)}+\left\|\Ub^{(4)}\right\|_{L^2_{sc}(\ucuv)}\les\af.
\end{align*}
\end{prop}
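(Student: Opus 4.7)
The strategy is to use the elliptic--transport structure of Corollary~\ref{cordA=F} under the gauge choice $U=0$, which reduces the estimates for $(\Asl,\Ub)$ to $\nabs_4$-transport equations driven by the already-controlled matter fields $\bF$ and $\rhoF$. Specifically, under $U=0$, Corollary~\ref{cordA=F} gives
\begin{align*}
  \nabs_4 \Asl &= -\bF - \tfrac{1}{2}\trch\, \Asl - \hch\c\Asl,\\
  \nabs_4 \Ub  &= -2\rhoF + 2\om\, \Ub + 2(\etab-\eta)\c\Asl,
\end{align*}
and by the Minkowskian initial data along $\ub=0$ we have $\Asl|_{\Hb_0}=0$ and $\Ub|_{\Hb_0}=0$.

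The plan proceeds in four steps. First, I commute each equation with $\dkb^i$ for $0\leq i\leq 4$ using Proposition~\ref{commutation}. The resulting commutator contributions take the schematic form $\Gag\c(\Asl,\Ub)^{(i)}+\afd\Gab^{(i+1)}$, so that the full equations can be written as $\nabs_4(\dkb^i\Asl)=-\dkb^i\bF+\text{l.o.t.}$ and $\nabs_4(\dkb^i\Ub)=-2\dkb^i\rhoF+\text{l.o.t.}$, where the lower-order terms are harmless under the bootstrap assumption~\eqref{B1}. Second, I apply Lemma~\ref{evolution} to integrate in the $e_4$-direction from $\Hb_0$:
\[
\|\dkb^i \Asl\|_{L^2_{sc}(S_{u,\ub})} \lesssim \int_0^\ub \|\dkb^i(\nabs_4\Asl)\|_{L^2_{sc}(S_{u,\ub'})}\,d\ub',
\]
and similarly for $\Ub$. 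The self-interaction terms proportional to $\Asl$ and $\Ub$ in the right-hand side are absorbed by a Gronwall argument, since their coefficients carry a factor $|u|^{-1}\les 4/a$, so the cumulative weight $\int_0^\ub |u|^{-1}d\ub'$ is small in $V_*$.

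Third, I invoke Propositions~\ref{estbrF} and~\ref{estrbbF}, which provide
\[
\|\bF^{(4)}\|_{L^2_{sc}(\cuv)}\lesssim \af,\qquad \|\rhoF^{(4)}\|_{L^2_{sc}(\cuv)}\lesssim b^{-1}\af.
\]
Cauchy--Schwarz in $\ub$ (using $\ub\leq 1$) then gives $\int_0^\ub \|\dkb^i\bF\|_{L^2_{sc}(S)}\,d\ub'\lesssim \|\dkb^i\bF\|_{L^2_{sc}(\cuv)}\lesssim \af$, and analogously for $\rhoF$. Hence we obtain pointwise-in-$u$ control $\|\dkb^i\Asl\|_{L^2_{sc}(S_{u,\ub})}+\|\dkb^i\Ub\|_{L^2_{sc}(S_{u,\ub})}\lesssim \af$. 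Fourth, I integrate against the weight $a/|u'|^2$ and use $\int_{u_\infty}^u a|u'|^{-2}\,du'\lesssim a/|u|\lesssim 1$ in $V_*$ to conclude
\[
\|\Asl^{(4)}\|_{L^2_{sc}(\ucuv)}+\|\Ub^{(4)}\|_{L^2_{sc}(\ucuv)}\lesssim \af.
\]

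The main technical obstacle is the mismatch of scale-invariant signatures between the potentials and their Maxwell sources: although $\nabs_4\Asl\sim\bF$ and $\nabs_4\Ub\sim\rhoF$ at the level of equations, the scale-invariant normalizations of $\Asl,\Ub$ differ from those of $\bF,\rhoF$. This forces careful tracking of the area factor $|u|^2$ when converting between $L^2_{sc}(S)$ norms on the two sides, and the gain that makes the argument close is precisely the suppression provided by the weight $a/|u'|^2$ in the definition of $L^2_{sc}(\ucuv)$, whose integral is uniformly bounded in $V_*$. Once these weights are balanced, the transport estimate inherits the top-order flux bound for $(\bF,\rhoF)$ with no loss.
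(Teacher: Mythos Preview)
Your approach coincides with the paper's: use the $\nabs_4$--transport equations for $\Asl$ and $\Ub$ from Corollary~\ref{cordA=F}, commute with $\dkb^i$, apply Lemma~\ref{evolution}, and feed in the flux bounds on $\bF$ and $\rhoF$ from Propositions~\ref{estbrF} and~\ref{estrbbF}. There is, however, a gap in your step~3: you cannot obtain the pointwise-in-$u$ bound $\|\dkb^4(\Asl,\Ub)\|_{L^2_{sc}(S_{u,\ub})}\lesssim\af$. After commutation the error is $(\Gab\c\Gab)^{(4)}$, and under~\eqref{B1} the top-order factor $\Gab^{(4)}$ is controlled only in flux (Lemma~\ref{decayGagGab}), not in $L^2_{sc}(S)$; for instance, the term $(\dkb^4\eta)\c\Asl$ arising from $(\etab-\eta)\c\Asl$ in the $\Ub$ equation has $\eta^{(4)}$ controlled only on $\ucuv$, so $\int_0^\ub\|(\Gab\c\Gab)^{(4)}\|_{L^2_{sc}(S_{u,\ub'})}d\ub'$ is not bounded uniformly in $u$. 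Your Gronwall argument only absorbs the terms where all four $\dkb$'s land on $\Asl$ or $\Ub$, not those where they land on the other factor. The paper does not claim pointwise control at the top order: it keeps the inequality $\|\Asl^{(4)}\|_{L^2_{sc}(S)}\les\af+\int_0^\ub\|(\Gab\c\Gab)^{(4)}\|_{L^2_{sc}}d\ub'$, squares, integrates against $a/|u'|^2$, and then invokes Theorem~\ref{Junkmanthm} to bound the resulting spacetime integral of $\|(\Gab\c\Gab)^{(4)}\|^2_{L^2_{sc}}$ by $ab$. This is the step you are missing, and it is the only substantive one.

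Your final paragraph about a signature mismatch is a non-issue. The transport equations $\nabs_4\Asl\sim\bF$ and $\nabs_4\Ub\sim\rhoF$ are signature-consistent (the decay rates in Section~\ref{secsignatures} and the definitions of $\Psisl,\Psi_3$ force $s_2(\Asl)=s_2(\bF)=0$ and $s_2(\Ub)=s_2(\rhoF)=\tfrac12$; the entries for $\Asl$ and $\Ub$ in the matter signature table appear to be transposed). The paper applies Lemma~\ref{evolution} directly with no reweighting, and the factor $a/|u'|^2$ in your step~4 is simply the definition of $\|\cdot\|_{L^2_{sc}(\ucuv)}$, not a device for repairing a scaling gap.
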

\begin{proof}
We have from Corollary \ref{cordA=F}
\begin{align*}
\nabs_4\slA&=-\bF+\Gab\c\Gab,\\
\nabs_4\Ub&=-2\rhoF+\Gag\c\Gab.
\end{align*}
Differentiating them by $\dkb^i$ and applying Proposition \ref{commutation}, we deduce
\begin{align*}
    \nabs_4(\dkb^i\slA)&=(\bF)^{(i)}+(\Gab\c\Gab)^{(i)},\\
    \nabs_4(\dkb^i\Ub)&=(\rhoF)^{(i)}+(\Gab\c\Gab)^{(i)}.
\end{align*}
Applying Lemma \ref{evolution} and Proposition \ref{ellipticLp}, we obtain
\begin{align*}
    \|\Ub^{(4)}\|_{L^2_{sc}(S_{u,\ub})}&\les\int_0^\ub \|\rhoF^{(4)}\|_{L^2_{sc}(S_{u,\ub'})}+\|(\Gab\c\Gab)^{(4)}\|_{L^2_{sc}(S_{u,\ub'})}d\ub'.
\end{align*}
Multiplying it by $\frac{a}{|u|^2}$ and integrating it from $u_\infty$ to $u$, we infer
\begin{align*}
    \int_{u_\infty}^u\frac{a}{|u'|^2}\|\Ub^{(4)}\|_{L^2_{sc}(S_{u',\ub})}^2du'&\les\int_{u_\infty}^u\frac{a}{|u'|^2}du'\int_0^\ub \|\rhoF^{(4)}\|_{L^2_{sc}(S_{u',\ub'})}^2d\ub'\\
    &+\int_{u_\infty}^u\frac{a}{|u'|^2}du'\int_0^\ub\|(\Gab\c\Gab)^{(4)}\|_{L^2_{sc}(S_{u',\ub'})}^2d\ub'.
\end{align*}
Combining with \eqref{phifluxsc},
\begin{align*}
\|\Ub^{(4)}\|_{L^2_{sc}(\ucuv)}^2&\les\|\rhoF^{(4)}\|_{L^2_{sc}(\ucuv)}^2+a^{-1}\int_0^\ub\int_{u_\infty}^u\|(\Gab\c\Gab)^{(4)}\|_{L^2_{sc}(S_{u',\ub'})}^2du'd\ub'\\
&\les a+b\les a,
\end{align*}
where we used Propositions \ref{estbrF}, \ref{estbF4} and \eqref{GabGab} in Theorem \ref{Junkmanthm} at the second step. Next, we have from Lemma \ref{evolution} and Proposition \ref{ellipticLp}
\begin{align*}
    \|\Asl^{(4)}\|_{L^2_{sc}(S_{u,\ub})}&\les\int_0^\ub\|\bF^{(4)}\|_{L^2_{sc}(S_{u,\ub'})}+\|(\Gab\c\Gab)^{(4)}\|_{L^2_{sc}(S_{u,\ub'})}d\ub'\\
    &\les\left(\int_0^\ub\|\bF^{(4)}\|^2_{L^2_{sc}(S_{u,\ub'})}d\ub'\right)^\frac{1}{2}+\int_0^\ub\|(\Gab\c\Gab)^{(4)}\|_{L^2_{sc}(S_{u,\ub'})}d\ub'\\
    &\les\af+\int_0^\ub\|(\Gab\c\Gab)^{(4)}\|_{L^2_{sc}(S_{u,\ub'})}d\ub',
\end{align*}
where we used Proposition \ref{estbrF} at the last step. Multiplying it by $\frac{a}{|u|^2}$ and integrating it from $u_\infty$ to $u$, we infer
\begin{align*}
    \|\Asl^{(4)}\|^2_{L^2_{sc}(\ucuv)}&\les \int_{u_\infty}^u \frac{a^2}{|u'|^2}du'+\int_0^\ub\int_{u_\infty}^u\frac{a}{|u'|^2}\|(\Gab\c\Gab)^{(4)}\|^2_{L^2_{sc}(S_{u',\ub'})}du'd\ub'\\
    &\les \frac{a^2}{|u|}+a^{-1}\int_0^\ub\int_{u_\infty}^u\|(\Gab\c\Gab)^{(4)}\|^2_{L^2_{sc}(S_{u',\ub'})}du'd\ub'\\
    &\les a+b\les a,
\end{align*}
where we used \eqref{GabGab} in Theorem \ref{Junkmanthm} at the third step. This concludes the proof of Proposition \ref{propestA}.
\end{proof}
\subsection{Estimate for \texorpdfstring{$\OO_i[T](u,\ub)$}{}--norms}
\begin{prop}\label{est4FPsi}
    We have the following estimate:
    \begin{align*}
        \left\|(\rhoF,\siF,\bbF,\Psisl,\Psit_3)^{(3)}\right\|_{L^2_{sc}(S_{u,\ub})}\les b^{-1}\af.
    \end{align*}
\end{prop}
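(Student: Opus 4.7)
The proof splits naturally according to which direction's flux estimate in Proposition \ref{estrbbF} carries the $b^{-1}$ improvement: the ``incoming--flux'' quantities $\rhoF,\siF,\Psisl$ on one hand, and the ``outgoing--flux'' quantities $\bbF,\Psit_3$ on the other. In both cases one reduces the sphere $L^2_{sc}$ estimate to a flux estimate from Proposition \ref{estrbbF} by integrating an appropriate transport equation.

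First, for $\bbF$ and $\Psit_3$, I would integrate the $\nabs_4$ transport equations of Propositions \ref{Maxwell} and \ref{wave} starting from the Minkowskian data on $\Hb_0$ (where both quantities vanish). Commuting with $\dkb^i$ for $i\le 3$ using Proposition \ref{commutation} and applying Lemma \ref{evolution}, one obtains
\begin{align*}
\|\bbF^{(3)}\|_{L^2_{sc}(S_{u,\ub})}&\les\int_0^{\ub}\bigl\|(\rhoF,\siF)^{(4)}+\tfrac{|u|}{a}(\Gab\cdot\Gab)^{(3)}+\ef(\Gab\cdot\Gab)^{(3)}\bigr\|_{L^2_{sc}(S_{u,\ub'})}d\ub',
\end{align*}
and similarly for $\Psit_3^{(3)}$ with $\Psisl^{(4)}$ in place of $(\rhoF,\siF)^{(4)}$. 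A Cauchy--Schwarz in $\ub'\in[0,1]$ converts the linear source to the outgoing flux, which by Proposition \ref{estrbbF} is $\les b^{-1}\af$.

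Second, for $\rhoF,\siF,\Psisl$, I would apply Lemma \ref{3evolution} to the $\nabs_3$ equations from Propositions \ref{Maxwell} and \ref{wave}, with $\la_0=1$ for the Maxwell pair and $\la_0=\frac{1}{2}$ for the scalar. The initial contribution on $S_{u_\infty,\ub}$ is controlled by the initial data estimates in Section \ref{secsignatures} which, since $s_2(\rhoF)=s_2(\Psisl)=\frac{1}{2}$, translate to $\|(\rhoF,\siF,\Psisl)^{(3)}\|_{L^2_{sc}(S_{u_\infty,\ub})}\les 1+\af\ef$; this is $\les b^{-1}\af$ because $b\ll\af$ and $\ef\ll b^{-3}$ are consequences of \eqref{bootsize}. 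The bulk integral $a\int_{u_\infty}^u|u'|^{-2}\|F\|_{L^2_{sc}(S_{u',\ub})}du'$ is estimated by Cauchy--Schwarz in $u'$, using $|u|\ges a/4$, to produce a factor $\af|u|^{-1/2}\les 1$ times $\|F\|_{L^2_{sc}(\ucuv)}$. The leading linear piece of $F$ is $\sld_1\bbF$ or $\sld_1^*(\Psit_3,0)$, whose $(4)$-level incoming flux is $\les b^{-1}\af$ by Proposition \ref{estrbbF}.

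Finally, the nonlinear sources $\Gab\cdot\Gab$ and $\ef\Gab\cdot\Gab$ are controlled through Proposition \ref{Holder} and the bootstrap \eqref{B1}. The structural estimate
$\|(\Gab\cdot\Gab)^{(3)}\|_{L^2_{sc}(\ucuv)}\les b^{1/2}a/|u|$
combined with the $\af|u|^{-1/2}$ prefactor yields a contribution $\les b^{1/2}$, which is $\les b^{-1}\af$ since $b^{3/2}\ll\af$ follows from $b\ll a^{1/6}$; the $\ef$--terms are further improved by Proposition \ref{computationGagGab}. The main obstacle is the careful bookkeeping of powers of $b$: the required bound $b^{-1}\af$ leaves no room to lose any $b$--factor, so one must simultaneously use the $b^{-1}$ improvement of Proposition \ref{estrbbF} for the linear principal sources, the smallness $b^3\ef\ll 1$ for the $\ef$--contributions (both in the initial data on $H_{u_\infty}$ and in the nonlinearities), and the smallness $b\ll a^{1/6}$ to absorb pure cubic terms. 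All three ingredients of \eqref{bootsize} enter at once.
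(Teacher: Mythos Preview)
Your approach is correct but differs from the paper's in one respect: for $\rhoF,\siF,\Psisl$ you integrate the $\nabs_3$ equations via Lemma~\ref{3evolution}, whereas the paper treats \emph{all five} quantities uniformly by integrating the $\nabs_4$ equations from the Minkowskian data on $\Hb_0$. In the paper's scheme the principal linear sources for $\rhoF,\siF$ and $\Psisl$ are $\bF$ and $\Psi_4$, whose outgoing fluxes are only $\les\af$ (Proposition~\ref{estbrF}); after the Cauchy--Schwarz step this produces $\afd\cdot\af=1$, which is already $\les b^{-1}\af$ by $b\ll a^{1/6}$. Thus the paper never needs the $b^{-1}$ gain of Proposition~\ref{estrbbF} for these three quantities, and never has to estimate their initial values on $H_{u_\infty}$. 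Your route instead invokes the $b^{-1}\af$ incoming flux of $\bbF,\Psit_3$ and pays for it by having to bound the data $\|(\rhoF,\siF,\Psisl)^{(3)}\|_{L^2_{sc}(S_{u_\infty,\ub})}$, which you correctly do via the abnormal initial behaviour $\les 1+\af\ef$ and the smallness conditions in \eqref{bootsize}. Both arguments close; the paper's is shorter and avoids the extra initial-data step, while yours makes more explicit use of the $b^{-1}$ improvement in both directions.
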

\begin{proof}
Given
\begin{align*}
    X\in\left\{\rhoF,\siF,\bbF,\Psisl,\Psit_3\right\},
\end{align*}
we have from Propositions \ref{Maxwell} and \ref{wave}
    \begin{align*}
        \nabs_4X=\afd(\bF,\rhoF,\siF,\Psi_4,\Psisl)^{(1)}+\Gab\c\Gab+\ef\Gab\c\Gab,
    \end{align*}
    which implies from Proposition \ref{commutation} that for $i\leq 3$
    \begin{align*}
        \nabs_4(\dkb^iX)=\afd(\bF,\rhoF,\siF,\Psi_4,\Psisl)^{(4)}+(\Gab\c\Gab)^{(3)}+\ef(\Gab\c\Gab)^{(3)}.
    \end{align*}
    Applying Propositions \ref{evolution}, \ref{Holder} and \ref{ellipticLp}, we infer\footnote{Here, we used the fact that $X=0$ on $\Hb_0$ and  $s_2(\ef)=0.5$.}
    \begin{align*}
        \|X^{(3)}\|_{L^2_{sc}(S_{u,\ub})}&\les \int_0^\ub \afd\|(\bF,\rhoF,\siF,\Psi_4,\Psisl)^{(4)}\|_{L^2_{sc}}d\ub'\\
        &+\int_0^\ub\|(\Gab\c\Gab)^{(3)}\|_{L^2_{sc}}+\|\ef(\Gab\c\Gab)^{(3)}\|_{L^2_{sc}}d\ub'\\
        &\les \afd\left(\int_0^\ub\|(\bF,\rhoF,\siF,\Psi_4,\Psisl)^{(4)}\|_{L^2_{sc}}^2d\ub'\right)^\frac{1}{2}\\
        &+\int_0^\ub \frac{1}{|u|}\|\Gab\|_{L^\infty_{sc}(S_{u,\ub'})}\|\Gab^{(3)}\|_{L^2_{sc}(S_{u,\ub'})}+\frac{\ef}{\af}\|\Gab\|_{L^\infty_{sc}(S_{u,\ub'})}\|\Gab^{(3)}\|_{L^2_{sc}(S_{u,\ub'})}d\ub'\\
        &\les \bo+\int_0^\ub \frac{\ef|u|}{\af}b^\frac{1}{2}a d\ub'\\
        &\les b^{-1}\af.
    \end{align*}
    This concludes the proof of Proposition \ref{est4FPsi}.
\end{proof}
\begin{prop}\label{est3FPsi}
    We have the following estimate:
    \begin{align*}
        \left\|(\Psi_4,\bF)^{(3)}\right\|_{L^2_{sc}(S_{u,\ub})}+\left\|\nabs_4\bF^{(2)}\right\|_{L^2_{sc}(S_{u,\ub})}+\left\|\psi\right\|_{L^2_{sc}(S_{u,\ub})}\les \af.
    \end{align*}
\end{prop}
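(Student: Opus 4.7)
The plan is to estimate each of the four quantities separately, exploiting the different directions in which they are naturally integrable. For $\bF^{(3)}$, I would integrate $\nabs_4\bF$ in the outgoing direction from $\Hb_0$, where $\bF$ vanishes by the Minkowskian initial data. For $\Psi_4^{(3)}$ and $(\nabs_4\bF)^{(2)}$, I would integrate the corresponding $\nabs_3$--transport equations from $H_{u_\infty}$ via Lemma~\ref{3evolution}. Finally, for $\psi$, I would use the gauge identity $\nabs_4\psi=\Psi_4$ (from $U=0$) together with $\psi|_{\Hb_0}=0$.

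For $\bF^{(3)}$, applying Lemma~\ref{evolution} to $\dkb^i\bF$ for $i\leq 3$ gives
$$
\|\bF^{(3)}\|_{L^2_{sc}(S_{u,\ub})}\les\int_0^{\ub}\|(\nabs_4\bF)^{(3)}\|_{L^2_{sc}(S_{u,\ub'})}d\ub'+\text{commutator errors},
$$
where the commutator contributions from Proposition~\ref{commutation} are $\Gab\c\bF^{(\leq 3)}$--type terms absorbed by the bootstrap~\eqref{B1}. Cauchy--Schwarz together with the outgoing flux estimate from Proposition~\ref{estbF4} yields the main term $\les\sqrt{\ub}\,\|(\nabs_4\bF)^{(3)}\|_{L^2_{sc}(\cuv)}\les\af$.

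For $\Psi_4^{(3)}$, I would commute the schematic wave equation $\nabs_3\Psi_4+\frac{1}{2}\trchb\Psi_4=\sld_1\Psisl+\Gab\c\Gab+\ef\Gab\c\Gab$ from Proposition~\ref{wave} with $\dkb^i$, $i\leq 3$, and apply Lemma~\ref{3evolution}. The initial contribution on $H_{u_\infty}$ is $\les\af$. The principal source $\dkb^i\sld_1\Psisl$ has $L^2_{sc}(S)$--norm $\les\afd\|\Psisl^{(i+1)}\|_{L^2_{sc}(S_{u,\ub})}$, and Cauchy--Schwarz together with the ingoing flux bound from Proposition~\ref{estbrF} gives
$$
\af\int_{u_\infty}^u|u'|^{-2}\|\Psisl^{(4)}\|_{L^2_{sc}(S_{u',\ub})}du'\les|u|^{-1/2}\|\Psisl^{(4)}\|_{L^2_{sc}(\ucuv)}\les|u|^{-1/2}\af\les 1,
$$
where the final step uses the geometric constraint $|u|\geq a/4$ on $V_*$. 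The nonlinear errors involving $\ef$ and $\Gab\c\Gab$ are handled exactly as in Propositions~\ref{estbrF} and~\ref{est4FPsi}, so $\|\Psi_4^{(3)}\|_{L^2_{sc}(S_{u,\ub})}\les\af$.

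For $(\nabs_4\bF)^{(2)}$, I would apply $\dkb^i$ for $i\leq 2$ to the schematic Teukolsky equation in Proposition~\ref{Maxwell} and invoke Lemma~\ref{3evolution}. The initial contribution is again $\les\af$. The main source $\dkb^i\sld_1^*\sld_1\bF$ scales as $a^{-1}\bF^{(i+2)}$, which at $i=2$ forces control of $\bF^{(4)}$ in $L^2_{sc}(S_{u',\ub})$ integrated along $e_3$. This is the key technical difficulty: no improved incoming flux estimate for $\bF^{(4)}$ is yet available, so we must rely on the bootstrap $\uf_4\leq\bo$, yielding $\|\bF^{(4)}\|_{L^2_{sc}(\ucuv)}\les\bo\af$; Cauchy--Schwarz then produces a contribution bounded by $\bo\,|u|^{-1/2}\les\bo/\af\ll 1$, which is safely absorbed. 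Finally, for $\psi$, the gauge relation $\nabs_4\psi=\Psi_4$ and $\psi|_{\Hb_0}=0$ combined with Lemma~\ref{evolution} give $\|\psi\|_{L^2_{sc}(S_{u,\ub})}\les\int_0^{\ub}\|\Psi_4\|_{L^2_{sc}(S_{u,\ub'})}d\ub'\les\ub\,\af\les\af$, completing the proof.
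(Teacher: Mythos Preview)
Your proposal is correct, and for $\Psi_4^{(3)}$, $(\nabs_4\bF)^{(2)}$ and $\psi$ it follows the same route as the paper (ingoing transport via Lemma~\ref{3evolution} for the first two, outgoing via $\nabs_4\psi=\Psi_4$ for the last). The one genuine difference is your treatment of $\bF^{(3)}$: the paper integrates the $\nabs_3\bF$ equation from $H_{u_\infty}$, feeding in the incoming flux of $(\rhoF,\siF)^{(4)}$ from Proposition~\ref{estbrF}, whereas you integrate in the $e_4$ direction from $\Hb_0$ (where $\bF=0$) and feed in the outgoing flux $\|(\nabs_4\bF)^{(3)}\|_{L^2_{sc}(\cuv)}\les\af$ from Proposition~\ref{estbF4}. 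Both work; the paper's choice has the minor advantage of treating $\bF$ and $\Psi_4$ uniformly, while yours avoids unpacking the structure of the $\nabs_3\bF$ equation.

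One small remark on your Teukolsky step: your claim that ``no improved incoming flux estimate for $\bF^{(4)}$ is yet available'' is not quite right. The source $\dkb^i\sld_1^*\sld_1\bF=\afd\dkb^{i+1}(\sld_1\bF)$ is controlled, for $i\leq 2$, by $\afd\|(\sld_1\bF)^{(3)}\|_{L^2_{sc}(\ucuv)}\les 1$ directly from Proposition~\ref{estbF4}, without any appeal to the bootstrap. Your fallback to $\uf_4\leq\bo$ still closes (as you verify), but the ``key technical difficulty'' you flag is in fact absent.
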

\begin{proof}
Let $X\in\left\{\Psi_4,\bF\right\}$, we have
\begin{align*}
    \nabs_3X+\frac{1}{2}\trchb X=\afd(\rhoF,\siF,\Psisl)^{(1)}+\Gab\c\Gab+\ef\Gab\c\Gab\c\Gab.
\end{align*}
Differentiating it by $\dkb^i$ for $i\leq 3$ and applying Propositions \ref{commutation}, we deduce
\begin{align*}
    \nabs_3(\dkb^iX)+\frac{1+i}{2}\trchb(\dkb^iX)=\afd(\rhoF,\siF,\Psisl)^{(4)}+(\Gab\c\Gab)^{(3)}+\ef(\Gab\c\Gab)^{(3)}
\end{align*}
Applying Lemma \ref{3evolution}, we obtain
\begin{align*}
    \|X^{(3)}\|_{L^2(S_{u,\ub})}&\les \|X^{(3)}\|_{L^2(S_{u_\infty,\ub})}+\afd\int_{u_\infty}^u \frac{a}{|u'|^2}\|(\rhoF,\siF,\Psisl)^{(4)}\|_{L^2_{sc}(S_{u',\ub})}du'\\
    &+\int_{u_\infty}^u\frac{a}{|u'|^2}\|(\Gab\c\Gab)^{(3)}\|_{L^2_{sc}(S_{u',\ub})}+\frac{a}{|u'|^2}\|\ef(\Gab\c\Gab)^{(3)}\|_{L^2_{sc}(S_{u',\ub})}du'\\
    &\les\af+\afd\|(\rhoF,\siF,\Psisl)^{(4)}\|_{L^2_{sc}(\ucuv)}+\int_{u_\infty}^u\frac{a}{|u'|^4}\frac{\ef|u'|^2}{\af}ab^\frac{1}{2}du'\\
    &\les\af+\afd(\bo\af)+\frac{\ef a^\frac{3}{2}b^\frac{1}{2}}{|u|}\\
    &\les\af.
\end{align*}
The estimate for $\nabs_4\bF$ can be deduced in a similar way. Finally, we have
\begin{align*}
    \nabs_4\psi=\Psi_4.
\end{align*}
Applying Lemma \ref{evolution}, we infer
\begin{align*}
    \|\psi\|_{L^2_{sc}(S_{u,\ub})}\les \int_0^\ub\|\Psi_4\|_{L^2_{sc}(S_{u,\ub'})}d\ub'\les \af.
\end{align*}
This concludes the proof of Proposition \ref{est3FPsi}.
\end{proof}
\begin{prop}\label{estAS}
We have the following estimate:
\begin{align*}
    \left\|(\Asl,\Ub)^{(3)}\right\|_{L^2_{sc(S_{u,\ub})}}\les \af.
\end{align*}
\end{prop}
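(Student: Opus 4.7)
The plan is to estimate $\Asl^{(3)}$ and $\Ub^{(3)}$ by integrating the $\nabs_4$-transport equations from Corollary~\ref{cordA=F} (under the gauge $U=0$) starting from the Minkowskian initial data $\Asl|_{\Hb_0}=\Ub|_{\Hb_0}=0$, in the same spirit as Propositions~\ref{est4FPsi} and~\ref{est3FPsi}. Concretely, from Corollary~\ref{cordA=F} together with Definition~\ref{gammag}, we may write schematically
\begin{align*}
\nabs_4 \Asl &= -\bF + \Gag\c\Gab + \Gab\c\Gab, \\
\nabs_4 \Ub &= -2\rhoF + \Gag\c\Gab.
\end{align*}
Commuting with $\dkb^i$ for $i\le 3$ via Proposition~\ref{commutation} (the commutator produces only $\Gab\c\dkb$-type terms which are absorbed into $(\Gab\c\Gab)^{(i)}$), we obtain
\begin{align*}
\nabs_4(\dkb^i \Asl) &= \bF^{(i)} + (\Gab\c\Gab)^{(i)}, \\
\nabs_4(\dkb^i \Ub) &= \rhoF^{(i)} + (\Gab\c\Gab)^{(i)}.
\end{align*}

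Applying Lemma~\ref{evolution} with vanishing data on $\Hb_0$ and combining with Proposition~\ref{ellipticLp} for $i\le 3$ yields
\begin{align*}
\|\Asl^{(3)}\|_{L^2_{sc}(S_{u,\ub})} &\les \int_0^\ub \|\bF^{(3)}\|_{L^2_{sc}(S_{u,\ub'})}\, d\ub' + \int_0^\ub \|(\Gab\c\Gab)^{(3)}\|_{L^2_{sc}(S_{u,\ub'})}\, d\ub',
\end{align*}
and similarly for $\Ub^{(3)}$ with $\bF$ replaced by $\rhoF$. For the first (linear) term on the right I would apply Cauchy--Schwarz in $\ub'\in[0,\ub]\subseteq[0,1]$ and invoke the flux bounds already obtained in Propositions~\ref{estbrF} and~\ref{estrbbF}:
\begin{align*}
\int_0^\ub \|\bF^{(3)}\|_{L^2_{sc}(S_{u,\ub'})}\,d\ub' &\les \ub^{1/2}\,\|\bF^{(3)}\|_{L^2_{sc}(\cuv)} \les \af, \\
\int_0^\ub \|\rhoF^{(3)}\|_{L^2_{sc}(S_{u,\ub'})}\,d\ub' &\les \ub^{1/2}\,\|\rhoF^{(3)}\|_{L^2_{sc}(\cuv)} \les b^{-1}\af \les \af.
\end{align*}

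For the nonlinear contribution, using Proposition~\ref{Holder} with Lemma~\ref{decayGagGab} and the constraint $|u|\ge a/4$ in $V_*$, one has
\begin{align*}
\int_0^\ub \|(\Gab\c\Gab)^{(3)}\|_{L^2_{sc}(S_{u,\ub'})}\,d\ub' \les \int_0^\ub \frac{1}{|u|}\|\Gab\|_{L^\infty_{sc}(S_{u,\ub'})}\|\Gab^{(3)}\|_{L^2_{sc}(S_{u,\ub'})}\,d\ub' \les \frac{b\,\af}{|u|} \les \af,
\end{align*}
since $b/|u|\les b/a \ll 1$ by \eqref{bootsize}. Adding the two bounds yields the desired estimate $\|(\Asl,\Ub)^{(3)}\|_{L^2_{sc}(S_{u,\ub})}\les \af$. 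The argument is genuinely routine at this stage: all the analytical work has been done in the flux estimates of Section~\ref{secF}, so the only thing to watch is the bookkeeping of the commutator and the Hodge--type terms arising from $\sld_1^*$ when converting Proposition~\ref{ellipticLp}--control of $\dkb^i$ into control of $(\af\nabs)^i$. I do not foresee a genuine obstacle; the one subtlety worth flagging is that, unlike in Proposition~\ref{propestA} where the flux estimate tolerated the logarithmically larger bound $b$ via the $a^{-1}\int$ gain, here we pay $\ub^{1/2}\le 1$ and so must rely directly on the flux bound being $\les \af$ rather than $\les \bo\af$; this is available for $\bF^{(3)}$ and $\rhoF^{(3)}$ from Propositions~\ref{estbrF} and~\ref{estrbbF}.
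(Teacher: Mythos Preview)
Your proposal is correct and follows essentially the same approach as the paper: write $\nabs_4\Asl=-\bF+\Gab\c\Gab$ and $\nabs_4\Ub=-2\rhoF+\Gag\c\Gab$, commute with $\dkb^i$, integrate via Lemma~\ref{evolution}, and bound the nonlinear piece by $\tfrac{1}{|u|}\|\Gab\|_{L^\infty_{sc}}\|\Gab^{(3)}\|_{L^2_{sc}}$. The only cosmetic difference is that the paper handles the linear source by directly invoking the $L^2_{sc}(S)$ bounds $\|(\bF,\rhoF)^{(3)}\|_{L^2_{sc}(S_{u,\ub})}\les\af$ already obtained in Propositions~\ref{est4FPsi} and~\ref{est3FPsi}, whereas you apply Cauchy--Schwarz in $\ub'$ and use the flux bounds from Propositions~\ref{estbrF} and~\ref{estrbbF}; both routes give $\af$ with no extra work.
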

\begin{proof}
    We have from Corollary \ref{cordA=F}
    \begin{align*}
        \nabs_4\Asl&=-\bF+\Gab\c\Gab,\\
        \nabs_4\Ub&=-2\rhoF+\Gag\c\Gab.
    \end{align*}
    Differentiating it by $\dkb^i$ for $i\leq 3$ and applying Proposition \ref{commutation}, we infer
    \begin{align*}
        \nabs_4(\dkb^i\Asl)&=\bF^{(3)}+(\Gab\c\Gab)^{(3)},\\
        \nabs_4(\dkb^i\Ub)&=\rhoF^{(3)}+(\Gab\c\Gab)^{(3)}.
    \end{align*}
    Applying Lemma \ref{evolution} and Proposition \ref{ellipticLp}, we obtain
\begin{align*}
    \|(\Asl,\Ub)^{(3)}\|_{L^2_{sc}(S_{u,\ub})}&\les\int_0^\ub \|(\bF,\rhoF)^{(3)}\|_{L^2_{sc}(S_{u,\ub'})}+\|(\Gab\c\Gab)^{(3)}\|_{L^2_{sc}(S_{u,\ub'})}d\ub'\\
    &\les\int_0^\ub\af+\frac{1}{|u|}\|\Gab\|_{L^\infty_{sc}(S_{u,\ub'})}\|\Gab^{(3)}\|_{L^2_{sc}(S_{u,\ub'})}d\ub'\\
    &\les\af+\int_0^\ub\frac{1}{|u|}b^\frac{1}{2}a d\ub'\\
    &\les\af.
\end{align*}
This concludes the proof of Proposition \ref{estAS}.
\end{proof}
Combining Propositions \ref{estbrF}--\ref{estAS}, this concludes the proof of Theorem \ref{M1}.
\section{Curvature estimates}\label{secR}
In this section, we prove Theorem \ref{M1} using the $|u|^p$--weighted estimate method developed in Section \ref{secBianchi}.
\subsection{Estimate for the Bianchi pair \texorpdfstring{$(\a,\bt)$}{}}
\begin{prop}\label{estab}
We have the following estimate:
\begin{align*}
    \|\a^{(3)}\|_{L^2_{sc}(\cuv)}+\|\bt^{(3)}\|_{L^2_{sc}(\ucuv)}\les\af.
\end{align*}
\end{prop}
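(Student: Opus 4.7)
The plan is to recognize the Bianchi pair $(\a,\bt)$ as an instance of the general structure \eqref{bianchi1} with $k=2$ and $s_2(\a)=0$, and then run the $|u|^p$--weighted machinery of Proposition \ref{keyintegral} at derivative levels $i\leq 3$, closing all nonlinear error terms via Theorem \ref{Junkmanthm}. First, I would extract from Proposition \ref{bianchi} the schematic system
\begin{align*}
\nabs_3\a+\tfrac{1}{2}\trchb\,\a &= \nabs\hot\bt + (\Gab\c\Gab)^{(1)}, \\
\nabs_4\bt &= \sdivs\a + (\Gab\c\Gab)^{(1)},
\end{align*}
which matches \eqref{bianchi1} with $(\psi_{(1)},\psi_{(2)})=(\a,\bt)$ (noting that $-2\sld_2^*\bt=\nabs\hot\bt$). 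Commuting with $\dkb^i$ for $0\leq i\leq 3$ via Proposition \ref{commutation}, and keeping track of the standard signature shift, each derived pair $(\dkb^i\a,\dkb^i\bt)$ satisfies the same structural system with $\trchb$--coefficient $\frac{1+i}{2}$ and nonlinear error of type $(\Gab\c\Gab)^{(i+1)}\subseteq(\Gab\c\Gab)^{(4)}$, following exactly the template used in Propositions \ref{estbrF}--\ref{estbF4}.

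Next, I would apply Proposition \ref{keyintegral} to each commuted system and sum in $i$, obtaining
\begin{align*}
\|\a^{(3)}\|_{L^2_{sc}(\cuv)}^2+\|\bt^{(3)}\|_{L^2_{sc}(\ucuv)}^2 &\les \|\a^{(3)}\|_{L^2_{sc}(H_{u_\infty}^{(0,\ub)})}^2+\|\bt^{(3)}\|_{L^2_{sc}(\Hb_0^{(u_\infty,u)})}^2 \\
&\quad + \int_0^\ub\!\!\int_{u_\infty}^u \left\|\Gab^{(4)}\c(\Gab\c\Gab)^{(4)}\right\|_{L^1_{sc}(S_{u',\ub'})} du'\,d\ub'.
\end{align*}
The boundary flux on $H_{u_\infty}$ is $\les a$ by $\II_{(0)}\les 1$, while Minkowskian data along $\Hb_0$ force $\bt\equiv 0$ there, killing the second boundary term. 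The bulk error is controlled by $\af b^{3/4}$ via \eqref{GabGabGab} of Theorem \ref{Junkmanthm}, and this is $\ll a$ under the constraint $b\ll a^{1/6}$ from \eqref{bootsize}. Taking square roots yields the claimed bound $\les\af$.

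The step that requires the most care is the commutation bookkeeping: one must verify that the linear $2\trch\,\bt$ term dropped in passing from Proposition \ref{BianchiKcsic} to the schematic version of Proposition \ref{bianchi}, together with the full tower of commutator errors generated by Proposition \ref{commutation} at each order, continues to fit inside the class $(\Gab\c\Gab)^{(4)}$ so that the single quadratic bound of Theorem \ref{Junkmanthm} absorbs everything. Unlike the matter-field estimates of Section \ref{secF}, no coupling to $\ef$ appears in the Bianchi pair for $(\a,\bt)$, so no smallness of $e_0$ is consumed at this stage and the $L^2_{sc}$--flux estimate is a clean instance of the general $|u|^p$--weighted framework. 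A final invocation of Proposition \ref{ellipticLp} (to pass from $\dkb^i$--derivatives back to $\nabs^i$--derivatives) completes the proof.
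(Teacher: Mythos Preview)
Your proposal is correct and follows essentially the same route as the paper: both recognize $(\a,\bt)$ as a Bianchi pair of type \eqref{bianchi1} with $k=2$, commute with $\dkb^i$ for $i\leq 3$, apply Proposition \ref{keyintegral}, bound the resulting bulk term by Theorem \ref{Junkmanthm}, use $\II_{(0)}\les 1$ for the $H_{u_\infty}$ flux and Minkowskian data on $\Hb_0$, and finish via Proposition \ref{ellipticLp}. Your observation that no $\ef$--coupling enters this pair is also noted implicitly in the paper, where the bulk reduces to the single cubic integral of \eqref{GabGabGab} without the additional $\ef$--weighted term present in the matter-field estimates.
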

\begin{proof}
We have from Proposition \ref{BianchiKcsic}
\begin{align*}
\nabs_3\a+\frac{1}{2}\trchb\,\a&=-2\sld_2^*\bt+(\Gab\c\Gab)^{(1)},\\
\nabs_4\bt&=\sld_2\a+(\Gag\c\Gab)^{(1)}.
\end{align*}
Commuting it with $\dkb^i$ and applying Proposition \ref{commutation}, we infer
\begin{align*}
\nabs_3(\dkb^i\a)+\frac{1+i}{2}\trchb(\dkb^i\a)&=-2\sld_2^*(\dkb^i\b)+\frac{|u|}{a}(\Gab\c\Gab)^{(i+1)},\\
\nabs_4(\dkb^i\bt)&=\sld_2(\dkb^i\a)+(\Gab\c\Gab)^{(i+1)}.
\end{align*}
Applying Proposition \ref{keyintegral} with $\psi_{(1)}=\dkb^i\a$ and $\psi_{(2)}=\dkb^i\bt$, we deduce for $i\leq 3$
\begin{align*}
&\|\dkb^i\a\|_{L^2_{sc}(\cuv)}^2+\|\dkb^i\bt\|^2_{L^2_{sc}(\ucuv)}\\
\les&\|\dkb^i\a\|_{L^2_{sc}(H_{u_\infty}^{(0,\ub)})}^2+\int_0^\ub\int_{u_\infty}^u\left\|\Gab^{(4)}\c(\Gab\c\Gab)^{(4)}\right\|_{L^1_{sc}(S_{u',\ub'})}du'd\ub'.
\end{align*}
Notice that we have from Theorem \ref{Junkmanthm}
\begin{align*}
\int_0^\ub\int_{u_\infty}^u\left\|\Gab^{(4)}\c(\Gab\c\Gab)^{(4)}\right\|_{L^1_{sc}(S_{u',\ub'})}du'd\ub'\les b^3\af\les a.
\end{align*}
Moreover, we have from the initial assumption that for $i\leq 3$
\begin{align*}
\|\dkb^i\a\|_{L^2_{sc}(\cuv)}^2+\|\dkb^i\bt\|^2_{L^2_{sc}(\ucuv)}\les a.
\end{align*}
Combining with Proposition \ref{ellipticLp}, this concludes the proof of Proposition \ref{estab}.
\end{proof}
\subsection{Estimates for \texorpdfstring{$L^2_{sc}(S_{u,\ub})$}{}--norms of \texorpdfstring{$\a$}{}, \texorpdfstring{$\hch$}{} and \texorpdfstring{$\hchb$}{}}\label{secGabimproved}
\begin{prop}\label{esthchhchb}
We have the following estimates:
\begin{align}
    \|\hch^{(3)}\|_{L^2_{sc}(S_{u,\ub})}&\les\af,\label{esthch}\\
    \|\a^{(2)}\|_{L^2_{sc}(S_{u,\ub})}&\les\af,\label{esta}\\
    \frac{a}{|u|}\|\hchb^{(3)}\|_{L^2_{sc}(S_{u,\ub})}&\les\af.\label{esthchb}
\end{align}
\end{prop}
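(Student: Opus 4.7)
The plan is to establish the three estimates in sequence, leveraging the flux bounds for $\a$ and $\bt$ from Proposition \ref{estab} together with the evolution lemmas (Lemmas \ref{evolution} and \ref{3evolution}) and the commutation identities (Proposition \ref{commutation}).

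\emph{Step 1 (improved $\hch^{(3)}$).} I treat the null structure equation $\nabs_4\hch = -\a + \Gab\c\Gab$ from Proposition \ref{null} as a transport equation along $e_4$. Commuting with $\dkb^i$ for $i\leq 3$ and integrating via Lemma \ref{evolution}, starting from the Minkowski datum $\hch|_{\Hb_0}=0$, reduces the problem to bounding
\[
\int_0^\ub\|\a^{(3)}\|_{L^2_{sc}(S_{u,\ub'})}\,d\ub'\;\leq\; \ub^{1/2}\|\a^{(3)}\|_{L^2_{sc}(\cuv)}\;\les\;\af
\]
via Cauchy--Schwarz, $\ub\leq 1$, and Proposition \ref{estab}. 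The commutator and nonlinear error terms are absorbed through Theorem \ref{Junkmanthm} and the bootstrap \eqref{B1}.

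\emph{Step 2 (estimate for $\a^{(2)}$).} From the Bianchi equation $\nabs_3\a + \frac{1}{2}\trchb\,\a = \nabs\hot\bt + (\Gab\c\Gab)^{(1)}$ of Proposition \ref{bianchi}, commuting with $\dkb^i$ for $i\leq 2$ yields a transport equation of the form required by Lemma \ref{3evolution}, which I integrate along $e_3$ from $H_{u_\infty}$. The initial norm $\|\a^{(2)}\|_{L^2_{sc}(S_{u_\infty,\ub})}\les\af$ is supplied by $\II_{(0)}\les 1$. The principal source term, schematically $\af^{-1}\bt^{(3)}$, is absorbed by Cauchy--Schwarz in the $u$-variable against the weight $\int_{u_\infty}^u|u'|^{-2}du'\les|u|^{-1}$, combined with the flux bound $\|\bt^{(3)}\|_{L^2_{sc}(\ucuv)}\les\af$ from Proposition \ref{estab}.

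\emph{Step 3 (estimate for $\hchb^{(3)}$).} I integrate $\nabs_4\hchb = \nabs\hot\etab - \frac{1}{2}\trchb\,\hch + \frac{|u|}{a}\Gab\c\Gab$ from Proposition \ref{null} along $e_4$, once again via Lemma \ref{evolution} with vanishing data on $\Hb_0$. The principal contribution comes from the term $\trchb\,\hch^{(3)}$: Proposition \ref{Holder} together with $\|\trchb\|_{L^\infty_{sc}}\les|u|^2/a$ and the Step 1 bound $\|\hch^{(3)}\|_{L^2_{sc}}\les\af$ yield
\[
\int_0^\ub\|\trchb\,\hch^{(3)}\|_{L^2_{sc}(S_{u,\ub'})}\,d\ub'\;\les\; \frac{|u|}{\af},
\]
matching the desired bound $\frac{a}{|u|}\|\hchb^{(3)}\|_{L^2_{sc}(S_{u,\ub})}\les\af$.

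\emph{Main obstacle.} The subtle point in Step 3 is the top-order term $(\nabs\hot\etab)^{(3)}$, which involves $\etab^{(4)}$. The argument is rescued by the observation that $\etab\in\Gag$ (Definition \ref{gammag}), whose top-order outgoing flux $\afd\|\etab^{(4)}\|_{L^2_{sc}(\cuv)}\les\bo$ is available through the bootstrap $\mg\leq\bo$; a second application of Cauchy--Schwarz in $\ub$ then produces a contribution of order $\bo/a$, which by the scaling constraints \eqref{bootsize} is much smaller than the leading $|u|/\af$ bound and is therefore harmless. The remaining quadratic and cubic errors are handled uniformly by Proposition \ref{Holder}, Theorem \ref{Junkmanthm}, and the schematic conventions in Remark \ref{GagGabrk}.
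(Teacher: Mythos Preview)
Your proposal is correct and follows essentially the same three-step approach as the paper: integrate $\nabs_4\hch$ along $e_4$ using the $\a$-flux from Proposition~\ref{estab}, then integrate $\nabs_3\a$ along $e_3$ using the $\bt$-flux, and finally integrate $\nabs_4\hchb$ along $e_4$ using the improved $\hch$ bound together with the bootstrap flux $\afd\|\etab^{(4)}\|_{L^2_{sc}(\cuv)}\les\bo$. One small slip: the $\etab^{(4)}$ contribution after Cauchy--Schwarz in $\ub$ is of order $\bo$, not $\bo/a$, but since $\bo\ll\af\les|u|/\af$ by \eqref{bootsize} this is still harmless and your conclusion stands.
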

\begin{proof}[Proof of Proposition \ref{esthchhchb}]
    We have from Proposition \ref{null}
    \begin{align*}
        \nabs_4\hch=-\a+\Gag\c\Gab.
    \end{align*}
    Differentiating it by $\dkb^i$ and applying Proposition \ref{commutation}, we infer
    \begin{align*}
        \nabs_4(\dkb^i\hch)=\a^{(i)}+(\Gab\c\Gab)^{(i)}.
    \end{align*}
    Applying Lemma \ref{evolution} and Proposition \ref{estab}, we deduce for $i\leq 3$
    \begin{align*}
        \|\dkb^i\hch\|_{L^2_{sc}(S_{u,\ub})}&\les\int_0^\ub\|\a^{(3)}\|_{L^2_{sc}(S_{u,\ub'})}+\|(\Gab\c\Gab)^{(3)}\|_{L^2_{sc}(S_{u,\ub'})}d\ub'\\
        &\les\|\a^{(i)}\|_{L^2_{sc}(\cuv)}+\frac{1}{|u|}\int_0^\ub\|\Gab^{(3)}\|_{L^2_{sc}(S_{u,\ub'})}\|\Gab\|_{L^\infty_{sc}(S_{u,\ub'})}d\ub'\\
        &\les\af+\frac{b^\frac{1}{2}a}{|u|}\les\af.
    \end{align*}
    Combining with Proposition \ref{ellipticLp}, we obtain \eqref{esthch}. Next, we have from Proposition \ref{bianchiequations}
    \begin{align*}
        \nabs_3\a+\frac{1}{2}\trchb\,\a=\afd\dkb\bt+(\Gag\c\Gab)^{(1)}.
    \end{align*}
    Differentiating it by $\dkb^{i}$ and applying Proposition \ref{commutation}, we infer for $i\leq 2$
    \begin{align*}
        \nabs_3(\dkb^i\a)+\frac{1+i}{2}\trchb(\dkb^i\a)=\afd\dkb^{i+1}\bt+\frac{|u|}{a}(\Gab\c\Gab)^{(3)}.
    \end{align*}
    Applying Lemma \ref{3evolution} and Proposition \ref{Holder}, we deduce for $i\leq 2$
    \begin{align*}
        \|\dkb^i\a\|_{L^2_{sc}(S_{u,\ub})}&\les\|\dkb^i\a\|_{L^2_{sc}(S_{u_\infty,\ub})}+\int_{u_\infty}^u\frac{\af}{|u'|^2}\|\bt^{(3)}\|_{L^2_{sc}(S_{u',\ub})}du'\\
        &+\int_{u_\infty}^u\frac{1}{|u'|^2}\|\Gab^{(3)}\|_{L^2_{sc}(S_{u',\ub})}\|\Gab\|_{L^\infty_{sc}(S_{u',\ub})}du'\\
        &\les\af+\left(\int_{u_\infty}^u\frac{a}{|u'|^2}\|\bt^{(3)}\|^2_{L^2_{sc}(S_{u',\ub})}du'\right)^\frac{1}{2}\left(\int_{u_\infty}^u\frac{1}{|u'|^2}du'\right)^\frac{1}{2}+\int_{u_\infty}^u\frac{b^\frac{1}{2}a}{|u'|^2}du'\\
        &\les\af+\afd\|\bt^{(3)}\|_{L^2_{sc}(\ucuv)}+b^\frac{1}{2}\\
        &\les\af.
    \end{align*}
    Combining with Proposition \ref{ellipticLp}, we obtain \eqref{esta}. Next, we have from Proposition \ref{null}
    \begin{align*}
        \nabs_4\hchb=-\frac{1}{2}\trchb\,\hch+\nabs\hot\etab+\frac{|u|}{a}\Gab\c\Gab.
    \end{align*}
    Differentiating it by $\dkb^i$ and applying Proposition \ref{commutation}, we obtain
    \begin{align*}
        \nabs_4(\dkb^i\hchb)=\trchb\,\hch^{(i)}+\afd\etab^{(i+1)}+\frac{|u|}{a}(\Gab\c\Gab)^{(i)}.
    \end{align*}
    Applying Lemma \ref{evolution} and Proposition \ref{Holder}, we infer for $i\leq 3$\footnote{Recall that we have $\|\trchb\|_{L^\infty_{sc}(S_{u,\ub})}=\frac{|u|^3}{a}\|\trchb\|_{L^\infty(S_{u,\ub})}\les \frac{|u|^2}{a}$.}
    \begin{align*}
        \|\dkb^i\hchb\|_{L^2_{sc}(S_{u,\ub})}&\les\int_{0}^\ub\|\trchb\,\hch^{(3)}\|_{L^2_{sc}(S_{u,\ub'})}+\afd\|\etab^{(4)}\|_{L^2_{sc}(S_{u,\ub'})}+\frac{|u|}{a}\|(\Gab\c\Gab)^{(3)}\|_{L^2_{sc}(S_{u,\ub'})}d\ub'\\
        &\les\int_0^\ub \frac{1}{|u|}\|\trchb\|_{L^\infty_{sc}(S_{u,\ub'})}\|\hch^{(3)}\|_{L^2_{sc}(S_{u,\ub'})} d\ub'+\int_0^\ub\afd\af\bo d\ub'\\
        &+\int_0^\ub a^{-1}\|\Gab\|_{L^\infty_{sc}(S_{u,\ub'})}\|\Gab^{(3)}\|_{L^2_{sc}(S_{u,\ub'})}d\ub'\\
        &\les \frac{|u|}{a}\int_0^\ub\|\hch^{(3)}\|_{L^2_{sc}(S_{u,\ub'})}d\ub'+\bo+b^\frac{1}{2}\\
        &\les \frac{|u|}{\af},
    \end{align*}
    where we used \eqref{esthch} at the last step. Combining with Proposition \ref{ellipticLp}, we obtain \eqref{esthchb}. This concludes the proof of Proposition \ref{esthchhchb}.
\end{proof}
\subsection{Estimate for the Bianchi pair \texorpdfstring{$(\b,(-\rho,\si))$}{}}
\begin{prop}\label{estbr}
We have the following estimate:
\begin{align*}
    \|\b^{(3)}\|_{L^2_{sc}(\cuv)}+\|(K,\sic)^{(3)}\|_{L^2_{sc}(\ucuv)}\les 1.
\end{align*}
\end{prop}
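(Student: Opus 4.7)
\medskip

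\noindent\textbf{Proof strategy.} The plan is to apply the unified $|u|^p$--weighted energy framework of Proposition \ref{keyintegral} to the Bianchi pair $(\b,(\K,\sic))$, which by Remark \ref{Bianchitype} satisfies a system of type \eqref{bianchi1} with $k=1$ and whose schematic form is recorded in Proposition \ref{bianchi}:
\begin{align*}
\nabs_3\b+\trchb\,\b&=\sld_1^*(\K,\sic)+\tfrac{|u|}{ab}(\Gab\c\Gab)^{(1)},\\
\nabs_4(\K,\sic)&=-\sld_1\b-\tfrac{1}{2}\trchb|\hch|^2+\bo\afd(\Gab\c\Gab)^{(1)}.
\end{align*}
I would first commute with $\dkb^i$ for $0\le i\le 3$ via Proposition \ref{commutation}, which preserves the Bianchi pair structure and produces commutator error terms absorbable into the same schematic form. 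Proposition \ref{keyintegral} then yields, for each $i\le 3$,
\begin{align*}
\|\dkb^i\b\|_{L^2_{sc}(\cuv)}^2+\|\dkb^i(\K,\sic)\|_{L^2_{sc}(\ucuv)}^2 \les \mathrm{(Data)}+\mathrm{(Bulk)},
\end{align*}
followed by Proposition \ref{ellipticLp} to convert $\dkb^i$ into $(\af\nabs)^i$.

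\medskip

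\noindent\textbf{Data and purely nonlinear bulk.} The data on $H_{u_\infty}$ is controlled by $\II_{(0)}\les 1$, while on $\Hb_0$ the quantities $\Kc$ and $\sic$ vanish by the Minkowskian initial condition. The genuinely trilinear bulk contributions coming from the schematic error $(\Gab\c\Gab)^{(1)}$, after accounting for the favorable prefactors $|u|/(ab)$ and $\bo\afd$, reduce to integrals of the form $\int\!\int\|\Gab^{(4)}\c(\Gab\c\Gab)^{(4)}\|_{L^1_{sc}}$, which Theorem \ref{Junkmanthm} bounds by $b^{3/4}\af$. Using \eqref{bootsize} together with these prefactors, these contributions are $\les 1$ with room to spare.

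\medskip

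\noindent\textbf{The main obstacle.} The delicate term is the borderline quadratic source $-\tfrac{1}{2}\trchb|\hch|^2$ in $\nabs_4(\K,\sic)$. If one invoked only the bootstrap estimate $\|\hch\|_{L^\infty_{sc}}\les\bo\af$, the resulting contribution would carry an extra power of $b$ and would fail to close at the target level $\les 1$ — precisely the obstruction flagged in Section \ref{secRintro}. The resolution is to exploit the \emph{improved} bound \eqref{esthch} of Proposition \ref{esthchhchb}, namely $\|\hch^{(3)}\|_{L^2_{sc}(S_{u,\ub})}\les\af$ \emph{without} the factor $\bo$, which was itself obtained from the improved estimate \eqref{esta} on $\a$ driven by the already-closed Bianchi pair $(\a,\bt)$ of Proposition \ref{estab}. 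Combined with $\|\trchb\|_{L^\infty_{sc}}\les |u|^2/a$ and Proposition \ref{Holder}, the commuted quadratic source $\dkb^i(\trchb|\hch|^2)$ is controlled in $L^2_{sc}(\ucuv)$ at size $O(1)$, so Cauchy--Schwarz against $(\K,\sic)^{(3)}$ absorbs this bulk into the left-hand flux at the sharp level.

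\medskip

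\noindent Combining the data bound, the cubic bulk bound from Theorem \ref{Junkmanthm}, and the sharp bound on the borderline quadratic term via Proposition \ref{esthchhchb}, Proposition \ref{keyintegral} yields $\|\b^{(3)}\|_{L^2_{sc}(\cuv)}+\|(\K,\sic)^{(3)}\|_{L^2_{sc}(\ucuv)}\les 1$, which after inverting $\dkb$ via Proposition \ref{ellipticLp} concludes the proof. The essential point, as emphasized in Section \ref{secRintro}, is that the naive bootstrap cannot close this estimate in isolation: one must first upgrade $\a$ via the $(\a,\bt)$-pair and then upgrade $\hch$ via $\nabs_4\hch=-\a+\ldots$ before the $(\b,(\K,\sic))$-pair can be closed.
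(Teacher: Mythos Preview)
Your proposal is correct and follows essentially the same approach as the paper: apply Proposition \ref{keyintegral} to the commuted Bianchi pair $(\dkb^i\b,\dkb^i(\K,\sic))$, control the purely cubic bulk via Theorem \ref{Junkmanthm} with the favorable prefactors, and handle the borderline source $\trchb|\hch|^2$ using the \emph{improved} estimate \eqref{esthch} on $\hch$ from Proposition \ref{esthchhchb} (itself fed by the already-closed $(\a,\bt)$-pair), absorbing the resulting contribution into the left-hand flux. The paper implements the absorption via a Young-inequality parameter $\de_0$ rather than a straight Cauchy--Schwarz, but this is the same mechanism you describe.
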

\begin{proof}
We have from Proposition \ref{bianchi}
\begin{align*}
\nabs_3\b+\trchb\,\b&=\sld_1^*(\K,\sic)+\frac{|u|}{ab}(\Gab\c\Gab)^{(1)},\\
\nabs_4(\K,\sic)&=-\sld_1\b-\frac{1}{2}\trchb|\hch|^2+\bo\afd(\Gab\c\Gab)^{(1)}.
\end{align*}
Differentiating it by $\dkb^i$ and applying Proposition \ref{commutation}, we infer for $i\leq 3$
\begin{align*}
    \nabs_3(\dkb^i\b)+\frac{2+i}{2}\trchb(\dkb^i\psi_1)&=\slD^*(\dkb^i(K,\sic))+\frac{|u|}{ab}(\Gab\c\Gab)^{(4)},\\
    \nabs_4(\dkb^i(\K,\sic))&=-\slD(\dkb^i\b)+\trchb(\hch\c\hch)^{(3)}+\frac{\bo}{\af}(\Gab\c\Gab)^{(4)}.
\end{align*}
Applying Proposition \ref{keyintegral}, we deduce for $i\leq 3$
\begin{align*}
&\|\dkb^i\b\|_{L^2_{sc}(\cuv)}^2+\|\dkb^i(\K,\sic)\|^2_{L^2_{sc}(\ucuv)}\\
\les\;&\|\dkb^i\b\|_{L^2_{sc}(\cuvi)}^2+\int_0^\ub\int_{u_\infty}^u \afd b^{-1}\|\Gab^{(4)}\c(\Gab\c\Gab)^{(4)}\|_{L^1_{sc}(S_{u',\ub'})}\\
+&\int_0^\ub\int_{u_\infty}^u\frac{a}{|u|^2}\|(\K,\sic)^{(3)}\|_{L^2_{sc}(S_{u',\ub'})}\|\trchb(\hch\c\hch)^{(3)}\|_{L^2_{sc}(S_{u',\ub'})}du'd\ub'.
\end{align*}
We first have from Theorem \ref{Junkmanthm}
\begin{align*}
    \int_0^\ub\int_{u_\infty}^u\afd b^{-1}\|\Gab^{(4)}\c(\Gab\c\Gab)^{(4)}\|_{L^1_{sc}(S_{u',\ub'})}du'd\ub'\les \afd b^{-1}\af b^\frac{3}{4}\les 1.
\end{align*}
Next, we have from Proposition \ref{Holder} and \eqref{esthch}
\begin{align*}
    &\int_0^\ub\int_{u_\infty}^u\frac{a}{|u'|^2}\|(\K,\sic)^{(3)}\|_{L^2_{sc}(S_{u',\ub'})}\|\trchb(\hch\c\hch)^{(3)}\|_{L^2_{sc}(S_{u',\ub'})}du'd\ub'\\
    \les\;&\int_0^\ub\int_{u_\infty}^u\frac{a}{|u'|^4}\|(\K,\sic)^{(3)}\|_{L^2_{sc}(S_{u',\ub'})}\|\trchb\|_{L^\infty_{sc}(S_{u',\ub'})}\|\hch\|_{L^\infty_{sc}(S_{u',\ub'})}\|\hch^{(3)}\|_{L^2_{sc}(S_{u',\ub'})}du'd\ub'\\
    \les\;&\int_0^\ub\int_{u_\infty}^u\frac{a}{|u'|^2}\|(\K,\sic)^{(3)}\|_{L^2_{sc}(S_{u',\ub'})}du'd\ub'\\
    \les\;&\de_0\int_0^\ub\int_{u_\infty}^u\frac{a}{|u'|^2}\|(\K,\sic)^{(3)}\|^2_{L^2_{sc}(S_{u',\ub'})}du'd\ub'+\de_0^{-1}\int_0^\ub\int_{u_\infty}^u\frac{a}{|u'|^2}du'd\ub'\\
    \les\;&\de_0\|(\K,\sic)^{(3)}\|^2_{L^2_{sc}(\ucuv)}+\de_0^{-1},
\end{align*}
where $0<\de_0\ll 1$ is a constant independent of $a$, $b$ and $\ef$. Moreover, we have from the initial assumption that
\begin{align*}
    \int_0^\ub\|\b^{(3)}\|_{L^2_{sc}(S_{u_\infty,\ub'})}^2d\ub'\les 1.
\end{align*}
Combining the above estimates and Proposition \ref{ellipticLp}, we deduce
\begin{align*}
    \|\b^{(3)}\|_{L^2_{sc}(\cuv)}^2+\|(\K,\sic)^{(3)}\|^2_{L^2_{sc}(\ucuv)}\les 1+\de_0\|(\K,\sic)^{(3)}\|^2_{L^2_{sc}(\ucuv)}+\de_0^{-1}.
\end{align*}
Thus, we obtain for $\de_0$ small enough
\begin{align*}
    \|\b^{(3)}\|_{L^2_{sc}(\cuv)}^2+\|(\K,\sic)^{(3)}\|^2_{L^2_{sc}(\ucuv)}\les1+\de_0^{-1}\les 1.
\end{align*}
This concludes the proof of Proposition \ref{estbr}.
\end{proof}
\subsection{Estimate for the Bianchi pair \texorpdfstring{$((\Kc,\sic),\bb)$}{}}
\begin{prop}\label{estrb}
We have the following estimate:
\begin{align*}
    \|(\Kc,\sic)^{(3)}\|_{L^2_{sc}(\cuv)}+\|\bb^{(3)}\|_{L^2_{sc}(\ucuv)}\les 1.
\end{align*}
\end{prop}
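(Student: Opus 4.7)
The plan is to apply the $|u|^p$--weighted estimate of Proposition \ref{keyintegral} to the Bianchi pair $((\Kc,-\sic),\bb)$, which has the structure \eqref{bianchi2} with $k=1$, as recorded in Remark \ref{Bianchitype}. From Proposition \ref{bianchi} the system reads
\begin{align*}
\nabs_3(\Kc,-\sic)+\frac{3}{2}\trchb(\Kc,-\sic)&=\sld_1\bb+\frac{1}{2}\trchb\,\mu+\frac{|u|}{ab}(\Gab\c\Gab)^{(1)},\\
\nabs_4\bb&=-\sld_1^*(\Kc,-\sic)+\frac{|u|}{ab}(\Gab\c\Gab)^{(1)}.
\end{align*}
After commuting with $\dkb^i$ for $0\leq i\leq 3$ via Proposition \ref{commutation} and feeding the result into Proposition \ref{keyintegral}, I need to control three types of right-hand-side contributions: the quadratic nonlinearities $\frac{|u|}{ab}(\Gab\c\Gab)^{(4)}$, the initial data on $H_{u_\infty}$, and the borderline mass aspect source $\frac{1}{2}\trchb\,\dkb^i\mu$.

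The first two pieces are routine. The quadratic error is handled exactly as in Propositions \ref{estab}--\ref{estbr}, combining Theorem \ref{Junkmanthm} with the gain of $b^{-1}$ from the coefficient $\frac{|u|}{ab}$ to produce an $O(1)$ contribution, while the initial data is $O(1)$ by hypothesis $\II_{(0)}\les 1$. The genuinely new obstacle is the mass aspect term, whose naive estimate with $\mu$ bounded by its outgoing $\mg_i$-flux just misses integrability in $u'$ and, if left unchecked, would only yield a bound of size $\bo$ that could not improve the bootstrap.

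To resolve this borderline source, I would split $\mu=\overline{\mu}+\muc$ and treat the two pieces separately. For the average-free part $\muc$, the improved weight $\frac{|u|}{a}\|\muc^{(i-1)}\|_{L^2_{sc}(\ucuv)}\les\bo$ built into the $\ug_i$--norm supplies exactly the extra $|u|/a$ decay missing from $\mu$; a Cauchy--Schwarz against $\|(\Kc,-\sic)^{(3)}\|_{L^2_{sc}(\ucuv)}$ with a small absorption parameter $\de_0$ then yields a bound of the form $\de_0\|(\Kc,-\sic)^{(3)}\|_{L^2_{sc}(\ucuv)}^2+\de_0^{-1}$, which can be absorbed into the left-hand side flux exactly as in Proposition \ref{estbr}. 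For the constant part $\overline{\mu}$, the $S$-pairing collapses to $\trchb\,\overline{\mu}\,|S|\,\overline{\dkb^i(\Kc,-\sic)}$: the $\sic$--component vanishes because $\overline{\sic}=0$ upon averaging the torsion identity \eqref{torsion} (the integral of $\curls\eta$ over $S$ is zero), and the $\overline{\Kc}$--component is small by Gauss--Bonnet ($\overline{\K}=1/r^2$) together with the closeness of $r$ to $|u|$ guaranteed by Lemma \ref{dint} under the bootstrap \eqref{B1}, so $\overline{\Kc}$ is negligible in scale-invariant norm. Finally, Proposition \ref{ellipticLp} upgrades the $\dkb^i$--bounds to the full $(\cdot)^{(3)}$--norms appearing in the statement.

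The main obstacle throughout is the delicate weight bookkeeping for the borderline $\trchb\,\mu$ contribution, which is the key new feature of this Bianchi pair; its resolution combines the improved $\muc$--estimate built into $\ug_i$ with the vanishing of $\overline{\sic}$ via torsion and the Gauss--Bonnet smallness of $\overline{\Kc}$. The remaining steps mirror the argument of Proposition \ref{estbr}.
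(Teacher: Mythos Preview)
Your proposal correctly identifies the borderline $\trchb\,\mu$ source as the crux, and you correctly name the two key inputs that ultimately resolve it: the improved decay of $\muc$ and the zero-mode control $\overline{\sic}=0$, $|\overline{\Kc}|\les|u|^{-3}$. However, the execution you describe for the $\muc$ piece does not close.

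The absorption you propose cannot work as stated. For the pair $((\Kc,-\sic),\bb)$, which is of type \eqref{bianchi2}, the quantity $(\Kc,-\sic)$ plays the role of $\psi_{(1)}$, so the left-hand side of Proposition \ref{keyintegral} carries $\|(\Kc,-\sic)^{(3)}\|_{L^2_{sc}(\cuv)}^2$, not $\|(\Kc,-\sic)^{(3)}\|_{L^2_{sc}(\ucuv)}^2$; there is nothing to absorb $\de_0\|(\Kc,-\sic)^{(3)}\|_{L^2_{sc}(\ucuv)}^2$ into. (This is exactly the opposite of the situation in Proposition \ref{estbr}, where $(\K,\sic)$ was $\psi_{(2)}$.) You might try instead to invoke $\|(\Kc,\sic)^{(3)}\|_{L^2_{sc}(\ucuv)}\les 1$ from Proposition \ref{estbr} directly, but then the remaining factor from your Cauchy--Schwarz split carries the bootstrap bound $\frac{|u|}{a}\|\muc^{(3)}\|_{L^2_{sc}(\ucuv)}\les\bo$, and the resulting weight in the $u'$--integral does not match the $\ucuv$--weight: you are left with $\int_{u_\infty}^u\frac{1}{a}\|\muc^{(3)}\|_{L^2_{sc}}^2du'$, which has a growing $|u'|^2$ relative to the flux you control and diverges as $|u_\infty|\to-\infty$. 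A secondary issue: your ``pairing collapses'' description of the $\overline{\mu}$ piece is only sensible for $i=0$; for $i\geq 1$ one has $\dkb^i(\trchb\,\overline{\mu})=\overline{\mu}\,\dkb^i\trchbt$, which does not vanish and is not controlled by $\overline{\Kc}$, $\overline{\sic}$.

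The paper's route avoids both problems by never splitting $\mu$ and never appealing to the $\muc$ flux. It runs the energy estimate only for $1\leq i\leq 3$ and observes that once at least one angular derivative hits $\mu$, the improved $L^2(S)$-type bootstrap for $\muc$ built into Definition \ref{gammag} (namely $\frac{|u|}{a}\muc\in\afd\Gab^{(1)}$, hence $\dkb\mu\in\frac{a}{|u|}\Gag^{(2)}$) lets one write $\dkb^i(\trchb\,\mu)=\frac{|u|}{a}(\Gag\c\Gag)^{(4)}=\frac{|u|}{a^{3/2}}(\Gab\c\Gab)^{(4)}$, which is dominated by the existing error $\frac{|u|}{ab}(\Gab\c\Gab)^{(4)}$ since $b<\af$, and Theorem \ref{Junkmanthm} closes. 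The zero mode is then recovered separately from $\overline{\sic}=0$ and the Gauss--Bonnet smallness of $\overline{\Kc}$ via Proposition \ref{ellipticLp}, with no energy estimate needed at $i=0$.
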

\begin{proof}
We have from Proposition \ref{bianchi}
\begin{align}
\begin{split}\label{Ksibbstartest}
\nabs_3(\Kc,-\sic)+\frac{3}{2}\trchb(\Kc,-\sic)&=\sld_1\bb+\frac{1}{2}\trchb\,\mu+\frac{|u|}{ab}(\Gab\c\Gab)^{(1)},\\
\nabs_4\bb&=-\sld_1^*(\Kc,-\sic)+\frac{|u|}{ab}(\Gab\c\Gab)^{(1)}.
\end{split}
\end{align}
Notice that we have
\begin{align*}
    \trchb\,\mu=-\frac{2}{\Om|u|}\mu+\trchbt\,\mu=-\frac{2}{|u|}\mu+\Gag\c\Gag^{(1)}.
\end{align*}
Hence, we obtain for $1\leq i\leq 3$
\begin{align*}
    \dkb^i(\trchb\,\mu)=-\frac{2}{|u|}(\dkb^i\mu)+(\Gag\c\Gag)^{(i+1)}.
\end{align*}
Recalling from Definition \ref{gammag} that
\begin{align*}
    \mu\in \Gag^{(1)},\qquad\quad\dkb\mu\in \frac{a}{|u|}\Gag^{(2)},\qquad\quad\trchb\in \frac{|u|^2}{a^2}\Gag,\qquad \quad\dkb\trchb\in\Gag^{(1)},
\end{align*}
we obtain for $1\leq i\leq 3$
\begin{align*}
    \dkb^i(\trchb\,\mu)&=\dkb^{i-1}(\trchb(\dkb\mu)+(\dkb\trchb)\mu)\\
    &=\left(\frac{|u|^2}{a^2}\frac{a}{|u|}\Gag\c\Gag^{(2)}+\Gag^{(1)}\c\Gag^{(1)}\right)^{(2)}\\
    &=\frac{|u|}{a}(\Gag\c\Gag)^{(4)}\\
    &=\frac{|u|}{a^\frac{3}{2}}(\Gab\c\Gab)^{(4)}.
\end{align*}
Differentiating \eqref{Ksibbstartest} by $\dkb^i$ and applying Proposition \ref{commutation}, we infer for $1\leq i\leq 3$
\begin{align*}
    \nabs_3(\dkb^i(\Kc,-\sic))+\frac{3+i}{2}\trchb(\dkb^i(\Kc,-\sic))&=\slD(\dkb^i\bb)+\frac{|u|}{ab}(\Gab\c\Gab)^{(4)},\\
    \nabs_4(\dkb^i\bb)&=-\slD^*(\dkb^i(\Kc,-\sic))+\frac{|u|}{ab}(\Gab\c\Gab)^{(4)}.
\end{align*}
Applying Proposition \ref{keyintegral}, we obtain for $1\leq i\leq 3$
\begin{align*}
&\|\dkb^i(\Kc,\sic)\|_{L^2_{sc}(\cuv)}^2+\|\dkb^i\bb\|^2_{L^2_{sc}(\ucuv)}\\
\les\;&\|\dkb^i(\Kc,\sic)\|_{L^2_{sc}(\cuvi)}^2+\int_0^\ub\int_{u_\infty}^u\afd b^{-1}\|\Gab^{(4)}\c(\Gab\c\Gab)^{(4)}\|_{L^1_{sc}(S_{u',\ub'})}du'd\ub'.
\end{align*}
Applying Theorem \ref{Junkmanthm}, we have
\begin{align*}
    \int_0^\ub\int_{u_\infty}^u\afd b^{-1}\|\Gab^{(4)}\c(\Gab\c\Gab)^{(4)}\|_{L^1_{sc}(S_{u',\ub'})}du'd\ub'\les\afd b^{-1}\af b^\frac{3}{4}\les 1.
\end{align*}
We also have from the initial assumption that
\begin{align*}
    \|\dkb^i(\Kc,\sic)\|_{L^2_{sc}(\cuvi)}^2\les 1.
\end{align*}
Thus, we deduce for $1\leq i\leq 3$
\begin{align*}
    \|\dkb^i(\Kc,\sic)\|_{L^2_{sc}(\cuv)}^2+\|\dkb^i\bb\|^2_{L^2_{sc}(\ucuv)}\les 1.
\end{align*}
Moreover, we have from Gauss-Bonnet Theorem and \eqref{torsion}
\begin{align*}
    \left|\ov{\Kc}\right|=\left|\frac{1}{r^2}-\frac{1}{|u|^2}\right|\les\frac{1}{|u|^3},\qquad\quad \ov{\sic}=\ov{\curls\eta}=0.
\end{align*}
Combining with Proposition \ref{ellipticLp}, we infer
\begin{align*}
    \|(\Kc,\sic)^{(3)}\|_{L^2_{sc}(\cuv)}^2+\|\bb^{(3)}\|^2_{L^2_{sc}(\ucuv)}\les 1.
\end{align*}
This concludes the proof of Proposition \ref{estrb}.
\end{proof}
\subsection{Estimate for \texorpdfstring{$\OO_i[W](u,\ub)$}{}--norms}
\begin{prop}\label{estR}
    We have the following estimate:
    \begin{align*}
        \|(\b,\rho,\si,\bb,\bt,\Kc,\sic)^{(2)}\|_{L^2_{sc}(S_{u,\ub})}\les 1.
    \end{align*}
\end{prop}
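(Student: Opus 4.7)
The plan is to integrate the schematic Bianchi equations of Proposition \ref{bianchi} as transport equations along either $e_4$ or $e_3$, converting the $L^2$--flux bounds from Propositions \ref{estab}, \ref{estbr}, and \ref{estrb} (available at order $3$) into pointwise $L^2_{sc}(S_{u,\ub})$--bounds at order $2$. Concretely, I split the seven quantities $\{\b,\rho,\si,\bb,\bt,\Kc,\sic\}$ into two groups according to which transport equation carries a controllable source.

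\textbf{Group 1 (transport in $e_4$, initial data on $H_{u_\infty}$):} the renormalized $\bt$, the pair $(\K,\sic)$, and $\bb$. Each satisfies schematically $\nabs_4 X=\slD Y+\mathrm{l.o.t.}$ with $Y$ taken from $\{\a,\b,(\Kc,-\sic)\}$, whose $L^2_{sc}(\cuv)$--flux at order $3$ was bounded by $\af$, $1$, and $1$ respectively. After commuting with $\dkb^i$ for $i\leq 2$ via Proposition \ref{commutation}, applying Lemma \ref{evolution} and Cauchy--Schwarz in $\ub$ gives
\begin{align*}
\|X^{(2)}\|_{L^2_{sc}(S_{u,\ub})}\les \|X^{(2)}\|_{L^2_{sc}(S_{u_\infty,\ub})}+\|Y^{(3)}\|_{L^2_{sc}(\cuv)}+\int_0^\ub\|(\Gab\c\Gab)^{(2)}\|_{L^2_{sc}(S_{u,\ub'})}\,d\ub'.
\end{align*}
The initial piece is controlled by $\II_{(0)}\les 1$; the nonlinear error is treated by Proposition \ref{Holder} and the bootstrap \eqref{B1} exactly as in the estimates of Section \ref{secF}. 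The extra semilinear term $\tfrac{1}{2}\trchb|\hch|^2$ appearing in the $(\K,\sic)$ equation is absorbed via \eqref{esthch} and the bound $\|\trchb\|_{L^\infty_{sc}}\les|u|^2/a$.

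\textbf{Group 2 (transport in $e_3$):} $\b$ and $(\Kc,-\sic)$. The relevant equations are
\begin{align*}
\nabs_3\b+\trchb\,\b&=\sld_1^*(\K,\sic)+\tfrac{|u|}{ab}(\Gab\c\Gab)^{(1)},\\
\nabs_3(\Kc,-\sic)+\tfrac{3}{2}\trchb(\Kc,-\sic)&=\sld_1\bb+\tfrac{1}{2}\trchb\,\mu+\tfrac{|u|}{ab}(\Gab\c\Gab)^{(1)}.
\end{align*}
Commuting with $\dkb^i$ for $i\leq 2$, applying Lemma \ref{3evolution}, and using Cauchy--Schwarz in $u$ converts the source into $\|(\K,\sic)^{(3)}\|_{L^2_{sc}(\ucuv)}$ and $\|\bb^{(3)}\|_{L^2_{sc}(\ucuv)}$, which are $\les 1$ by Propositions \ref{estbr} and \ref{estrb}. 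The borderline $\tfrac{1}{2}\trchb\,\mu$ term is handled exactly as in the proof of Proposition \ref{estrb}: writing $\trchb\,\mu=\tfrac{|u|}{a^{3/2}}(\Gab\c\Gab)^{(1)}$ after using Definition \ref{gammag}, it absorbs into the nonlinear error controlled by Theorem \ref{Junkmanthm}.

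Finally, $\rho$ and $\si$ are recovered algebraically from $(\Kc,\sic)$ and $(\hch,\hchb)$ via the definitions in \eqref{dfKcsic} and the Gauss equation \eqref{gauss}, using the $L^2_{sc}(S_{u,\ub})$--bounds for $(\hch,\hchb)^{(3)}$ supplied by Proposition \ref{esthchhchb}. The main delicacy will be signature and weight bookkeeping when applying Lemma \ref{3evolution}: one must verify that, with $s_2(\b)=\tfrac12$ and $s_2(\Kc)=s_2(\sic)=1$, the $|u|^{\la_1-2s_2}$ weights conspire so that the $u'$--integral $\int_{u_\infty}^u\tfrac{a}{|u'|^2}\|Y^{(3)}\|_{L^2_{sc}(S_{u',\ub})}\,du'$ is exactly the scale-invariant flux of Section \ref{secRnorms}. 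Once that is checked, the bootstrap constant $\bo$ never reappears in Group 2, and the stated bound $\les 1$ closes.
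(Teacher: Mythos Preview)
Your overall strategy is correct and close to the paper's, but there are two issues worth flagging.

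\textbf{Initial hypersurface in Group 1.} For $\nabs_4$--transport via Lemma \ref{evolution}, the initial sphere is $S_{u,0}\subset\Hb_0$, not $S_{u_\infty,\ub}\subset H_{u_\infty}$. Your displayed inequality should read $\|X^{(2)}\|_{L^2_{sc}(S_{u,0})}$, and the appeal to $\II_{(0)}$ (which lives on $H_{u_\infty}$) is misplaced. On $\Hb_0$ the data are Minkowski, so $\bt,\sic,\bb$ vanish and $\Kc=0$ since $r=|u|$ there; hence the initial term simply drops. This is how the paper handles it.

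\textbf{Group 2 is unnecessary, and its $\trchb\,\mu$ step has a gap.} The paper does \emph{not} use $\nabs_3$--transport for any of these quantities. Once Group 1 delivers $(\bt,\Kc,\sic,\bb)^{(2)}$, the paper recovers $\b,\rho,\si$ purely algebraically via
\[
\b=\bt+\tfrac{1}{2}\Ssl_4,\qquad \si=\sic+\tfrac{1}{2}\hch\wedge\hchb,\qquad \rho=-\K-\tfrac{1}{4}\trch\trchb+\tfrac{1}{2}\hch\c\hchb+\tfrac{1}{2}\tr\Ssl,
\]
using Theorem \ref{M1} for $\Ssl$ and Proposition \ref{esthchhchb} for $\hch,\hchb$. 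Your $\nabs_3$--route for $\b$ would work, but for $(\Kc,-\sic)$ your claim that $\trchb\,\mu=\tfrac{|u|}{a^{3/2}}(\Gab\c\Gab)^{(1)}$ ``exactly as in the proof of Proposition \ref{estrb}'' is only established there for $\dkb^i$ with $i\geq 1$; at $i=0$ the term $-\tfrac{2}{|u|}\mu$ is borderline in the $u$--integral, and the paper closes the zero mode separately via Gauss--Bonnet and $\ov{\sic}=0$. Since $(\K,\sic)$ is already obtained in Group 1, this redundant branch can simply be dropped. With the initial-hypersurface fix and the algebraic recovery of $\b$, your proof coincides with the paper's.
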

\begin{proof}
    We have from Proposition \ref{bianchi}\footnote{Here, we used $\a\in\Gab^{(1)}$ and we ignore all the terms which have better decay properties.}
    \begin{align*}
        \nabs_4(\bt,\Kc,\sic,\bb)=-\frac{1}{|u|^2}\trch+\afd\a^{(1)}+\trchb(\hch\c\hch)^{(0)}+\frac{|u|}{ab}\Gab\c\Gab^{(1)}.
    \end{align*}
    Differentiating it by $\dkb^{i}$ and applying Proposition \ref{commutation}, we infer
    \begin{align*}
        \nabs_4(\dkb^{i}(\bt,\Kc,\sic,\bb))=\afd\a^{(i+1)}+\frac{1}{|u|^2}\dkb^i\trch+\trchb(\hch\c\hch)^{(i)}+\frac{|u|}{ab}(\Gab\c\Gab)^{(i+1)}.
    \end{align*}
    Applying Lemma \ref{evolution}, Propositions \ref{Holder} and \eqref{esthch}, we deduce for $1\leq i\leq 2$
    \begin{align*}
        \|\dkb^{i}(\bt,\Kc,\sic,\bb)\|_{L^2_{sc}(S_{u,\ub})}&\les\int_0^\ub\afd\|\a^{(3)}\|_{L^2_{sc}(S_{u,\ub'})}+a^{-1}\|\trcht^{(3)}\|_{L^2_{sc}(S_{u,\ub'})}d\ub'\\
        &+a^{-1}\int_0^\ub\|\hch\|_{L^\infty_{sc}(S_{u,\ub'})}\|\hch^{(3)}\|_{L^2_{sc}(S_{u,\ub'})}d\ub'\\
        &+a^{-1}b^{-1}\int_0^\ub\|\Gab^{(3)}\|_{L^2_{sc}(S_{u,\ub'})}\|\Gab\|_{L^\infty_{sc}(S_{u,\ub'})} d\ub'\\
        &\les\afd\left(\int_0^\ub\|\a^{(3)}\|^2_{L^2_{sc}(S_{u,\ub'})} d\ub'\right)^\frac{1}{2}+a^{-1}\bo+a^{-1}\af\af\\
        &\les 1,
    \end{align*}
    where we used Proposition \ref{estab} at the last step. Combining with Proposition \ref{ellipticLp} and the fact that $\ov{\Kc}\les|u|^{-3}$, we obtain
    \begin{align}\label{btKcbbL2S}
        \|(\bt,\Kc,\bb)^{(2)}\|_{L^2_{sc}(S_{u,\ub})}\les 1.
    \end{align}
    Next, we recall from \eqref{gauss} and \eqref{dfKcsic}
    \begin{align*}
        \rho=-\K-\frac{1}{4}\trch\,\trchb+\frac{1}{2}\hch\c\hchb+\frac{1}{2}\tr\Ssl,\qquad \si=\sic+\frac{1}{2}\hch\wedge\hchb,\qquad \b=\bt+\frac{1}{2}\Ssl_{4}.
    \end{align*}
    Combining with Theorem \ref{M1}, Proposition \ref{esthchhchb} and \eqref{btKcbbL2S}, we deduce
    \begin{align*}
        \|(\rho,\si,\b)^{(2)}\|_{L^2_{sc}(S_{u,\ub})}\les 1.
    \end{align*}
    This concludes the proof of Proposition \ref{estR}.
\end{proof}
Combining Propositions \ref{estab}--\ref{estR}, this concludes the proof of Theorem \ref{M2}.
\begin{rk}
    Combining Proposition \ref{esthchhchb} and Theorems \ref{M1} and \ref{M2}, we obtain the following estimates\footnote{Here, we also used Proposition \ref{sobolev}.}
    \begin{align}\label{improvedGab}
        \|\Gab^{(3)}\|_{L^2_{sc}(S_{u,\ub})}\les \af,\qquad \|\Gab^{(1)}\|_{L^\infty_{sc}(S_{u,\ub})}\les\af,
\end{align}
which improves the a priori estimate for $\Gab$ in \eqref{Gabapriori}. In the remainder of this paper, we will use the improved decay \eqref{improvedGab} for $\Gab$, which behaves $b^{-\frac{1}{4}}$ better than the a priori estimate in Proposition \ref{decayGagGab}, which follows directly from the bootstrap assumption \eqref{B1}.
\end{rk}
\section{Estimates for Ricci coefficients}\label{secO}
The goal of this section is to prove Theorem \ref{M3}.
\subsection{Estimate for \texorpdfstring{$\mo_i[G](u,\ub)$}{}}
\subsubsection{Estimate for \texorpdfstring{$(\om,\omd)$}{}}
\begin{prop}\label{estom}
    We have the following estimate:
    \begin{align*}
        \|(\om,\omd)^{(3)}\|_{L^2_{sc}(S_{u,\ub})}\les 1.
    \end{align*}
\end{prop}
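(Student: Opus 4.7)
The plan is to treat $\om$ and $\omd$ as solutions of $\nabs_3$--transport equations with initial data on $H_{u_\infty}$, and apply Lemma \ref{3evolution}. The relevant equations are, from Proposition \ref{null} and the definition \eqref{dfomd},
\begin{align*}
\nabs_3\om&=-\tfrac{1}{2}\K-\tfrac{1}{8}\trch\trchb+\tfrac{|u|}{a}\Gab\c\Gab,\\
\nabs_3\omd&=\tfrac{1}{2}\si,\qquad \omd|_{H_{u_\infty}}=0.
\end{align*}
The initial data of $\om$ on $H_{u_\infty}$ is controlled by $\II_{(0)}\les 1$, and the curvature sources $\K$ and $\si$ have been controlled by Theorem \ref{M2}.

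First I would commute with $\dkb^i$ for $0\leq i\leq 3$. Since $\om,\omd\in\Gag$, the commutator identities of Proposition \ref{commutation} produce only contributions that can be absorbed in a schematic source of the form $\frac{|u|}{a}(\Gab\c\Gab)^{(i)}$. This yields, for $i\le 3$,
\begin{align*}
\nabs_3(\dkb^i\om)+\tfrac{i}{2}\trchb\,(\dkb^i\om)&=\dkb^i\!\left(-\tfrac{1}{2}\K-\tfrac{1}{8}\trch\trchb\right)+\tfrac{|u|}{a}(\Gab\c\Gab)^{(i)},\\
\nabs_3(\dkb^i\omd)+\tfrac{i}{2}\trchb\,(\dkb^i\omd)&=\tfrac{1}{2}\dkb^i\si+\tfrac{|u|}{a}(\Gab\c\Gab)^{(i)},
\end{align*}
to which Lemma \ref{3evolution} applies with $\la_0=i/2$ and $s_2(\om)=s_2(\omd)=0$.

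The low-order ($i\leq 2$) source terms are handled directly: the curvature components $\K^{(2)}$, $\si^{(2)}$ have pointwise-in-$u$ $L^2_{sc}(S)$--bounds from Proposition \ref{estR}, while $\trch\trchb$ and the $(\Gab\c\Gab)^{(2)}$ terms are bounded by Proposition \ref{Holder} together with the improved estimate \eqref{improvedGab}. The top-order ($i=3$) curvature terms are the delicate point, since at this regularity we only control $\dkb^3\K$ and $\dkb^3\si$ through the $L^2$--flux norm along $\Hb_\ub$ from Theorem \ref{M2}. I would handle this by Cauchy--Schwarz in $du'$, e.g.
\[
\int_{u_\infty}^u|u'|^{-3}\|\si^{(3)}\|_{L^2_{sc}(S_{u',\ub})}\,du'\leq \left(\int_{u_\infty}^u\tfrac{a}{|u'|^2}\|\si^{(3)}\|^2_{L^2_{sc}(S_{u',\ub})}du'\right)^{\!\!1/2}\!\left(\int_{u_\infty}^u\tfrac{1}{a\,|u'|^4}du'\right)^{\!\!1/2},
\]
using $\|\si^{(3)}\|_{L^2_{sc}(\ucuv)}\les 1$. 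Together with the prefactor $a$ in Lemma \ref{3evolution} and the facts $|u|\gtrsim a$ in $V_*$ and $|u_\infty|\gg a$, this yields an $O(1)$ contribution.

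The principal obstacle is precisely the flux-to-$L^2(S)$ conversion at top order: one must verify that the dual weight $(a|u'|^4)^{-1}$ obtained after extracting the flux norm integrates to something compatible, up to the prefactor $a$, with the desired unit bound. This is where the restriction $|u|\geq a/4$ defining $V_*$ plays a decisive role. The nonlinear $\Gab\c\Gab$ contributions can either be bounded pointwise via Proposition \ref{Holder} and \eqref{improvedGab}, or absorbed into Theorem \ref{Junkmanthm}, both giving much slack. Combining these ingredients yields $\|(\om,\omd)^{(3)}\|_{L^2_{sc}(S_{u,\ub})}\les 1$ as required.
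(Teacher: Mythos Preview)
Your approach is essentially the paper's: both integrate the $\nabs_3$--transport equations for $(\om,\omd)$ after commuting with $\dkb^i$, and both handle the top-order curvature source via Cauchy--Schwarz against the incoming flux bound on $(\K,\si)^{(3)}$. The paper does not split into low and high $i$; it applies the flux argument uniformly for all $i\le 3$ (your pointwise use of Proposition \ref{estR} for $i\le 2$ is correct but unnecessary). Two small points worth tightening: (i) the flux bound you need for $\K^{(3)}$ along $\Hb_\ub$ is not literally part of the $\mw$--norm but comes from Proposition \ref{estbr}; (ii) you should make explicit the bound $\|(\trch\trchb)^{(3)}\|_{L^2_{sc}(S)}\les |u|/a$, which the paper computes separately and which you only mention for $i\le 2$.
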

\begin{proof}
    We have from Proposition \ref{null} and \eqref{dfomd}
    \begin{align*}
        \nabs_3(\om,-\omd)=-\frac{1}{2}(\K,\si)-\frac{1}{8}(\trch\trchb,0)+\frac{|u|}{a}\Gab\c\Gab.
    \end{align*}
    Differentiating it by $\dkb^i$ and applying Proposition \ref{commutation}, we infer
    \begin{align*}
        \nabs_3(\dkb^i(\om,-\omd))+\frac{i}{2}\trchb(\dkb^i(\om,-\omd))=(\K,\si)^{(i)}+(\trch\trchb)^{(i)}+\frac{|u|}{a}(\Gab\c\Gab)^{(i)}.
    \end{align*}
    Note that we have from Propositions \ref{Holder} and \ref{sobolev}
    \begin{align*}
        \|(\trch\trchb)^{(3)}\|_{L^2_{sc(S_{u,\ub})}}&\les \|(\trch)^{(3)}\trch\|_{L^2_{sc(S_{u,\ub})}}+\|\trch(\trchb)^{(3)}\|_{L^2_{sc(S_{u,\ub})}}\\
        &\les\frac{1}{|u|}\|\trch^{(3)}\|_{L^2_{sc(S_{u,\ub})}}\|\trchb^{(3)}\|_{L^2_{sc(S_{u,\ub})}}\\
        &\les\frac{1}{|u|}\frac{|u|^2}{a}\\
        &\les\frac{|u|}{a}.
    \end{align*}
    Applying Lemma \ref{3evolution}, we deduce from Proposition \ref{estbr} and \eqref{improvedGab} that for $i\leq 3$
    \begin{align*}
        &\quad\,\,|u|^{-1}\|\dkb^i(\om,\omd)\|_{L^2_{sc}(S_{u,\ub})}\\
        &\les|u_\infty|^{-1}\|\dkb^i(\om,\omd)\|_{L^2_{sc}(S_{u_\infty,\ub})}+\int_{u_\infty}^u\frac{a}{|u'|^3}\|(\K,\si)^{(3)}\|_{L^2_{sc}(S_{u',\ub})}du'\\
        &+\int_{u_\infty}^u\frac{a}{|u'|^3}\|(\trch\trchb)^{(3)}\|_{L^2_{sc}(S_{u',\ub})}+\frac{1}{|u'|^2}\|(\Gab\c\Gab)^{(3)}\|_{L^2_{sc}(S_{u',\ub})}du'\\
        &\les\frac{1}{|u_\infty|}+\left(\int_{u_\infty}^u\frac{a}{|u'|^2}\|(\K,\si)^{(3)}\|^2_{L^2_{sc}(S_{u',\ub})}du'\right)^\frac{1}{2}\left(\int_{u_\infty}^u\frac{a}{|u'|^4}du'\right)^\frac{1}{2}+\int_{u_\infty}^u\frac{a}{|u'|^3}du'\\
        &\les\frac{1}{|u|}+\frac{\af}{|u|^\frac{3}{2}}+\frac{a}{|u|^2}\\
        &\les\frac{1}{|u|}.
    \end{align*}
    Combining with Proposition \ref{ellipticLp}, this concludes the proof of Proposition \ref{estom}.
\end{proof}
\subsubsection{Estimate for \texorpdfstring{$\eta$}{}}
\begin{prop}\label{esteta}
    We have the following estimate:
    \begin{align*}
        \|\eta^{(3)}\|_{L^2_{sc}(S_{u,\ub})}\les 1.
    \end{align*}
\end{prop}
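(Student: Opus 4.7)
The plan is to integrate the $e_4$--transport equation for $\eta$ along the outgoing null cones, using that the Minkowskian initial data on $\Hb_0$ gives $\eta|_{\Hb_0}=0$, and then converting the resulting bound on $\dkb^i\eta$ into a bound on $\eta^{(i)}$ by Hodge--type elliptic estimates (Proposition~\ref{ellipticLp}) together with the control of $\ov{\eta}$ coming from the identity $\eta=\ze+\nabs\log\Om$ in~\eqref{nullidentities}.

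First I would read off from Proposition~\ref{null} the schematic equation
\[
\nabs_4\eta=-\b+\Gab\c\Gab,
\]
commute with $\dkb^i$ for $i\leq 3$ using Proposition~\ref{commutation} (which produces only $\Gab\c\dkb^{i}\eta$ and $\afd\Gab^{(1)}\c\dkb^{i-1}\eta$ type terms, absorbed into $(\Gab\c\Gab)^{(i)}$), to obtain
\[
\nabs_4(\dkb^i\eta)=\b^{(i)}+(\Gab\c\Gab)^{(i)}.
\]
Then Lemma~\ref{evolution}, applied with the vanishing datum on $\ub=0$, yields
\[
\|\dkb^i\eta\|_{L^2_{sc}(S_{u,\ub})}\les\int_0^\ub\|\b^{(3)}\|_{L^2_{sc}(S_{u,\ub'})}\,d\ub'+\int_0^\ub\|(\Gab\c\Gab)^{(3)}\|_{L^2_{sc}(S_{u,\ub'})}\,d\ub'.
\]

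The first integral is handled by Cauchy--Schwarz and the curvature flux estimate from Proposition~\ref{estbr}:
\[
\int_0^\ub\|\b^{(3)}\|_{L^2_{sc}(S_{u,\ub'})}\,d\ub'\les\|\b^{(3)}\|_{L^2_{sc}(\cuv)}\les 1.
\]
For the second integral I would use the improved bounds~\eqref{improvedGab}, namely $\|\Gab\|_{L^\infty_{sc}(S)}\les\af$ and $\|\Gab^{(3)}\|_{L^2_{sc}(S)}\les\af$, combined with Proposition~\ref{Holder}, to get
\[
\|(\Gab\c\Gab)^{(3)}\|_{L^2_{sc}(S_{u,\ub'})}\les\frac{1}{|u|}\|\Gab\|_{L^\infty_{sc}(S_{u,\ub'})}\|\Gab^{(3)}\|_{L^2_{sc}(S_{u,\ub'})}\les\frac{a}{|u|}\les 1
\]
in $V_*$ (since $|u|\geq a/4$ there). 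Hence $\|\dkb^i\eta\|_{L^2_{sc}(S_{u,\ub})}\les 1$ for $i\leq 3$.

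Finally, to upgrade from $\dkb^i\eta$ to $\eta^{(3)}=(\eta,\af\nabs\eta,a\nabs^2\eta,a^{3/2}\nabs^3\eta)$, I would use Proposition~\ref{ellipticLp} applied to the Hodge system formed by the torsion equation~\eqref{torsion} and $\sdivs\eta=-\mu+\K-|u|^{-2}$ coming from~\eqref{murenor}, since the mass aspect $\mu$ has already been controlled and $\K=\Kc+|u|^{-2}$ with $\Kc$ controlled via Proposition~\ref{estrb}; the $\ell=0$ mode of $\eta$ is controlled by $\|\eta\|_{L^2_{sc}(S)}\les\|\ze\|_{L^2_{sc}(S)}+\|\nabs\log\Om\|_{L^2_{sc}(S)}$ via~\eqref{nullidentities}. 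The only bookkeeping issue—and the likely main obstacle—is to make sure the product $\Gab\c\Gab$ is estimated with the \emph{improved} $\afd$--decay of~\eqref{improvedGab} rather than the a priori $\bo\af$ from~\eqref{Gabapriori}; without this improvement the bulk integral would only give $\bo$ rather than $1$ and the estimate would not close.
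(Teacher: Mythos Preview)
Your proposal is correct and follows the paper's proof almost verbatim through the transport equation, commutation, Cauchy--Schwarz on $\b^{(3)}$, and H\"older on $\Gab\c\Gab$ using the improved bounds~\eqref{improvedGab}. The only divergence is your final paragraph, which over-engineers the elliptic upgrade: the paper simply invokes Proposition~\ref{ellipticLp} to pass from $\dkb^i\eta$ to $(\af\nabs)^i\eta$, with no need for the torsion equation, the mass aspect $\mu$, or the identity $\eta=\ze+\nabs\log\Om$---the case $i=0$ of the transport estimate already gives $\|\eta\|_{L^2_{sc}(S)}\les 1$ directly, and since $\eta\in\sk_1$ is a $1$-form, item~(2) of Proposition~\ref{ellipticLp} controls both $\nabs\eta$ and $\eta$ from $\sld_1\eta$ without any separate ``$\ell=0$ mode'' argument.
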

\begin{proof}
    We have from Proposition \ref{null}
    \begin{align*}
        \nabs_4\eta=-\b+\Gab\c\Gab.
    \end{align*}
    Differentiating it by $\dkb^i$ and applying Proposition \ref{commutation}, we infer
    \begin{align*}
        \nabs_4(\dkb^i\eta)=\b^{(i)}+(\Gab\c\Gab)^{(i)}.
    \end{align*}
    Applying Lemma \ref{evolution}, we deduce for $i\leq 3$
    \begin{align*}
        \|\dkb^i\eta\|_{L^2_{sc}(S_{u,\ub})}&\les\int_{0}^\ub\|\b^{(3)}\|_{L^2_{sc}(S_{u,\ub'})}+\|(\Gab\c\Gab)^{(3)}\|_{L^2_{sc}(S_{u,\ub'})}d\ub'\\
        &\les\left(\int_{0}^\ub\|\b^{(3)}\|^2_{L^2_{sc}(S_{u,\ub'})}d\ub'\right)^\frac{1}{2}+\int_0^\ub \frac{1}{|u|}\|\Gab^{(3)}\|_{L^2_{sc}(S_{u,\ub'})}\|\Gab\|_{L^\infty_{sc}(S_{u,\ub'})} d\ub'\\
        &\les 1+\frac{a}{|u|}\\
        &\les 1,
    \end{align*}
    where we used Proposition \ref{estrb} and \eqref{improvedGab} at the third step. Combining with Proposition \ref{ellipticLp}, this concludes the proof of Proposition \ref{esteta}.
\end{proof}
\subsubsection{Estimate for \texorpdfstring{$\trcht$}{}}
\begin{prop}\label{esttrcht}
    We have the following estimate:
    \begin{align*}
        \frac{|u|}{a}\|\trcht^{(3)}\|_{L^2_{sc}(S_{u,\ub})}\les 1.
    \end{align*}
\end{prop}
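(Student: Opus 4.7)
The plan is to treat $\trcht$ as the solution of a transport equation along $e_4$ with vanishing initial data on $\Hb_0$ and to close the top-order angular estimate using the improved a priori bound \eqref{improvedGab}, which delivers the required gain from $\bo$ to $1$.

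First, I would invoke the schematic null structure equation from Proposition \ref{null}:
\begin{equation*}
\nabs_4\trcht=\Gab\c\Gab.
\end{equation*}
Differentiating by $\dkb^i$ for $i\leq 3$ and applying the commutator identity from Proposition \ref{commutation}, the emerging commutator terms have the form $\Gab\c(\dkb^{<i}\trcht)+\afd\Gab^{(1)}\c(\dkb^{<i}\trcht)$ and, because $\trcht\in\frac{a}{|u|}\Gag\subset\Gab$, they are absorbed into the schematic right-hand side. One obtains
\begin{equation*}
\nabs_4(\dkb^i\trcht)=(\Gab\c\Gab)^{(i)},\qquad i\leq 3.
\end{equation*}

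Second, since the initial data along $\ub=0$ is Minkowskian, $\trcht|_{\Hb_0}=0$ and all its angular derivatives vanish there as well. Applying Lemma \ref{evolution} gives
\begin{equation*}
\|\dkb^i\trcht\|_{L^2_{sc}(S_{u,\ub})}\les\int_0^{\ub}\|(\Gab\c\Gab)^{(3)}\|_{L^2_{sc}(S_{u,\ub'})}\,d\ub'.
\end{equation*}
Next, I would use Proposition \ref{Holder} together with the improved bounds \eqref{improvedGab} to estimate
\begin{equation*}
\|(\Gab\c\Gab)^{(3)}\|_{L^2_{sc}(S_{u,\ub'})}\les\frac{1}{|u|}\|\Gab\|_{L^\infty_{sc}(S_{u,\ub'})}\|\Gab^{(3)}\|_{L^2_{sc}(S_{u,\ub'})}\les\frac{1}{|u|}\cdot\af\cdot\af=\frac{a}{|u|}.
\end{equation*}
Integrating on $\ub'\in[0,1]$ then gives $\|\trcht^{(3)}\|_{L^2_{sc}(S_{u,\ub})}\les a/|u|$, and multiplying by $|u|/a$ yields the claim.

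The only subtlety — and the real point of the argument — is the use of the improved estimate \eqref{improvedGab} rather than the bootstrap bound \eqref{Gabapriori}. With the bootstrap bound $\|\Gab\|_{L^\infty_{sc}}\les\bo\af$ one would only recover $\les\bo a/|u|$, which is insufficient to close the bootstrap at the final constant $1$. The improved estimate is precisely what the curvature estimates of Section \ref{secR} deliver via Proposition \ref{esthchhchb} and Theorems \ref{M1}--\ref{M2}, so the argument is essentially a bookkeeping exercise once those bounds are available.
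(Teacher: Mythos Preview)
Your proof is correct and follows essentially the same approach as the paper: transport equation for $\trcht$ along $e_4$, commutation with $\dkb^i$, integration via Lemma \ref{evolution}, and the H\"older-type bound from Proposition \ref{Holder}. Your explicit appeal to the improved estimate \eqref{improvedGab} is in fact more precise than the paper's citation of Proposition \ref{esthchhchb} alone, since after commutation the right-hand side $(\Gab\c\Gab)^{(3)}$ involves all members of $\Gab$, not only $\hch,\a,\hchb$; the only minor omission is the routine use of Proposition \ref{ellipticLp} at the end to pass from the $\dkb^i$ norms to the full $\trcht^{(3)}$ norm.
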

\begin{proof}
    We have from Proposition \ref{null}
    \begin{align*}
        \nabs_4\trcht=\Gab\c\Gab.
    \end{align*}
    Differentiating it by $\dkb^i$ and applying Proposition \ref{commutation}, we infer
    \begin{align*}
        \nabs_4(\dkb^i\trcht)=(\Gab\c\Gab)^{(i)}.
    \end{align*}
    Applying Lemma \ref{evolution}, we deduce for $i\leq 3$
    \begin{align*}
        \|\dkb^i\trcht\|_{L^2_{sc}(S_{u,\ub})}&\les\int_{0}^\ub\|(\Gab\c\Gab)^{(3)}\|_{L^2_{sc}(S_{u,\ub'})}d\ub'\\
        &\les\int_0^\ub \frac{1}{|u|}\|\Gab^{(3)}\|_{L^2_{sc}(S_{u,\ub'})}\|\Gab\|_{L^\infty_{sc}(S_{u,\ub'})} d\ub'\\
        &\les \frac{a}{|u|},
    \end{align*}
    where we used Proposition \ref{esthchhchb} at the last step. Combining with Proposition \ref{ellipticLp}, this concludes the proof of Proposition \ref{esttrcht}.
\end{proof}
\subsubsection{Estimate for \texorpdfstring{$\trchbt$}{}}
\begin{prop}\label{esttrchbc}
    We have the following estimate:
    \begin{align*}
        \left\|\trchbt^{(3)}\right\|_{L^2_{sc}(S_{u,\ub})}\les1.
    \end{align*}
\end{prop}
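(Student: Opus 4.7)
The plan is to treat the equation
\[
\nabs_3 \trchbt + \trchb\,\trchbt = \frac{|u|^2}{a^2}\,\Gab \cdot \Gab
\]
from Proposition \ref{null} as an $e_3$-transport equation for $\trchbt$ and to run the same scheme used in Propositions \ref{estom}--\ref{esttrcht}: commute with $\dkb^i$, apply Lemma \ref{3evolution}, bound the bulk via the improved $\Gab$ estimates from \eqref{improvedGab}, and recover pure $\nabs$-derivative control through Proposition \ref{ellipticLp}.

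First, I would differentiate the transport equation by $\dkb^i$ for $i\le 3$ and invoke the commutator formula of Proposition \ref{commutation}. Since $[\nabs_3,\dkb]$ produces an extra $\frac{1}{2}\trchb\,\dkb$ term modulo $\Gab$-controlled errors, the equation schematically becomes
\[
\nabs_3(\dkb^i \trchbt) + \left(1 + \tfrac{i}{2}\right)\trchb\,(\dkb^i \trchbt) = \frac{|u|^2}{a^2}\,(\Gab\cdot\Gab)^{(i)},
\]
where the additional commutator contributions are absorbed into the same schematic RHS, exactly as in Propositions \ref{estom} and \ref{esttrcht}.

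Next I would apply Lemma \ref{3evolution} with $\la_0 = 1 + i/2$, so $\la_1 = 1 + i$. Since $s_2(\dkb^i\trchbt) = 1 + i/2$ and $s_2(F) = 2 + i/2$ for the RHS $F$, one computes $\la_1 - 2s_2(\dkb^i\trchbt) = -1$ and $\la_1 - 2s_2(F) = -3$, yielding
\[
|u|^{-1}\|\dkb^i \trchbt\|_{L^2_{sc}(S_{u,\ub})} \lesssim |u_\infty|^{-1}\|\dkb^i \trchbt\|_{L^2_{sc}(S_{u_\infty,\ub})} + a\int_{u_\infty}^u |u'|^{-3}\|F\|_{L^2_{sc}(S_{u',\ub})}\,du'.
\]
The initial term is bounded using $\II_{(0)}\lesssim 1$. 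For the bulk, Proposition \ref{Holder} combined with the improved bounds \eqref{improvedGab} gives
\[
\|(\Gab\cdot\Gab)^{(3)}\|_{L^2_{sc}(S_{u',\ub})} \lesssim \frac{1}{|u'|}\|\Gab\|_{L^\infty_{sc}(S_{u',\ub})}\|\Gab^{(3)}\|_{L^2_{sc}(S_{u',\ub})} \lesssim \frac{a}{|u'|},
\]
so $\|F\|_{L^2_{sc}(S_{u',\ub})} \lesssim |u'|/a$ and the bulk integral is controlled by $\int_{u_\infty}^u |u'|^{-2}\,du' \lesssim |u|^{-1}$. Multiplying through by $|u|$ delivers $\|\dkb^i\trchbt\|_{L^2_{sc}(S_{u,\ub})}\lesssim 1$ for $i\le 3$, and Proposition \ref{ellipticLp} then upgrades this to full control of $\trchbt^{(3)}$.

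This argument is essentially mechanical; there is no substantive obstacle beyond bookkeeping. The only subtle point worth checking is that the renormalization $\trchbt := \trchb + 2/(\Om|u|)$ is exactly what turns the borderline $-(1/|u|^2)$ contribution from the naive Raychaudhuri equation into the $\frac{|u|^2}{a^2}\Gab\c\Gab$ form of the RHS --- without this cancellation, the bulk integral would fail to close at the rate $\lesssim 1$. Once the RHS has the stated schematic structure, the weights on both sides of the Lemma \ref{3evolution} estimate combine favorably and the proof proceeds in full parallel with Propositions \ref{estom} and \ref{esttrcht}.
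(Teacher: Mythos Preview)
Your proposal is correct and follows essentially the same route as the paper: commute the $\nabs_3$-transport equation for $\trchbt$ from Proposition \ref{null} with $\dkb^i$, apply Lemma \ref{3evolution} with $\la_0=1+\tfrac{i}{2}$, bound the bulk $\frac{|u|^2}{a^2}(\Gab\c\Gab)^{(i)}$ via H\"older and the improved $\Gab$ estimates, and finish with Proposition \ref{ellipticLp}. The only cosmetic difference is that the paper cites Proposition \ref{esthchhchb} for the improved $\Gab$ control whereas you invoke \eqref{improvedGab}; both are available at this stage and give the needed $\|(\Gab\c\Gab)^{(3)}\|_{L^2_{sc}}\lesssim a/|u'|$.
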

\begin{proof}
    We have from Proposition \ref{null}
    \begin{align*}
        \nabs_3(\trchbt)+\frac{1}{2}\trchb\,\trchbt=\frac{|u|^2}{a^2}\Gab\c\Gab.
    \end{align*}
    Differentiating it by $\dkb^i$ and applying Proposition \ref{commutation}, we infer
    \begin{align*}
        \nabs_3(\dkb^i\trchbc)+\frac{2+i}{2}\trchb(\dkb^i\trchbc)=\frac{|u|^2}{a^2}(\Gab\c\Gab)^{(i)}.
    \end{align*}
    Applying Lemma \ref{3evolution} and Proposition \ref{esthchhchb}, we deduce for $i\leq 3$
    \begin{align*}
        |u|^{-1}\|\dkb^i\trchbt\|_{L^2_{sc}(S_{u,\ub})}&\les|u_\infty|^{-1}\|\dkb^i\trchbt\|_{L^2_{sc}(S_{u_\infty,\ub})}+\int_{u_\infty}^u\frac{1}{a|u'|}\|(\Gab\c\Gab)^{(3)}\|_{L^2_{sc}(S_{u',\ub})}du'\\
        &\les|u_\infty|^{-1}+\int_{u_\infty}^u\frac{1}{|u'|^2}du'\\
        &\les |u|^{-1}.
    \end{align*}
    Combining with Proposition \ref{ellipticLp}, this concludes the proof of Proposition \ref{esttrchbc}.
\end{proof}
\subsubsection{Estimate for \texorpdfstring{$\etab$}{}}
\begin{prop}\label{estetab}
    We have the following estimate:
    \begin{align*}
        \|\etab^{(3)}\|_{L^2_{sc}(S_{u,\ub})}\les 1.
    \end{align*}
\end{prop}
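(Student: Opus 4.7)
The plan is to mimic the structure of Propositions \ref{esteta}--\ref{esttrchbc}, but now integrating in the $e_3$ direction since $\etab$ admits a natural $\nabs_3$ transport equation rather than a $\nabs_4$ one. More precisely, I start from the schematic equation in Proposition \ref{null},
\begin{equation*}
\nabs_3\etab+\frac{1}{2}\trchb\,\etab=\bb+\frac{1}{2}\trchb\,\eta+\frac{|u|}{a}\Gab\c\Gab,
\end{equation*}
and commute by $\dkb^i$ using Proposition \ref{commutation}. Keeping only the principal contributions and absorbing all commutator error into the $\frac{|u|}{a}(\Gab\c\Gab)^{(i)}$ term (as in the earlier propositions), I obtain for $i\leq 3$
\begin{equation*}
\nabs_3(\dkb^i\etab)+\frac{1+i}{2}\trchb(\dkb^i\etab)=\bb^{(i)}+(\trchb\,\eta)^{(i)}+\frac{|u|}{a}(\Gab\c\Gab)^{(i)}.
\end{equation*}

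Next, I apply Lemma \ref{3evolution} with $\la_0=\frac{1+i}{2}$, so that $\la_1-2s_2(\dkb^i\etab)=-1$ and $\la_1-2s_2(F)=-3$, which yields
\begin{equation*}
|u|^{-1}\|\dkb^i\etab\|_{L^2_{sc}(S_{u,\ub})}\les |u_\infty|^{-1}\|\dkb^i\etab\|_{L^2_{sc}(S_{u_\infty,\ub})}+a\int_{u_\infty}^u\frac{1}{|u'|^3}\|F\|_{L^2_{sc}(S_{u',\ub})}du'.
\end{equation*}
The initial piece is bounded by $|u_\infty|^{-1}$ from $\II_{(0)}\les 1$. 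The three source contributions are then handled as follows: for the $\bb^{(i)}$ term, Cauchy--Schwarz pulls the weight into the shape $\left(\int\frac{a}{|u'|^2}\|\bb^{(3)}\|^2_{L^2_{sc}}\right)^{1/2}\left(\int\frac{a}{|u'|^4}du'\right)^{1/2}$, which is $\les \af/|u|^{3/2}$ via Proposition \ref{estrb}; the $(\trchb\,\eta)^{(i)}$ term is controlled using $\|\trchb\,\eta^{(3)}\|_{L^2_{sc}(S)}\les\frac{|u|}{a}$ from Proposition \ref{computationGagGab} together with Proposition \ref{esteta}, producing an integrand that is $O(|u'|^{-2})$ and hence a bound $\les |u|^{-1}$; finally, the $\frac{|u'|}{a}(\Gab\c\Gab)^{(i)}$ term is handled by the improved Sobolev bound \eqref{improvedGab}, giving $\|(\Gab\c\Gab)^{(3)}\|_{L^2_{sc}(S)}\les a/|u'|$ and hence a contribution $\les a|u|^{-2}\les |u|^{-1}$ in the region $V_*$.

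Combining all of the above gives $\|\dkb^i\etab\|_{L^2_{sc}(S_{u,\ub})}\les 1$ for $i\leq 3$, and then Proposition \ref{ellipticLp} applied to the Hodge system satisfied by $\etab$ (via $\sdivs\etab$ coming from $\mub$ and $\curls\etab=-\sic$ from \eqref{torsion}) promotes these $\dkb^i$ bounds to the desired $(\af\nabs)^i$ bounds, completing the proof. The only mildly delicate point I foresee is verifying that the source term $\frac{1}{2}\trchb\,\eta$ does not create a logarithmic loss: its natural integrability is borderline, but the extra factor $|u'|^{-2}$ inherited from the $\la_1-2s_2(F)=-3$ weight together with the gain $\|\eta^{(3)}\|_{L^2_{sc}(S)}\les 1$ from Proposition \ref{esteta} makes the bulk integral converge with room to spare, so no further renormalization of $\etab$ is needed.
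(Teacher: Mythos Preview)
Your proof is correct and follows essentially the same approach as the paper: commute the $\nabs_3\etab$ transport equation by $\dkb^i$, apply Lemma \ref{3evolution} with the weights you identified, bound the $\bb^{(i)}$ bulk via Cauchy--Schwarz and Proposition \ref{estrb}, the $\trchb\,\eta$ piece via $\|\trchb\|_{L^\infty_{sc}}\les|u|^2/a$ together with Proposition \ref{esteta}, and the quadratic error via \eqref{improvedGab}. One small point: your invocation of Proposition \ref{ellipticLp} at the end does not need the specific Hodge system through $\mub$ and $\sic$ (indeed $\mub$ has not yet been estimated at this stage); the role of Proposition \ref{ellipticLp} here is simply to pass between $\dkb^i$ and $(\af\nabs)^i$ norms, which it provides for general tensors.
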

\begin{proof}
    We have from Proposition \ref{null}
    \begin{align*}
        \nabs_3\etab+\frac{1}{2}\trchb\,\etab=\bb+\frac{1}{2}\trchb\,\eta+\frac{|u|}{a}\Gab\c\Gab.
    \end{align*}
    Differentiating it by $\dkb^i$ and applying Proposition \ref{commutation}, we infer
    \begin{align*}
        \nabs_3(\dkb^i\etab)+\frac{1+i}{2}\trchb(\dkb^i\etab)=\bb^{(i)}+\trchb(\dkb^i\eta)+\frac{|u|}{a}(\Gab\c\Gab)^{(i)}.
    \end{align*}
    Applying Lemma \ref{3evolution} and Proposition \ref{Holder}, we deduce for $i\leq 3$
    \begin{align*}
         |u|^{-1}\|\dkb^i\etab\|_{L^2_{sc}(S_{u,\ub})}&\les|u_\infty|^{-1}\|\dkb^i\etab\|_{L^2_{sc}(S_{u_\infty,\ub})}+\int_{u_\infty}^u\frac{a}{|u'|^3}\|\bb^{(3)}\|_{L^2_{sc}(S_{u',\ub})}du'\\
         &+\int_{u_\infty}^u\frac{a}{|u'|^4}\|\trchb\|_{L^\infty_{sc}(S_{u',\ub})}\|\eta^{(3)}\|_{L^2_{sc}(S_{u',\ub})}+\frac{1}{|u'|^2}\|(\Gab\c\Gab)^{(3)}\|_{L^2_{sc}(S_{u',\ub})}du'\\
         &\les\frac{1}{|u_\infty|}+\left(\int_{u_\infty}^u\frac{a}{|u'|^2}\|\bb^{(3)}\|_{L^2_{sc}(S_{u',\ub})}^2du'\right)^\frac{1}{2}\left(\int_{u_\infty}^u\frac{a}{|u'|^4}du'\right)^\frac{1}{2}\\
         &+\int_{u_\infty}^u\frac{1}{|u'|^2}\|\eta^{(3)}\|_{L^2_{sc}(S_{u',\ub})}+\frac{1}{|u'|^3}\|\Gab^{(3)}\|_{L^2_{sc}(S_{u',\ub})}\|\Gab\|_{L^\infty_{sc}(S_{u',\ub})}du'\\
         &\les\frac{1}{|u|}+\frac{\af}{|u|^\frac{3}{2}}+\frac{1}{|u|}+\frac{a}{|u|^2}\\
         &\les\frac{1}{|u|},
    \end{align*}
    where we used Propositions \ref{estrb} and \ref{esteta} and \eqref{improvedGab}. Combining with Proposition \ref{ellipticLp}, this concludes the proof of Proposition \ref{estetab}.
\end{proof}
\subsubsection{Estimate for \texorpdfstring{$(\omb,\ombd)$}{}}
\begin{prop}\label{estomb}
We have the following estimate:
    \begin{align*}
        \|(\omb,\ombd)^{(3)}\|_{L^2_{sc}(S_{u,\ub})}\les 1.
    \end{align*}
\end{prop}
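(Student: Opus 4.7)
The plan is to mirror the proof of Proposition \ref{estom} for $(\om,\omd)$, with the roles of $e_3$ and $e_4$ interchanged. From Proposition \ref{null} together with the definition \eqref{dfomd}, the pair $(\omb,-\ombd)$ satisfies the schematic transport system
\begin{align*}
    \nabs_4(\omb,-\ombd) = -\frac{1}{2}(\K,\si) - \frac{1}{8}(\trch\trchb,0) + \frac{|u|}{a}\Gab\c\Gab.
\end{align*}
Commuting with $\dkb^i$ for $i\leq 3$ via Proposition \ref{commutation} preserves the structure, since the commutator $[\Om\nabs_4,\dkb]$ only produces additional $\Gab$--type errors that are absorbed into the last term, yielding
\begin{align*}
    \nabs_4(\dkb^i(\omb,-\ombd)) = (\K,\si)^{(i)} + (\trch\trchb)^{(i)} + \frac{|u|}{a}(\Gab\c\Gab)^{(i)}.
\end{align*}

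Next, I would apply Lemma \ref{evolution} along the outgoing cone $H_u$, using that $\omb$ vanishes on $\Hb_0$ by the Minkowskian initial data and $\ombd$ vanishes there by definition \eqref{dfomd}. This gives
\begin{align*}
    \|\dkb^i(\omb,\ombd)\|_{L^2_{sc}(S_{u,\ub})} \les \int_0^{\ub} \Big(\|(\K,\si)^{(3)}\|_{L^2_{sc}(S_{u,\ub'})} + \|(\trch\trchb)^{(3)}\|_{L^2_{sc}(S_{u,\ub'})} + \frac{|u|}{a}\|(\Gab\c\Gab)^{(3)}\|_{L^2_{sc}(S_{u,\ub'})}\Big)d\ub'.
\end{align*}
A Cauchy--Schwarz inequality in $\ub'$, together with $\ub\leq 1$, converts the first term to $\|(\K,\si)^{(3)}\|_{L^2_{sc}(\cuv)}$, which is controlled via Proposition \ref{estrb} after writing $\K=\Kc+|u|^{-2}$ and $\si=\sic+\tfrac{1}{2}\hch\wedge\hchb$ and invoking Proposition \ref{esthchhchb} for the correction. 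The product $(\trch\trchb)^{(3)}$ factor is treated exactly as in the proof of Proposition \ref{estom}: using Proposition \ref{Holder} together with the $L^\infty_{sc}$ bound on $\trch$ and the $L^2_{sc}$ bound on $\trchbt^{(3)}$ from Proposition \ref{esttrchbc} produces a favorable $|u|/a$ weight. The nonlinear term is bounded using the improved estimate \eqref{improvedGab}.

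The main subtle point, as in Proposition \ref{estom}, is to verify that the $|u|$ weights are consistent with $s_2(\omb)=s_2(\ombd)=1$ in the definition \eqref{dfsc} of the scale--invariant norm, so that the $\ub'$--integration produces the correct overall power of $|u|$ and $a$. Since we integrate only over $\ub'\in[0,1]$ rather than over a $u$--interval running all the way to $u_\infty$, the potential accumulation of the $a/|u|$ factor seen in the proof of Proposition \ref{estom} does not reappear here; the estimate is in fact somewhat simpler. Finally, a Poincar\'e--type estimate via Proposition \ref{ellipticLp} converts the bounds on the weighted angular derivatives $\dkb^i(\omb,\ombd)$ to the desired control of $(\omb,\ombd)^{(3)}$ in $L^2_{sc}(S_{u,\ub})$, completing the proof.
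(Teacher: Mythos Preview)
Your outline misses the key algebraic cancellation that the paper's proof hinges on. You propose to handle the terms $-\tfrac12\K$ and $-\tfrac18\trch\trchb$ separately, writing $\K=\Kc+|u|^{-2}$ and bounding $(\trch\trchb)^{(3)}$ ``exactly as in the proof of Proposition~\ref{estom}''. But each of these has a nonvanishing principal part of size $|u|^{-2}$, whose scale-invariant $L^2_{sc}(S_{u,\ub})$ norm (signature $1$) is $\simeq |u|/a$. In Proposition~\ref{estom} this was harmless because the subsequent $u$--integration against the weight $a|u'|^{-3}$ from Lemma~\ref{3evolution} converts it into $\int |u'|^{-2}\,du'\les |u|^{-1}$. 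Here you integrate in $\ub'$ over an interval of length $\leq 1$ with no extra weight, so the contribution stays $\simeq |u|/a$, which is \emph{not} uniformly bounded as $|u|\to|u_\infty|$. Your remark that this produces ``a favorable $|u|/a$ weight'' is exactly backwards: that weight is the obstruction.

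The paper resolves this by combining the two terms before estimating: it rewrites
\[
\K+\tfrac14\trch\trchb=\Kc+\Wb+\Gag\c\Gag,\qquad \Wb:=\frac{\Om-1}{\Om|u|^2}-\frac{1}{2\Om|u|}\trcht,
\]
so that the $O(|u|^{-2})$ principal parts cancel and the remainder $\Wb$ satisfies $\|\Wb^{(3)}\|_{L^2_{sc}(S_{u,\ub})}\les 1$ (using $|\Om-1|\les |u|^{-1}$ and Proposition~\ref{esttrcht}). After this renormalization the source becomes $(\Kc,\si)^{(i)}+\Wb^{(i)}+\frac{|u|}{a}(\Gab\c\Gab)^{(i)}$, each of which integrates to $O(1)$ over $\ub'\in[0,1]$ via Proposition~\ref{estrb} and \eqref{improvedGab}. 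Equivalently, one can invoke the Gauss equation \eqref{gauss} to replace $\K+\tfrac14\trch\trchb$ by $-\rho+\tfrac12\hch\c\hchb+\tfrac12\tr\Ssl$ and use $\|\rho^{(3)}\|_{L^2_{sc}(\cuv)}\les 1$; either way, the point is that the two ``large'' terms must be paired, not estimated separately.
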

\begin{proof}
    We have from Proposition \ref{null} and \eqref{dfomd}
    \begin{align}\label{nab4ombinproof}
        \nabs_4(\omb,-\ombd)=-\frac{1}{2}(\K,\si)-\frac{1}{8}(\trch\trchb,0)+\frac{|u|}{a}\Gab\c\Gab.
    \end{align}
    Note that we have
    \begin{align*}
        \K+\frac{1}{4}\trch\trchb&=\K+\frac{1}{4}\left(\trcht+\frac{2}{|u|}\right)\left(\trchbt-\frac{2}{\Om|u|}\right)\\
        &=\K-\frac{1}{\Om|u|^2}+\frac{1}{4}\trch\,\trchbt-\frac{1}{2\Om|u|}\trcht\\
        &=\Kc+\Wb+\Gag\c\Gag,
    \end{align*}
    where
    \begin{align}\label{Wbdf}
        \Wb:=\frac{\Om-1}{\Om|u|^2}-\frac{1}{2\Om|u|}\trcht.
    \end{align}
    Moreover, we have $s_2(\Wb)=1$. Thus, we have from Proposition \ref{Holder}\footnote{Note that here the signature $s_2$ of $|u|^{-2}$ and $|u|^{-1}$ are $1$, which are consistent with \eqref{Wbdf}.}
    \begin{align}
    \begin{split}\label{Wbest}
        \left\|\Wb^{(3)}\right\|_{L^2_{sc}(S_{u,\ub})}&\les\left\|\frac{(\log\Om)^{(3)}}{|u|^2}\right\|_{L^2_{sc}(S_{u,\ub})}+\left\|\frac{(\trcht)^{(3)}}{|u|}\right\|_{L^2_{sc}(S_{u,\ub})}\\
        &\les\frac{1}{|u|}\left\||u|^{-2}\right\|_{L^\infty_{sc}(S_{u,\ub})}\left\|(\log\Om)^{(3)}\right\|_{L^2_{sc}(S_{u,\ub})}\\
        &+\frac{1}{|u|}\left\||u|^{-1}\right\|_{L^\infty_{sc}(S_{u,\ub})}\left\|\trcht^{(3)}\right\|_{L^2_{sc}(S_{u,\ub})}\\
        &\les a^{-1}\left\|(\log\Om)^{(3)}\right\|_{L^2_{sc}(S_{u,\ub})}+\frac{|u|}{a}\left\|\trcht^{(3)}\right\|_{L^2_{sc}(S_{u,\ub})}\\
        &\les 1,
    \end{split}
    \end{align}
    where we used \eqref{B1} and Proposition \ref{esttrcht} at the last step. Hence, \eqref{nab4ombinproof} can be written in the following form:
    \begin{align*}
        \nabs_4(\omb,-\ombd)=-\frac{1}{2}(\Kc,\si)-\frac{1}{2}(\Wb,0)+\frac{|u|}{a}\Gab\c\Gab.
    \end{align*}
    Differentiating it by $\dkb^i$ and applying Proposition \ref{commutation}, we deduce
    \begin{align*}
        \nabs_4(\dkb^i(\omb,-\ombd))=(\Kc,\si)^{(i)}+\Wb^{(i)}+\frac{|u|}{a}(\Gab\c\Gab)^{(i)}.
    \end{align*}
    Applying Lemma \ref{evolution} and Proposition \ref{Holder}, we infer for $i\leq 3$
    \begin{align*}
        &\quad\;\,\|\dkb^i(\omb,\ombd)\|_{L^2_{sc}(S_{u,\ub})}\\
        &\les\int_0^\ub\|\Kc^{(3)}\|_{L^2_{sc}(S_{u,\ub'})}+\|\Wb^{(3)}\|_{L^2_{sc}(S_{u,\ub'})}+\frac{|u|}{a}\|(\Gab\c\Gab)^{(3)}\|_{L^2_{sc}(S_{u,\ub'})}d\ub'\\
        &\les\|\Kc^{(3)}\|_{L^2_{sc}(\cuv)}+\int_0^\ub\|\Wb^{(3)}\|_{L^2_{sc}(S_{u,\ub'})}+a^{-1}\|\Gab^{(3)}\|_{L^2_{sc}(S_{u,\ub'})}\|\Gab\|_{L^\infty_{sc}(S_{u,\ub'})}d\ub'\\
        &\les 1,
    \end{align*}
    where we used Proposition \ref{estrb}, \eqref{Wbest} and \eqref{improvedGab} at the last step.
\end{proof}
\subsubsection{Estimate for null lapse \texorpdfstring{$\log\Om$}{}}
\begin{prop}\label{estOm}
We have the following estimate:
    \begin{align*}
        \left\|\log\Om\right\|_{L^2_{sc}(S_{u,\ub})}\les 1.
    \end{align*}
\end{prop}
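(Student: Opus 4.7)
The plan is to exploit the null structure identity $\om = -\frac{1}{2}\nabs_4(\log\Om)$ from \eqref{nullidentities}, rewritten as the transport equation
\begin{equation*}
\nabs_4(\log\Om) = -2\om,
\end{equation*}
and integrate it along the $e_4$ direction, starting from the incoming null cone $\Hb_0$ on which the initial data is Minkowskian. In particular, under the gauge convention associated to Minkowskian initial data on $\Hb_0$, we have $\log\Om\equiv 0$ along $S_{u,0}$, so the initial data contribution vanishes.

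Since $s_2(\Om)=0$ (see the signature table in Section \ref{secsignatures}) and $s_2(\om)=0$, the scale-invariant $L^2_{sc}(S_{u,\ub})$ norm for $\log\Om$ coincides with the ordinary $L^2(S_{u,\ub})$ norm, so no weight bookkeeping is required. Applying Lemma \ref{evolution} to the transport equation above yields
\begin{equation*}
\|\log\Om\|_{L^2_{sc}(S_{u,\ub})} \;\les\; \|\log\Om\|_{L^2_{sc}(S_{u,0})} + \int_0^{\ub} \|\nabs_4(\log\Om)\|_{L^2_{sc}(S_{u,\ub'})}\,d\ub' \;\les\; \int_0^{\ub}\|\om\|_{L^2_{sc}(S_{u,\ub'})}\,d\ub'.
\end{equation*}

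The final step is to invoke Proposition \ref{estom}, which gives $\|\om\|_{L^2_{sc}(S_{u,\ub'})}\les 1$ uniformly, and use $\ub\leq 1$ in $V_*$ to conclude $\|\log\Om\|_{L^2_{sc}(S_{u,\ub})}\les 1$. There is no real obstacle here: the estimate is a direct consequence of the transport equation together with the already-established estimate for $\om$ and the triviality of the initial data on $\Hb_0$. The only point worth emphasizing is that, had we needed higher-order derivatives $\|(\log\Om)^{(i)}\|_{L^2_{sc}}$, we would commute with $\dkb^i$ via Proposition \ref{commutation} and pick up additional $\Gab\c\Gab$-type terms, but all such terms are already under control through \eqref{improvedGab} and Proposition \ref{estom}.
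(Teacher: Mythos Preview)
Your proposal is correct and follows exactly the same approach as the paper's proof: use the transport equation $\nabs_4(\log\Om)=-2\om$ from \eqref{nullidentities}, apply Lemma \ref{evolution} with vanishing initial data on $\Hb_0$, and invoke Proposition \ref{estom} to bound the $\om$ integral. The paper's argument is identical, just more tersely stated.
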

\begin{proof}
    We have from \eqref{nullidentities}
    \begin{align*}
        \nabs_4\log\Om=-2\om.
    \end{align*}
    Applying Lemma \ref{evolution} and Proposition \ref{estom}, we infer
    \begin{align*}
        \|\log\Om\|_{L^2_{sc}(S_{u,\ub})}\les \int_{0}^\ub \|\om\|_{L^2_{sc}(S_{u,\ub'})}d\ub'\les1.
    \end{align*}
    This concludes the proof of Proposition \ref{estOm}.
\end{proof}
\subsubsection{Estimate for null shift \texorpdfstring{$\bbb$}{}}
\begin{prop}\label{estbbb}
We have the following estimate:
    \begin{align*}
        \left\|\bbb\right\|_{L^2_{sc}(S_{u,\ub})}\les 1.
    \end{align*}
\end{prop}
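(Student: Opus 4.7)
The plan is to derive a transport equation for $\bbb$ in the $e_4$ direction, then integrate from $\Hb_0$ where the shift vanishes by the Minkowskian initial data assumption. The key input is the commutator $[N,\underline{N}]$, which in the coordinate system of \eqref{metricg} simply reads $[N,\underline{N}]=[\pa_\ub,\pa_u+\bbb]=(\pa_\ub\bbb^A)\pa_{x^A}$, while on the other hand a direct computation using the Ricci formulae \eqref{ricciformulas} and the null identities \eqref{nullidentities} gives
\begin{align*}
[N,\underline{N}]=\Om^2[e_4,e_3]+\Om e_4(\Om)e_3-\Om e_3(\Om)e_4=2\Om^2(\etab-\eta)^B e_B=-4\Om^2\ze^B e_B.
\end{align*}
Expressing $\pa_{x^A}$ in terms of the horizontal frame $e_A$ and noting $\pa_\ub=\Om e_4$, this yields schematically
\begin{align*}
\nabs_4\bbb=-4\Om\,\ze+\Gag\c\bbb,
\end{align*}
where the last term absorbs the corrections arising from projecting $\D_4\bbb$ onto $S_{u,\ub}$ and from the frame matrix $e_A^B$ (all such corrections are of $\Gag$ type).

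With this equation in hand, the strategy mirrors Propositions \ref{esteta}--\ref{estOm}. Since $\ze=\frac{1}{2}(\eta-\etab)\in\Gag$ with $s_2(\ze)=\frac{1}{2}$, and since we have just established $\|\eta\|_{L^2_{sc}(S_{u,\ub})}\les 1$ and $\|\etab\|_{L^2_{sc}(S_{u,\ub})}\les 1$ in Propositions \ref{esteta} and \ref{estetab}, together with the bound $\|\log\Om\|_{L^2_{sc}(S_{u,\ub})}\les 1$ of Proposition \ref{estOm}, we deduce
\begin{align*}
\|-4\Om\,\ze+\Gag\c\bbb\|_{L^2_{sc}(S_{u,\ub})}\les 1+\frac{\bo}{|u|}\|\bbb\|_{L^2_{sc}(S_{u,\ub})}.
\end{align*}
The initial data assumption (Minkowski along $\ub=0$) gives $\bbb|_{\Hb_0}=0$, so applying Lemma \ref{evolution} to the transport equation yields
\begin{align*}
\|\bbb\|_{L^2_{sc}(S_{u,\ub})}\les\int_0^\ub\|\nabs_4\bbb\|_{L^2_{sc}(S_{u,\ub'})}d\ub'\les 1+\frac{\bo}{|u|}\int_0^\ub\|\bbb\|_{L^2_{sc}(S_{u,\ub'})}d\ub'.
\end{align*}
Since $\ub\in[0,1]$ and $|u|\ges\frac{a}{4}\gg 1$, a Gronwall argument in $\ub$ closes the estimate to produce $\|\bbb\|_{L^2_{sc}(S_{u,\ub})}\les 1$.

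The main subtlety is verifying that the coordinate-derived equation $\pa_\ub\bbb^A=-4\Om^2\ze^A$ translates correctly to an intrinsic $\nabs_4$ equation for the horizontal vector field $\bbb$, and that all the remainder terms produced by the frame conversion fall into the $\Gag\c\bbb$ category with manageable scale-invariant size. This is not conceptually hard but requires careful bookkeeping of signatures: $s_2(\bbb)=\frac{1}{2}$ (consistent with the decay $|\bbb|\les\af|u|^{-2}$ in the main theorem) and $s_2(\Om\ze)=\frac{1}{2}$, so the two sides of the equation have matching signatures, confirming that the $|u|^p$--weighted framework applies without modification.
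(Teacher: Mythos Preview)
Your approach is essentially the same as the paper's: derive the transport equation $\nabs_4\bbb^A=-4\Om\ze^A$ from the commutator $[N,\underline{N}]$, then integrate using Lemma \ref{evolution} from the Minkowskian data on $\Hb_0$ together with the already-proved bounds on $\ze$ (Propositions \ref{esteta}, \ref{estetab}). The paper simply states the equation and integrates in two lines; you have filled in the derivation and handled the frame-conversion remainder via Gronwall, which is fine.

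One small correction: the remainder term from passing between $(\pa_\ub\bbb^A)\pa_{x^A}$ and $\nabs_4\bbb$ is $\bbb^A\nabs_4\pa_{x^A}=\chi\c\bbb$, which contains $\hch\in\Gab$, not only $\Gag$. So the schematic error should read $\Gab\c\bbb$ rather than $\Gag\c\bbb$, and the corresponding bound is $\frac{\bo\af}{|u|}\|\bbb\|_{L^2_{sc}}$ rather than $\frac{\bo}{|u|}\|\bbb\|_{L^2_{sc}}$. This does not affect your conclusion, since $\frac{\bo\af}{|u|}\leq\frac{4\bo}{\af}\ll 1$ in $V_*$ and Gronwall still closes.
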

\begin{proof}
    Notice that we have
    \begin{align*}
        \nabs_4\bbb^A=-4\Om\ze^A.
    \end{align*}
    Applying Lemma \ref{evolution}, Propositions \ref{esteta} and \ref{estetab}  and \eqref{nullidentities}, we infer
    \begin{align*}
        \|\bbb\|_{L^2_{sc}(S_{u,\ub})}\les \int_{0}^\ub \|\ze\|_{L^2_{sc}(S_{u,\ub'})}d\ub'\les 1.
    \end{align*}
    This concludes the proof of Proposition \ref{estbbb}.
\end{proof}
\subsubsection{Estimate for areal radius \texorpdfstring{$r$}{}}
\begin{prop}\label{estr}
    We have the following estimate:
    \begin{align*}
        |r+u|\les 1.
    \end{align*}
\end{prop}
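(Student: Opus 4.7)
The plan is to derive a transport equation for $r+u$ along the outgoing null direction $e_4$, starting from the Minkowskian initial data on $\Hb_0$, and then close the estimate using Grönwall's inequality together with the already-established bounds on $\trcht$ and $\log\Om$.

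First, I would note that in the coordinate system $(u,\ub,x^A)$, the function $u$ is independent of $\ub$, hence $e_4(u)=\Om^{-1}\pa_{\ub}u=0$. By Lemma \ref{dint}, this gives
\begin{equation*}
e_4(r+u)=e_4(r)=\frac{\ov{\Om\trch}}{2\Om}\,r.
\end{equation*}
Using the decomposition $\trch=\tfrac{2}{|u|}+\trcht$ from \eqref{dfKcsic}, I would rewrite the right-hand side to isolate the linear term in $r+u$:
\begin{align*}
e_4(r+u)&=\frac{\ov{\Om}}{\Om}\cdot\frac{r}{|u|}+\frac{r}{2\Om}\ov{\Om\trcht}\\
&=\frac{\ov{\Om}}{\Om}+\frac{\ov{\Om}}{\Om}\cdot\frac{r+u}{|u|}+\frac{r}{2\Om}\ov{\Om\trcht}.
\end{align*}
Converting $e_4$ to the coordinate derivative $\pa_{\ub}=\Om\,e_4$ yields a scalar ODE in $\ub$ for $f:=r+u$ with initial data $f(u,0)=0$ (coming from the Minkowskian data on $\Hb_0$, which gives $r(u,0)=-u=|u|$).

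The second step is to bound the forcing terms. From Propositions \ref{estom}, \ref{esteta}, \ref{estetab}, \ref{estOm} together with the identity $\nabs\log\Om=\tfrac{1}{2}(\eta+\etab)$ and the Sobolev embedding in Proposition \ref{sobolev}, I would deduce $\|\log\Om\|_{L^{\infty}(S_{u,\ub})}\les|u|^{-1}$, so $\ov{\Om}/\Om=1+O(|u|^{-1})$. From Proposition \ref{esttrcht} and Proposition \ref{sobolev} I obtain $\|\trcht\|_{L^{\infty}(S_{u,\ub})}\les a/|u|^{2}$, and hence $|\ov{\Om\trcht}|\les a/|u|^{2}$.

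Integrating the transport equation from $\ub=0$ and using $\ub\in[0,1]$ together with $|u|\geq a/4$ in $V_{*}$, I get the scalar integral inequality
\begin{equation*}
|f(u,\ub)|\les \ub+\frac{a\ub}{|u|}\,\sup_{0\le\ub'\le\ub}\frac{r(u,\ub')}{|u|}+\frac{1}{|u|}\int_{0}^{\ub}|f(u,\ub')|\,d\ub'.
\end{equation*}
A standard bootstrap on $r/|u|\le 2$ (consistent with $f(u,0)=0$ and the smallness of the forcing) together with Grönwall's inequality then produces $|f(u,\ub)|\les 1$ uniformly in $V_{*}$. Since $|u|/(a/4)\ge 1$ makes the Grönwall exponential harmless, the bootstrap closes and no term poses a genuine obstacle; the only delicate point is to verify that the $L^{\infty}$ bounds on $\log\Om$ and $\trcht$ obtained from the previous propositions are sharp enough to make the linear-in-$f$ coefficient $\ov{\Om}/|u|$ small, which is exactly where the scale-critical hierarchy pays off.
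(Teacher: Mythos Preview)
Your proof is correct and follows the same underlying idea as the paper: derive $e_4(r+u)=\tfrac{\ov{\Om\trch}}{2\Om}r$ from Lemma~\ref{dint}, use the Minkowskian data $r(u,0)=|u|$ on $\Hb_0$, and integrate in $\ub$.

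The difference is only in economy. The paper does not decompose $\trch=\tfrac{2}{|u|}+\trcht$ or invoke Gr\"onwall; it simply observes that $|\trch|\les |u|^{-1}$, $\Om\simeq 1$, and $r\les |u|$ (the last by an implicit continuity argument, since $r=|u|$ initially and the derivative stays bounded), whence $|e_4(r+u)|\les 1$ directly, and then integrates over $\ub\in[0,1]$. Your decomposition isolating the linear-in-$(r+u)$ piece and the explicit bootstrap on $r/|u|\le 2$ are not wrong, but they are unnecessary: the coefficient $\ov\Om/(\Om|u|)$ in front of $r+u$ is $O(|u|^{-1})$, so over a unit $\ub$-interval Gr\"onwall contributes only a factor $e^{O(|u|^{-1})}\simeq 1$, which is why the paper can skip it entirely. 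Your route makes the continuity argument explicit; the paper's route is shorter.
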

\begin{proof}
    We have from Lemma \ref{dint}
    \begin{align*}
        |e_4(r+u)|=\left|\frac{\ov{\Om\trch}}{2\Om}r\right|\les 1.
    \end{align*}
    Recalling that $r=|u|$ holds on $\Hb_0$, we obtain
    \begin{align*}
        |r+u|\les \int_0^\ub |e_4(r+u)| d\ub'\les 1.
    \end{align*}
    This concludes the proof of Proposition \ref{estr}.
\end{proof}
\subsubsection{top-order estimate for \texorpdfstring{$\trcht$}{}}
\begin{prop}\label{fluxtrcht}
    We have the following estimate:
    \begin{align*}
        \left\|\trcht^{(4)}\right\|_{L^2_{sc}(S_{u,\ub})}\les\af.
    \end{align*}
\end{prop}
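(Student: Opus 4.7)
The plan is to differentiate the null structure equation for $\trcht$ four times in the angular direction and integrate along the outgoing null cone. From Proposition \ref{null} we have the schematic transport equation $\nabs_4\trcht = \Gab\c\Gab$, coming from $\nabs_4\trcht=-|\hch|^2-2\om\trch-\S_{44}$. Commuting with $\dkb^4$ using Proposition \ref{commutation}, I obtain schematically
\begin{align*}
    \nabs_4(\dkb^4\trcht)=(\Gab\c\Gab)^{(4)}+\Gab\c\trcht^{(3)}+\text{lower order commutator terms}.
\end{align*}
Since $\trcht^{(\leq 3)}$ is not part of $\Gab$, these commutator terms must be tracked explicitly, but they only ever involve at most three derivatives of $\trcht$ paired with factors of $\Gab^{(\leq 4)}$.

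Next, I invoke Lemma \ref{evolution} starting from $\Hb_0$, where the Minkowskian initial data force $\trcht|_{\Hb_0}=0$, to obtain
\begin{align*}
    \|\dkb^4\trcht\|_{L^2_{sc}(S_{u,\ub})}\les\int_0^\ub\|(\Gab\c\Gab)^{(4)}\|_{L^2_{sc}(S_{u,\ub'})}+\|\Gab\c\trcht^{(3)}\|_{L^2_{sc}(S_{u,\ub'})}\,d\ub'+\cdots.
\end{align*}
For the commutator contribution, Proposition \ref{Holder} together with the pointwise bound $\|\Gab\|_{L^\infty_{sc}(S)}\les\af$ from Lemma \ref{decayGagGab} and the lower-order bound $\|\trcht^{(3)}\|_{L^2_{sc}(S)}\les a/|u|$ from Proposition \ref{esttrcht} yield an integrand of size $\frac{\af}{|u|}\cdot\frac{a}{|u|}$, whose integral in $\ub'\in[0,1]$ is $\les a^{3/2}/|u|^2\les 1$. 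For the main nonlinear term, I apply Cauchy--Schwarz in $\ub'$ to pass from $S$-norms to an $L^2_{sc}(\cuv)$ flux, then use Proposition \ref{Holder}:
\begin{align*}
    \int_0^\ub\|(\Gab\c\Gab)^{(4)}\|_{L^2_{sc}(S_{u,\ub'})}\,d\ub'\les\|(\Gab\c\Gab)^{(4)}\|_{L^2_{sc}(\cuv)}\les\frac{\af}{|u|}\|\Gab^{(4)}\|_{L^2_{sc}(\cuv)}.
\end{align*}
The top-order flux $\|\Gab^{(4)}\|_{L^2_{sc}(\cuv)}$ is controlled by the bootstrap assumption $\mg\leq\bo$ together with Theorems \ref{M1} and \ref{M2}, giving $\les \bo\af$; combined with $a/|u|\les 1$, this contribution is $\les\bo\les\af$.

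The main technical point is the endpoint distribution of derivatives in $(\Gab\c\Gab)^{(4)}$, namely terms of the form $\Gab\c\Gab^{(4)}$, where only a flux bound on $\Gab^{(4)}$ is available; this is precisely what forces the Cauchy--Schwarz trick in $\ub'$ paired with the optimal pointwise bound $\|\Gab\|_{L^\infty_{sc}(S)}\les\af$. The intermediate distributions $\Gab^{(i)}\c\Gab^{(4-i)}$ with $1\leq i\leq 3$ are easier: one factor can be put in $L^\infty_{sc}$ via Sobolev (Proposition \ref{sobolev}) at an order $\leq 3$, where the improved pointwise bound from \eqref{improvedGab} applies, and the other factor remains in $L^2_{sc}(S)$ with bound $\les\bo\af$. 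Summing over distributions and combining with the lower-order control of $\trcht^{(\leq 3)}$ from Proposition \ref{esttrcht} concludes $\|\trcht^{(4)}\|_{L^2_{sc}(S_{u,\ub})}\les\af$.
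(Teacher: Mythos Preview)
Your overall strategy is the same as the paper's --- differentiate the Raychaudhuri equation for $\trcht$, integrate via Lemma~\ref{evolution}, and close the top-order quadratic term by Cauchy--Schwarz in $\ub'$ against an outgoing flux --- and your numerics are correct. But there is a genuine gap at the step where you assert that $\|\Gab^{(4)}\|_{L^2_{sc}(\cuv)}\les\bo\af$ ``by the bootstrap assumption $\mg\leq\bo$ together with Theorems~\ref{M1} and~\ref{M2}.'' This is false for the schematic class $\Gab$ as a whole: $\Gab$ contains $\frac{a}{|u|}\hchb$, $\bbF$, $\Psit_3$, $\Ub$, and (at the level of $\Gab^{(1)}$) $\af\bb$, whose top-order derivatives are controlled only on the \emph{incoming} cone $\ucuv$, not on $\cuv$. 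Lemma~\ref{decayGagGab} makes exactly this point by splitting $\Up=\Up_1+\Up_2$ with fluxes on different cones. So as written, the bound you invoke does not exist.

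What saves the argument --- and what the paper makes explicit --- is that the right-hand side $-|\hch|^2-2\om\trch-\S_{44}$, with $\S_{44}=2|\Psi_4|^2+2|\bF|^2$, involves only the specific quantities $\{\trcht,\hch,\om,\Psi_4,\bF\}$, all of which \emph{do} have top-order outgoing flux control via the norms $\mg_4$, $\mf_4$, $\pf_4$. The paper packages this observation by introducing the subclass
\[
\Ga_l:=\{\trcht,\hch,\om,\Psi_4,\bF\},\qquad \Ga_l^{(1)}:=(\af\nabs)^{\leq 1}\Ga_l\cup\{\b\},
\]
records $\|\Ga_l^{(4)}\|_{L^2_{sc}(\cuv)}\les\af\bo$ and $\|\Ga_l^{(1)}\|_{L^\infty_{sc}}\les\af\bo$, and writes $\nabs_4\trcht=\Ga_l\c\Ga_l$ rather than the coarser $\Gab\c\Gab$. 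Once you restrict to $\Ga_l$, your Cauchy--Schwarz-in-$\ub'$ argument goes through verbatim and yields $\frac{\af\bo}{|u|}\cdot\af\bo\les\frac{ab^{1/2}}{|u|}\les\af$. Your separate tracking of the commutator term $\Gab\c\trcht^{(3)}$ is fine but unnecessary: the paper absorbs it into $(\Gag\c\Gab)^{(i-1)}$, since $\frac{|u|}{a}\trcht\in\Gag$ and only $\leq 3$ derivatives of $\trcht$ appear there.
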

\begin{proof}
Denoting
\begin{align*}
    \Ga_l:=\{\trcht,\hch,\om,\Psi_4,\bF\},\qquad\quad \Ga_l^{(1)}:=(\af\nabs)^{\leq 1}\Ga_l\cup\{\b\},
\end{align*}
we have from Section \ref{secnorms} and \eqref{B1}
\begin{align*}
    \|\Ga_l^{(4)}\|_{L^2_{sc}(\cuv)}\les\af\bo,\qquad\quad \|\Ga_l^{(1)}\|_{L^\infty_{sc}(S_{u,\ub})}\les\af\bo.
\end{align*}
We have from Proposition \ref{nulles}
\begin{align*}
    \nabs_4\trcht=\Ga_l\c\Ga_l.
\end{align*}
Differentiating it by $\dkb^i$ and applying Proposition \ref{commutation}, we obtain
\begin{align*}
    \nabs_4(\dkb^i\trcht)=(\Gag\c\Gab)^{(i-1)}+(\Ga_l\c\Ga_l)^{(i)}.
\end{align*}
Applying Lemma \ref{evolution} and Proposition \ref{Holder}, we infer for $i\leq 4$
\begin{align*}
    \|\dkb^i\trcht\|_{L^2_{sc}(S_{u,\ub})}&\les\int_0^\ub\|(\Gag\c\Gab)^{(3)}\|_{L^2_{sc}(S_{u,\ub'})}+\|(\Ga_l\c\Ga_l)^{(4)}\|_{L^2_{sc}(S_{u,\ub'})}d\ub'\\
    &\les\int_0^\ub \frac{\af b^\frac{1}{2}}{|u|}+\frac{\af\bo}{|u|}\|\Ga_l^{(4)}\|_{L^2_{sc}(S_{u,\ub'})}d\ub'\\
    &\les\frac{\af b^\frac{1}{2}}{|u|}+\frac{\af\bo}{|u|}\left(\int_0^\ub\|\Ga_l^{(4)}\|^2_{L^2_{sc}(S_{u,\ub'})}d\ub'\right)^\frac{1}{2}\\
    &\les\frac{ab^\frac{1}{2}}{|u|}\\
    &\les\af.
\end{align*}
Combining with Proposition \ref{ellipticLp}, this concludes the proof of Proposition \ref{fluxtrcht}.
\end{proof}
\subsubsection{top-order estimate for \texorpdfstring{$\trchbc$}{}}
\begin{prop}\label{fluxtrchbc}
    We have the following estimate:
    \begin{align*}
        |u|^{-\frac{1}{2}}\left\|\trchbc^{(4)}\right\|_{L^2_{sc}(S_{u,\ub})}\les 1.
    \end{align*}
\end{prop}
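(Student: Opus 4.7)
The plan is to adapt the argument of Proposition \ref{esttrchbc} to the top order $i=4$, where a Cauchy--Schwarz in the $u'$-direction will generate the weaker $|u|^{-1/2}$ weight. First I would differentiate the transport equation $\nabs_3\trchbt+\trchb\,\trchbt=\frac{|u|^2}{a^2}\Gab\c\Gab$ from Proposition \ref{null} by $\dkb^i$ and commute via Proposition \ref{commutation} to obtain, for $i\leq 4$,
\[
\nabs_3(\dkb^i\trchbt)+\frac{2+i}{2}\trchb(\dkb^i\trchbt)=\frac{|u|^2}{a^2}(\Gab\c\Gab)^{(i)}.
\]
For $i=4$, Lemma \ref{3evolution} then yields
\[
|u|^{-1}\|\dkb^4\trchbt\|_{L^2_{sc}(S_{u,\ub})}\les|u_\infty|^{-1}\|\dkb^4\trchbt\|_{L^2_{sc}(S_{u_\infty,\ub})}+\frac{1}{a}\int_{u_\infty}^u\frac{\|(\Gab\c\Gab)^{(4)}\|_{L^2_{sc}(S_{u',\ub})}}{|u'|}du',
\]
and the initial contribution is $\les|u_\infty|^{-1/2}\le|u|^{-1/2}$ by the initial assumption of Theorem \ref{maintheorem}.

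For the bulk, Proposition \ref{Holder} combined with the pointwise bound $\|\Gab\|_{L^\infty_{sc}}\les\af$ from \eqref{improvedGab} gives the dominant estimate
\[
\|(\Gab\c\Gab)^{(4)}\|_{L^2_{sc}(S_{u',\ub})}\les\frac{\af}{|u'|}\|\Gab^{(4)}\|_{L^2_{sc}(S_{u',\ub})}+\lot,
\]
where the intermediate products $\Gab^{(j)}\c\Gab^{(4-j)}$, $1\le j\le 3$, are controlled by \eqref{improvedGab} together with Proposition \ref{sobolev}. Splitting $\Gab=\Ga_1+\Ga_2$ as in Lemma \ref{decayGagGab}, I would handle the $\Ga_2$ part, which has $\ucuv$-flux control, by Cauchy--Schwarz:
\[
\int_{u_\infty}^u\frac{\|\Ga_2^{(4)}\|_{L^2_{sc}(S_{u',\ub})}}{|u'|^2}du'\les\frac{1}{|u|^{1/2}}\cdot\frac{\|\Ga_2^{(4)}\|_{L^2_{sc}(\ucuv)}}{a^{1/2}}\les\frac{\bo}{|u|^{1/2}},
\]
contributing $\bo/(\af|u|^{1/2})\les|u|^{-1/2}$ by the constraint $b\ll a^{1/6}$ in \eqref{bootsize}. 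For the $\Ga_1$ part (members of $\Gab$ transported in $e_4$, such as $\hch$, $\bF$, $\Psi_4$), I would integrate the relevant $\nabs_4$ transport equations from $\Hb_0$, using the Minkowskian data together with the top-order curvature fluxes supplied by Theorems \ref{M1} and \ref{M2} (notably $\|\a^{(4)}\|_{L^2_{sc}(\cuv)}\les\af$ from Proposition \ref{estab}), to derive a pointwise bound $\|\Ga_1^{(4)}\|_{L^2_{sc}(S_{u',\ub})}\les\af$; this yields the even better contribution $\af/|u|$.

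Putting these together gives $|u|^{-1}\|\dkb^4\trchbt\|_{L^2_{sc}(S_{u,\ub})}\les|u|^{-1/2}$, and Proposition \ref{ellipticLp} combined with the lower-order estimate of Proposition \ref{esttrchbc} will then conclude the bound. The main obstacle is the mixed-transport nature of the schematic $\Gab$ at top order: the $e_3$-transported components (like $\hchb$, $\omb$, $\Psit_3$, $\bbF$) admit the direct Cauchy--Schwarz treatment against the $\ucuv$-flux, but the $e_4$-transported components do not, so one must separately integrate their $\nabs_4$ transport equations from $\Hb_0$, where the interplay between the Minkowskian initial data and the already-established top-order curvature fluxes from Proposition \ref{estab} is the essential ingredient.
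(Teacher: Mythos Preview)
Your overall strategy (commute the $\nabs_3$--transport equation by $\dkb^4$, apply Lemma~\ref{3evolution}, and use Cauchy--Schwarz in $u'$ to generate the $|u|^{-1/2}$ loss) is exactly what the paper does, and your treatment of the ``$\Ga_2$'' piece via the incoming flux is correct.

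The gap is in your $\Ga_1$ treatment. You invoke $\|\a^{(4)}\|_{L^2_{sc}(\cuv)}\les\af$ from Proposition~\ref{estab}, but that proposition only controls $\a^{(3)}$: in this hierarchy the Weyl components carry one fewer derivative than the Ricci coefficients (see the $\mw$--norms in Section~\ref{secRnorms}, which sum over $i\le 3$). Consequently, integrating $\nabs_4\hch=-\a+\Gab\c\Gab$ after commuting by $\dkb^4$ would require $\dkb^4\a$ in $L^2_{sc}(\cuv)$, which is not available, and the pointwise bound $\|\hch^{(4)}\|_{L^2_{sc}(S)}\les\af$ you need is never established in the paper (Proposition~\ref{fluxhch} gives only the flux).

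The fix, which is what the paper does, is to observe that this $\Ga_1$ problem is artificial: the actual Raychaudhuri equation
\[
\nabs_3\trchb+\tfrac{1}{2}(\trchb)^2=-|\hchb|^2-2\omb\trchb-\S_{33}
\]
involves only $\hchb,\omb,\trchb,\Psi_3,\bbF$, all of which lie in the set $\Ga_r:=\{\tfrac{a}{|u|}\trchbt,\tfrac{a}{|u|}\hchb,\omb,\Psi_3,\bbF\}$ with top--order $\ucuv$--flux control $\|\Ga_r^{(4)}\|_{L^2_{sc}(\ucuv)}\les\af\bo$ under \eqref{B1}. After commutation the top--order nonlinearity is $\tfrac{|u|^2}{a^2}(\Ga_r\c\Ga_r)^{(4)}$, not the generic $(\Gab\c\Gab)^{(4)}$, and the commutator terms contribute only $(\Gag\c\Gab)^{(3)}$ at lower order. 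So only your $\Ga_2$--type Cauchy--Schwarz argument is needed; the quantities $\hch,\bF,\Psi_4$ simply do not appear at top order, and no $\Ga_1$ treatment is required. The schematic $\Gab\c\Gab$ in Proposition~\ref{null} is too coarse here; you have to go back to Proposition~\ref{nulles} and Lemma~\ref{Ricciexpression} to see the precise structure.
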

\begin{proof}
Denoting
\begin{align*}
    \Ga_r:=\left\{\frac{a}{|u|}\trchbt,\frac{a}{|u|}\hchb,\omb,\Psi_3,\bbF\right\},\qquad\quad \Ga_r^{(1)}:=(\af\nabs)^{\leq 1}\Ga_r\cup\{\bb\},
\end{align*}
we have from Section \ref{secnorms} and \eqref{B1}
\begin{align*}
    \|\Ga_r^{(4)}\|_{L^2_{sc}(\ucuv)}\les\af\bo,\qquad\quad \|\Ga_r^{(1)}\|_{L^\infty_{sc}(S_{u,\ub})}\les\af\bo.
\end{align*}
Next, we have from Proposition \ref{nulles}
\begin{align*}
    \nabs_3\trchb+\frac{1}{2}(\trchb)^2=\frac{|u|^2}{a^2}\Ga_r\c\Ga_r.
\end{align*}
Differentiating it by $\dkb^i$ and applying Proposition \ref{commutation}, we deduce
\begin{align*}
    \nabs_3(\dkb^i\trchb)+\trchb(\dkb^i\trchb)=\frac{|u|^2}{a^2}(\Gag\c\Gab)^{(3)}+\frac{|u|^2}{a^2}(\Ga_r\c\Ga_r)^{(4)}.
\end{align*}
Applying Lemma \ref{3evolution} and Propositions \ref{Holder}, we infer for $1\leq i\leq 4$
\begin{align*}
    |u|^{-1}\|\dkb^i\trchb\|_{L^2_{sc}(S_{u,\ub})}&\les|u_\infty|^{-1}\|\dkb^i\trchb\|_{L^2_{sc}(S_{u_\infty,\ub})}+\int_{u_\infty}^u\frac{1}{a|u'|}\|(\Gag\c\Gab)^{(3)}\|_{L^2_{sc}(S_{u',\ub})}du'\\
    &+\int_{u_\infty}^u\frac{1}{a|u'|}\|(\Ga_r\c\Ga_r)^{(4)}\|_{L^2_{sc}(S_{u',\ub})}du'\\
    &\les\frac{\af}{|u|}+\int_{u_\infty}^u\frac{b^\frac{1}{2}}{\af|u'|^2}du'+\int_{u_\infty}^u\frac{\bo}{\af|u'|^2}\|\Ga_r^{(4)}\|_{L^2_{sc}(S_{u',\ub})}du'\\
    &\les\frac{\af}{|u|}+\bo\left(\int_{u_\infty}^u\frac{a}{|u'|^2}\|\Ga_r^{(4)}\|^2_{L^2_{sc}(S_{u',\ub})}du'\right)^\frac{1}{2}\left(\int_{u_\infty}^u\frac{du'}{a^2|u'|^2}\right)^\frac{1}{2}\\
    &\les\frac{\af}{|u|}+\frac{\bo}{a|u|^\frac{1}{2}}\|\Ga_r^{(4)}\|_{L^2(\ucuv)}\\
    &\les|u|^{-\frac{1}{2}}.
\end{align*}
Combining with Propositions \ref{ellipticLp}, this concludes the proof of Proposition \ref{fluxtrchbc}.
\end{proof}
Combining Propositions \ref{esthchhchb}, \ref{estom}--\ref{fluxtrchbc}, we obtain
\begin{equation}\label{estOG}
    \sup_{V_*}\sum_{i=0}^4\mo_i[G](u,\ub)\les 1.
\end{equation}
\subsection{Estimates for \texorpdfstring{$\mg_i(u,\ub)$}{} and \texorpdfstring{$\ug_i(u,\ub)$}{}}
In this section, we estimate the $L^2$--flux norms of all the Ricci coefficients.
\subsubsection{Estimate for \texorpdfstring{$\hch$}{}}
\begin{prop}\label{fluxhch}
    We have the following estimate:
    \begin{align*}
        \|\hch^{(4)}\|_{L^2_{sc}(\cuv)}\les\af.
    \end{align*}
\end{prop}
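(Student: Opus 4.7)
The transport equation $\nabs_4\hch = -\a + \Gab\c\Gab$ cannot be used directly to derive the top-order flux estimate, because this would require $\|\a^{(4)}\|_{L^2_{sc}(\cuv)}$, whereas the Weyl curvature $\a$ is only controlled up to three derivatives via Proposition \ref{estab}, i.e., $\a$ has one less derivative of regularity than $\hch$. The plan is instead to use the Codazzi equation, which converts angular derivatives of $\hch$ into $\nabs\trch$ and $\b$, both of which are already controlled at top order.

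\textbf{Step 1: Codazzi plus elliptic estimate.} From Proposition \ref{nulles} and Corollary \ref{RicGagGab},
\[
\sdivs\hch = \tfrac{1}{2}\nabs\trch - \b + \tfrac{1}{2}\Ssl_4, \qquad \Ssl_4 = \Gab\c\Gab.
\]
For $0\leq i\leq 3$, differentiate with $\dkb^i$ and invoke Proposition \ref{commutation} to obtain schematically
\[
\sld_2(\dkb^i\hch) = \tfrac{1}{2}(\nabs\trch)^{(i)} - \b^{(i)} + (\Gab\c\Gab)^{(i)} + (\text{commutators involving }\hch^{(i)},\Gab).
\]
Since $\hch\in\sk_2$, the 2D elliptic estimate of Proposition \ref{ellipticLp}(3) applied to $\dkb^i\hch$ on each $S_{u,\ub'}$ yields
\[
\|\dkb^{i+1}\hch\|_{L^2_{sc}(S_{u,\ub'})} \les \|\b^{(i)}\|_{L^2_{sc}(S_{u,\ub'})} + \|\trcht^{(i+1)}\|_{L^2_{sc}(S_{u,\ub'})} + \|(\Gab\c\Gab)^{(i)}\|_{L^2_{sc}(S_{u,\ub'})} + \|\hch^{(i)}\|_{L^2_{sc}(S_{u,\ub'})}.
\]

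\textbf{Step 2: Integration and bookkeeping.} Square, take $i=3$, and integrate in $\ub'\in[0,\ub]$. The three key inputs are:
\begin{enumerate}
    \item $\|\b^{(3)}\|_{L^2_{sc}(\cuv)}\les 1\les\af$, from Proposition \ref{estbr};
    \item $\displaystyle\int_0^\ub\|\trcht^{(4)}\|_{L^2_{sc}(S_{u,\ub'})}^2\,d\ub'\les a$, which follows from the pointwise bound $\|\trcht^{(4)}\|_{L^2_{sc}(S_{u,\ub'})}\les\af$ of Proposition \ref{fluxtrcht};
    \item $\|\hch^{(3)}\|_{L^2_{sc}(S_{u,\ub'})}\les\af$ from Proposition \ref{esthchhchb}, giving an $a$-bound after integration.
\end{enumerate}
For the cubic nonlinearity one uses the improved a priori bound \eqref{improvedGab} (saving a factor $b^{-1/4}$ over \eqref{Gabapriori}), together with Proposition \ref{Holder}, to obtain
\[
\int_0^\ub\|(\Gab\c\Gab)^{(3)}\|_{L^2_{sc}(S_{u,\ub'})}^2\,d\ub' \les \frac{a^2}{|u|^2}\les a,
\]
or alternatively Theorem \ref{Junkmanthm}. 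Summing the contributions gives $\|\hch^{(4)}\|_{L^2_{sc}(\cuv)}^2\les a$, which is the desired estimate.

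\textbf{Main obstacle.} The conceptual obstacle is the one-derivative loss between Weyl and Ricci; this is precisely the difficulty already flagged in Section \ref{secRintro} and resolved by the Codazzi detour. The technical obstacle is that the nonlinearity $\Ssl_4\sim\Gab\c\Gab$ would, under the raw bootstrap \eqref{B1}, cost a factor of $\bo$, and a naive treatment would yield only $\|\hch^{(4)}\|_{L^2_{sc}(\cuv)}\les\af\bo$, insufficient to improve \eqref{B1}. The improved Ricci/matter bound \eqref{improvedGab}, available as a by-product of Theorems \ref{M1}--\ref{M2} and Proposition \ref{esthchhchb}, is precisely the extra $b^{-1/4}$ savings needed to close the top-order estimate without loss.
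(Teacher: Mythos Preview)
Your proposal is correct and follows essentially the same route as the paper: exploit the Codazzi equation together with the $2D$ elliptic estimate to trade $\hch^{(4)}$ for $\trcht^{(4)}$, $\b^{(3)}$, and quadratic matter/Ricci terms, then close using Propositions~\ref{fluxtrcht}, \ref{estbr} and the improved bound~\eqref{improvedGab}. One minor bookkeeping slip: since $\dkb=\af\sld$, the right-hand side of your sphere estimate should read $\af\|\b^{(i)}\|_{L^2_{sc}}+\af\|(\Gab\c\Gab)^{(i)}\|_{L^2_{sc}}$ rather than the unweighted versions, but as $\|\b^{(3)}\|_{L^2_{sc}(\cuv)}\les 1$ and $\|(\Gab\c\Gab)^{(3)}\|_{L^2_{sc}(S)}\les a|u|^{-1}$ this only tightens the final bound to exactly $\af$.
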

\begin{proof}
    We have from Proposition \ref{null}
    \begin{align*}
        \sdivs\hch=\frac{1}{2}\nabs\trch-\b+\Gab\c\Gab,
    \end{align*}
    which implies for $i\leq 3$
    \begin{align*}
        \dkb^{i+1}\hch=\dkb^{i+1}\trch-\af\dkb^{i}\b+\af(\Gab\c\Gab)^i.
    \end{align*}
    Applying Propositions \ref{ellipticLp} and \ref{Holder}, we infer
    \begin{align*}
        \|\hch^{(4)}\|_{L^2_{sc}(S_{u,\ub})}&\les\left\|\trcht^{(4)}\right\|_{L^2_{sc}(S_{u,\ub})}+\af\|\b^{(3)}\|_{L^2(S_{u,\ub})}+\af\|(\Gab\c\Gab)^{(3)}\|_{L^2_{sc}(S_{u,\ub})}\\
        &\les\af+\frac{a^\frac{3}{2}}{|u|}+\af\|\b^{(3)}\|_{L^2(S_{u,\ub})}\\
        &\les\af+\af\|\b^{(3)}\|_{L^2_{sc}(S_{u,\ub})}.
    \end{align*}
    where we used Proposition \ref{fluxtrcht} and \eqref{improvedGab} at the second step. Hence, we obtain from Proposition \ref{estbr}
    \begin{align*}
        \|\hch^{(4)}\|_{L^2_{sc}(\cuv)}\les \af+\af\|\b^{(3)}\|_{L^2_{sc}(\cuv)}\les\af.
    \end{align*}
    This concludes the proof of Proposition \ref{fluxhch}.
\end{proof}
\subsubsection{Estimate for \texorpdfstring{$\hchb$}{}}
\begin{prop}\label{fluxhchb}
    We have the following estimate:
    \begin{align*}
        \left\|\frac{a}{|u|}\hchb^{(4)}\right\|_{L^2_{sc}(\ucuv)}\les\af.
    \end{align*}
\end{prop}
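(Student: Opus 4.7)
The plan is to mimic the strategy used in Proposition~\ref{fluxhch} for $\hch$, but based on the Codazzi equation for $\hchb$ from Proposition~\ref{null}
\[
\sdivs\hchb=\frac{1}{2}\nabs\trchb+\bb-\frac{1}{2}\trchb\,\ze+\frac{|u|}{a}\Gab\c\Gab,
\]
which expresses the top angular derivative of $\hchb$ in terms of quantities we already control. Differentiating by $\dkb^i$ for $i\le 3$ and applying the $2D$ elliptic estimate in Proposition~\ref{ellipticLp}, we obtain schematically
\[
\|\hchb^{(4)}\|_{L^2_{sc}(S_{u,\ub})}\les \|\trchbc^{(4)}\|_{L^2_{sc}(S_{u,\ub})}+\af\|\bb^{(3)}\|_{L^2_{sc}(S_{u,\ub})}+\af\|(\trchb\,\ze)^{(3)}\|_{L^2_{sc}(S_{u,\ub})}+\frac{|u|}{\af}\|(\Gab\c\Gab)^{(3)}\|_{L^2_{sc}(S_{u,\ub})},
\]
where the $\trchb\,\ze$ piece reduces, via Proposition~\ref{Holder} together with $\ze=\frac{1}{2}(\eta-\etab)\in\Gag$ and the already established pointwise norm of $\trchb$, to a controlled quantity.

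Once this is in place, I would multiply by the desired weight $a/|u|$, square, and integrate in $L^2_{sc}(\ucuv)$. For the $\bb$ contribution,
\[
\int_{u_\infty}^u \frac{a}{|u'|^2}\cdot\frac{a^2}{|u'|^2}\cdot a\cdot\|\bb^{(3)}\|^2_{L^2_{sc}(S_{u',\ub})}du'\les \frac{a^3}{|u|^2}\,\|\bb^{(3)}\|^2_{L^2_{sc}(\ucuv)}\les a,
\]
using Proposition~\ref{estrb} and $|u|\ge a/4$. For the $\trchbc$ contribution, Proposition~\ref{fluxtrchbc} gives the pointwise bound $\|\trchbc^{(4)}\|_{L^2_{sc}(S_{u,\ub})}\les |u|^{1/2}$, whence
\[
\int_{u_\infty}^u \frac{a}{|u'|^2}\cdot\frac{a^2}{|u'|^2}\cdot|u'|\,du'\les \frac{a^3}{|u|^2}\les a.
\]
For the nonlinear term, Proposition~\ref{Holder} together with the improved estimate~\eqref{improvedGab} yields $\|(\Gab\c\Gab)^{(3)}\|_{L^2_{sc}(S_{u,\ub})}\les a/|u|$, which after the weighting and integration contributes $\les a$ as well.

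The only delicate step is the $\trchbc$ term: its lower-order $L^2(S)$ norm behaves like $|u|^{1/2}$, which is slightly worse than the other pieces. The reason the argument still closes is that the outgoing weight $\frac{a}{|u|^2}$ in the definition of $\|\cdot\|_{L^2_{sc}(\ucuv)}$ combined with the factor $(a/|u|)^2$ produces an $|u|^{-3}$ integrand, just barely integrable and yielding exactly an $a$ bound after using $|u|\geq a/4$ in $V_*$. Modulo this bookkeeping, the result follows by combining Proposition~\ref{ellipticLp}, Proposition~\ref{estrb}, Proposition~\ref{fluxtrchbc}, and the improved schematic bounds in~\eqref{improvedGab}, producing $\frac{a}{|u|}\|\hchb^{(4)}\|_{L^2_{sc}(\ucuv)}\les\af$, as desired.
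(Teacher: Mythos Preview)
Your proposal is correct and follows essentially the same route as the paper: both start from the Codazzi equation for $\hchb$, apply the elliptic estimate of Proposition~\ref{ellipticLp}, control the resulting terms via Propositions~\ref{fluxtrchbc}, \ref{estrb}, \ref{esteta}--\ref{estetab} and the improved bound~\eqref{improvedGab}, then integrate with the $a/|u'|^2$ weight to close. Your observation that the $\trchbc^{(4)}$ piece is the borderline term, saved only by $|u'|^{-3}$ integrability and $|u|\ge a/4$, is exactly the point the paper's computation hinges on.
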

\begin{proof}
    We have from Proposition \ref{null}
    \begin{align*}
        \sdivs\hchb=\frac{1}{2}\nabs\trchb+\bb-\frac{1}{2}\trchb\,\ze+\frac{|u|}{a}\Gab\c\Gab,
    \end{align*}
    which implies for $i\leq 3$
    \begin{align*}
        \dkb^{i+1}\hchb=\frac{1}{2}\dkb^{i+1}(\trchbc)+\af(\dkb^i\bb)-\frac{1}{2}\af\trchb(\dkb^i\ze)+\frac{|u|}{\af}(\Gab\c\Gab)^{(i)}.
    \end{align*}
    Applying Propositions \ref{ellipticLp} and \ref{Holder}, we infer
    \begin{align*}
        \|\hchb^{(4)}\|_{L^2_{sc}(\cuv)}&\les\left\|\trchbc^{(4)}\right\|_{L^2_{sc}(S_{u,\ub})}+\af\|\bb^{(3)}\|_{L^2_{sc}(S_{u,\ub})}+\|\trchb\c\ze^{(3)}\|_{L^2_{sc}(S_{u,\ub})}\\
        &+\frac{|u|}{\af}\|(\Gab\c\Gab)^{(3)}\|_{L^2_{sc}(S_{u,\ub})}\\
        &\les\left\|\trchbc^{(4)}\right\|_{L^2_{sc}(S_{u,\ub})}+\frac{1}{|u|}\|\trchb\|_{L^\infty_{sc}(S_{u,\ub})}\|\ze^{(3)}\|_{L^2_{sc}(S_{u,\ub})}\\
        &+\afd\|\Gab\|_{L^2_{sc}(S_{u,\ub})}\|\Gab^{(3)}\|_{L^2_{sc}(S_{u,\ub})}+\af\|\bb^{(3)}\|_{L^2_{sc}(S_{u,\ub})}\\
        &\les|u|^\frac{1}{2}+\af\|\bb^{(3)}\|_{L^2_{sc}(S_{u,\ub})},
    \end{align*}
    where we used Propositions \ref{esteta}, \ref{estetab} and \ref{fluxtrchbc} and \eqref{improvedGab} at the last step. Hence, we deduce
    \begin{align*}
        \frac{a}{|u|}\|\hchb^{(4)}\|_{L^2_{sc}(\cuv)}\les \frac{a}{|u|^\frac{1}{2}}+\frac{a^\frac{3}{2}}{|u|}\|\bb^{(3)}\|_{L^2_{sc}(S_{u,\ub})}.
    \end{align*}
    Thus, we infer
    \begin{align*}
        \left\|\frac{a}{|u|}\hchb^{(4)}\right\|_{L^2_{sc}(\ucuv)}^2&=\int_{u_\infty}^u\frac{a}{|u'|^2}\left\|\frac{a}{|u'|}\hchb^{(4)}\right\|_{L^2_{sc}(S_{u',\ub})}^2du'\\
        &\les \int_{u_\infty}^u\frac{a^3}{|u'|^3}+\frac{a^4}{|u'|^4}\|\bb^{(3)}\|_{L^2_{sc}(S_{u',\ub})}^2 du'\\
        &\les \frac{a^3}{|u|^2}+\frac{a^3}{|u|^2}\int_{u_\infty}^u \frac{a}{|u'|^2}\|\bb^{(3)}\|_{L^2_{sc}(S_{u',\ub})}^2du'\\
        &\les a,
    \end{align*}
    where we used \eqref{phifluxsc} and Proposition \ref{estrb} at the last step. This concludes the proof of Proposition \ref{fluxhchb}.
\end{proof}
\subsubsection{Estimates for \texorpdfstring{$\mu$}{} and \texorpdfstring{$\eta$}{}}
\begin{prop}\label{fluxmu}
    We have the following estimate:
    \begin{align*}
        \|\muc^{(3)}\|_{L^2_{sc}(\ucuv)}\les\frac{a}{|u|},\qquad\quad \|\eta^{(4)}\|_{L^2_{sc}(\ucuv)}\les\af.
    \end{align*}
\end{prop}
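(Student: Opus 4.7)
The plan is to treat the two bounds in sequence: first establish the flux estimate for $\muc$, then deduce the flux estimate for $\eta$ by a Hodge-type elliptic argument. The whole strategy is the one announced after Proposition \ref{estrb} in the introduction: although the transport equation for $\mu$ itself contains the borderline source $-\trch/|u|^2$, its spherically-symmetric part lives entirely in the average $\ov{\mu}$, so that $\muc$ satisfies an equation whose principal source is the much better $-\trchtc/|u|^2$, which is already controlled by Proposition \ref{fluxtrcht}.

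First, I would derive the transport equation for $\muc$ by subtracting the $S$-average from the equation
\begin{align*}
\nabs_4 \mu = -\frac{1}{|u|^2}\trch + \afd(\Gab\c\Gab)^{(1)}
\end{align*}
given by Proposition \ref{null}. Using Lemma \ref{dint} to compute $\Om e_4(\ov{\mu})$ and Lemma \ref{chav} to handle mixed average products, one obtains schematically
\begin{align*}
\Om\nabs_4\muc = -\frac{\Om}{|u|^2}\widecheck{\trch} + \widecheck{\,\afd(\Gab\c\Gab)^{(1)}\,} - \ov{\widecheck{\Om\trch}\,\muc} + (\log\Om - \overline{\log\Om})\cdot\text{lot},
\end{align*}
so that the only surviving principal term is $-\trchtc/|u|^2$, plus average-free corrections that are controlled by $\trcht$, $\log\Om$ and quadratic $\Gab$-products already estimated in the earlier sections.

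Second, I would commute with $\dkb^i$ for $i\leq 3$ using Proposition \ref{commutation}, noting that the commutator $[\Om\nabs_4,\dkb]$ produces only $\Gab$-type error terms, and apply Lemma \ref{evolution} starting from $\Hb_0$, where $\muc = 0$ by the Minkowski data. This yields
\begin{align*}
\|\muc^{(3)}\|_{L^2_{sc}(S_{u,\ub})} \les \int_0^{\ub}\left(\frac{1}{|u|^2}\|\trchtc^{(3)}\|_{L^2_{sc}(S_{u,\ub'})} + \|(\Gab\c\Gab)^{(4)}\|_{L^2_{sc}(S_{u,\ub'})}\right)d\ub',
\end{align*}
and the right-hand side is controlled by Proposition \ref{fluxtrcht}, by Proposition \ref{esthchhchb} together with \eqref{improvedGab}, and by Theorem \ref{Junkmanthm}. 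Multiplying by $a/|u|^2$ and integrating in $u$ converts this pointwise (in $\ub$) bound on spheres into the flux bound $\|\muc^{(3)}\|_{L^2_{sc}(\ucuv)}\les a/|u|$.

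Finally, for the $\eta$ bound I would use the Hodge system
\begin{align*}
\sdivs\eta = -\muc + \widecheck{\Kc}, \qquad \curls\eta = \sic,
\end{align*}
obtained from the definition $\mu = \Kc - \sdivs\eta$ in \eqref{murenor} (average-free part, since $\ov{\sdivs\eta} = 0$) and from the torsion equation \eqref{torsion} (which also gives $\ov{\sic} = 0$). Applying the Hodge elliptic estimate of Proposition \ref{ellipticLp} at the $\dkb^3$ level, commuted with $\dkb^i$ using Proposition \ref{commutation}, yields
\begin{align*}
\|\eta^{(4)}\|_{L^2_{sc}(S_{u,\ub})} \les \|\muc^{(3)}\|_{L^2_{sc}(S_{u,\ub})} + \|(\Kc,\sic)^{(3)}\|_{L^2_{sc}(S_{u,\ub})} + \text{lower order},
\end{align*}
and integrating this in $u$ against the $a/|u|^2$ weight, combined with the $\muc$ flux bound just proved and the flux bound $\|(\Kc,\sic)^{(3)}\|_{L^2_{sc}(\ucuv)}\les 1$ from Proposition \ref{estrb}, gives the claimed $\|\eta^{(4)}\|_{L^2_{sc}(\ucuv)}\les\af$.

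The main technical obstacle is the first step: carefully verifying that the average-subtraction really does convert the borderline source $-\trch/|u|^2$ into $-\trchtc/|u|^2$, including tracking the corrections produced by $\Om - 1$, by $\ov{\widecheck{\Om\trch}\,\muc}$, and by the $\trch\,\mu$ term that sits implicitly inside the schematic error of Proposition \ref{null}. Each of these corrections is lower order, but showing this requires Proposition \ref{esttrcht} for $\trcht$, Proposition \ref{estOm} for $\log\Om$, and the bootstrap control on $\mu \in \Gag^{(1)}$ to close the absorption. Once this renormalized equation is in hand, the rest is a routine combination of Lemma \ref{evolution}, Proposition \ref{fluxtrcht} and Theorem \ref{Junkmanthm}, followed by a standard Hodge elliptic estimate for $\eta$.
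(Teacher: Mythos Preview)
Your approach is correct but takes a more laborious route than the paper. The key difference is in how you remove the borderline source $-\trch/|u|^2$: you subtract the $S$-average first and derive a transport equation for $\muc$ directly, which forces you to compute $\Om e_4(\ov{\mu})$ via Lemma \ref{dint} and track the resulting corrections (the $\ov{\widecheck{\Om\trch}\,\muc}$ term, the $\Om-1$ contributions, etc.). The paper instead works with $\dkb^i\mu$ for $1\leq i\leq 3$ from the start, exploiting the trivial identity $\dkb^i\trch=\dkb^i\trcht$ (since $2/|u|$ is constant on each sphere) so that the borderline source is killed automatically by differentiation, with no average manipulation needed; the zeroth-order piece $\muc$ is then recovered at the end from $\dkb\mu$ via the Poincar\'e-type inequality built into Proposition \ref{ellipticLp}. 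Both approaches arrive at the same place, but the paper's route avoids the first-step verification you correctly flag as the main obstacle. For the $\eta$ bound, your Hodge argument is essentially identical to the paper's (the paper writes only $\sdivs\eta=\Kc-\mu$ and leaves the $\curls\eta=\sic$ part implicit in the use of Proposition \ref{ellipticLp}, but the content is the same).
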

\begin{rk}
The fact that $\muc$ has better decay than $\mu$ plays an essential role in the proof of Proposition \ref{estrb}.
\end{rk}
\begin{proof}[Proof of Proposition \ref{fluxmu}]
We have from Proposition \ref{null}
\begin{align*}
    \nabs_4\mu=-\frac{1}{|u|^2}\trch+\afd(\Gab\c\Gab)^{(1)}.
\end{align*}
Differentiating it by $\dkb^i$ and applying Proposition \ref{commutation}, we obtain for $1\leq i\leq 3$
\begin{align*}
    \nabs_4(\dkb^i\mu)=-\frac{1}{|u|^2}\dkb^i\trcht+\afd(\Gab\c\Gab)^{(i+1)}.
\end{align*}
Applying Lemma \ref{evolution}, we infer for $1\leq i\leq 3$
\begin{align*}
    \|\dkb^i\mu\|_{L^2_{sc}(S_{u,\ub})}&\les \int_0^\ub a^{-1}\|\trcht^{(3)}\|_{L^2_{sc}(S_{u,\ub})}+\afd\|(\Gab\c\Gab)^{(4)}\|_{L^2_{sc}(S_{u,\ub'})}d\ub'\\
    &\les\frac{1}{|u|}+\int_0^\ub\afd\|(\Gab\c\Gab)^{(4)}\|_{L^2_{sc}(S_{u,\ub'})}d\ub',
\end{align*}
where we used Proposition \ref{esttrcht} at the last step. Multiplying it by $\frac{a}{|u|^2}$ and integrating it from $u_\infty$ to $u$, we deduce for $1\leq i\leq 3$
\begin{align*}
    \|\dkb^i\mu\|^2_{L^2_{sc}(\ucuv)}&\les\int_0^\ub\int_{u_\infty}^u\frac{a}{|u'|^4}+\frac{1}{|u'|^2}\|(\Gab\c\Gab)^{(4)}\|_{L^2_{sc}(S_{u',\ub'})}^2du'd\ub'\\
    &\les\frac{a}{|u|^3}+\frac{1}{|u|^2}\int_0^\ub\int_{u_\infty}^u\|(\Gab\c\Gab)^{(4)}\|_{L^2_{sc}(S_{u',\ub'})}^2du'd\ub'\\
    &\les\frac{a}{|u|^3}+\frac{ab}{|u|^2}\\
    &\les\frac{a^2}{|u|^2}.
\end{align*}
where we used Theorem \ref{Junkmanthm} in the third step. Applying Proposition \ref{ellipticLp}, we infer
\begin{equation}\label{mucest}
    \|\muc^{(3)}\|_{L^2_{sc}(\ucuv)}\les \frac{a}{|u|}.
\end{equation}
We recall from \eqref{murenor}
\begin{align*}
    \sdivs\eta=\Kc-\mu.
\end{align*}
Applying Propositions \ref{ellipticLp} and \ref{estbr} and \eqref{mucest}, we have
\begin{align*}
    \|\eta^{(4)}\|_{L^2_{sc}(\ucuv)}\les\af\|\Kc^{(3)}\|_{L^2_{sc}(\ucuv)}+ \af\|\mu^{(3)}\|_{L^2_{sc}(\ucuv)}\les\af+\frac{a^\frac{3}{2}}{|u|}\les\af.
\end{align*}
This concludes the proof of Proposition \ref{fluxmu}.
\end{proof}
\subsubsection{Estimates for \texorpdfstring{$\vkpb$}{} and \texorpdfstring{$(\omb,\ombd)$}{}}
\begin{prop}\label{fluxvkpb}
    We have the following estimate:
    \begin{align*}
        \|\vkpb^{(3)}\|_{L^2_{sc}(\ucuv)}\les 1,\qquad\quad \|(\omb,\ombd)^{(4)}\|_{L^2_{sc}(\ucuv)}\les\af.
    \end{align*}
\end{prop}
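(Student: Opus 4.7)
My plan is to first estimate $\vkpb^{(3)}$ via the transport equation in $\nabs_4$, and then recover the top-order control of $(\omb,\ombd)$ from the Hodge system defining $\vkpb$ together with the already-established bound on $\bb$ from Proposition \ref{estrb}.

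For the first estimate, I would start from the schematic equation in Proposition \ref{null}:
\begin{equation*}
\nabs_4 \vkpb = \frac{|u|}{ab}(\Gab\c\Gab)^{(1)}.
\end{equation*}
Commuting with $\dkb^i$ for $i\leq 3$ by means of Proposition \ref{commutation} introduces only lower-order terms absorbable into the same schematic right-hand side (using $\vkpb \in \Gag^{(1)}$ at the renormalized level), giving
\begin{equation*}
\nabs_4(\dkb^i \vkpb) = \frac{|u|}{ab}(\Gab\c\Gab)^{(4)}.
\end{equation*}
Since $\bb=0$, $\omb=0$ on $\Hb_0$ and $\ombd|_{\Hb_0}=0$ by \eqref{dfomd}, the definition \eqref{renorvkp} gives $\vkpb=0$ on $\Hb_0$, so Lemma \ref{evolution} and Cauchy--Schwarz in $\ub$ (with $\ub\le 1$) yield
\begin{equation*}
\|\dkb^i\vkpb\|_{L^2_{sc}(S_{u,\ub})}^2 \les \int_0^\ub \frac{|u|^2}{a^2 b^2}\,\|(\Gab\c\Gab)^{(4)}\|_{L^2_{sc}(S_{u,\ub'})}^2\, d\ub'.
\end{equation*}
Multiplying by $a/|u|^2$ and integrating in $u$ then gives
\begin{equation*}
\|\vkpb^{(3)}\|_{L^2_{sc}(\ucuv)}^2 \les \frac{1}{ab^2}\int_0^\ub\!\!\int_{u_\infty}^u \|(\Gab\c\Gab)^{(4)}\|_{L^2_{sc}(S_{u',\ub')}}^2\, du'd\ub' \les \frac{ab}{ab^2} = b^{-1} \les 1,
\end{equation*}
where the bulk bound is \eqref{GabGab} of Theorem \ref{Junkmanthm}. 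This proves the first estimate.

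For the second estimate, I would then exploit the defining identity \eqref{renorvkp}, rewritten as the Hodge-type system
\begin{equation*}
\sld_1^*(\omb,\ombd) = -\vkpb + \tfrac{1}{2}\bb,
\end{equation*}
and apply the $2D$ elliptic estimate of Proposition \ref{ellipticLp} on each sphere $S_{u,\ub}$, commuted with $\dkb^i$ for $i\leq 3$. This yields, up to the average (controlled by the $L^2_{sc}(S)$ bound for $(\omb,\ombd)$ from Proposition \ref{estomb}),
\begin{equation*}
\|(\omb,\ombd)^{(4)}\|_{L^2_{sc}(S_{u,\ub})} \les \af\,\|\vkpb^{(3)}\|_{L^2_{sc}(S_{u,\ub})} + \af\,\|\bb^{(3)}\|_{L^2_{sc}(S_{u,\ub})} + \|(\omb,\ombd)\|_{L^2_{sc}(S_{u,\ub})}.
\end{equation*}
Taking $L^2_{sc}(\ucuv)$-norms and invoking Proposition \ref{estrb} for $\bb$ together with the bound on $\vkpb^{(3)}$ just obtained produces
\begin{equation*}
\|(\omb,\ombd)^{(4)}\|_{L^2_{sc}(\ucuv)} \les \af(1 + 1) + \|(\omb,\ombd)\|_{L^2_{sc}(\ucuv)} \les \af,
\end{equation*}
as desired.

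The only mildly delicate point is bookkeeping of the scale-invariant weights: one must verify that the signature $s_2(\vkpb)=3/2$ is consistent with the transport/elliptic scheme above, so that the explicit $|u|/(ab)$ factor in the renormalized equation really survives as the crucial source of smallness. Once this is checked, all else is a routine application of the framework already deployed in Proposition \ref{fluxmu}, and no new Bianchi-pair energy identity from Section \ref{secBianchi} is required.
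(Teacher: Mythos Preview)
Your proposal is correct and follows essentially the same route as the paper: integrate the $\nabs_4$--transport equation for $\vkpb$ from $\Hb_0$, control the bulk via \eqref{GabGab} in Theorem \ref{Junkmanthm}, and then recover $(\omb,\ombd)^{(4)}$ from the Hodge identity $\sld_1^*(\omb,\ombd)=-\vkpb+\tfrac{1}{2}\bb$ together with Propositions \ref{ellipticLp}, \ref{estrb}, and \ref{estomb}. One small bookkeeping slip: it is $\af\,\vkpb\in\Gab^{(1)}$ (Definition \ref{gammag}), not $\vkpb\in\Gag^{(1)}$, but the commutator terms are still absorbable into $\tfrac{|u|}{ab}(\Gab\c\Gab)^{(4)}$ since $b\ll a^{1/6}$ and $|u|\geq a/4$, so this does not affect the argument.
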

\begin{proof}
    We have from Proposition \ref{null}
    \begin{align*}
        \nabs_4\vkpb=\frac{|u|}{ab}(\Gab\c\Gab)^{(1)}.
    \end{align*}
    Differentiating it by $\dkb^i$ and applying Proposition \ref{commutation}, we obtain
    \begin{align*}
        \nabs_4(\dkb^i\vkpb)=\frac{|u|}{ab}(\Gab\c\Gab)^{(i+1)}.
    \end{align*}
    Applying Lemma \ref{evolution}, we infer for $i\leq 3$
    \begin{align*}
        \|\dkb^i\vkpb\|_{L^2_{sc}(S_{u,\ub})}\les\int_0^\ub\frac{|u|}{ab}\|(\Gab\c\Gab)^{(4)}\|_{L^2_{sc}(S_{u,\ub'})}d\ub'.
    \end{align*}
    Multiplying it by $\frac{a}{|u|^2}$ and integrating it from $u_\infty$ to $u$, we deduce for $i\leq 3$
    \begin{align*}
        \|\dkb^i\vkpb\|_{L^2_{sc}(\ucuv)}^2&\les\int_0^\ub\int_{u_\infty}^u\frac{a}{|u'|^2}\frac{|u'|^2}{a^2b^2}\|(\Gab\c\Gab)^{(4)}\|_{L^2_{sc}(S_{u',\ub'})}^2du'd\ub'\\
        &\les\frac{b}{a}+\frac{ab}{ab^2}\les 1.
    \end{align*}
    where we used Theorem \ref{Junkmanthm} in the second step. Applying Proposition \ref{ellipticLp}, we infer
    \begin{equation}\label{vkpbest}
        \|\vkpb^{(3)}\|_{L^2_{sc}(\ucuv)}\les 1.
    \end{equation}
    Next, we recall from \eqref{renorvkp}
    \begin{align*}
        \sld_1^*(\omb,\ombd):=-\vkpb+\frac{1}{2}\bb.
    \end{align*}
    Applying Propositions \ref{ellipticLp}, we obtain
    \begin{align*}
        \|(\omb,\ombd)^{(4)}\|_{L^2_{sc}(\ucuv)}&\les\af\|\vkpb^{(3)}\|_{L^2_{sc}(\ucuv)}+\af\|\bb^{(3)}\|_{L^2_{sc}(\ucuv)}+\|(\omb,\ombd)\|_{L^2_{sc}(\ucuv)}\\
        &\les\af,
    \end{align*}
    where we used \eqref{vkpbest} and Propositions \ref{estrb} and \ref{estomb} at the last step. This concludes the proof of Proposition \ref{fluxvkpb}.
\end{proof}
\subsubsection{Estimates for \texorpdfstring{$\mub$}{} and \texorpdfstring{$\etab$}{}}
\begin{prop}\label{fluxmub}
We have the following estimate:
\begin{align*}
    \|\mub^{(3)}\|_{L^2_{sc}(\cuv)}\les 1,\qquad\quad \|\etab^{(4)}\|_{L^2_{sc}(\cuv)}\les\af.
\end{align*}
\end{prop}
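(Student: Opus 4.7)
The plan is to mirror the proof of Proposition \ref{fluxmu}, replacing the outgoing evolution in $e_4$ by the incoming evolution in $e_3$. Starting from the schematic equation in Proposition \ref{null},
\[
\nabs_3\mub+\trchb\,\mub = -\frac{1}{|u|^2}\trchbt - \trchb\,\sdivs\eta + \frac{|u|}{a^{3/2}}(\Gab\c\Gab)^{(1)},
\]
I commute with $\dkb^i$ for $0\le i\le 3$ using Proposition \ref{commutation}, producing
\[
\nabs_3(\dkb^i\mub)+\tfrac{2+i}{2}\trchb(\dkb^i\mub) = -\frac{\dkb^i\trchbt}{|u|^2} - \trchb\,\dkb^i(\sdivs\eta) + \frac{|u|}{a^{3/2}}(\Gab\c\Gab)^{(i+1)}.
\]
Applying Lemma \ref{3evolution} with $s_2(\mub)=1$, I obtain a pointwise bound for $\|\mub^{(i)}\|_{L^2_{sc}(S_{u,\ub})}$ in terms of the initial data on $H_{u_\infty}$ and a weighted $u'$--integral of the forcing.

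The three forcing contributions are then controlled in turn: the linear term $|u|^{-2}\trchbt$ is bounded by Proposition \ref{esttrchbc}; the transport term $\trchb\,\sdivs\eta$ is rewritten via the identity $\sdivs\eta=\Kc-\mu$ from \eqref{murenor} so that it is controlled by the improved $\muc$--flux of Proposition \ref{fluxmu} together with $\Kc^{(3)}$ from Proposition \ref{estrb}; and the quadratic error $\frac{|u|}{a^{3/2}}(\Gab\c\Gab)^{(4)}$ is absorbed using Theorem \ref{Junkmanthm}. Squaring the resulting pointwise estimate and integrating in $\ub'\in[0,\ub]$, after a Cauchy--Schwarz step on the $u'$--integral (exploiting the $|u'|^{-3}$ weight from $\la_1-2s_2(F)=-3$), yields the desired flux bound $\|\mub^{(3)}\|_{L^2_{sc}(\cuv)}\les 1$.

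For $\etab$, I invoke the elliptic identity $\sdivs\etab=\Kc-\mub$ implicit in \eqref{murenor}, and apply Proposition \ref{ellipticLp} at each differentiation level $0\le i\le 3$. The right-hand side is controlled in $L^2_{sc}(\cuv)$ by the just-obtained bound on $\mub^{(3)}$ combined with $\|\Kc^{(3)}\|_{L^2_{sc}(\cuv)}\les 1$ from Proposition \ref{estrb}, while the lower-order term in the elliptic estimate is absorbed by $\|\etab^{(0)}\|_{L^2_{sc}(S_{u,\ub})}\les 1$ from Proposition \ref{estetab}. This gives $\|\etab^{(4)}\|_{L^2_{sc}(\cuv)}\les \af$.

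The main obstacle is the transport-type term $\trchb\,\sdivs\eta$: a naive bound via $\|\trchb\|_{L^\infty_{sc}}\cdot\|\nabs\eta\|_{L^2_{sc}}$ is only borderline in $|u|$ and fails to close after the $u'$--integration. The algebraic substitution $\sdivs\eta=\Kc-\mu$ is essential, since it trades this borderline loss for quantities with strictly better decay; in particular, the improved $\muc$--flux derived in Proposition \ref{fluxmu} (which exploits the favorable behavior of $\trcht$ in the $\nabs_4\mu$ equation) is precisely what restores integrability in $u'$. All remaining estimates are signature bookkeeping that parallels Proposition \ref{fluxmu}.
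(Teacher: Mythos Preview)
Your proposal is correct and follows essentially the same route as the paper: commute the $\nabs_3\mub$ transport equation, apply Lemma \ref{3evolution}, and---crucially---replace the borderline term $\trchb\,\sdivs\eta$ via the identity $\sdivs\eta=\Kc-\mu$ so that it is controlled by the improved $\muc$--flux of Proposition \ref{fluxmu} and by $\Kc^{(3)}$ from Proposition \ref{estrb}; then recover $\etab^{(4)}$ from the Hodge system $\sld_1\etab=(\Kc-\mub,-\sic)$ using Proposition \ref{ellipticLp}. The only minor omission is that you mention only $\sdivs\etab$ for the elliptic step, whereas the full $\sld_1$--estimate also requires $\curls\etab=-\sic$ from \eqref{torsion}, but $\sic^{(3)}$ is already bounded in $L^2_{sc}(\cuv)$ by Proposition \ref{estrb}, so this closes without further work.
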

\begin{proof}
    We have from Proposition \ref{null}
    \begin{align*}
        \nabs_3\mub+\trchb\,\mub=-\frac{1}{|u|^2}\trchbt-\trchb\,\sdivs\eta+\frac{|u|}{ab}(\Gab\c\Gab)^{(1)}.
    \end{align*}
    Differentiating it by $\dkb^i$ and applying Proposition \ref{commutation}, we obtain
    \begin{align*}
        \nabs_3(\dkb^i\mub)+\trchb(\dkb^i\mub)=\frac{1}{|u|^2}\trcht^{(i)}+\trchb(\widecheck{\K}-\muc)^{(i)}+\frac{|u|}{ab}(\Gab\c\Gab)^{(i+1)}.
    \end{align*}
    Applying Lemma \ref{3evolution} and Proposition \ref{Holder}, we deduce for $i\leq 3$
    \begin{align*}
        |u|^{-1}\|\dkb^i\mub\|_{L^2_{sc}(S_{u,\ub})}&\les|u_\infty|^{-1}\|\dkb^i\mub\|_{L^2_{sc}(S_{u_\infty,\ub})}+\int_{u_\infty}^u\frac{1}{|u'|^3}\|\trchbt^{(3)}\|_{L^2_{sc}(S_{u',\ub})}+\frac{1}{|u'|^2}\|\muc^{(3)}\|_{L^2_{sc}(S_{u',\ub})}du'\\
        &+\int_{u_\infty}^u \frac{1}{|u'|^2}\|\Kc^{(3)}\|_{L^2_{sc}(S_{u',\ub})}+\frac{1}{b|u'|^2}\|(\Gab\c\Gab)^{(4)}\|_{L^2_{sc}(S_{u',\ub})}du',
    \end{align*}
    which implies from Propositions \ref{esttrchbc}, \ref{fluxmu} and \ref{estrb} that for $i\leq 3$
    \begin{align*}
        |u|^{-2}\|\dkb^i\mub\|_{L^2_{sc}(\cuv)}^2&\les|u_\infty|^{-2}\|\dkb^i\mub\|_{L^2_{sc}(\cuvi)}^2+|u|^{-4}\\
        &+\int_0^\ub\left(\int_{u_\infty}^u\frac{a}{|u'|^2}\|\muc^{(3)}\|^2_{L^2_{sc}(S_{u',\ub'})} du'\right)\left(\int_{u_\infty}^u\frac{1}{a|u'|^2}du'\right)d\ub'\\
        &+\int_0^\ub\left(\int_{u_\infty}^u\frac{1}{|u'|^2}\|\Kc^{(3)}\|^2_{L^2_{sc}(S_{u',\ub'})} du'\right)\left(\int_{u_\infty}^u\frac{1}{|u'|^2}du'\right)d\ub'\\
        &+\int_0^\ub\left(\int_{u_\infty}^u\frac{1}{b|u'|^2}\|(\Gab\c\Gab)^{(4)}\|_{L^2_{sc}(S_{u',\ub'})}du'\right)^2d\ub'\\
        &\les\frac{1}{|u|^2}+\frac{1}{a|u|}\int_0^\ub\|\muc^{(3)}\|_{L^2_{sc}(\Hb_{\ub'}^{(u_\infty,u)})}^2d\ub'+\frac{1}{|u|}\int_{u_\infty}^u\frac{du'}{|u'|^2}\|\Kc^{(3)}\|^2_{L^2_{sc}(H_{u'}^{(0,\ub)})}\\
        &+\frac{1}{b^2|u|^3}\int_0^\ub\int_{u_\infty}^u\|(\Gab\c\Gab)^{(4)}\|_{L^2_{sc}(S_{u',\ub'})}^2du'd\ub'\\
        &\les\frac{1}{|u|^2}+\frac{b}{|u|^3}+\frac{ab}{b^2|u|^3}\les\frac{1}{|u|^2},
    \end{align*}
    where we used Theorem \ref{Junkmanthm} in the third step. Combining with Propositions \ref{ellipticLp} and \ref{estR}, we infer
    \begin{equation}\label{estmub}
        \|\mub^{(3)}\|_{L^2_{sc}(\cuv)}\les 1.
    \end{equation}
    We recall from \eqref{murenor}
    \begin{align*}
        \sdivs\etab=\Kc-\mub.
    \end{align*}
    Applying Propositions \ref{ellipticLp} and \ref{estR}, we obtain
    \begin{align*}
        \|\etab^{(4)}\|_{L^2_{sc}(S_{u,\ub})}\les\af\|\Kc^{(3)}\|_{L^2_{sc}(\ucuv)}+ \af\|\mub^{(3)}\|_{L^2_{sc}(\ucuv)}\les\af.
    \end{align*}
    This concludes the proof of Proposition \ref{fluxmub}.
\end{proof}
\subsubsection{Estimates for \texorpdfstring{$\vkp$}{} and \texorpdfstring{$(\om,\omd)$}{}}
\begin{prop}\label{fluxvkp}
    We have the following estimate:
    \begin{align*}
        \|\vkp^{(3)}\|_{L^2_{sc}(\cuv)}\les 1,\qquad\quad \|(\om,\omd)^{(4)}\|_{L^2_{sc}(\cuv)}\les\af.
    \end{align*}
\end{prop}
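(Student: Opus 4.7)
The plan is to prove Proposition \ref{fluxvkp} in direct parallel with Proposition \ref{fluxvkpb}, but using the $e_3$ transport equation for $\vkp$ from Proposition \ref{null},
\begin{align*}
\nabs_3\vkp+\tfrac{1}{2}\trchb\,\vkp=\tfrac{1}{4}\trchb\,\b+\tfrac{|u|}{ab}(\Gab\c\Gab)^{(1)},
\end{align*}
and invoking Lemma \ref{3evolution} in place of the $\nabs_4$ evolution Lemma \ref{evolution}. I first commute with $\dkb^i$ for $i\le 3$ using Proposition \ref{commutation} to obtain
\begin{align*}
\nabs_3(\dkb^i\vkp)+\tfrac{1+i}{2}\trchb(\dkb^i\vkp)=\trchb\,\b^{(i)}+\tfrac{|u|}{ab}(\Gab\c\Gab)^{(i+1)},
\end{align*}
and apply Lemma \ref{3evolution} with the appropriate signature, which produces a pointwise-in-$u$ weighted $L^2_{sc}(S_{u,\ub})$ bound. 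Squaring, applying Cauchy--Schwarz in $u'$, and integrating in $\ub$ then converts this into the desired $L^2_{sc}(\cuv)$ flux norm.

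The main terms to control on the right-hand side are three: (i) the initial data contribution at $u_\infty$, bounded by $\II_{(0)}\les 1$; (ii) the semilinear term $\trchb\,\b^{(3)}$, which I handle using $\|\trchb\|_{L^\infty_{sc}}\les |u|^2/a$ together with the flux estimate $\|\b^{(3)}\|_{L^2_{sc}(\cuv)}\les 1$ from Proposition \ref{estbr}; and (iii) the nonlinear bulk $\tfrac{|u|}{ab}(\Gab\c\Gab)^{(4)}$, which is treated exactly as in Proposition \ref{fluxvkpb} via the inequality \eqref{GabGab} of Theorem \ref{Junkmanthm}, contributing a factor $\les b^{-1}\les 1$. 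These combine to give $\|\vkp^{(3)}\|_{L^2_{sc}(\cuv)}\les 1$.

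The top-order bound on $(\om,\omd)$ then follows by inverting the elliptic system in the defining renormalization \eqref{renorvkp},
\begin{align*}
\sld_1^*(-\om,\omd)=\vkp+\tfrac{1}{2}\b,
\end{align*}
applying the Hodge-type elliptic estimate of Proposition \ref{ellipticLp} fiberwise on each $S_{u,\ub}$, using Proposition \ref{estbr} to control $\b^{(3)}$, and using Proposition \ref{estom} to control the zero-order part of $(\om,\omd)$. Integrating in $\ub$ yields $\|(\om,\omd)^{(4)}\|_{L^2_{sc}(\cuv)}\les\af$, completing the proof.

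The main obstacle is the semilinear term $\trchb\,\b$, which has no analog in the $\nabs_4\vkpb$ equation and therefore requires a genuine coupling to the previously obtained curvature flux $\|\b^{(3)}\|_{L^2_{sc}(\cuv)}\les 1$. Correctly balancing the $|u|$--weights between the $\ucuv$-type integration produced by Lemma \ref{3evolution} and the $\cuv$-type norm appearing on the output side will require careful signature bookkeeping, but once the Cauchy--Schwarz in $u'$ is arranged so that the inner $u'$--integral reproduces the $\cuv$-flux of $\b^{(3)}$ with the correct $|u|^2/a$ weight against $\trchb$, all remaining estimates are standard and analogous to those already performed in Proposition \ref{fluxvkpb}.
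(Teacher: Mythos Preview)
Your proposal is correct and follows essentially the same approach as the paper's proof: commute the $\nabs_3\vkp$ equation with $\dkb^i$, apply Lemma \ref{3evolution}, square and integrate in $\ub$, control the $\trchb\,\b^{(3)}$ term via Proposition \ref{estbr} after Cauchy--Schwarz in $u'$, and handle the nonlinear bulk through Theorem \ref{Junkmanthm}; then recover $(\om,\omd)^{(4)}$ from the Hodge system \eqref{renorvkp} using Proposition \ref{ellipticLp} together with Propositions \ref{estbr} and \ref{estom}. The only cosmetic discrepancy is that the paper writes the commuted coefficient as $\tfrac{1}{2}\trchb$ rather than your $\tfrac{1+i}{2}\trchb$, but your version is consistent with the commutation conventions used elsewhere in the paper and makes no difference to the estimate.
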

\begin{proof}
    We have from Proposition \ref{null}
    \begin{align*}
        \nabs_3\vkp+\frac{1}{2}\trchb\vkp=\frac{1}{4}\trchb\,\b+\frac{|u|}{ab}(\Gab\c\Gab)^{(1)}.
    \end{align*}
    Differentiating it by $\dkb^i$ and applying Proposition \ref{commutation}, we obtain
    \begin{align*}
        \nabs_3(\dkb^i\vkp)+\frac{1}{2}\trchb(\dkb^i\vkp)=\trchb\,\b^{(i)}+\frac{|u|}{ab}(\Gab\c\Gab)^{(i+1)}.
    \end{align*}
    Applying Lemma \ref{3evolution} and Proposition \ref{Holder}, we deduce for $i\leq 3$
    \begin{align*}
        |u|^{-1}\|\dkb^i\vkp\|_{L^2_{sc}(S_{u,\ub})}&\les|u_\infty|^{-1}\|\dkb^i\vkp\|_{L^2_{sc}(S_{u,\ub})}+\int_{u_\infty}^u\frac{a}{|u'|^4}\|\trchb\|_{L^\infty_{sc}(S_{u',\ub})}\|\b^{(3)}\|_{L^2_{sc}(S_{u',\ub})}du'\\
        &+\int_{u_\infty}^u\frac{1}{b|u'|^2}\|(\Gab\c\Gab)^{(4)}\|_{L^2_{sc}(S_{u',\ub})}du',
    \end{align*}
    which implies from Proposition \ref{estbr} and Theorem \ref{Junkmanthm} that for $i\leq 3$
    \begin{align*}
        |u|^{-2}\|\dkb^i\vkp\|^2_{L^2_{sc}(\cuv)}&\les|u_\infty|^{-2}\|\dkb^i\vkp\|^2_{L^2_{sc}(\cuvi)}+\int_0^\ub\left(\int_{u_\infty}^u\frac{1}{|u'|^2}\|\b^{(3)}\|^2_{L^2_{sc}(S_{u',\ub'})}du'\right)^2d\ub'\\
        &+\int_0^\ub\left(\int_{u_\infty}^u\frac{1}{b|u'|^2}\|(\Gab\c\Gab)^{(4)}\|_{L^2_{sc}(S_{u',\ub'})}du'\right)^2d\ub'\\
        &\les\frac{1}{|u|^2}+\frac{1}{|u|}\int_{u_\infty}^u\int_0^\ub\frac{1}{|u'|^2}\|\b^{(3)}\|^2_{L^2_{sc}(S_{u',\ub'})}d\ub'du'\\
        &+\frac{1}{b^2|u|^3}\int_0^\ub\int_{u_\infty}^u\|(\Gab\c\Gab)^{(4)}\|_{L^2_{sc}(S_{u',\ub'})}^2du'd\ub'\\
        &\les\frac{1}{|u|^2}+\frac{b}{|u|^3}+\frac{ab}{b^2|u|^3}\les\frac{1}{|u|^2}.
    \end{align*}
    Combining with Proposition \ref{ellipticLp}, we obtain
    \begin{align}\label{vkpest}
        \|\vkp^{(3)}\|_{L^2_{sc}(\cuv)}\les 1.
    \end{align}
    Next, we recall from \eqref{renorvkp}
    \begin{align*}
        \sld_1^*(-\om,\omd):=\vkp+\frac{1}{2}\b.
    \end{align*}
    Applying Propositions \ref{ellipticLp}, we obtain
    \begin{align*}
        \|(\om,\omd)^{(4)}\|_{L^2_{sc}(\cuv)}&\les\af\|\vkp^{(3)}\|_{L^2_{sc}(\cuv)}+\af\|\b^{(3)}\|_{L^2_{sc}(\cuv)}+\|(\om,\omd)\|_{L^2_{sc}(\cuv)}\\
        &\les\af,
    \end{align*}
    where we used \eqref{vkpest} and Propositions \ref{estbr} and \ref{estom} at the last step. This concludes the proof of Proposition \ref{fluxvkpb}.
\end{proof}
Combining \eqref{estOG} and Propositions \ref{fluxhch}--\ref{fluxvkpb}, this concludes the proof of Theorem \ref{M3}.
\section{Conclusion}\label{sectrapping}
The goal of this section is to prove the following theorem.
\begin{thm}\label{thmtrappedsurface}
    Under the hypothesis of Theorem \ref{maintheorem}, we assume in addition that
    \begin{align}
        \int_0^1|u_\infty|^2\left(|\hch|^2+|\bF|^2+|\Psi_4|^2\right)(u_\infty,\ub',x^1,x^2)d\ub'&\geq a,\qquad \forall \;(x^1,x^2),\label{geqathm}\\
        \left|\int_{H_{u_\infty}^{(0,1)}}\Om\Im(\psi\Psi_4^\dag)\right|&\geq a.\label{J4geqa}
    \end{align}
    Then, we have:
    \begin{itemize}
        \item the $2$--sphere $S_{-\frac{a}{4},1}$ is a trapped surface,
        \item the Hawking mass of the sphere $S_{u_\infty,1}$ satisfies
        \begin{equation}\label{mass}
            m(S_{u_\infty,1})\simeq a,
        \end{equation}
        \item the electric charge of the sphere $S_{u_\infty,1}$ satisfies
    \begin{equation}\label{charge}
        |Q(S_{u_\infty,1})|\simeq\ef a.
    \end{equation}
    \end{itemize}
\end{thm}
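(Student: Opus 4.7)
The plan is to prove the three conclusions separately, all exploiting the Raychaudhuri equation for $\trch$, its counterpart for $\rhoF$, and the detailed bounds of Theorem \ref{maintheorem}.

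\textbf{Step 1: Initial foliation is free of trapped surfaces.} I first work on $H_{u_\infty}$. From the initial data assumption and the signature table, one has $|\trch|^2, |\hch|^2, |\bF|^2, |\Psi_4|^2, |\om\trch|\les a/|u_\infty|^2$. Thus Raychaudhuri's equation
\[
\nabs_4\trch+\frac{1}{2}(\trch)^2=-|\hch|^2-2\om\trch-2|\Psi_4|^2-2|\bF|^2
\]
gives $|e_4\trch|\les a/|u_\infty|^2$, so $|\trch(u_\infty,\ub)-\trch(u_\infty,0)|\leq Ca/|u_\infty|^2$. Since $\trch(u_\infty,0)=2/|u_\infty|$ and $|u_\infty|\gg a$, I conclude $\trch(u_\infty,\ub)\geq 1/|u_\infty|>0$ on all of $H_{u_\infty}^{(0,1)}$, confirming that no sphere $S_{u_\infty,\ub}$ is trapped.

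\textbf{Step 2: Propagation of the lower bound.} Define $Z:=|u|^2(|\hch|^2+|\bF|^2+|\Psi_4|^2)$. From the null structure equation $\nabs_3\hch+\frac{1}{2}\trchb\,\hch=\nabs\hot\eta+\frac{|u|}{a}\Gab\c\Gab$ (Proposition \ref{null}) together with the analogous equations for $\bF$ and $\Psi_4$ in Propositions \ref{Maxwell} and \ref{wave}, and using the scale-invariant $L^\infty$ bounds of Theorem \ref{maintheorem} to control everything on the right-hand side, I estimate
\[
\left|\nabs_3 Z\right|\les\frac{a^{3/2}}{|u|^2}+\frac{\ef a^2}{|u|^2}.
\]
Integrating in $u$ from $u_\infty$ to any $u\in[u_\infty,-a/4]$ and invoking \eqref{geqathm} (which gives $Z(u_\infty,\ub,\cdot)\geq a$ pointwise), I obtain
\[
|\hch|^2+|\bF|^2+|\Psi_4|^2\geq\frac{3a}{4|u|^2}\qquad\text{in }V_*,
\]
for $a$ large enough and $\ef<e_0$ small enough.

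\textbf{Step 3: Formation of the trapped surface.} I now integrate Raychaudhuri along $e_4$ from $S_{-a/4,0}$ to $S_{-a/4,1}$. Using the lower bound of Step 2, the remaining terms in $\nabs_4\trch$ are bounded by $C/|u|^2\les 16C/a^2$ and are therefore negligible. On $\Hb_0$ (Minkowskian) we have $\trch(-a/4,0)=2/|u|=8/a$. Hence
\[
\trch\Bigl(-\tfrac a4,1\Bigr)\leq\frac{8}{a}-\int_0^1\frac{3a}{4|u|^2}d\ub'+\text{small}\leq\frac{8}{a}-\frac{12}{a}+o(1/a)<0.
\]
Since $\trchb(-a/4,1)<0$ by the estimates in Theorem \ref{maintheorem} (with $\trchb=-2/(\Om|u|)+\trchbt$ and $\trchbt$ small), $S_{-a/4,1}$ is trapped.

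\textbf{Step 4: Hawking mass and charge.} For the Hawking mass, I use $\nabs_3(\trch\trchb)$ derived from the Raychaudhuri equations combined with \eqref{Hawkingdf}: integrating from $u_\infty$ where $m\simeq 0$ along $\Hb_1$, the propagation gives $\int_{S_{u_\infty,1}}(-\trch\trchb)\gtrsim a/|u_\infty|^2\cdot r^2$, which yields $m(S_{u_\infty,1})\simeq a$ after using $r\simeq |u_\infty|$ from Proposition \ref{estr}. For the charge, I multiply the Maxwell equation $\nabs_4\rhoF+\trch\rhoF=\sdivs\bF+(\ze+\etab)\c\bF+2\ef\Im(\psi\Psi_4^\dag)$ by $\Om$, use $\Om e_4=\pr_\ub$ and Lemma \ref{dint} so that $\int_S\Om\nabs_4\rhoF+\Om\trch\rhoF\, d\slg=e_4(\int_S\rhoF)$ after noting $\sdivs(\Om\bF)$ integrates to zero on $S$. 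This yields
\[
\int_{S_{u_\infty,1}}\rhoF\, d\slg=2\int_{H_{u_\infty}^{(0,1)}}\ef\,\Om\,\Im(\psi\Psi_4^\dag).
\]
By \eqref{J4geqa}, the right-hand side has absolute value $\geq 2\ef a$, while the matching upper bound follows from the $L^\infty$ estimates on $\psi$ and $\Psi_4$. Dividing by $4\pi$ gives $|Q(S_{u_\infty,1})|\simeq\ef a$.

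\textbf{Main obstacle.} The delicate point is Step 2: propagating the pointwise lower bound of $Z$ from $H_{u_\infty}$ throughout $V_*$ while absorbing the error terms. The contribution of the charging source (terms like $\ef\rhoF\psi$ in $\nabs_3\bF$, $\ef\bF\psi$ in $\nabs_4\Psisl$, etc.) is critical: these behave an extra factor of $\af$ worse than in the uncharged case, so it is essential that $\ef<e_0$ with $e_0$ small enough that $\ef a^{1/2}\ll a^{1/2}$. Getting the constants right so that the loss $\ef a^2/|u|^2$ after integration remains below $a/(4|u|^2)$ is the tight part of the argument.
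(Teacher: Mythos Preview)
Your overall strategy for the trapped surface and the charge is right and matches the paper, but there are two genuine problems.

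\textbf{Step 2 misreads the hypothesis.} Condition \eqref{geqathm} is an \emph{integral} lower bound in $\ub$,
\[
\int_0^1|u_\infty|^2\bigl(|\hch|^2+|\bF|^2+|\Psi_4|^2\bigr)(u_\infty,\ub',x^1,x^2)\,d\ub'\geq a,
\]
not the pointwise statement $Z(u_\infty,\ub,\cdot)\geq a$ you invoke. Consequently the pointwise lower bound $|\hch|^2+|\bF|^2+|\Psi_4|^2\geq \tfrac{3a}{4|u|^2}$ in $V_*$ is false in general. The paper propagates $\int_0^1 Z(u,\ub',\cdot)\,d\ub'$ in $u$: one first bounds $\bigl|\pr_u(|u|^2|\hch|^2)\bigr|$ etc.\ (being careful that $\Om e_3=\pr_u+\bbb^A\pr_A$, so the angular drift must also be controlled), integrates in $u$, and only \emph{then} integrates in $\ub'$ to conclude $\int_0^1|u|^2(|\hch|^2+|\bF|^2+|\Psi_4|^2)\,d\ub'\geq \tfrac{3a}{4}$. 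Your Step 3 actually only needs this integrated bound, so the numbers there survive, but the intermediate pointwise claim must be replaced.

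\textbf{Step 4 for the mass is in the wrong direction.} You propose computing $\nabs_3(\trch\trchb)$ and integrating along $\Hb_1$ from $u_\infty$ ``where $m\simeq 0$''. But $S_{u_\infty,1}\in\Hb_1$ is precisely the sphere whose mass you want, and it is not $\simeq 0$; there is no sphere on $\Hb_1$ where $m$ is known a priori. The paper instead uses the $e_4$ direction: along $H_{u_\infty}$, one has $m(S_{u_\infty,0})=0$ from the Minkowskian data on $\Hb_0$, and a direct computation from Lemma \ref{dint} and the Raychaudhuri equations gives
\[
\Bigl|\Om e_4(m)-\tfrac{1}{16\pi}\int_S\bigl(|\hch|^2+\S_{44}\bigr)\Bigr|\les 1.
\]
Integrating in $\ub$ from $0$ to $1$ and using \eqref{geqathm} (via $\S_{44}=2|\Psi_4|^2+2|\bF|^2$) yields $m(S_{u_\infty,1})\gtrsim a$; the upper bound comes directly from the pointwise estimates of Theorem \ref{maintheorem}. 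Your charge argument is correct and coincides with the paper's.
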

\begin{rk}
    Theorem \ref{thmtrappedsurface} shows that a trapped surface forms in evolution, although the initial data is free of the trapped surface. Moreover, \eqref{mass} and \eqref{charge} imply that the charge-to-mass ratio of the sphere $S_{u_\infty,1}$ has size $\ef$, which is bounded above by $e_0$.
\end{rk}
The proof of Theorem \ref{thmtrappedsurface} is divided into three parts, which are proved respectively in Propositions \ref{trappedsurface}--\ref{charging}.
\subsection{Formation of trapped surface}
\begin{prop}\label{trappedsurface}
Under the hypothesis of Theorem \ref{thmtrappedsurface}, $S_{-\frac{a}{4},1}$ is a trapped surface.
\end{prop}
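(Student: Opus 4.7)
The plan is to verify both $\trch < 0$ and $\trchb < 0$ on $S_{-\frac{a}{4},1}$. The inequality $\trchb < 0$ is immediate from the estimates of Theorem \ref{maintheorem}: since $\trchbt$ has size at most $a/|u|^3$ while the principal Minkowskian part $-\frac{2}{\Omega|u|}$ is of order $1/|u|$, at $u=-\frac{a}{4}$ the ratio of the correction to the principal term is $O(a^{-1})$, so $\trchb \simeq -8/a < 0$ on $S_{-a/4,1}$. The nontrivial task is to show $\trch(-\frac{a}{4},1) < 0$.

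The first main step is to propagate the lower bound \eqref{geqathm} from $H_{u_\infty}$ into the bulk region $V_*$. Using the null structure equation $\nabs_3\hch + \frac{1}{2}\trchb\,\hch = \nabs\hot\eta + \frac{|u|}{a}\Gab\cdot\Gab$ from Proposition \ref{null}, together with the $e_3$-transport equations for $\bF$ and $\Psi_4$ in Propositions \ref{Maxwellequations} and \ref{waveequation}, I would compute $\nabs_3(|u|^2(|\hch|^2 + |\bF|^2 + |\Psi_4|^2))$. The estimates of Theorem \ref{maintheorem} bound the right-hand side by a quantity of size $(a^{3/2} + \ef a^2)/|u|^2$. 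Integrating along the $e_3$ direction from $u_\infty$ to any $u \in [u_\infty, -\frac{a}{4}]$ and using $\int_{u_\infty}^{-a/4}|u'|^{-2}\, du' \lesssim a^{-1}$, the total correction is $O(a^{-1/2} + \ef)$, which is much smaller than $1$. Combined with the initial lower bound \eqref{geqathm}, this yields the pointwise estimate
\begin{equation*}
|\hch|^2 + |\bF|^2 + |\Psi_4|^2 \geq \frac{3a}{4|u|^2} \quad \text{throughout } V_*,
\end{equation*}
provided $\ef \ll 1$ and $a \gg 1$.

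Next I would apply the Raychaudhuri equation from Proposition \ref{nulles},
\begin{equation*}
\nabs_4\trch + \tfrac{1}{2}(\trch)^2 = -|\hch|^2 - 2\om\trch - \S_{44},
\end{equation*}
where from Lemma \ref{Ricciexpression}, $\S_{44} = \Ric_{44} = 2(|\Psi_4|^2 + |\bF|^2)$. On $H_{-a/4}$, the terms $(\trch)^2$ and $\om\trch$ are controlled by $a^{-2}$ via Theorem \ref{maintheorem} and are therefore negligible compared to the principal $a/|u|^2 = 16/a$ contribution from the lower bound. The initial value $\trch(-\frac{a}{4}, 0) = \frac{2}{|u|} = \frac{8}{a}$ follows from the Minkowskian data on $\Hb_0$. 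Integrating the inequality $\Omega^{-1}\partial_{\ub}\trch \leq -(|\hch|^2+|\bF|^2+|\Psi_4|^2) + \text{l.o.t.}$ from $\ub=0$ to $\ub=1$ and using the lower bound yields $\trch(-\frac{a}{4},1) \leq \frac{8}{a} - \frac{12}{a} = -\frac{4}{a} < 0$.

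The main obstacle is the propagation step, and specifically confirming that the bulk error terms generated by the coupling constant $\ef$ (which have abnormal size due to $\Psi_4, \bF \in \Gab$ rather than $\Gag$, as highlighted in the overview) stay strictly below the margin $a/(4|u|^2)$ that we can afford to lose from the initial lower bound. This is where the smallness assumption $\ef < e_0$ with $e_0$ independent of $a$ is essential: it guarantees that the $\ef a^2/|u|^2$ contribution, once integrated in $u$, produces only an $O(\ef)$ correction rather than one that scales with $a$, which in turn is what forces $\ef$ to be small uniformly in $a$ in the main theorem.
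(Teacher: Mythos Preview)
Your approach is essentially the paper's: propagate the lower bound along $e_3$ using the transport equations for $\hch$, $\bF$, $\Psi_4$, then feed it into Raychaudhuri on $H_{-a/4}$. One correction: the hypothesis \eqref{geqathm} is an integral bound in $\ub'$, not a pointwise one, so what you can actually propagate is $\int_0^1 |u|^2\bigl(|\hch|^2+|\bF|^2+|\Psi_4|^2\bigr)\,d\ub' \geq \tfrac{3a}{4}$, not a pointwise lower bound throughout $V_*$; since your Raychaudhuri step only uses the $\ub$-integral, the arithmetic $\tfrac{8}{a}-\tfrac{12}{a}=-\tfrac{4}{a}$ is unaffected. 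The paper also handles the shift $\bbb^A\partial_A$ explicitly when passing from $e_3$ to $\partial_u$, and writes Raychaudhuri in the form $\partial_\ub(\Om^{-1}\trch)=-\tfrac12(\trch)^2-|\hch|^2-\S_{44}$ so that the $\om\trch$ term is absorbed exactly via \eqref{nullidentities}; your treatment of it as an $O(a^{-2})$ error is equally valid.
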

\begin{proof}
    We have from \eqref{basicnull}
    \begin{align}\label{nab3hch}
        \nabs_3\hch+\frac{1}{2}\trchb\,\hch=\nabs\hot\eta+2\omb\hch-\frac{1}{2}\trch\,\hchb+\eta\hot\eta+\Sslh.
    \end{align}
    Contracting it with $\hch$, we infer
    \begin{align*}
    e_3(|\hch|^2)+\trchb|\hch|^2=2\hch\c\nabs\hot\eta+4\omb|\hch|^2-\trch(\hch\c\hchb)+2\hch\c\eta\hot\eta+2\hch\c\Sslh.
    \end{align*}
    Thus, we obtain
    \begin{align*}
e_3(|u|^2|\hch|^2)=|u|^2\left(-\trchbt|\hch|^2+2\hch\c\nabs\hot\eta+4\omb|\hch|^2-\trch(\hch\c\hchb)+2\hch\c\eta\hot\eta+2\hch\c\Sslh\right).
    \end{align*}
    Applying Theorem \ref{maintheorem}, we have
    \begin{align*}
        \left|e_3(|u|^2|\hch|^2)\right|\les\frac{a}{|u|^2}+\frac{a^\frac{5}{2}}{|u|^3}.
    \end{align*}
    Recalling that
    \begin{align}\label{Ome3bbb}
        \Om e_3=\pr_u+\bbb^A\pr_A,
    \end{align}
    we deduce from Theorem \ref{maintheorem}
    \begin{align}\label{pruhch}
\left|\pr_u(|u|^2|\hch|^2)\right|\les\left|\bbb^A\pr_A(|u|^2|\hch|^2)\right|+\left|e_3(|u|^2|\hch|^2)\right|\les\frac{a^\frac{3}{2}}{|u|^2}+\frac{a}{|u|^2}+\frac{a^\frac{5}{2}}{|u|^3}\les\frac{a^\frac{3}{2}}{|u|^2}.
    \end{align}
    We also have from Proposition \ref{Maxwellequations}
    \begin{align*}
        \nabs_3\bF+\frac{1}{2}\trchb\bF=\nabs\rhoF-\dual\nabs\siF+2\omb\bF+2\rhoF\eta-2\siF\dual\eta+\hch\c\bbF -2\ef\Im\left(\psi\Psisl^\dag\right).
    \end{align*}
    Contracting it with $\bF$ and multiplying it by $|u|^2$, we infer
    \begin{align*}
        e_3\left(|u|^2|\bF|^2\right)&=-|u|^2\trchbt|\bF|^2+2|u|^2\bF\c\left(2\omb\bF+\nabs\rhoF-\dual\nabs\siF\right)\\
        &+2|u|^2\bF\c\left(2\rhoF\eta-2\siF\dual\eta+\hch\c\bbF -2\ef\Im\left(\psi\Psisl^\dag\right)\right).
    \end{align*}
    Applying Theorem \ref{maintheorem} and \eqref{Ome3bbb}, we obtain
\begin{align}\label{prubF}
\left|\pr_u\left(|u|^2|\bF|^2\right)\right|\les\left|e_3\left(|u|^2|\bF|^2\right)\right|+\left|\bbb^A\pr_A\left(|u|^2|\bF|^2\right)\right|\les\frac{a^\frac{3}{2}}{|u|^2}+\frac{\ef a^2}{|u|^2}.
\end{align}
Similarly, we have from Proposition \ref{waveequation}
\begin{align*}
    \nabs_3(\Psi_4)+\frac{1}{2}\trchb\Psi_4-2\omb\Psi_4+i\ef \Ub\Psi_4=\sdivs\Psisl+2\eta\c\Psisl+i\ef\Asl\c\Psisl-\frac{1}{2}\trch\Psi_3+i\ef\rhoF\psi,
\end{align*}
which implies
\begin{align*}
    e_3(|u|^2|\Psi_4|^2)&=(4\omb-\trchbt)|u|^2|\Psi_4|^2+2|u|^2\Psi_4^\dag\left(\sdivs\Psisl+2\eta\c\Psisl+i\ef\Asl\c\Psisl-\frac{1}{2}\trch\Psi_3+i\ef\rhoF\psi\right).
\end{align*}
Applying Theorem \ref{maintheorem} and \eqref{Ome3bbb}, we obtain
\begin{align}\label{pruPsi4}
\left|\pr_u\left(|u|^2|\Psi_4|^2\right)\right|\les\left|e_3\left(|u|^2|\Psi_4|^2\right)\right|+\left|\bbb^A\pr_A\left(|u|^2|\Psi_4|^2\right)\right|\les\frac{a^\frac{3}{2}}{|u|^2}+\frac{\ef a^2}{|u|^2}.
\end{align}
Combining \eqref{pruhch}, \eqref{prubF} and \eqref{pruPsi4}, we obtain 
\begin{align*}
    \left|\pr_u\left(|\hch|^2+|\bF|^2+|\Psi_4|^2\right)\right|\les\frac{a^\frac{3}{2}}{|u|^2}+\frac{\ef a^2}{|u|^2},
\end{align*}
which implies for $a\gg 1$ and $\ef\ll 1$
\begin{align*}
    \left|\pr_u\left(|\hch|^2+|\bF|^2+|\Psi_4|^2\right)\right|\leq \frac{a^2}{16|u|^2}.
\end{align*}
Integrating it from $u_\infty$ to $u$, we deduce
\begin{align*}
    |u|^2\left(|\hch|^2+|\bF|^2+|\Psi_4|^2\right)\geq|u_\infty|^2\left(|\hch|^2+|\bF|^2+|\Psi_4|^2\right)\bigg|_{H_{\infty}}-\frac{a^2}{16|u|}.
\end{align*}
Combining with \eqref{geqathm}, we obtain for $|u|\geq \frac{a}{4}$
\begin{align*}
    &\quad\,\int_0^1 |u|^2\left(|\hch|^2+|\bF|^2+|\Psi_4|^2\right)(u,\ub')d\ub'\\
    &\geq \int_0^1|u_\infty|^2\left(|\hch|^2+|\bF|^2+|\Psi_4|^2\right)(u_\infty,\ub')d\ub'-\frac{a^2}{16|u|}\\
    &\geq a-\frac{a^2}{16|u|}\\
    &\geq\frac{3a}{4}.
\end{align*}
Taking $u=-\frac{a}{4}$, we have
    \begin{align*}
        \int_0^1\left(|\hch|^2+|\bF|^2+|\Psi_4|^2\right)\left(-\frac{a}{4},\ub'\right)d\ub'\geq\frac{12}{a}.
    \end{align*}
    Finally, we have from Proposition \ref{nulles} and \eqref{nullidentities}
    \begin{align*}
        \pr_\ub(\Om^{-1}\trch)=-\frac{1}{2}(\trch)^2-|\hch|^2-\S_{44}\leq -|\hch|^2-|\bF|^2-|\Psi_4|^2.
    \end{align*}
    Recalling that we have on $\Hb_0$
    \begin{align*}
        \Om^{-1}\trch\left(-\frac{a}{4},0\right)=\frac{8}{a},
    \end{align*}
    we infer
    \begin{align}\label{trchne}
        \Om^{-1}\trch\left(-\frac{a}{4},1\right)\leq\frac{8}{a}-\int_0^1\left(|\hch|^2+|\bF|^2+|\Psi_4|^2\right)\left(-\frac{a}{4},\ub'\right)d\ub'\leq-\frac{4}{a}<0.
    \end{align}
    Next, we have from Theorem \ref{maintheorem} that
    \begin{align*}
        \Om\trchb+\frac{2}{|u|}\les \frac{a}{|u|^3}\les a^{-2},
    \end{align*}
    which implies for $a\gg 1$
    \begin{align*}
        \Om\trchb\leq -\frac{2}{|u|}+\frac{4}{a}.
    \end{align*}
    Taking $u=-\frac{a}{4}$, we obtain
    \begin{align}\label{trchbne}
        \Om\trchb\left(-\frac{a}{4},1\right)\leq -\frac{8}{a}+\frac{4}{a}=-\frac{4}{a}<0.
    \end{align}
    Combining \eqref{trchne} and \eqref{trchbne}, we conclude that $S_{-\frac{a}{4},1}$ is a trapped surface. This concludes the proof of Proposition \ref{trappedsurface}.
\end{proof}
\subsection{Mass increasing}
\begin{prop}\label{massincreasing}
Under the hypothesis of Theorem \ref{thmtrappedsurface}, we have
    \begin{equation}
        m(S_{u_\infty,1})\simeq a.
    \end{equation}
\end{prop}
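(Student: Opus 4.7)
The strategy is to integrate the evolution of the Hawking mass along the initial outgoing null hypersurface $H_{u_\infty}^{(0,1)}$, starting from $m(S_{u_\infty,0})=0$, which holds because the data on $\Hb_0$ is Minkowskian. In view of $r(S_{u_\infty,1})\simeq |u_\infty|$ (Proposition \ref{estr}), it suffices to show that $\frac{2m(S_{u_\infty,1})}{r(S_{u_\infty,1})} \simeq \frac{a}{|u_\infty|}$.

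Using Definition \ref{Hawkingmass}, Lemma \ref{dint}, and the identity $\pr_\ub=\Om e_4$, I would differentiate $2m/r$ in $\ub$. The two main inputs are the Raychaudhuri equation for $\nabs_4\trch$ and the transport equation for $\nabs_4\trchb$ from Proposition \ref{nulles}; these are combined with the Gauss equation \eqref{gauss} to eliminate $\rho$ and with the identities $\S_{44}=2|\Psi_4|^2+2|\bF|^2$ and $\Tr\S=\tfrac{1}{3}\R$ from Lemma \ref{Ricciexpression}. After algebraic rearrangement, the evolution takes the schematic form
\begin{equation*}
    \pr_\ub\!\left(\frac{2m}{r}\right) = \frac{1}{16\pi}\int_{S_{u_\infty,\ub}}\Om\,(-\trchb)\bigl(|\hch|^2+\S_{44}\bigr)\,d\slg + E,
\end{equation*}
where $E$ collects quadratic terms in the smaller Ricci coefficients ($\hchb$, $\etab$, $\om$, $\trcht$, $\log\Om$) and in the subleading matter components ($\Psi_3$, $\bbF$, $\Psisl$, $\rhoF$, $\siF$). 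By Theorem \ref{maintheorem}, each such contribution decays fast enough in $\ub$ on $H_{u_\infty}^{(0,1)}$ that, after integration, $E$ yields a correction of size $o(1)$ in $\frac{2m}{r}$ once $a\gg 1$ and $\ef\ll 1$.

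Integrating from $\ub=0$ to $\ub=1$ and using $\Om\simeq 1$, $\trchb\simeq -2/|u_\infty|$ on $H_{u_\infty}$, this reduces to
\begin{equation*}
    \frac{m(S_{u_\infty,1})}{|u_\infty|} \simeq \frac{1}{16\pi|u_\infty|}\int_0^1\int_{S_{u_\infty,\ub}}\bigl(|\hch|^2+|\Psi_4|^2+|\bF|^2\bigr)\,d\slg\,d\ub + o(1).
\end{equation*}
The pointwise lower bound \eqref{geqathm}, together with $|S_{u_\infty,\ub}|\simeq |u_\infty|^2$, then yields $m(S_{u_\infty,1})\gtrsim a$, while the a priori bounds $|\hch|,|\bF|,|\Psi_4|\lesssim a^{1/2}/|u_\infty|$ from Theorem \ref{maintheorem} furnish the matching upper bound $m(S_{u_\infty,1})\lesssim a$. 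The main obstacle I anticipate is the derivation of the clean mass-increasing formula above: the matter contributions carried by $\S_{44}$, $\S_{34}$ and $\Tr\S$ in the $\nabs_4\trchb$ equation must combine with $|\hch|^2$ at leading order without leaving a borderline residue, and one must verify term-by-term---paying particular attention to the abnormally large matter components $\rhoF,\siF,\Psisl$ (each of size $\simeq a/|u|^2$)---that the remaining pieces decay fast enough in $\ub$ to be absorbed into $E$.
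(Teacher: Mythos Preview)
Your approach is essentially the same as the paper's: both differentiate the Hawking mass along $e_4$ on $H_{u_\infty}$, use the Raychaudhuri equations from Proposition~\ref{nulles} to identify the leading term $|\hch|^2+\S_{44}=|\hch|^2+2|\Psi_4|^2+2|\bF|^2$, bound the remaining terms pointwise via Theorem~\ref{maintheorem}, and then invoke \eqref{geqathm} for the lower bound. The paper differentiates $2m$ rather than $2m/r$ and obtains the upper bound directly from the algebraic identity $\frac{2m}{r}=\frac{1}{16\pi}\int_S\bigl[\tfrac{2}{r}(\trch-\tfrac{2}{r})+\trch(\trchb+\tfrac{2}{r})\bigr]d\slg$ instead of through the evolution formula, but these are cosmetic differences. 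Your worry about the ``abnormally large'' components $\rhoF,\siF,\Psisl$ is unfounded: they enter $E$ only through $\Tr\S$ and $\tr\Ssl$, and since $|\Psisl|^2,(\rhoF)^2,(\siF)^2\lesssim a^2/|u_\infty|^4$, their sphere integrals are $\lesssim a^2/|u_\infty|^2\ll 1$ on $H_{u_\infty}$---no use of the Gauss equation is needed to eliminate $\rho$, the direct bound $|\rho|\lesssim a/|u|^3$ already suffices.
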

\begin{proof}
We have from \eqref{Hawkingdf}
\begin{align*}
    \frac{2m}{r}&=\frac{1}{4\pi}\int_S \frac{1}{r^2}d\slg+\frac{1}{16\pi}\int_S \trch\trchb d\slg\\
    &=\frac{1}{16\pi}\int_S\left(\trch\trchb+\frac{4}{r^2}\right) d\slg\\
    &=\frac{1}{16\pi}\int_S \frac{2}{r}\left(\trch-\frac{2}{r}\right)+\trch\left(\trchb+\frac{2}{r}\right)d\slg,
\end{align*}
which implies from Theorem \ref{maintheorem}
\begin{align*}
    m\les \int_S\left(\left|\trch-\frac{2}{r}\right|+\left|\trchb+\frac{2}{r}\right|\right)d\slg\les a.
\end{align*}
    Next, recalling from \eqref{basicnull} that
    \begin{align*}
        \nabs_4\trch&=-\frac{1}{2}(\trch)^2-|\hch|^2-2\omb\trchb-\S_{44},\\
        \nabs_4\trchb&=-\frac{1}{2}\trch\trchb+2\om\trchb+2\rho-\hch\c\hchb+2\sdivs\etab+2|\etab|^2+\Tr\S,
    \end{align*}
    we have
    \begin{align*}
        \nabs_4(\trch\trchb)&=-\trchb\trch^2-\trchb|\hch|^2-2\omb\trchb^2-\trchb\,\S_{44}+2\om\trch\trchb\\
        &+\trch\left(2\rho-\hch\c\hchb+2\sdivs\etab+2|\etab|^2\right)+\trch\,\Tr\S.
    \end{align*}
    Thus, we obtain
    \begin{align*}
        \Om e_4\int_S(\trch\trchb)d\slg&=\int_S\left(-\Om\trchb|\hch|^2-2\Om\omb(\trchb)^2-\Om\trchb\,\S_{44}+2\Om\om\trch\trchb\right)d\slg\\
        &+\int_S\Om\trch\left(2\rho-\hch\c\hchb+2\sdivs\etab+2|\etab|^2+\Tr\S\right)d\slg,
    \end{align*}
    which implies from Theorem \ref{maintheorem}
    \begin{align}\label{e4mest}
        \left|\Om e_4\left(\int_S\trch\trchb d\slg\right)-\frac{2}{r}\int_S\left(|\hch|^2+\S_{44}\right)d\slg\right|\les\frac{1}{|u|}.
    \end{align}
    Notice that we have from \eqref{Hawkingdf}
    \begin{align*}
        \Om e_4(2m)&=\Om e_4(r)\frac{2m}{r}+\frac{r}{16\pi}\Om e_4\left(\int_S\trch\trchb d\slg\right)\\
        &=\Om e_4(r)\frac{2m}{r}+\frac{1}{8\pi}\int_S\left(|\hch|^2+\S_{44}\right)d\slg\\
        &+\frac{r}{16\pi}\left(\Om e_4\left(\int_S\trch\trchb d\slg\right)-\frac{2}{r}\int_S\left(|\hch|^2+\S_{44}\right)d\slg\right),
    \end{align*}
    which implies from \eqref{e4mest}
    \begin{align*}
        \left|\Om e_4(m)-\frac{1}{16\pi}\int_S\left(|\hch|^2+\S_{44}\right)d\slg\right|\les 1.
    \end{align*}
    Hence, we infer\footnote{Here, we used the fact that $m=0$ on $\Hb_0$.}
    \begin{align*}
        \left|m(S_{u_\infty,1})-\frac{1}{16\pi}\int_0^1\int_{S_{u_\infty,\ub'}}\left(|\hch|^2+\S_{44}\right)d\slg d\ub'\right|\les 1.
    \end{align*}
    Combining with \eqref{geqathm}, we obtain
    \begin{align*}
        m(S_{u_\infty,1})\ges a.
    \end{align*}
    This concludes the proof of Proposition \ref{massincreasing}.
\end{proof}
\subsection{Charging process}
\begin{prop}\label{charging}
Under the hypothesis of Theorem \ref{thmtrappedsurface}, we have
    \begin{equation}
        |Q(S_{u_\infty,1})|\simeq \ef a.
    \end{equation}
\end{prop}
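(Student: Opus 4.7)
The plan is to derive a clean integral identity along $H_{u_\infty}^{(0,1)}$ that relates the total electric charge on $S_{u_\infty,1}$ to the electric current integral appearing in the hypothesis \eqref{J4geqa}, and then exploit the estimates of Theorem \ref{maintheorem} to obtain a matching upper bound.

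First, I will start from the Maxwell equation for $\rhoF$ in Proposition \ref{Maxwellequations}:
\begin{align*}
\nabs_4\rhoF+\trch\,\rhoF=\sdivs\bF+(\ze+\etab)\c\bF+2\ef\Im\left(\psi\Psi_4^\dag\right),
\end{align*}
multiply by $\Om$ and apply the integral formula of Lemma \ref{dint} with $f=\rhoF$. Using $\Om e_4=\pr_\ub$ and the null identity $\ze+\etab=\nabs\log\Om$ from \eqref{nullidentities}, the bulk terms reorganize as
\begin{align*}
\Om\bigl(\sdivs\bF+(\ze+\etab)\c\bF\bigr)=\sdivs(\Om\bF),
\end{align*}
so that Stokes' theorem on $S_{u_\infty,\ub}$ eliminates the electromagnetic flux contribution. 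This yields the exact identity
\begin{align*}
\pr_\ub\left(\int_{S_{u_\infty,\ub}}\rhoF\,d\slg\right)=2\ef\int_{S_{u_\infty,\ub}}\Im\left(\psi\,\Om\,\Psi_4^\dag\right)\,d\slg.
\end{align*}

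Second, since the initial data along $\Hb_0$ is Minkowskian, one has $\rhoF|_{S_{u_\infty,0}}=0$. Integrating the identity from $\ub=0$ to $\ub=1$ and recalling that the volume form on $H_{u_\infty}$ is $d\slg\,d\ub$, I obtain
\begin{align*}
\int_{S_{u_\infty,1}}\rhoF\,d\slg=2\ef\int_{H_{u_\infty}^{(0,1)}}\Im\left(\psi\,\Om\,\Psi_4^\dag\right).
\end{align*}
Then, by Definition \ref{dfcharge} of the electric charge together with the lower bound assumption \eqref{J4geqa}, I deduce immediately
\begin{align*}
|Q(S_{u_\infty,1})|=\frac{1}{4\pi}\left|\int_{S_{u_\infty,1}}\rhoF\,d\slg\right|\geq\frac{\ef\,a}{2\pi},
\end{align*}
which provides the desired lower bound $|Q(S_{u_\infty,1})|\gtrsim\ef a$.

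Third, for the matching upper bound, I apply the pointwise bounds from Theorem \ref{maintheorem}, namely $|\psi|,|\Psi_4|\lesssim\af/|u_\infty|$ and $|\Om|\lesssim 1$, together with $|S_{u_\infty,\ub}|\simeq|u_\infty|^2$. These give
\begin{align*}
\left|\int_{H_{u_\infty}^{(0,1)}}\Im\left(\psi\,\Om\,\Psi_4^\dag\right)\right|\lesssim\int_0^1\int_{S_{u_\infty,\ub'}}\frac{a}{|u_\infty|^2}\,d\slg\,d\ub'\lesssim a,
\end{align*}
and combining this with the exact identity above yields $|Q(S_{u_\infty,1})|\lesssim\ef a$. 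The only subtlety I anticipate is the sign-preserving cancellation between the angular divergence $\Om\sdivs\bF$ and the $\Om(\ze+\etab)\c\bF$ term; once the $U=0$ gauge is used and $\ze+\etab=\nabs\log\Om$ is invoked, this step is purely algebraic, so the proof reduces essentially to the Stokes identity plus the pointwise decay bounds from Theorem \ref{maintheorem}.
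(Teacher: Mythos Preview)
Your proposal is correct and follows essentially the same route as the paper's proof: both derive the exact identity $\pr_\ub\bigl(\int_{S_{u_\infty,\ub}}\rhoF\,d\slg\bigr)=2\ef\int_{S_{u_\infty,\ub}}\Om\Im(\psi\Psi_4^\dag)\,d\slg$ from the Maxwell equation for $\rhoF$ and Lemma \ref{dint}, then invoke \eqref{J4geqa} for the lower bound and the pointwise estimates of Theorem \ref{maintheorem} for the upper bound. You are in fact slightly more explicit than the paper about why $\Om\sdivs\bF+\Om(\ze+\etab)\c\bF=\sdivs(\Om\bF)$ via $\ze+\etab=\nabs\log\Om$, whereas the paper simply writes the combined equation down.
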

\begin{proof}
    We have from Proposition \ref{Maxwellequations} and \eqref{nullidentities}
    \begin{align*}
    \Om\nabs_4\rhoF+\Om\trch\rhoF=\sdivs(\Om\bF)+2\ef\Im\left(\psi\Om\Psi_4^\dag\right).
    \end{align*}
    Applying Lemma \ref{dint}, we have
    \begin{align*}
    \pr_\ub\left(\int_{S_{u,\ub}}\rhoF d\slg\right)&=\int_{S_{u,\ub}} \left(\Om \nabs_4(\rhoF)+\Om\trch \rhoF\right)d\slg\\
    &=\int_{S_{u,\ub}} \left(\sdivs(\Om\bF)+2\ef\Im\left(\psi\Om\Psi_4^\dag\right)\right)d\slg\\
    &=2\ef\int_{S_{u,\ub}}\Om\Im\left(\psi\Psi_4^\dag\right)d\slg.
    \end{align*}
    Integrating it on $\cuvi$, we obtain
    \begin{align*}
        4\pi Q(S_{u_\infty,1})=2\ef\int_{\cuvi}\Om\Im\left(\psi\Psi_4^\dag\right).
    \end{align*}
    Recalling from Theorem \ref{maintheorem}
    \begin{align*}
        |\Om|\simeq 1,\qquad\quad|\psi|\les \frac{\af}{|u|},\qquad\quad|\Psi_4|\les \frac{\af}{|u|},
    \end{align*}
    we infer
    \begin{align*}
        |Q(S_{u_\infty,1})|\les \ef\int_{\cuvi}|\Om||\psi||\Psi_4|\les  \ef\int_0^1\int_{S_{u,\ub}}\frac{a}{|u|^2}d\slg d\ub'\les\ef a.
    \end{align*}
    We also have from \eqref{J4geqa}
    \begin{align*}
        |Q(S_{u_\infty},1)|\geq \frac{\ef}{2\pi}\left|\int_{H_{u_\infty}^{(0,1)}}\Om\Im\left(\psi\Psi_4^\dag\right)\right|\gtrsim \ef a.
    \end{align*}
    This concludes the proof of Theorem \ref{charging}.
\end{proof}
Combining Propositions \ref{trappedsurface}--\ref{charging}, this concludes the proof of Theorem \ref{thmtrappedsurface}.
\section{Scaling argument}\label{secscaling}
\subsection{Spacetime rescaling}\label{secscaling1}
Recall that throughout this paper, we use the coordinate system $x^\a := \{ u, \ub, x^1, x^2 \}$ adapted to the double null foliation, where $(x^1, x^2)$ are stereographic coordinates on $\mathbb{S}^2$. In these coordinates, we study the spacetime region
$$
u_{\infty}\leq u\leq-\frac{a}{4},\qquad\quad 0\leq\ub\leq 1.
$$
The Lorentzian metric $\g$ takes the following form:
$$
\g=-2\Om^2(du\otimes d\ub+d\ub\otimes du)+\slg_{AB}(dx^A-\bbb^A du)\otimes(dx^B-\bbb^B du).
$$
Following \cite{An}, we use a rescaled coordinate system ${x'}^{\a}:=\{u',\ub',{x'}^1,{x'}^2\}$, where
\begin{equation}\label{rescale}
u'=\de\c u,\qquad \ub'=\de\c \ub,\qquad {x'}^A:=\de\c x^A.
\end{equation}
Under the rescaling (\ref{rescale}), it follows
$$\g'({{x'}^\a})=\de^2\c\g(x^\a).$$ 
In $\{{x'}^{\a}\}$--coordinates, we let
$$
\g'({x'}^\a)=-2{\Om'}^2(du'\otimes d\ub'+d\ub'\otimes du')+\slg'_{A'B'}(d{x'}^A-{\bbb'}^A du')\otimes(d{x'}^B-{\bbb'}^Bdu').
$$
Comparing with
$$
\g(x^{\a})=-2\Om^2(du\otimes d\ub+d\ub\otimes du)+\slg_{AB}(d\th^A-\bbb^A du)\otimes(d\theta^B-\bbb^B du),
$$
we obtain
\begin{align*}
    d{x'}^{\a}&=\de\c d x^\a,\qquad\quad e'_\mu({x'}^{\a})=\de^{-1}e_\mu(x^\a),\\
    {\Om'}({x'}^{\a})&=\Om(x^{\a}), \qquad \slg'_{A'B'}({x'}^{\a})=\slg_{AB}(x^{\a}), \qquad {\bbb'}^A({x'}^{\a})=\bbb^A(x^{\a}).
\end{align*}
\subsection{Rescaled quantities}\label{secscaling2}
We have the following propositions.
\begin{prop}\label{Prop rescale 1}
For $\Gamma\in \{\hch, \trch, \hchb, \trchb, \eta, \etab, \zeta, \om,\omb\}$ written in two different coordinate systems $\{{x'}^{\a}\}$ and $\{x^{\a}\}$ related by \eqref{rescale}, we have
$$\Ga'({x'}^{\a})=\de^{-1}\c\Ga(x^{\a}).$$
\end{prop}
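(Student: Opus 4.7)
The plan is to reduce the claim to a direct computation based on two transformation rules: (i) the map $\phi: x^\alpha \mapsto x'^\alpha = \delta\cdot x^\alpha$ satisfies $\phi^*\g' = \delta^2\,\g$ (so we are really just rescaling the metric by a \emph{constant} conformal factor), and (ii) the null frame obeys $\phi^*e'_\mu = \delta^{-1} e_\mu$. Both are immediate from the rules listed in Section \ref{secscaling1}. Because $\delta$ is constant, the Levi-Civita connection is invariant under the conformal rescaling: the Christoffel symbols $\frac{1}{2}\g^{\rho\sigma}(\pa_\mu \g_{\nu\sigma}+\pa_\nu \g_{\mu\sigma}-\pa_\sigma \g_{\mu\nu})$ pick up canceling factors of $\delta^2$ in $\g^{-1}$ and in $\g$, so $\D^{(\delta^2\g)} = \D^{(\g)} =: \D$. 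Combined with (ii) and $e_\mu(\delta^{-1}) = 0$, this yields the key identity
\begin{equation*}
\phi^*\bigl(\D'_{e'_\mu} e'_\nu\bigr) = \D_{\delta^{-1} e_\mu}\bigl(\delta^{-1} e_\nu\bigr) = \delta^{-2}\, \D_{e_\mu} e_\nu.
\end{equation*}

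Next I would march through the Ricci coefficients in the list using their definitions from \eqref{defga}. Each of $\chi, \chib, \om, \omb, \eta, \etab, \ze$ has the generic form $c \cdot \g(\D_{e_i} e_j, e_k)$ for some fixed numerical constant $c$ and some choice of null-frame indices. Pulling the primed version back through $\phi$ and using the two rules above gives
\begin{equation*}
\phi^*\Ga'(x) = c\cdot(\delta^2\,\g)\bigl(\delta^{-2}\D_{e_i}e_j,\ \delta^{-1} e_k\bigr) = \delta^{-1}\cdot c\cdot \g\bigl(\D_{e_i}e_j, e_k\bigr) = \delta^{-1}\Ga(x),
\end{equation*}
which is exactly the claimed relation. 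For the trace parts $\trch'$ and $\trchb'$ I would use that $\slg'^{A'B'}(x'^\alpha) = \slg^{AB}(x^\alpha)$ (dual to the invariance $\slg'_{A'B'}(x'^\alpha) = \slg_{AB}(x^\alpha)$ from Section \ref{secscaling2}), so $\trch' = \slg'^{AB}\chi'_{AB} = \slg^{AB}\cdot \delta^{-1}\chi_{AB} = \delta^{-1}\trch$, and similarly $\trchb' = \delta^{-1}\trchb$. The traceless parts $\hch' = \chi' - \tfrac{1}{2}\slg'\trch'$ and $\hchb'$ then inherit the $\delta^{-1}$ scaling by linearity.

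The main (and only modest) obstacle is bookkeeping: one must keep straight which objects live on $\MM$ versus the rescaled copy, distinguish tensorial identities from their coordinate-component expressions, and invoke the constancy of $\delta$ cleanly at the two crucial points (namely $\D^{(\delta^2\g)} = \D$ and $e_\mu(\delta^{-1}) = 0$). No analytic input is required; once the transformation rules from Sections \ref{secscaling1}--\ref{secscaling2} are in hand, the result is purely algebraic and follows uniformly for every $\Ga$ in the list.
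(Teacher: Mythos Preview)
Your proposal is correct. The paper itself does not give a proof but simply cites Proposition 8.1 in \cite{An}; your argument is a clean, self-contained version of exactly the computation that reference carries out, based on the constant-conformal invariance of the Levi-Civita connection together with the frame rule $e'_\mu = \delta^{-1} e_\mu$ from Section \ref{secscaling1}.
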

\begin{proof}
See Proposition 8.1 in \cite{An}.
\end{proof}
\begin{prop}\label{Prop rescale 2}
For $\W\in \{\a, \b, \rho, \si, \bb, \aa\}$ written in two different coordinate systems $\{{x'}^{\a}\}$ and $\{x^{\a}\}$ related by \eqref{rescale}, we have the following identity:
$$\W'({x'}^{\a})=\de^{-2}\c \W(x^\a).$$
\end{prop}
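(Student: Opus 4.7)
My plan is to reduce the identity to a direct computation from the defining formulas \eqref{defr}, combining the behavior of the $(0,4)$ Weyl tensor under the constant conformal rescaling $\g' = \de^2 \g$ with the frame scaling $e'_\mu = \de^{-1} e_\mu$ recorded in Section \ref{secscaling2}. The key auxiliary fact I would establish first is that, as a $(0,4)$-tensor on the spacetime, $\W'_{\a\b\ga\de}(x'^\mu) = \de^2\, \W_{\a\b\ga\de}(x^\mu)$. Because $\de$ is a constant, the Levi--Civita connections of $\g$ and $\g'$ share the same Christoffel symbols, so the $(1,3)$ Riemann tensor is invariant; lowering one index with $\g'=\de^2\g$ then multiplies $R_{\a\b\ga\de}$ by $\de^2$. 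Taking the relevant traces yields $\Ric'_{\mu\nu} = \Ric_{\mu\nu}$ and $\R' = \de^{-2}\R$, and plugging these into the algebraic (Kulkarni--Nomizu) decomposition of $\W$ in terms of $R$, $\Ric$, $\R$, and the metric shows that every term acquires the same factor $\de^2$. A parallel calculation using $\ep'_{\a\b\ga\de} = \de^4\ep_{\a\b\ga\de}$ and $g'^{\mu\nu} = \de^{-2}g^{\mu\nu}$ delivers the same scaling for the Hodge dual $\dual\W$.

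With this in hand, I would insert the frame scaling $e'_\mu = \de^{-1} e_\mu$ into the six defining expressions in \eqref{defr}. Each component is a numerical multiple ($1$, $\tfrac12$, or $\tfrac14$) of $\W$ or $\dual\W$ evaluated on four frame vectors, so
\[
\W'(e'_{\mu_1}, e'_{\mu_2}, e'_{\mu_3}, e'_{\mu_4}) = \de^2\, \W(\de^{-1}e_{\mu_1},\de^{-1}e_{\mu_2},\de^{-1}e_{\mu_3},\de^{-1}e_{\mu_4}) = \de^{-2}\, \W(e_{\mu_1},e_{\mu_2},e_{\mu_3},e_{\mu_4}),
\]
and the numerical prefactors are irrelevant to the scaling. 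The bookkeeping $\de^2\cdot(\de^{-1})^4 = \de^{-2}$ therefore handles $\a,\b,\rho,\bb,\aa$ uniformly, while $\si$ is handled identically via the dual statement.

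The only genuinely nontrivial step is the first one, namely the careful tracking of how the fully covariant Weyl tensor responds to a constant conformal rescaling; once this is in place, the rest is a mechanical substitution into \eqref{defr}. As a conceptual cross-check, I would observe that the Bianchi system in Proposition \ref{bianchi}, together with the Ricci coefficient scaling of Proposition \ref{Prop rescale 1} and the angular derivative scaling $\nabs = \de^{-1}\nabs'$, is homogeneous only under the assignment $\W' = \de^{-2}\W$; this matches the expectation that curvature carries dimension (length)$^{-2}$ under the rescaling $x' = \de x$, $\g' = \de^2\g$.
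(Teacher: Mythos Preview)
Your argument is correct: the constant conformal rescaling $\g'=\de^2\g$ leaves the Christoffel symbols and hence the $(1,3)$ Riemann tensor unchanged, so lowering with $\g'$ gives the $\de^2$ factor on the $(0,4)$ Weyl tensor (and on its Hodge dual), and then contracting with four frame vectors $e'_\mu=\de^{-1}e_\mu$ produces the net $\de^{-2}$. The paper does not supply its own proof here---it simply cites Proposition~8.2 of \cite{An}---so your self-contained derivation is in fact more detailed than what the paper records, while being exactly the standard computation one expects that citation to cover.
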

\begin{proof}
    See Proposition 8.2 in \cite{An}.
\end{proof}
\begin{prop}\label{Prop rescale 3}According to Einstein equations
\begin{equation}\label{G=T}
    \begin{split}
    \Ric_{\mu\nu}-\frac{1}{2}\R\g_{\mu\nu}&=\T^{EM}_{\mu\nu}+\T^{CSF}_{\mu\nu},\\
    \T^{EM}_{\mu\nu}&=2\left(\F_{\mu\a}{\F_{\nu}}^\a-\frac{1}{4}\g_{\mu\nu}\F_{\a\b}\F^{\a\b}\right),\\
    \T^{CSF}_{\mu\nu}&= 2 \left(\Re\left(\Psi_\mu \Psi_\nu^\dag\right)-\frac{1}{2}\g_{\mu\nu} \Psi_\a (\Psi^\a)^\dag\right),
    \end{split}
\end{equation}
any quantity $T\in\{\bF, \rhoF, \siF, \bbF, \Psi_4, \Psisl, \Psi_3\}$, when written in two different coordinate systems $\{{x'}^\a\}$ and $\{x^\a\}$ related by the rescaling \eqref{rescale}, satisfies
$$T'({x'}^\a)=\de^{-1}\c T(x^\a),$$
while the components of $\A$ and the complex scalar wave $\psi$ transform under \eqref{rescale} as
$$
\Ub'({x'}^\a)=\Ub(x^{\a}),\qquad \Asl'_{A'}({x'}^\a)=\Asl_A(x^\a),\qquad \psi'({x'}^\a)=\psi(x^\a). 
$$
\end{prop}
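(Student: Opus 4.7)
The plan is to follow the same template used for Propositions \ref{Prop rescale 1} and \ref{Prop rescale 2} (taken from \cite{An}): use the Einstein equations \eqref{G=T} together with the rescaling rules established in Section \ref{secscaling1} to fix the transformation law of the coordinate components of $\F$, $\A$, and $\psi$, and then convert these to null-component scalings via the frame rescaling $e'_\mu(x') = \delta^{-1} e_\mu(x)$. Because $\Om'(x') = \Om(x)$, $\slg'_{A'B'}(x') = \slg_{AB}(x)$, and $\bbb'^{A'}(x')=\bbb^A(x)$, the coordinate metric components are invariant, $\g'_{\mu'\nu'}(x') = \g_{\mu\nu}(x)$, while $\pa_{x'^{\mu'}} = \delta^{-1}\pa_{x^\mu}$ and the basis-vector components satisfy $(e'_{\mu'})^{\nu'}(x') = e_\mu^\nu(x)$ (the $\delta^{-1}$ from $e'_\mu = \delta^{-1}e_\mu$ precisely cancels the $\delta$ from the change of coordinate basis $\pa_{x^\nu} = \delta\pa_{x'^{\nu'}}$).

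First I would compute the scaling of the Einstein tensor: since the Christoffel symbols in the $x'$-system scale as $\delta^{-1}$ (one derivative of an invariant metric component), the Ricci tensor components transform as $\Ric'_{\mu'\nu'}(x') = \delta^{-2}\Ric_{\mu\nu}(x)$, hence $(\Ric' - \tfrac{1}{2}\R'\g')_{\mu'\nu'}(x') = \delta^{-2}(\Ric - \tfrac{1}{2}\R\g)_{\mu\nu}(x)$. Preservation of \eqref{G=T} forces the stress-energy tensor to transform the same way. Setting $\F'_{\mu'\nu'}(x') = c_F\,\F_{\mu\nu}(x)$ and plugging into the definition of $\T^{EM}$ gives ${\T^{EM}}'_{\mu'\nu'}(x') = c_F^2\,\T^{EM}_{\mu\nu}(x)$ by the invariance of the metric components; matching with the $\delta^{-2}$ factor yields $c_F = \delta^{-1}$. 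The analogous ansatz $\Psi'_{\mu'}(x') = c_\Psi \Psi_\mu(x)$ applied to $\T^{CSF}$ gives $c_\Psi = \delta^{-1}$.

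Next I would convert the coordinate-component scalings into null-component scalings. Using $(e'_{\mu'})^{\nu'}(x') = e_\mu^\nu(x)$, the null components of $\F$ satisfy
\[
\bF'_{A'}(x') = \F'_{\mu'\nu'}(x')\,(e'_{A'})^{\mu'}\,(e'_{4'})^{\nu'} = \delta^{-1}\,\F_{\mu\nu}(x)\,e_A^\mu(x)\,e_4^\nu(x) = \delta^{-1}\bF_A(x),
\]
and identical calculations yield the $\delta^{-1}$ scaling for $\rhoF, \siF, \bbF, \Psi_4, \Psisl, \Psi_3$. For the electromagnetic potential, $\F = d\A$ read in the $x'$-coordinates together with $\pa_{x'^{\mu'}} = \delta^{-1}\pa_{x^\mu}$ forces $\A'_{\mu'}(x') = \A_\mu(x)$ (i.e.\ $c_\A = 1$), whence $\Ub'(x')=\Ub(x)$ and $\Asl'_{A'}(x')=\Asl_A(x)$ by the same frame-contraction argument. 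For the scalar field, matching $\Psi'_{\mu'}(x') = \delta^{-1}\Psi_\mu(x)$ against $(\pa_{x'^{\mu'}} + i\ef'\A'_{\mu'})\psi'(x')$ with the implicitly forced rescaled coupling $\ef' = \delta^{-1}\ef$ yields $\psi'(x') = \psi(x)$.

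The main obstacle is the careful bookkeeping of the two competing $\delta$--factors: the $\delta^{-1}$ arising from the coordinate rescaling (appearing in $\pa_{x'^{\mu'}}$, the Christoffels, and $\Ric'$) versus the $\delta^{+1}$ hidden in the basis-vector components $(e'_{\mu'})^{\nu'} = e_\mu^\nu$. These must conspire so that $\F$ and $\Psi$ yield null components scaling uniformly by $\delta^{-1}$, while $\A$ and $\psi$ yield invariant null components. A secondary subtlety is the induced rescaling $\ef' = \delta^{-1}\ef$ of the coupling constant, which is required by gauge covariance and is ultimately responsible for the hypothesis $\delta\ef < e_0$ appearing in Theorem \ref{scalingintro}.
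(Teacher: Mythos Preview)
Your proof is correct and follows the same overall strategy as the paper: use the Einstein equations to fix the scaling of the matter-field components, then pass to the null frame via $e'_\mu=\de^{-1}e_\mu$. The only real difference is presentational. You work in the coordinate basis $\pa_{x'^{\mu'}}$ throughout, computing the $\de^{-1}$ scaling of the Christoffel symbols and the $\de^{-2}$ scaling of $\Ric'_{\mu'\nu'}$ explicitly, whereas the paper invokes directly the homothety-invariance of the Einstein tensor as a $(0,2)$-tensor (i.e.\ $G'=G$), reads off $\F'=\de\,\F$, $\A'=\de\,\A$, $\Psi'=\Psi$ at the tensor level, and only then contracts with the rescaled frame. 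Your derivation of $\psi'=\psi$ also differs slightly: you obtain it together with $\ef'=\de^{-1}\ef$ from the gauge-covariant-derivative relation $\Psi'=D'\psi'$, while the paper records $\psi'=\psi$ here (as an immediate consequence of $\Psi'=\Psi$, or simply because $\psi$ is a scalar) and postpones the coupling-constant rescaling to Proposition~\ref{Prop rescale 4}, where it is derived from the Maxwell equation instead. Both routes are valid; the paper's tensor-level argument is a bit slicker, while yours makes the $\de$-bookkeeping fully explicit.
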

\begin{proof}
    It's obvious that Einstein tensor remains invariant under \eqref{rescale}:\footnote{$\Ric-\frac{1}{2}\R\g$ is invariant as a tensor, so applying to $(e_\mu,e_\nu)$, we see $\Ric'_{\mu\nu}({x'}^\a)-\frac{1}{2}\R'({x'}^\a)\g'_{\mu\nu}({x'}^\a)=\Ric_{\mu\nu}(x^\a)-\frac{1}{2}\R(x^{\a})\g_{\mu\nu}(x^{\a})$.}
\begin{equation}
    \Ric'({x'}^\a)-\frac{1}{2}\R'({x'}^\a)\g'({x'}^\a)=\Ric(x^\a)-\frac{1}{2}\R(x^{\a})\g(x^{\a}).
\end{equation}
From \eqref{G=T}, we observe that the Maxwell field \(\F\) and, consequently, the electromagnetic potential \(\A\) rescale as follows:
\begin{equation}\label{F rescaling1}
    \F'_{\mu\nu}({x'}^\a)=\de \c  \F_{\mu\nu}(x^{\a}),\quad \A'_\mu({x'}^\a)=\de \c \A_\mu(x^\a).
\end{equation}
Additionally, from \eqref{G=T}, the gauge covariant derivatives \(\Psi\) and, hence, the complex scalar field \(\psi\) transform under \eqref{rescale} as:
\begin{equation}\label{Psi rescaling1}
    \Psi'_{\mu}({x'}^\a)=\Psi_{\mu}(x^{\a}),\quad \psi'({x'}^\a)=\psi(x^\a).
\end{equation}
Hence, we have
\begin{align*}
\F'_{\mu'\nu'}({x'}^\a)&=\F'({x'}^\a)\left(e_\mu',e_\nu' \right)=\de^{-2}\F'({x'}^\a)\left(e_\mu,e_\nu\right)=\de^{-2}\de \c  \F_{\mu\nu}(x^{\a})=\de^{-1}\F_{\mu\nu}(x^{\a}),\\
\A'_{\mu'}({x'}^\a)&=\A'({x'}^\a)\left(e_\mu'\right)=\de^{-1}\A'({x'}^\a)(e_\mu)=\de^{-1}\de\c\A_\mu(x^\a)=\A_\mu(x^\a),\\
\Psi'_{\mu'}({x'}^\a)&=\Psi'({x'}^\a)\left(e_\mu'\right)=\de^{-1}\Psi'({x'}^\a)(e_\mu) =\de^{-1}\Psi_{\mu}(x^{\a}).
\end{align*}
This concludes Proposition \ref{Prop rescale 3}.
\end{proof}
\begin{prop}\label{Prop rescale 4}
    The Maxwell equation with a fixed coupling constant
    \begin{equation}\label{DF=ej}
    \D^\mu\F_{\mu\nu}=-2\ef\Im(\psi \Psi_\nu^\dag)    
    \end{equation}
    is not scale invariant when coupled to Einstein equations \eqref{G=T}. In particular, the scaling \eqref{rescale} converts \eqref{DF=ej} into the following Maxwell equation
    \begin{equation}\label{DF=ej'}
    \D^{'\mu'}\F'_{\mu'\nu'}=-2\ef'\Im(\psi' \Psi_{\nu'}^{'\dag}),    
    \end{equation}
    with a different coupling constant
    \begin{equation}
        \ef'=\de^{-1}\ef.
    \end{equation}
\end{prop}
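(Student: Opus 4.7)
The plan is to rewrite both sides of \eqref{DF=ej'} in terms of the original $(\F,\A,\psi,\g)$ using the tensorial rescaling laws implied by Propositions \ref{Prop rescale 1}--\ref{Prop rescale 3}, and then read off the unique value of $\ef'$ that makes the result equivalent to \eqref{DF=ej}.

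First I would translate Proposition \ref{Prop rescale 3} into a tensorial statement. Combined with $e'_\mu = \de^{-1}e_\mu$, the null-component scaling $\F'(e'_\mu,e'_\nu) = \de^{-1}\F(e_\mu,e_\nu)$ is equivalent to $\F' = \de\F$ as a $(0,2)$-tensor; likewise $\A' = \de\A$ as a $1$-form, and $\psi' = \psi$. Because $\g' = \de^2\g$ is a constant conformal rescaling, $\g'^{-1} = \de^{-2}\g^{-1}$ and the Levi-Civita connections coincide as operators, $\D' = \D$. Plugging these in, the left-hand side of \eqref{DF=ej'} becomes
\begin{align*}
\D'^{\mu'}\F'_{\mu'\nu'} = \g'^{\mu'\a'}\D'_{\a'}\F'_{\mu'\nu'} = \de^{-2}\,\g^{\mu\a}\c\de\,\D_\a\F_{\mu\nu} = \de^{-1}\,\D^\mu\F_{\mu\nu}.
\end{align*}

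Next I would analyze the right-hand side. The gauge-covariant derivative in the rescaled system satisfies
\begin{align*}
D'_{\nu'}\psi' = \D'_{\nu'}\psi + i\ef'\A'_{\nu'}\psi = \D_\nu\psi + i(\ef'\de)\A_\nu\psi,
\end{align*}
which coincides with $D_\nu\psi$ if and only if $\ef'\de = \ef$, i.e., $\ef' = \de^{-1}\ef$. Under this choice, $\Im\bigl(\psi'(D'_{\nu'}\psi')^\dag\bigr) = \Im\bigl(\psi(D_\nu\psi)^\dag\bigr)$ as a $1$-form, and hence
\begin{align*}
-2\ef'\,\Im\bigl(\psi'(D'_{\nu'}\psi')^\dag\bigr) = -2\de^{-1}\ef\,\Im\bigl(\psi(D_\nu\psi)^\dag\bigr).
\end{align*}
Both sides of \eqref{DF=ej'} are then the $\de^{-1}$-multiple of the corresponding sides of \eqref{DF=ej}, so the two equations are equivalent.

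The only conceptual point, and the actual content of the proposition, is that the coupling constant cannot be held fixed under \eqref{rescale}. The relation $\F = d\A$ together with $\F' = \de\F$ forces $\A' = \de\A$; once this scaling is fixed, the gauge term $i\ef'\A'\psi'$ inside $D'$ balances the ordinary derivative only when $\ef' = \de^{-1}\ef$. This failure of exact scale invariance, driven by a dimensional coupling, is precisely why in Theorem \ref{scalingintro} the small parameter controlling the charge-to-mass ratio is $\de\ef$ rather than $\ef$.
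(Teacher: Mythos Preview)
Your proof is correct and follows essentially the same scaling computation as the paper: compute how each side of the Maxwell equation transforms under \eqref{rescale} and read off $\ef'$. The only difference is presentational---the paper works in null-frame components (so the left side picks up $\de^{-2}$ and $\Im(\psi'\Psi_{\nu'}^{'\dag})$ picks up $\de^{-1}$), while you work tensorially as $1$-forms (so both sides pick up $\de^{-1}$); these agree after evaluating on $e'_\nu=\de^{-1}e_\nu$. Your derivation of $\ef'=\de^{-1}\ef$ directly from the requirement $D'\psi'=D\psi$ (equivalently, from $\A'=\de\A$ inside the gauge coupling) is a clean alternative viewpoint that the paper does not make explicit, and it nicely explains why the relevant small parameter in Theorem~\ref{scalingintro} is $\de\ef$ rather than $\ef$.
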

\begin{proof}
    We have from Proposition \ref{Prop rescale 3},
    \begin{align*}
    ({\D'}^{\mu'}\F'_{\mu'\nu'})(x^{'\a})&={\g'}^{\mu'\la'}({x'}^{\a})(\D'_{\la'}\F'_{\mu'\nu'})({x'}^{\a})\\
    &=\g^{\mu\la}(x^{\a})(\de^{-1}\D_{\la}(\de^{-1}\F_{\mu\nu}))(x^{\a})\\
    &=\de^{-2}(\D^{\mu}\F_{\mu\nu})(x^{\a}),
    \end{align*}
    and
    \begin{align*}
    \Im(\psi'\Psi_{\nu'}^{'\dag})({x'}^{\a})&=\Im(\psi(\de^{-1}\Psi_{\nu})^{\dag})(x^{\a})=\de^{-1}\Im(\psi\Psi_\nu^\dag)(x^\a).
    \end{align*}
    Then, we deduce from the original Maxwell equation \eqref{DF=ej}
    \begin{align*}
    ({\D'}^{\mu'}\F'_{\mu'\nu'})(x^{'\a})=\de^{-2}(\D^{\mu}\F_{\mu\nu})(x^{\a}) = -2\de^{-2}\ef\Im(\psi \Psi_\nu^\dag)(x^\a)=-2\de^{-1}\ef\Im(\psi' {\Psi'}_{\nu'}^{\dag})({x'}^{\a}).
    \end{align*}
    This finishes the proof of Proposition \ref{Prop rescale 4}.
\end{proof}
\subsection{Main theorem in rescaled version}
Unlike the Einstein--Maxwell or Einstein–scalar field systems, the EMCSF system loses scale invariance due to the coupling between the Maxwell field and the charged scalar field, as shown in Proposition \ref{Prop rescale 4}. With a fixed physical coupling constant, the system is not scale invariant unless the coupling constant itself rescales. We define the rescaled EMCSF' system as follows:
\begin{align}
    \begin{split}\label{EMCSF'}
        \Ric'_{\mu'\nu'}-\frac{1}{2}\R'\g'_{\mu'\nu'}&={\T'}^{EM}_{\mu'\nu'}+{\T'}^{CSF}_{\mu'\nu'},\\
        {\D'}^{\mu'}\F_{\mu'\nu'}&=-2\ef'\Im\left(\psi'(D'_{\nu'}\psi)^\dag\right),\\
        {\g'}^{\mu'\nu'}D_{\mu'}D_{\nu'}\psi'&=0,
    \end{split}
\end{align}
where the coupling constant is given by:
\begin{align}\label{dfef'}
\ef'=\de^{-1}\ef.
\end{align}
We are now prove the following theorem.
\begin{thm}\label{thmscaling}
    Consider the EMCSF' system \eqref{EMCSF'} with $\ef'$ defined by \eqref{dfef'}. Let $\de>0$ be a fixed constant. Then, there exists a sufficiently large $a_0(\de)>0$ and a sufficiently small $e_0(\de)>0$. For $0<a_0<a$ and $0<\ef'<e_0'(\de)$, with an initial data that satisfies: \begin{itemize}
    \item The following assumption holds along $u'=u_\infty'$:
    $$
    \sum_{i\leq 10,k\leq 4}\afd|u_\infty'|\left\|(\de\nabs'_{4'})^k(|u_\infty'|\nabs')^i\left(\hch',\bF',\psi'\right)\right\|_{L^\infty(S_{u_\infty',\ub'})}\leq 1.
    $$
    \item Minkowskian initial data along $\ub'=0$.
    \item The following lower bound condition holds along $u'=u_\infty'$:
    \begin{align*}
        \int_0^\de |u_\infty'|^2\left(|\hch'|^2+|\bF'|^2+|\Psi_4'|^2\right)(u_\infty',\ub',x^{'1},x^{'2})d\ub'\geq \de a,\qquad\forall\; (x^{'1},x^{'2}).
    \end{align*}
    \item The following lower bound condition holds along $u'=u_\infty'$:
    \begin{align*}
    \left|\int_{H_{u_\infty'}^{(0,\de)}}\Om'\Im(\psi'\Psi_{4'}^{'\dag})\right|\geq\de a.
    \end{align*}
    \end{itemize}
    \begin{figure}[H]
    \centering
    \begin{tikzpicture}[scale=0.9]
\fill[yellow] (1,1)--(2,2)--(4,0)--(3,-1)--cycle;

\draw[very thick] (1,1)--(2,2)--(4,0)--(3,-1)--cycle;

\node[scale=1, black] at (4.1,-1) {$H_{u_\infty}$};
\node[scale=1, black] at (1.1,1.9) {$H_{-\frac{\de a}{4}}$};
\node[scale=1, black] at (3.2,1.4) {$\Hb_\de$};
\node[scale=1, black] at (1.7,-0.3) {$\Hb_0$};

\draw[dashed] (0,-4)--(0,4);
\draw[dashed] (0,-4)--(4,0)--(0,4);
\draw[dashed] (0,0)--(2,2);
\draw[dashed] (0,2)--(3,-1);

\draw[->] (3.3,-0.6)--(3.6,-0.3) node[midway, sloped, above, scale=1] {$e_4$};
\draw[->] (2.4,-0.3)--(1.7,0.4) node[midway, sloped, above, scale=1] {$e_3$};

\filldraw[red] (2,2) circle (2pt);
\filldraw[red] (4,0) circle (2pt);

\draw[thin] (2.1,2.5) node[above, scale=1] {$S_{-\frac{\de a}{4},\de}$} -- (2,2);
\draw[thin] (4.1,0.5) node[above, scale=1] {$S_{u_\infty,\de}$} -- (4,0);
\end{tikzpicture}
    \caption{{\footnotesize The initial conditions in Theorem \ref{thmscaling} lead to trapped surface ($S_{-\frac{\de a}{4},\de}$) formation in the future of the $H_{u_\infty}$ and $\Hb_0$.}}
    \label{scalingfigure}
\end{figure}
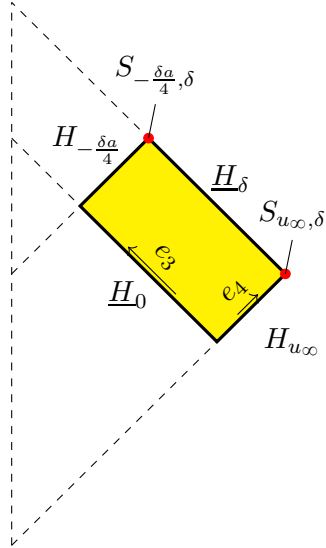
    Then, Einstein--Maxwell--charged scalar field system \eqref{EMCSF} admits a unique smooth solution in the colored region of Figure \ref{scalingfigure}. Moreover, we have:
\begin{itemize}
        \item The $2$--sphere $S_{-\frac{\de a}{4},\de}$ is a trapped surface.
        \item The Hawking mass of the sphere $S_{u_\infty',\de}$ satisfies
        \begin{align*}
            m'(S_{u_\infty',1})\simeq\de a.
        \end{align*}
        \item The electric charge of the sphere $S_{u_\infty',\de}$ satisfies
        \begin{align*}
            |Q'(S_{u_\infty',1})|\simeq\de^2\ef' a.
        \end{align*}
        \item The following estimates hold in the colored region of Figure \ref{scalingfigure}:
    \begin{align*}
    |\a'|&\les\frac{\de^{-1}\af}{|u'|},\quad\; |\b'|\les\frac{a^\frac{1}{2}}{|u'|^2}, \quad |\rho',\si'|\les\frac{\de a}{|u'|^3},\quad\; |\bb'|\les\frac{\de^2 a^\frac{3}{2}}{|u'|^4},\quad\; |\aa'|\les\frac{\de^3 a^2}{|u'|^5},\\
    |\hch'|&\les\frac{\af}{|u'|},\qquad\qquad\;\;\,|\om',\trch',\log\Om'|\les\frac{1}{|u'|},\quad\;\;\;|\eta',\etab',\ze',\hchb',\bbb',\Psi'_{3'}|\les\frac{\de\af}{|u'|^2},\\
    |\trchbt',\omb'|&\les\frac{\de^2a}{|u'|^3},\qquad\qquad\;\;\;\,
    |\bF',\Psi'_{4'},\Asl'|\les\frac{\af}{|u|'},\qquad\;|\rhoF',\siF',\Psisl',\Ub'|\les\frac{\de a}{|u'|^2},\\
    |\bbF',\Psit'_{3'}|&\les\frac{\de^2 a^\frac{3}{2}}{|u'|^3}, \qquad\qquad\qquad\qquad\quad|\psi'|\les \frac{\de a^{\frac{1}{2}}}{|u'|}.
\end{align*}
\end{itemize}
\end{thm}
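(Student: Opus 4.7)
The plan is to derive Theorem \ref{thmscaling} as a direct corollary of Theorem \ref{maintheorem} together with Theorem \ref{thmtrappedsurface} by applying the spacetime rescaling $u = \de^{-1}u'$, $\ub = \de^{-1}\ub'$, $x^A = \de^{-1}x'^A$ introduced in Section \ref{secscaling1}. Under this rescaling, the primed region $\{u_\infty' \leq u' \leq -\de a/4,\ 0 \leq \ub' \leq \de\}$ maps to the unprimed region $V_* = \{u_\infty \leq u \leq -a/4,\ 0 \leq \ub \leq 1\}$ with $u_\infty := \de^{-1}u_\infty'$, which is precisely the region covered by Theorem \ref{maintheorem}. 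By Proposition \ref{Prop rescale 4}, the coupling constant in the unprimed EMCSF system is $\ef = \de\ef'$, and setting $e_0'(\de) := \de^{-1}e_0$ (with $e_0$ the constant from Theorem \ref{maintheorem}) guarantees $\ef < e_0$, making Theorem \ref{maintheorem} applicable.

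First, I would verify that each of the three input hypotheses of Theorem \ref{thmscaling} translates into a hypothesis of Theorem \ref{maintheorem} or Theorem \ref{thmtrappedsurface}. Using $\de\nabs'_{4'} = \nabs_4$, $|u_\infty'|\nabs' = |u_\infty|\nabs$, and the transformation rules $\hch' = \de^{-1}\hch$, $\bF' = \de^{-1}\bF$, $\psi' = \psi$ from Propositions \ref{Prop rescale 1}--\ref{Prop rescale 3}, the weighted $L^\infty$--bound for $(\hch',\bF',\psi')$ becomes exactly the bound for $(\hch,\bF,\psi)$ along $H_{u_\infty}$ required by Theorem \ref{maintheorem}. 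For the trapping lower bound, the combination $|\hch'|^2 = \de^{-2}|\hch|^2$ (and similarly for $|\bF'|^2$, $|\Psi_{4'}'|^2$), together with $|u_\infty'|^2 = \de^2 |u_\infty|^2$ and $d\ub' = \de\,d\ub$, gives
\begin{align*}
\int_0^\de |u_\infty'|^2\left(|\hch'|^2+|\bF'|^2+|\Psi_{4'}'|^2\right)d\ub' = \de \int_0^1|u_\infty|^2\left(|\hch|^2+|\bF|^2+|\Psi_4|^2\right)d\ub,
\end{align*}
so the hypothesis $\geq \de a$ becomes exactly \eqref{geqathm}. For the charging hypothesis, using $\Om' = \Om$, $\psi' = \psi$, $\Psi_{4'}'^{\dag} = \de^{-1}\Psi_4^{\dag}$, $d\slg' = \de^2\,d\slg$, one finds $\int_{H_{u_\infty'}^{(0,\de)}}\Om'\Im(\psi'\Psi_{4'}'^{\dag}) = \de^2 \int_{H_{u_\infty}^{(0,1)}}\Om\Im(\psi\Psi_4^{\dag})$, so the primed lower bound $\geq \de a$ yields $|\int_{H_{u_\infty}^{(0,1)}}\Om\Im(\psi\Psi_4^{\dag})| \geq \de^{-1}a \geq a$ (in the natural regime $\de\leq 1$), matching \eqref{J4geqa}.

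With the hypotheses verified, applying Theorems \ref{maintheorem} and \ref{thmtrappedsurface} in the unprimed frame produces a unique smooth EMCSF solution on $V_*$ together with the trapped surface $S_{-a/4,1}$, the Hawking mass bound $m(S_{u_\infty,1}) \simeq a$, and the charge bound $|Q(S_{u_\infty,1})| \simeq \ef a$. I would then transfer the conclusions back to the primed frame using Propositions \ref{Prop rescale 1}--\ref{Prop rescale 3}. Since $\trch' = \de^{-1}\trch$ and $\trchb' = \de^{-1}\trchb$, the trapped condition at $S_{-a/4,1}$ transfers to $S_{-\de a/4,\de}$. Since $r' = \de r$ and $\int_{S'}\trch'\trchb' d\slg' = \int_S\trch\trchb d\slg$, the Hawking mass transforms as $m' = \de m$, giving $m'(S_{u_\infty',\de}) \simeq \de a$. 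Since $\rhoF' = \de^{-1}\rhoF$ and $d\slg' = \de^2 d\slg$, the electric charge transforms as $Q' = \de Q$, giving $|Q'(S_{u_\infty',\de})| \simeq \de\ef a = \de^2\ef' a$. Finally, each pointwise estimate from Theorem \ref{maintheorem} converts to the corresponding primed estimate by inserting the appropriate scaling factors, e.g.\ $|\a'| = \de^{-2}|\a| \les \de^{-2}\af/|u| = \de^{-1}\af/|u'|$, and similarly for all remaining quantities.

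The main obstacle, such as it is, lies in the $\de$--dependent bookkeeping: one must confirm that the renormalized quantities ($\trchbt$, $\muc$, $\vkp$, $\vkpb$, $\Psit_3$, etc.) transform cleanly under the rescaling. For instance, $\trchbt = \trchb + 2/(\Om|u|)$ satisfies $\trchbt' = \de^{-1}\trchbt$ because both summands scale as $\de^{-1}$, which in turn guarantees the consistency of the weighted estimate $|\trchbt'| \les \de^2 a/|u'|^3$ with the unprimed bound $|\trchbt| \les a/|u|^3$. A secondary point is to verify that the implicit constants in Theorem \ref{maintheorem} depend only on $a$ and on the smallness constant $e_0$ (and not on $\de$ itself), so that all the final primed bounds carry the precise powers of $\de$ dictated by the scaling and no spurious $\de$--dependence is introduced through the estimates.
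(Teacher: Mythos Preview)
Your proposal is correct and follows essentially the same strategy as the paper: both reduce Theorem \ref{thmscaling} to Theorems \ref{maintheorem} and \ref{thmtrappedsurface} via the rescaling rules of Propositions \ref{Prop rescale 1}--\ref{Prop rescale 4}, and then read off the primed pointwise bounds quantity by quantity exactly as in Section \ref{rescale bounds}. One small imprecision: since $\psi' = \psi$ (unlike $\hch' = \de^{-1}\hch$ and $\bF' = \de^{-1}\bF$), the initial data bound for $\psi$ does not translate \emph{exactly} but carries an extra factor of $\de^{-1}$ on the unprimed side; this is harmless in practice (the $\psi$--bound is recovered from the $\Psi_4$--bound and the Minkowski data on $\Hb_0$, and the paper absorbs such issues into ``repeating the arguments''), but your phrase ``becomes exactly the bound'' slightly overstates the match.
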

Theorem \ref{thmscaling} is proved in Section \ref{rescale bounds}, based on the proof of Theorem \ref{maintheorem} and standard scaling arguments, which we recall in Sections \ref{secscaling1} and \ref{secscaling2}. See also Section 8 in \cite{An} and Section 9 in \cite{AnAth} for similar arguments.
\subsection{Proof of Theorem \ref{thmscaling}}\label{rescale bounds}
We now prove Theorem \ref{thmscaling} by applying the rescaling properties established in Propositions \ref{Prop rescale 1}–\ref{Prop rescale 3} and transferring the estimates derived previously to the rescaled coordinates. Applying Proposition \ref{Prop rescale 1} and Proposition \ref{Prop rescale 2}, we establish the connection to \cite{AnLuk,Chr}. Take $\hch$ as an example. Applying Proposition \ref{Prop rescale 1}, \eqref{esthch} and the fact that $u'=\de u$, we have 
\begin{equation*}
\begin{split}
&|\hch'_{A'B'}({x'}^\a)|=\de^{-1}\c  |\hch_{AB}(x^{\a})|\leq \de^{-1}\c  \frac{\af}{|u|}=\frac{\af}{\de|u|}=\frac{\af}{|u'|}. 
\end{split}
\end{equation*}
In the same fashion, we have
\begin{align*}
|\hchb'_{A'B'}({x'}^\a)|&=\de^{-1}\c  |\hchb_{AB}(x^{\a})|\leq \de^{-1}\c  \frac{\af}{|u|^2}=\frac{\de\af}{\de^2|u|^2}=\frac{\de\af}{|u'|^2}, \\
|\trch'({x'}^\a)|&=\de^{-1}\c  |\trch(x^{\a})|\leq \de^{-1}\c  \frac{1}{|u|}=\frac{1}{\de|u|}=\frac{1}{|u'|}, \\
|\eta'_{A'}({x'}^\a)|&=\de^{-1}\c  |\eta_{A}(x^{\a})|\leq \de^{-1}\c  \frac{\af}{|u|^2}=\frac{\de\af}{\de^2|u|^2}=\frac{\de\af}{|u'|^2},\\
|\etab'_{A'}({x'}^\a)|&=\de^{-1}\c  |\etab_{A}(x^{\a})|\leq \de^{-1}\c  \frac{\af}{|u|^2}=\frac{\de\af}{\de^2|u|^2}=\frac{\de\af}{|u'|^2}, \\
|\om'({x'}^\a)|&=\de^{-1}\c  |\om(x^{\a})|\leq \de^{-1}\c  \frac{1}{|u|}=\frac{1}{\de|u|}=\frac{1}{|u'|}, \\
|\omb'({x'}^\a)|&=\de^{-1}\c  |\omb(x^{\a})|\leq \de^{-1}\c  \frac{a}{|u|^3}=\frac{\de^2 a}{\de^3|u|^3}=\frac{\de^2 a}{|u'|^3},\\
|\trchbt'({x'}^\a)|&=\de^{-1}\c|\trchbt|\leq \de^{-1}\c  \frac{a}{|u|^3}=\frac{\de^2 a}{\de^3|u|^3}=\frac{\de^2 a}{|u'|^3}.
\end{align*}
In the same manner, by Proposition \ref{Prop rescale 2} and with the help that $|u'|\geq  \de a/4$ we have
\begin{align*}
|\a'_{A'B'}({x'}^\a)|&=\de^{-2}\c|\a_{AB}(x^{\a})|\leq\de^{-2}\c\frac{\af}{|u|}=\frac{\de^{-1}\af}{\de |u|}=\frac{\de^{-1}\af}{|u'|},\\
|\b'_{A'}({x'}^\a)|&=\de^{-2}\c|\b_{A}(x^{\a})|\leq \de^{-2}\c\frac{\af}{|u|^2}=\frac{\af}{\de^2|u|^2}=\frac{\af}{|u'|^2},\\
|\rho'({x'}^\a)|&=\de^{-2}\c|\rho(x^{\a})|\leq \de^{-2}\c\frac{a}{|u|^3}=\frac{\de a}{\de^3|u|^3}=\frac{\de a}{|u'|^3},\\
|\si'({x'}^\a)|&=\de^{-2}\c|\si(x^{\a})|\leq \de^{-2}\c\frac{a}{|u|^3}=\frac{\de a}{\de^3|u|^3}=\frac{\de a}{|u'|^3}, \\
|\bb'_{A'}({x'}^\a)|&=\de^{-2}\c |\bb_{A}(x^{\a})|\leq \de^{-2}\c\frac{a^{\frac{3}{2}}}{|u|^4}=\frac{\de^2 a^{\frac{3}{2}}}{\de^4|u|^4}=\frac{\de^2 a^{\frac{3}{2}}}{|u'|^4},\\
|\aa'_{A'B'}({x'}^\a)|&=\de^{-2}\c |\aa_{AB}(x^{\a})|\leq \de^{-2}\c\frac{a^2}{|u|^5}=\frac{\de^3 a^2}{\de^5|u|^5}=\frac{\de^3 a^2}{|u'|^5}.
\end{align*}
Similarly, applying Proposition \ref{Prop rescale 3} and the fact that $|u'|\geq\de a/4$, we have
\begin{align*}
|\bF'_{A'}({x'}^\a)|&=\de^{-1}\c|\bF_{A}(x^{\a})|\leq \de^{-1}\c\frac{\af}{|u|}=\frac{\af}{\de|u|}=\frac{\af}{|u'|}, \\
|\bbF'_{A'}({x'}^\a)|&=\de^{-1}\c|\bbF_{A}(x^{\a})|\leq \de^{-1}\c\frac{a^\frac{3}{2}}{|u|^3}=\frac{\de^2 a^\frac{3}{2}}{\de^3|u|^3}=\frac{\de^2 a^\frac{3}{2}}{|u'|^3},\\
|\rhoF'({x'}^\a)|&=\de^{-1}\c|\rhoF(x^{\a})|\leq \de^{-1}\c\frac{a}{|u|^2}=\frac{\de a}{\de^2|u|^2}=\frac{\de a}{|u'|^2},\\
|\siF'({x'}^\a)|&=\de^{-1}\c|\siF(x^{\a})|\leq \de^{-1}\c \frac{a}{|u|^2}=\frac{\de a}{\de^2|u|^2}=\frac{\de a}{|u'|^2},\\
|\Psi'_{4'}({x'}^\a)|&=\de^{-1}\c|\Psi_{4}(x^\a)|\leq \de^{-1}\c\frac{\af}{|u|}=\frac{\af}{\de|u|}=\frac{\af}{|u'|},\\
\left|\Psit'_{3'}({x'}^\a)\right|&=\de^{-1}\c\left|\Psit_3(x^{\a})\right|\leq \de^{-1}\c\frac{a^{\frac{3}{2}}}{|u|^3}=\frac{\de^2 a^{\frac{3}{2}}}{\de^3|u|^3}=\frac{\de^2 a^{\frac{3}{2}}}{|u'|^3},\\
|\Psisl'_{A'}({x'}^\a)|&=\de^{-1}\c|\Psisl_{A}(x^{\a})|\leq\de^{-1}\c\frac{a}{|u|^2}=\frac{\de a}{\de^2|u|^2}=\frac{\de a}{|u'|^2}.
\end{align*}
By the above rescaling argument, we hence transfer the bounds derived in the previous sections into new bounds for the rescaled EMSCF' \eqref{EMCSF'}, holding in the spacetime region:
\begin{align*}
    -\de u_\infty\leq u'\leq -\de a,\qquad\quad 0\leq \ub\leq \de.
\end{align*}
Repeating the arguments in Section \ref{proofmain} and in Sections \ref{secF}--\ref{sectrapping}, this concludes the proof of Theorem \ref{thmscaling}.
\appendix
\section{Proof of results in Section \ref{secpre}}
In this appendix, we prove the Maxwell equations and complex wave equations state in Section \ref{secpre}.
\subsection{Proof of Proposition \ref{Maxwellequations}}\label{AppendixM}
\begin{lem}\label{lem:DivF}
In double null foliation, $\D^\mu\F_{\mu\nu}=-2\ef\Im\left(\psi (D_\nu\psi)^\dag \right)$ can be written as the following using null components:
\begin{equation}
\begin{split}\label{eqn:DivF}
\nabs_3\rhoF+\trchb\rhoF&=-\sdivs\bbF-\frac{\eta+\etab}{2}\c\bbF-2\ef\Im\left(\psi \Psi_3^\dag\right),\\
\nabs_4\rhoF+\trch\rhoF&=\sdivs\bF+\frac{\eta+\etab}{2}\c\bF+2\ef\Im\left(\psi \Psi_4^\dag\right),\\
\nabs_3\bF+\nabs_4\bbF&=-\frac{1}{2}\trchb\bF+2\omb\bF+\hchb\c\bF-\frac{1}{2}\trch\bbF+2\om\bbF\\
&+\hch\c\bbF+2\rhoF(\eta-\etab)-2\siF(\dual\eta+\dual\etab)-2\dual\nabs\siF\\
&-4\ef\Im\left(\psi\Psisl^\dag\right).
\end{split}
\end{equation}
\end{lem}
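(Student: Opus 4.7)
\medskip

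\noindent\textbf{Proof proposal for Lemma \ref{lem:DivF}.}

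The plan is a direct null-frame expansion of the inhomogeneous Maxwell equation $\D^{\mu}\F_{\mu\nu}=-2\ef\,\Im(\psi(D_{\nu}\psi)^{\dag})$. First I would record the components of the inverse metric in the null frame, namely $\g^{34}=\g^{43}=-\tfrac12$ and $\g^{AB}=\de^{AB}$, so that
\begin{equation*}
\D^{\mu}\F_{\mu\nu}=-\tfrac{1}{2}\D_{3}\F_{4\nu}-\tfrac{1}{2}\D_{4}\F_{3\nu}+\de^{AB}\D_{A}\F_{B\nu}.
\end{equation*}
The three equations of the lemma are then obtained by specializing $\nu$ to $4$, $3$, and $A$ respectively, and using $D_{\nu}\psi=\Psi_{\nu}$ on the right-hand side, which produces the claimed sources $2\ef\,\Im(\psi\,\Psi_{4}^{\dag})$, $-2\ef\,\Im(\psi\,\Psi_{3}^{\dag})$, and $-4\ef\,\Im(\psi\,\Psisl^{\dag})$ after multiplying through by $-2$.

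Next, for each choice of $\nu$, I would convert ambient derivatives into the horizontal derivatives $\nabs_{3},\nabs_{4},\nabs$ of the null components $\bF,\bbF,\rhoF,\siF$ using the Ricci formulae \eqref{ricciformulas}. Concretely, for a generic combination like $\D_{4}\F_{3A}$ one writes $(\D_{4}\F)(e_{3},e_{A})=e_{4}(\F_{3A})-\F(\D_{4}e_{3},e_{A})-\F(e_{3},\D_{4}e_{A})$ and substitutes $\D_{4}e_{3}=2\om e_{3}+2\etab_{B}e_{B}$, $\D_{4}e_{A}=\nabs_{4}e_{A}+\etab_{A}e_{4}$. Using the identifications $\F_{A4}=\bF_{A}$, $\F_{A3}=\bbF_{A}$, $\F_{34}=2\rhoF$, and $\F(e_{A},e_{B})=-\slep_{AB}\siF$ from \eqref{defF}, the residual terms organize themselves into the Ricci-coefficient contractions $\om,\omb,\etab,\eta,\hch,\hchb,\trch,\trchb$ appearing on the right-hand side, while the tangential divergence $\de^{AB}\D_{A}\F_{BA'}$ for $\nu=A'$ yields $(\dual\nabs\siF)_{A'}$ through the Hodge-dual relation.

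The choices $\nu=4$ and $\nu=3$ isolate $\nabs_{4}\rhoF$ and $\nabs_{3}\rhoF$ together with $\sdivs\bF$ and $\sdivs\bbF$ respectively, producing the first two equations in \eqref{eqn:DivF}; the tangential choice $\nu=A$ produces a single relation in which $\nabs_{3}\bF_{A}$ and $\nabs_{4}\bbF_{A}$ enter together (with matching signs after accounting for the antisymmetry $\F_{3A}=-\bbF_{A}$, $\F_{4A}=-\bF_{A}$), which is why the third equation is stated as the combined identity $\nabs_{3}\bF+\nabs_{4}\bbF=\cdots$ rather than as two separate evolution equations. The individual transport equations for $\nabs_{3}\bF$ and $\nabs_{4}\bbF$ appearing in Proposition \ref{Maxwellequations} are then obtained by pairing this identity with the closed-form relation $d\F=0$ (equivalently, $\F=d\A$), but that step lies outside the present lemma.

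The main obstacle is purely bookkeeping: keeping track of all the Ricci-coefficient terms produced by each $\D_{\la}e_{\mu}$ substitution, correctly handling the various antisymmetries of $\F$, and recognizing $\de_{AB}(\eta+\etab)_{B}$, $\trch$, $\trchb$, $\hch\c\bbF$, $\hchb\c\bF$, and $\dual\eta,\dual\etab$ as the natural repackaging of the error terms. No new analytical ingredient is needed; the identities ultimately follow by collecting terms and comparing with the definitions in Section \ref{secmatterfields}.
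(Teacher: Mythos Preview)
Your proposal is correct and follows essentially the same approach as the paper: the paper's proof simply cites Section 2.3.3 of \cite{Giorgi} for the null-frame expansion of $\D^{\mu}\F_{\mu\nu}$ and then reads off the source terms $-2\ef\,\Im(\psi(D_{\nu}\psi)^{\dag})$ componentwise, which is exactly the direct computation you sketch. Your outline is in fact more detailed than what the paper records; the only minor slip is that for the tangential equation the overall rescaling is a multiplication by $2$ (not $-2$), but the structure and all the identifications you describe are right.
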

\begin{proof}
    See Section 2.3.3 in \cite{Giorgi} for the calculation of $\D^\mu\F_{\mu\nu}$ without gauge fixing. Imposing the double null gauge and combining with the null components of $-2\ef\Im\left(\psi (D_\nu\psi)^\dag\right)$ then yields the equations directly.
\end{proof}
\begin{lem}\label{lem:CurlF}
    In double null foliation, the Bianchi identities $\D_{[\mu}\F_{\nu\lambda]}=0$\footnote{A consequence of $\F=d\A$.} of $\F$ can be written as the following using null components:
    \begin{equation}\label{eqn:CurlF}
        \begin{split}
\nabs_3\bF-\nabs_4\bbF&=-\frac{1}{2}\trchb\bF+2\omb\bF-\hchb\c\bF+\frac{1}{2}\trch \bbF-2\om\bbF\\
&+\hch\c\bbF+2\rhoF(\eta+\etab)-2\siF(\dual\eta-\dual\etab)+2\nabs\rhoF,\\ 
\nabs_3\siF+\trchb\siF&=\curls\bbF-(\ze-\eta)\wedge\bbF,\\
\nabs_4\siF+\trch\siF&=\curls\bF+(\ze+\etab)\wedge\bF.
        \end{split}
    \end{equation}
\end{lem}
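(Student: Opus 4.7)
The plan is to derive all three equations of Lemma \ref{lem:CurlF} from the single Bianchi identity $\D_{[\mu}\F_{\nu\lambda]}=0$, which follows immediately from $\F=d\A$. Written cyclically,
\begin{equation*}
\D_\mu\F_{\nu\lambda}+\D_\nu\F_{\lambda\mu}+\D_\lambda\F_{\mu\nu}=0,
\end{equation*}
and the three stated equations correspond to three inequivalent choices of the free index triple $(\mu,\nu,\lambda)$. Throughout I will use the null decomposition in \eqref{defF}, with the sign conventions $\F_{4A}=-\bF_A$, $\F_{A3}=\bbF_A$, $\F_{34}=2\rhoF$, and $\F_{AB}=-\slep_{AB}\siF$, together with the Ricci formulae from Proposition \ref{ricciformulas} and the gauge condition $\xi=\xib=0$ from \eqref{nullidentities}.

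First, I would take $(\mu,\nu,\lambda)=(3,4,A)$ to produce the first equation of Lemma \ref{lem:CurlF}. Expanding each covariant derivative in the null frame, for instance
\begin{equation*}
\D_3\F_{4A}=e_3(\F_{4A})-\F(\D_3 e_4,e_A)-\F(e_4,\D_3 e_A),
\end{equation*}
and substituting $\D_3 e_4=2\omb e_4+2\eta_B e_B$, $\D_3 e_A=\nabs_3 e_A+\eta_A e_3$, one identifies the leading term as $-\nabs_3\bF_A$ and collects the corrections $2\omb\bF_A$, $2\eta\wedge\siF$-type terms, etc. Doing the same for $\D_4\F_{A3}$ (producing $\nabs_4\bbF_A$ plus corrections through $\D_4 e_3$ and $\D_4 e_A$) and for $\D_A\F_{34}=2\nabs_A\rhoF-2\chi_{AB}\bbF_B/\cdots$ (where $\D_A e_3$ and $\D_A e_4$ generate $\chi\cdot\bbF$ and $\chib\cdot\bF$ contributions), and splitting $\chi=\frac{1}{2}\trch\,\slg+\hch$, $\chib=\frac{1}{2}\trchb\,\slg+\hchb$, I would assemble all terms to recover exactly the first equation. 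For the two $\siF$ equations, the choices $(3,A,B)$ and $(4,A,B)$ yield $\D_3\F_{AB}+\D_A\F_{B3}+\D_B\F_{3A}=0$ and its $e_4$-analogue; contracting with $\frac{1}{2}\slep^{AB}$ converts the first term to $-\nabs_3\siF$, the combination $\D_A\bbF_B-\D_B\bbF_A$ (after $\slep^{AB}$-contraction) to $\curls\bbF$, and the trace corrections from $\D_3 e_A$, $\D_A e_B$ give the $\trchb\siF$ and $(\ze-\eta)\wedge\bbF$ contributions. The $(4,A,B)$ case is structurally identical, producing $\nabs_4\siF+\trch\siF=\curls\bF+(\ze+\etab)\wedge\bF$.

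The step I expect to be most delicate is not conceptual but combinatorial: keeping track of sign conventions for $\slep_{AB}$ when switching between contractions $\slep^{AB}\F_{AB}$ and Hodge duals $\dual\bF$, $\dual\bbF$, and separating $\chi$, $\chib$ into trace and traceless parts so that the $\trch\bbF$, $\trchb\bF$ terms (which appear on the left side of the first equation with the coefficients $-\frac{1}{2}\trchb$ and $\frac{1}{2}\trch$) are correctly distinguished from the $\hch\cdot\bbF$, $\hchb\cdot\bF$ terms. The absence of a matter source on the right-hand sides is expected, since $d\F=0$ is unsourced; this is precisely what distinguishes Lemma \ref{lem:CurlF} from Lemma \ref{lem:DivF}, where the divergence $\D^\mu\F_{\mu\nu}$ carries the charged-current source $-2\ef\,\Im(\psi(D_\nu\psi)^\dag)$. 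Finally, as noted in the cited Section 2.3.3 of \cite{Giorgi}, these computations are standard, so I would carry them out in the gauge $\xi=\xib=0$ used throughout this paper and then state the result without reproducing every intermediate algebraic step.
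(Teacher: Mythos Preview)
Your proposal is correct and follows exactly the approach the paper adopts: the paper's own proof simply cites Section 2.3.3 of \cite{Giorgi} for the full null-decomposition computation and then imposes the double null gauge $\xi=\xib=0$, while you sketch that very computation explicitly (choosing the index triples $(3,4,A)$, $(3,A,B)$, $(4,A,B)$ and expanding via the Ricci formulae). Your final paragraph even anticipates this, noting the computation is standard and would be stated without reproducing every algebraic step.
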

\begin{proof}
    See Section 2.3.3 in \cite{Giorgi} for the full derivation without gauge fixing. Imposing the double null gauge directly yields the equations.
\end{proof}
\begin{proof}[Proof of Proposition \ref{Maxwellequations}]
    Taking addition and subtraction of \eqref{eqn:DivF} and \eqref{eqn:CurlF}, we get
    \begin{align*}
        \nabs_3\bF+\frac{1}{2}\trchb\bF-2\omb\bF&=\nabs\rhoF-\dual\nabs\siF+2\rhoF\eta-2\siF\dual\eta+\hch\c\bbF -2\ef\Im\left(\psi \Psisl^\dag\right),\\
        \nabs_4\bbF+\frac{1}{2}\trch\bbF-2\om\bbF&=-\nabs\rhoF-\dual\nabs\siF-2\rhoF\etab-2\siF\dual\etab+\hchb\c\bF -2\ef\Im\left(\psi \Psisl^\dag\right).
    \end{align*}
    This concludes the proof of Proposition \ref{Maxwellequations}.
\end{proof}
\subsection{Proof of Proposition \ref{lem:FdAeq}}\label{AppendixA}
The electromagnetic $2$-form $\F$ is given by $\F=d\A$ for electromagnetic potential $1$-form $\A$. We recall the null components of $\F$ are
\begin{equation}
    \F_{A4}=\bF,\qquad \F_{A3}=\bbF,\qquad\F_{34}=2\rhoF,\qquad\F_{AB}=-\eps_{AB}\siF.
\end{equation}
We have from $\F=d\A$ that
\begin{equation}
    \F_{\mu\nu}=\D_\mu\A_\nu-\D_\mu \A_\nu=(\D_\mu\A)(e_\nu)-(\D_\nu\A)(e_\mu).
\end{equation}
We first expand $\rhoF$:
\begin{align*}
    \rhoF&=\frac{1}{2}\F_{34}=\frac{1}{2} (\D_3\A)(e_4)-\frac{1}{2} (\D_4\A)(e_3)\\
    &=\frac{1}{2} \D_3(\A(e_4))-\frac{1}{2}\A(\D_3e_4) -\frac{1}{2} \D_4(\A(e_3))+\frac{1}{2}\A(\D_4e_3)\\
    &=\frac{1}{2} \D_3U-\frac{1}{2}\A(2\omb e_4 +2\eta^B e_B) -\frac{1}{2}\D_4\Ub+\frac{1}{2}\A(2\om e_3 +2\etab^B e_B)\\
    &=\frac{1}{2}\nabs_3U-\frac{1}{2}\nabs_4\Ub-\omb U+\om\Ub +(\etab-\eta)\c \Asl.
\end{align*}    
Now for $\siF$, by symmetry of Christoffel symbols on any manifolds, we have
\begin{align*}
    -\slep_{BC}\siF&=\F_{BC}\\
    &=\D_B\A_C-\D_C \A_B\\
    &=\pr_B(\A(e_C))-\pr_C(\A(e_B))\\
    &=\nabs_B\Asl_C-\nabs_C\Asl_B.
\end{align*}
Contracting with $\slep^{BC}$, we deduce
\begin{align*}
    \siF=-\curls\Asl.
\end{align*}
Next, we rewrite $\bF$:
\begin{align*}
    \bF_B&=\F_{B4}=(\D_B\A)(e_4)-(\D_4\A)(e_B)\\
    &=\D_B(\A(e_4))-\A(\D_Be_4) - \D_4(\A(e_B))+\A(\D_3e_4)\\
    &=\nabs_B U -\A (\chi_{BC}e_C -\ze_B e_4) -\pr_4(\Asl_B)+\A (\nabs_4 e_B +\etab_B e_4 )\\
    &=\nabs_B U-(\pr_4(\Asl_B)-\Asl(\nabs_4 e_B))-\chi_{BC}\Asl_B+U\ze_B +U\etab_B\\
    &=\nabs_B U-\nabs_4\Asl_B-\chi_{BC}\Asl_B+U\ze_B +U\etab_B.
\end{align*}
The equation of $\bbF$ is similar. This concludes the proof of Proposition \ref{lem:FdAeq}.
\subsection{Proof of Proposition \ref{waveequation}}\label{AppendixPsi}
First notice that $D_\mu\Psi_\nu=D_\mu D_\nu\psi$ is not symmetric in $\mu,\nu$:
\begin{align*}
    D_\mu D_\nu\psi
    &=(\D_\mu+i\ef \A_\mu)(\D_\nu+i\ef\A_\nu)\psi\\
    &=(\D_\mu+i\ef \A_\mu)(\D_\nu\psi+i\ef \A_\nu \psi)\\
    &=\D_\mu\D_\nu\psi+i\ef \A_\nu (\D_\mu\psi)+i\ef (\D_\mu\A_\nu) \psi+i\ef \A_\mu(\D_\nu\psi)-\ef^2\A_\mu\A_\nu \psi,
\end{align*}
which implies
\begin{align*}
    D_\mu\Psi_\nu-D_\nu\Psi_\mu=i\ef (\D_\mu\A_\nu-\D_\nu\A_\mu) \psi=i\ef \F_{\mu\nu}\psi.
\end{align*}
In particular,
\begin{align*}
    D_4\Psi_3=D_3\Psi_4+i\ef \F_{43}\psi=D_3\Psi_4-2i\ef \rhoF \psi.
\end{align*}
We now rewrite $\g^{\mu\nu}D_\mu D_\nu\psi=0$ to get the $\nabs_3(\Psi_4)$-equation:
\begin{align*}
    0&=\g^{\mu\nu}D_\mu D_\nu\psi\\
    &=-\frac{1}{2}D_3\Psi_4-\frac{1}{2}D_4\Psi_3+\slg^{BC}D_C\Psi_B\\
    &=-D_3\Psi_4 +\slg^{BC}D_C\Psi_B+i\ef \rhoF \psi\\
    &=-\left((\D_3\Psi)(e_4)+i\ef\Ub\Psi_4\right)+\slg^{BC}\left((\D_C\Psi)(e_B)+ i\ef \Asl_C\Psisl_B\right)+i\ef \rhoF \psi\\
    &=-\nabs_3(\Psi_4)+\Psi(\D_3e_4)+\slg^{BC}\left(\pr_C(\Psisl_B)- \Psi(\D_Ce_B)\right)-i\ef\Ub\Psi_4+i\ef \Asl\c\Psisl+i\ef\rhoF\psi\\
    &=-\nabs_3(\Psi_4)+\Psi(2\omb e_4+2\eta^Be_B)+\slg^{BC}\left(\pr_C(\Psisl_B)- \Psi(\nabs_Ce_B+\frac{1}{2}\chi_{CB}e_3+\frac{1}{2}\chib_{CB}e_4)\right)\\
    &-i\ef\Ub\Psi_4+i\ef \Asl\c\Psisl+i\ef\rhoF\psi\\
    &=-\nabs_3(\Psi_4)+\sdivs\Psisl+2\omb\Psi_4+2\eta\c\Psisl-\frac{1}{2}\trch\Psi_3-\frac{1}{2}\trchb\Psi_4-i\ef\Ub\Psi_4+i\ef\Asl\c\Psisl+i\ef\rhoF\psi.
\end{align*}
Therefore,
\begin{align*}
    \nabs_3(\Psi_4)+\frac{1}{2}\trchb\Psi_4-2\omb\Psi_4+i\ef \Ub\Psi_4=\sdivs\Psisl+2\eta\c\Psisl+i\ef\Asl\c\Psisl-\frac{1}{2}\trch\Psi_3+i\ef\rhoF\psi,
\end{align*}
as desired.
From Lemma \ref{comm}, we know
\begin{align*}
    [\Om\nabs_4,\nabs]\psi=-\Om\chi\c\nabs\psi.
\end{align*}
Therefore, we can compute the $\nabs_4\Psisl$--equation:
\begin{align*}
    \Om\nabs_4\Psisl&=\Om\nabs_4\left(\nabs\psi+i\ef\Asl\psi\right)\\
    &=[\Om\nabs_4,\nabs]\psi+\nabs(\Om e_4\psi) +i\ef(\Om\nabs_4\Asl)\psi +i\ef \Asl \Om e_4\psi\\
    &=\nabs \left(\Om\Psi_4-i\ef\Om U\psi\right)-\Om\chi\c\nabs\psi +i\ef (\Om\nabs_4\Asl)\psi +i\ef\Om \Asl e_4\psi\\
    &=\nabs(\Om\Psi_4)-\Om\chi\c\nabs\psi-i\ef U\Om\nabs\psi+i\ef\Om\Asl e_4\psi+i\ef (\Om\nabs_4\Asl-\nabs(\Om U))\psi.
\end{align*}
By Lemma \ref{lem:FdAeq}, we know
\begin{align*}
    \Om\nabs_4\Asl-\nabs(\Om U)-\Om\chi\c\Asl=-\Om\bF,
\end{align*}
hence,
\begin{align*}
    \Om\nabs_4\Psisl+\frac{1}{2}\Om\trch\Psisl+\Om\hch\c\Psisl+i\ef \Om U\Psisl=\nabs(\Om\Psi_4)+i\ef\Om\Asl\Psi_4-i\ef\Om\bF\psi.
\end{align*}
Finally, we compute using $\curls\Asl=-\siF$ from Lemma \ref{lem:FdAeq}
\begin{align*}
    \curls\Psisl&=\curls(\nabs\psi+i\ef\Asl\psi)\\
    &=i\ef (\curls\Asl) \psi -i\ef\Asl\wedge\nabs\psi\\
    &=-i\ef \siF\psi -i\ef\Asl\wedge\Psisl.
\end{align*}
    We first write down the difference between $\Psi_3$ and $\Psit_3$
    \begin{align*}
\Psit_3=e_3\psi+i\ef\Ub\psi-\frac{1}{|u|\Om}\psi=\Psi_3-\frac{1}{\Om|u|}\psi.    
    \end{align*}
Therefore,
\begin{align*}
&\quad\;\,\nabs_4(\Psi_3)+\frac{1}{2}\trchb\Psi_4+\frac{1}{2}\trch\Psi_3-2\om\Psi_3\\
&=\nabs_4\left(\Psit_3+\frac{1}{\Om|u|}\psi\right)+\frac{1}{2}\trchb \Psi_4+\frac{1}{2}\trch\left(\Psit_3+\frac{1}{\Om|u|}\psi\right)-2\om\left(\Psit_3+\frac{1}{\Om|u|}\psi\right)\\
&=\nabs_4(\Psit_3)+\left(\frac{1}{2}\trchb + \frac{1}{\Om |u|} \right)\Psi_4-\frac{\nabs_4\Om}{\Om^2 |u|}\psi+\frac{1}{2}\trch\Psit_3+\frac{1}{2\Om|u|}\trch\psi -2\om\Psit_3-\frac{2\om}{\Om |u|}\psi\\
&=\nabs_4(\Psit_3)+\frac{1}{2}\trch\Psit_3-2\om\Psit_3+\left(\frac{1}{2}\trchb + \frac{1}{\Om |u|} \right)\Psi_4+\frac{1}{2\Om|u|}\trch\psi.
\end{align*}
Combining with $\nabs_4(\Psi_3)$-equation, we get the desired $\nabs_4(\Psit_3)$-equation. Now we can also compute
\begin{align*}
&\quad\nabs(\Psi_3)+\frac{\eta+\etab}{2}\Psi_3+i\ef\Asl\Psi_3\\
&=\nabs\left(\Psit_3+\frac{1}{\Om|u|}\psi\right)+\frac{\eta+\etab}{2}\left(\Psit_3+\frac{1}{\Om|u|}\psi\right)+i\ef \Asl\left(\Psit_3+\frac{1}{\Om|u|}\psi\right)\\
&=\nabs(\Psit_3)+\frac{1}{\Om|u|}\nabs\psi-\frac{\nabs\Om}{\Om^2 |u|}\psi +\frac{\eta+\etab}{2}\left(\Psit_3+\frac{1}{\Om|u|}\psi\right)+i\ef \Asl\left(\Psit_3+\frac{1}{\Om|u|}\psi\right)\\
&=\nabs(\Psit_3)+\frac{1}{\Om|u|}\Psisl +\frac{\eta+\etab}{2}\Psit_3+i\ef \Asl \Psit_3\\
&=\nabs(\Psit_3)-\frac{1}{2}\trchb \Psisl +\frac{\eta+\etab}{2}\Psit_3+i\ef \Asl \Psit_3 + \left(\frac{1}{2}\trchb+\frac{1}{\Om|u|} \right)\Psisl.
\end{align*}    
Combining with $\nabs_3\Psisl$--equation, we get the desired $\nabs_3\Psisl$--equation in terms of $\Psit_3$.
This concludes the proof of Proposition \ref{waveequation}.

\vspace{0.1cm}
\small{Dawei Shen: Department of Mathematics, Columbia University, New York, NY, 10027. \\
Email: \textit{ds4350@columbia.edu}\\ \\
Jingbo Wan: Department of Mathematics, Columbia University, New York, NY, 10027.\\
Email: \textit{jingbowan@math.columbia.edu}}
\end{document}